\newtheorem*{thA}{Theorem A}
\newtheorem*{thB}{Theorem B}
\newtheorem*{thC}{Theorem C}
\newtheorem{theorem}{Theorem}[section]
\newtheorem{claim}[theorem]{Claim}
\newtheorem{lemma}[theorem]{Lemma}
\newtheorem{corollary}[theorem]{Corollary}
\newtheorem{definition}[theorem]{Definition}
\newcommand{\aI}{{\alpha_{1}}}
\newcommand{\aII}{{\alpha_{2}}} 
\newcommand{\aIII}{{\alpha_{3}}}
\newcommand{\half}{\tfrac{1}{2}}
\newcommand{\llangle}{{\langle \! \langle}}
\newcommand{\rrangle}{{\rangle \! \rangle}}
\newcommand{\cH}{{\mathcal H}}
\newcommand{\cK}{{\mathcal K}}
\newcommand{\nocontentsline}[3]{}
\newcommand{\tocless}[2]{\bgroup\let\addcontentsline=\nocontentsline#1{#2}\egroup}
\newcommand{\labitem}[2]{%
\def\@itemlabel{({#1})}
\item
\def\@currentlabel{#1}
\label{#2}}
\newcommand{\Rmnum}[1]{\expandafter\@slowromancap\romannumeral #1@}
\renewcommand{\qed}{\qquad\hspace*{\fill}$\Box$}
\begin{document}

\begin{titlepage}
\begin{center}

\textsc{\LARGE \bf }\\[3cm]

{\huge \bf The Ramsey number of \\[12pt] mixed-parity cycles II}

\vspace{20mm}

{\Large David G. Ferguson} 
\end{center}

\vspace{40mm}

\abstract{
\noindent 
Denote by $R(G_1, G_2, G_3)$ the minimum integer $N$ such that any three-colouring of the edges of the complete graph on $N$ vertices contains
a monochromatic copy of a graph $G_i$ coloured with colour~$i$ for some $i\in{1,2,3}$.
In a series of three papers of which this is the second, we consider the case where $G_1, G_2$ and $G_3$ are cycles of mixed parity. Here and in the previous paper, we consider~$R(C_n,C_m,C_{\ell})$, where $n$ and $m$ are even and $\ell$ is odd.
Figaj and \L uczak determined an asymptotic result for 
this case, which we improve upon to give an exact result. We prove that for~$n,m$ and $\ell$ sufficiently large
$$R(C_n,C_m,C_\ell)=\max\{2n+m-3, n+2m-3, \half n +\half m + \ell - 2\}.$$
The proof of this result is mostly contained within the first paper in this series, however, in the case that the longest cycle is of odd length, we require an additional technical result, the proof of which makes up the majority of this paper.

}

\end{titlepage}

\renewcommand{\baselinestretch}{1.25}\small\normalsize
\setcounter{page}{2}

For graphs $G_1,G_2,G_3$, the Ramsey number $R(G_1,G_2,G_3)$ is the smallest integer~$N$ such that every edge-colouring of the complete graph on~$N$ vertices with up to three colours, results in the graph having, as a subgraph, a copy of~$G_{i}$ coloured with colour~$i$ for some~$i$. 
We consider the case when $G_1,G_2$ and $G_3$ are~cycles, specifically cycles of mixed parity. For a history of this problem see~\cite{DF1}.

Defining $\llangle x \rrangle$ to be the largest even number not greater than~$x$ and $\langle x \rangle$ to be the largest odd number not greater than~$x$, Figaj and \L uczak~\cite{FL2008} proved that, for all $\aI,\aII,\aIII>0$,
\begin{align*}
&\text{(i)}\ R(C_{\llangle \aI n \rrangle },C_{\llangle \aII n \rrangle },C_{\langle \aIII n\rangle  }) = \max \{2\aI+\aII, \aI+2\aII, \half\aI  + \half\aII +\aIII \}n +o(n),\\
&\text{(ii)}\ R(C_{\llangle \aI n \rrangle },C_{\langle \aII n \rangle  },C_{\langle \aIII n\rangle  }) = \max \{4\aI,\aI+2\aII, \aI  +2\aIII \}n +o(n),
\end{align*}
as $n\rightarrow \infty$.

In \cite{DF1}, improving on their result, in the case when exactly one of the cycles is of odd length, we considered the following result:

\phantomsection
\hypertarget{thA}
\phantomsection
\begin{thA}
\label{thA}

For every $\alpha_{1}, \alpha_{2}, \alpha_{3}>0$ such that $\aI \geq \aII$, there exists a positive integer $n_{A}=n_{A}(\aI,\aII,\aIII)$ such that, for $n> n_{A}$,
 \begin{align*}
 R(C_{\llangle \alpha_{1} n \rrangle},C_{\llangle \alpha_{2} n \rrangle}, C_{\langle \alpha_{3} n \rangle }) = \max\{ 2\llangle \alpha_{1} n \rrangle \!+\! \llangle \alpha_{2} n \rrangle  \!-\!\text{\:}3,\text{\:}\half\llangle  \alpha_{1} n \rrangle  \!+\! \half\llangle \alpha_{2} n \rrangle  \!+\! \langle \alpha_{3} n \rangle \!-\! \text{\:}2\}.
\end{align*}
\end{thA}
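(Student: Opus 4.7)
The plan is to establish matching upper and lower bounds for $R=R(C_{\llangle \aI n\rrangle},C_{\llangle \aII n\rrangle},C_{\langle \aIII n\rangle})$, splitting the analysis according to which of $T_{1}=2\llangle \aI n\rrangle + \llangle \aII n\rrangle - 3$ (the ``even-dominant'' term) and $T_{2}=\half\llangle \aI n\rrangle + \half\llangle \aII n\rrangle + \langle \aIII n\rangle - 2$ (the ``odd-dominant'' term) dominates. The lower bound $R\ge T_{1}$ comes from duplicating the extremal two-colour colouring for $R(C_{\llangle \aI n\rrangle},C_{\llangle \aII n\rrangle})=\llangle \aI n\rrangle + \half\llangle \aII n\rrangle -1$: take two disjoint copies of the $2$-colour extremal graph (in which colour~$1$ is a disjoint union $K_{\llangle \aI n\rrangle-1}\cup K_{\half\llangle \aII n\rrangle-1}$ and colour~$2$ is the complementary bipartite graph, each copy of sizes summing to $\llangle \aI n\rrangle+\half\llangle \aII n\rrangle -2$), and fill in the edges between the two copies with colour~$3$, which is then complete bipartite and so contains no odd cycle, in particular no $C_{\langle \aIII n\rangle}$. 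The lower bound $R\ge T_{2}$ is achieved by a bipartite colour-$3$ construction with carefully balanced part sizes forbidding $C_{\langle \aIII n\rangle}$, together with colour-$1$ and colour-$2$ cliques inside the parts tuned to forbid $C_{\llangle \aI n\rrangle}$ and $C_{\llangle \aII n\rrangle}$ respectively.

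For the upper bound, I would begin from the Figaj--\L uczak asymptotic theorem~(i) quoted above. Applying Szemer\'edi's regularity lemma to a $3$-coloured $K_{N}$ with $N=\max(T_{1},T_{2})$ and invoking the connected-matching reduction on the reduced graph, one finds in some colour~$i$ a monochromatic connected matching of size asymptotically $\half|C_{i}|$. A stability analysis then converts the asymptotic conclusion into an exact one: either the colouring is structurally close to one of the two extremal configurations described above (and then a direct pinning argument closes the case), or the reduced structure has enough slack to \emph{lift} a long connected matching into a monochromatic cycle of exactly the prescribed length. In the regime $T_{1}\ge T_{2}$ the target cycle is even, and the path-concatenation machinery of the first paper \cite{DF1} lifts a large colour-$1$ or colour-$2$ connected matching to an even cycle of every even length in a wide range; I would invoke those lemmas directly, the threshold $n_{A}$ being chosen large enough that the error terms in both the regularity step and the stability step are negligible.

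The regime $T_{2}\ge T_{1}$ is where the main work of this paper lies, because the target is the \emph{odd} cycle $C_{\langle \aIII n\rangle}$ and, near the $T_{2}$-extremal configuration, colour~$3$ is essentially bipartite: its connected components yield only even cycles, and odd cycles arise only from ``defect'' edges crossing the near-bipartition in the wrong direction. My plan is to prove the paper's key technical result (anticipated to be Theorem~4.1), which I expect asserts that in any $3$-coloured $K_{N}$ with $N\ge T_{2}$, either colour~$3$ already contains cycles of every odd length in a wide interval including $\langle \aIII n\rangle$, or the colouring is so close to the $T_{2}$-extremal one that a separate explicit case analysis handles it. To prove this technical result I would apply regularity to colour~$3$, use stability to identify its near-balanced bipartition, analyse the defect edges within the parts (which must exist, else we are at extremality), and use them as short odd ``ears'' that can be spliced into long bipartite paths to adjust the total cycle length by $\pm 2$ and so realise every odd value in the required range. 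The main obstacle, and the technical heart of the paper, is precisely this parity-control step: one must simultaneously guarantee an \emph{exact} odd length $\langle \aIII n\rangle$, monochromaticity in colour~$3$, and uniform applicability across the admissible range of $(\aI,\aII,\aIII)$ -- a length-and-parity tuning with no analogue in the even case handled in \cite{DF1}.
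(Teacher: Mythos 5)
Your plan has two genuine gaps, both stemming from a misidentification of the extremal configuration for the term $T_2=\half\llangle \aI n \rrangle+\half\llangle \aII n\rrangle+\langle \aIII n\rangle-2$. First, the lower bound $R\ge T_2$ cannot come from a colouring whose third colour class is bipartite: if green appears only between two parts, then inside each part only colours $1$ and $2$ occur, so each part has at most $R(C_{\llangle \aI n\rrangle},C_{\llangle \aII n\rrangle})-1=\llangle \aI n\rrangle+\half\llangle \aII n\rrangle-2$ vertices and the whole graph at most $2\llangle \aI n\rrangle+\llangle \aII n\rrangle-4$ vertices, which is strictly smaller than $T_2-1$ exactly in the regime where $T_2$ dominates. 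The correct construction takes a green clique on $\langle \aIII n\rangle-1$ vertices joined completely in red to a red clique $A$ of size $\half\llangle \aI n\rrangle-1$ and completely in blue to a blue clique $B$ of size $\half\llangle \aII n\rrangle-1$, with green between $A$ and $B$; its green graph is very far from bipartite -- the obstruction to a green $C_{\langle \aIII n\rangle}$ is that the non-bipartite green component has only $\langle \aIII n\rangle-1$ vertices, not that odd cycles are absent.

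The same misreading undermines your upper-bound strategy in the odd-dominant regime. You assume that near-extremal colourings have colour $3$ ``essentially bipartite'' and propose to manufacture the odd cycle by splicing defect edges as ears into long bipartite green paths; but in the hard case $\aIII\ge\aI\ge\aII$ the critical near-extremal structure is the one just described (the class $\cK$ of the paper), where green odd cycles of many lengths exist but the odd component is too small, so the ear-splicing step attacks the wrong obstruction. The paper controls parity at the level of connected-matchings, not cycles: the key technical statement is not a ``cycles of every odd length'' theorem but the stability result Theorem~B, asserting that an almost-complete three-coloured graph on slightly fewer than $\max\{2\aI+\aII,\half\aI+\half\aII+\aIII\}k$ vertices contains a red or blue connected-matching of the right size, a green \emph{odd} connected-matching (i.e.\ in a component containing an odd cycle, which then blows up to an odd cycle of the exact prescribed length), or one of the coloured structures $\cH$, $\cK$, $\cK^*$; the entire content of this paper is the proof of that result when $\aIII\ge\aI\ge\aII$, via the decomposition $M\cup N\cup P\cup Q$ built around a maximum green connected-matching and the long case analysis C--E. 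Your proposal defers precisely this core to an ``anticipated Theorem~4.1'' whose statement you guess incorrectly and do not prove, and it never addresses how the near-extremal structures (the $\cK$, $\cK^*$ and $\cH$ configurations) are converted into the exact value of the Ramsey number, so the heart of the argument is missing.
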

In order to prove Theorem~A, we required a new Ramsey-stability result for so called connected-matchings.
Owing to the length of the proof of this stability result, part of it was postponed to this paper. 
 The stability result, referred to as Theorem~B, will be restated in the next section, after we have given the necessary definitions. 

The complementary case, that is, the case when
exactly one of the cycles is of even length is considered in
 \cite{DF3}, where we prove the following result, which again improves upon the corresponding result of Figaj and \L uczak:

\phantomsection
\hypertarget{thC}
\phantomsection
\begin{thC}
\label{thC}

For every $\alpha_{1}, \alpha_{2}, \alpha_{3}>0$, there exists a positive integer $n_{C}=n_{C}(\aI,\aII,\aIII)$ such that, for $n> n_{C}$,
 \begin{align*}
 R(C_{\llangle \alpha_{1} n \rrangle},C_{\langle \alpha_{2} n \rangle}, C_{\langle \alpha_{3} n \rangle }) = \max\{
4\llangle \aI n \rrangle,
\llangle \aI n \rrangle+2\langle \aII n \rangle,
\llangle \aI n \rrangle+2\langle \aIII n \rangle\}-3
.
 \end{align*}
\end{thC}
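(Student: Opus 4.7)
The plan is to establish matching upper and lower bounds on $R(C_{\llangle \aI n \rrangle}, C_{\langle \aII n \rangle}, C_{\langle \aIII n \rangle})$; the lower bound via three explicit $3$-edge-colourings of complete graphs, and the upper bound via Szemer\'edi's regularity lemma combined with a stability result for monochromatic connected matchings in $3$-edge-coloured graphs, in direct analogy with the strategy used to prove Theorem~A. Write $k = \llangle \aI n \rrangle$, $\ell_1 = \langle \aII n \rangle$, $\ell_2 = \langle \aIII n \rangle$, and let $M$ denote the claimed value of the Ramsey number.

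For the lower bound, I would exhibit three $3$-colourings of $K_{M-1}$, each corresponding to one term of the maximum. Two of these (for the terms $k + 2\ell_i - 3$) follow the classical Bondy--Erd\H os pattern: partition the vertex set into one block of size $k-1$ and two blocks of size $\ell_i - 1$, then colour so that the targeted odd cycle in colour $i+1$ lies inside a bipartite colour class (and hence is absent) while no even cycle of length $k$ can appear in colour~$1$, which lives on a union of blocks too small or structured to host it. The third construction, matching $4k - 3$, uses a four-block partition with parts of size $k-1$ and assigns the two odd colours to complete bipartite pieces while colour~$1$ is restricted to components of order less than $k$; the two odd colour classes are then bipartite and hence free of odd cycles, and colour~$1$ has no component large enough to host~$C_k$.

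For the upper bound, I would apply the three-colour Szemer\'edi regularity lemma to an arbitrary $3$-edge-colouring of $K_N$ with $N = M$, obtaining a reduced multigraph $H$ on $t$ clusters in which each edge carries its dominant colour. A counting argument, using that $N$ is asymptotically at least one of $4k$, $k + 2\ell_1$, or $k + 2\ell_2$, forces $H$ to contain a large monochromatic subgraph in at least one colour. The next step is to invoke a stability result analogous to Theorem~B but tailored to the two-odd-cycle setting: either some colour class of $H$ contains a connected matching strictly larger than half the relevant target length, in which case the standard lifting lemma produces a monochromatic cycle of the exact length and parity required; or $H$ is within edit distance $o(t^2)$ of one of the three extremal colourings realised by the lower-bound constructions, in which case a direct analysis of the residual edges outside the extremal skeleton still produces the required monochromatic cycle.

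The main obstacle I anticipate is the stability result itself. With two of the target cycles being odd, one must rule out proximity to two distinct extremal configurations simultaneously, and the interaction between the two odd colour classes is more subtle than in Theorem~B: a connected matching in an odd colour lifts to an odd cycle only once parities have been adjusted inside the regular pairs traversed, so the argument must track both the matching size and the specific parities available within each cluster. A further difficulty arises when the three terms in the maximum are nearly equal, as the stability argument must remain valid uniformly across the transition between the extremal regimes. Once this stability step is in place, the cycle extraction from connected matchings and the remaining routine calculations follow the template already established in the first paper of the series.
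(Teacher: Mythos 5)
First, a point of scope: this paper does not actually prove Theorem~C. It only states it and defers the proof to the third paper in the series (\cite{DF3}); the present paper's content is the remaining half of the stability result (Theorem~B) needed for Theorem~A, namely the case $\aIII\geq\aI\geq\aII$. So there is no proof here to compare yours against line by line, and any assessment has to be of your proposal on its own terms.

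Judged that way, what you have written is a roadmap rather than a proof, and the roadmap omits exactly the part that carries the difficulty. The regularity-plus-connected-matching framework you describe is indeed the method of this series (and of Figaj and \L uczak), but the whole content of such a proof is the stability theorem for the relevant colour pattern -- here, one even and two odd target cycles -- together with the extremal-case analysis that converts ``close to one of the three extremal colourings'' into an exact monochromatic cycle with the right parity and the exact constant ($-3$). You name this as ``the main obstacle'' and then do not supply it; by comparison, the analogous ingredient for Theorem~A (Theorem~B) occupies essentially all of papers~I and~II, so leaving it as a black box leaves the theorem unproved. There is also a concrete flaw in your lower bound: for the terms $\llangle\aI n\rrangle+2\langle\aII n\rangle-3$ and $\llangle\aI n\rrangle+2\langle\aIII n\rangle-3$ you propose blocks of sizes $k-1,\ \ell_i-1,\ \ell_i-1$, which total $k+2\ell_i-3=M$ vertices, not $M-1$; if such a colouring really avoided all three cycles it would contradict the very equality you are trying to prove, so either the sizes are off by one or the construction contains one of the forbidden cycles. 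Moreover, in those two constructions you only discuss colour~$1$ and one of the odd colours -- you must also place the remaining odd colour and verify it is bipartite (or otherwise cycle-free at the target length), which your sketch does not do. Until the two-odd stability theorem is stated and proved, and the extremal colourings are written down with correct sizes and all three colours verified, the argument does not establish Theorem~C.
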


\section {Connected-matching stability result}
\label{s:struct}

Before proceeding to restate state Theorem~B, 
we remind the reader of a few concepts and structures from~\cite{DF1}.

We define a \textit{matching} to be collection of pairwise vertex-disjoint edges. We will sometimes abuse terminology and, where appropriate, refer to a matching by its vertex set rather than its edge set. Recall also that we call a matching with all its vertices in the same component of~$G$ a \textit{connected-matching} and that a connected-matching is called \textit{odd} if the component containing the matching also contains an odd cycle. Note that we call a connected-matching with all its edges contained in a monochromatic component of~$G$ a \textit{monochromatic connected-matching}. 

We say that a graph~$G=(V,E)$ on~$N$ vertices is $a$-\emph{almost-complete} for $0\leq a\leq N-1$ if its minimum degree~$\delta(G)$ is at least $(N-1)-a$. Observe that, if~$G$ is $a$-almost-complete and $X\subseteq V$, then $G[X]$ is also $a$-almost-complete.We say that a graph~$G$ on~$N$ vertices is $(1-c)$-\emph{complete} for $0\leq c\leq 1$ if it is $c(N-1)$-almost-complete, that is, if $\delta(G)\geq (1-c)(N-1)$. Observe that, for $c\leq \half$, any $(1-c)$-complete graph is connected.

We say that a bipartite graph~$G=G[U,W]$ is $a$-\emph{almost-complete} if every $u\in U$ has degree at least $|W|-a$ and every $w\in W$ has degree at least $|U|-a$. Notice that, if~$G[U,W]$ is $a$-almost-complete and $U_1\subseteq U, W_1\subseteq W$, then $G[U_1,W_1]$ is $a$-almost-complete. We say that a bipartite graph~$G=G[U,W]$ is $(1-c)$-\emph{complete} if every $u\in U$ has degree at least $(1-c)|W|$ and every $w\in W$ has degree at least $(1-c)|U|$. Again, notice that, for $c< \half$, any $(1-c)$-complete bipartite graph $G[U,W]$ is connected, provided that~$U,W\neq \emptyset$.

We say that a graph~$G$ on~$N$ vertices is $c$-\emph{sparse} for $0<c<1$ if its maximum degree is at most $c(N-1)$. We say a bipartite graph~$G=G[U,W]$ is $c$-\emph{sparse} if every $u\in U$ has degree at most $c|W|$ and every vertex $w\in W$ has degree at most $c|U|$.

Given a three-coloured graph~$G$, we refer to the first, second and third colours as red, blue and green respectively and use $G_1, G_2, G_3$ to refer to the monochromatic spanning subgraphs of $G$. That is, $G_1$ (resp. $G_2, G_3$) has the same vertex set as~$G$ and includes, as an edge, any edge which (in~$G$) is coloured red (resp. blue, green). If~$G_1$ contains the edge $uv$, we say that $u$ and $v$ are \textit{red neighbours} of each other in $G$. Similarly, if $uv\in E(G_2)$, we say that $u$ and $v$ are \textit{blue neighbours} and, if $uv\in E(G_3)$, we say that that $u$ and $v$ are \textit{green neighbours}.

\begin{definition}
\label{d:H}

For $x_{1}, x_{2}, c_1,c_2$ positive, $\gamma_1,\gamma_2$ colours, let $\cH(x_{1},x_{2}, c_1,c_2, \gamma_1,\gamma_2)$ be the class of edge-multicoloured graphs defined as follows: 

A given two-multicoloured graph $H=(V,E)$ belongs to~$\cH$ if its vertex set can be partitioned into $X_{1}\cup X_{2}$ such that
\begin{itemize}
\item[(i)] $|X_{1}|\geq x_{1}, |X_{2}|\geq x_{2} $;
\item[(ii)] $H$ is $c_1$-almost-complete; and
\item[(iii)] defining $H_1$ to be the spanning subgraph induced by the colour $\gamma_1$ and $H_2$ to be the subgraph induced by the colour $\gamma_2$,
\begin{itemize}
\item[(a)] $H_1[X_{1}]$ is $(1-c_2)$-complete and $H_2[X_{1}]$ is $c_2$-sparse,
\item[(b)] $H_2[X_1,X_2]$ is $(1-c_2)$-complete and $H_1[X_1,X_2]$ is $c_2$-sparse.
\end{itemize}
\end{itemize}
\end{definition}

\begin{figure}[!h]
\vspace{-2mm}
\centering
\includegraphics[height=22mm, page=1]{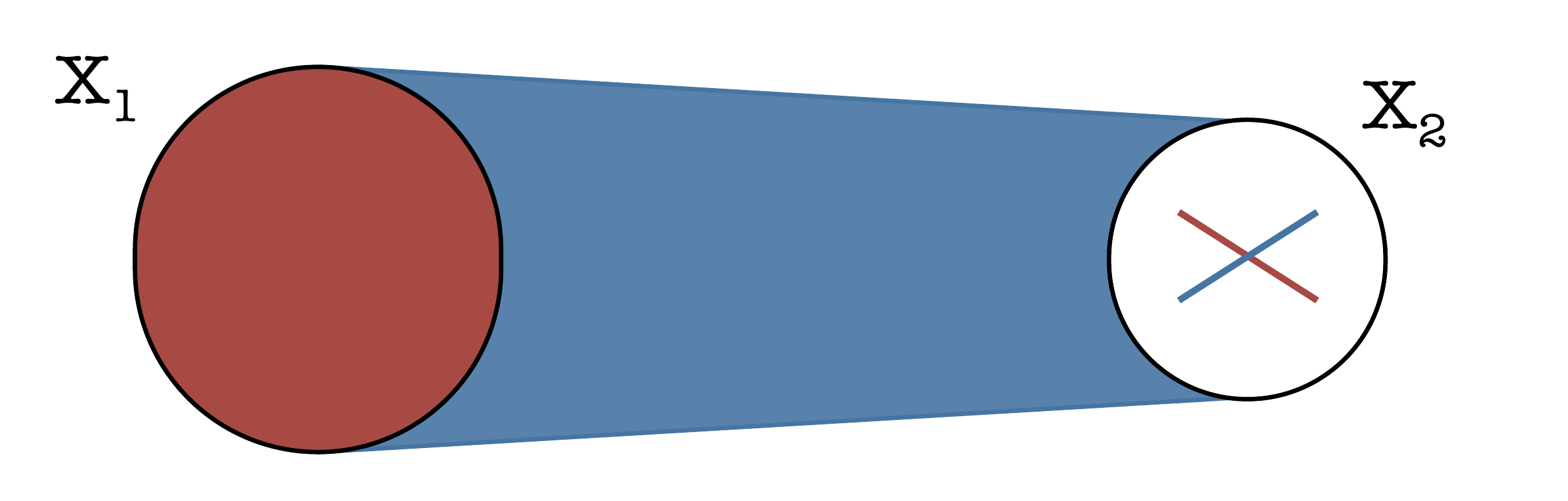}
\vspace{0mm}\caption{$H\in\cH(x_1,x_2,c_1,c_2,red,blue)$.}
\end{figure}

\begin{definition}
\label{d:K}

For $x_{1}, x_{2},x_{3}, c$ positive, let $\cK(x_{1},x_{2}, x_3,c)$ be the class of edge-multicoloured graphs defined as follows: 

A given three-multicoloured graph $H=(V,E)$ belongs to~$\cK$ if its vertex set can be partitioned into $X_{1}\cup X_{2}\cup X_{3}$ such that
\begin{itemize}
\item[(i)] $|X_{1}|\geq x_{1}, |X_{2}|\geq x_{2} , |X_{3}|\geq x_{3} $;
\item[(ii)] $H$ is $c$-almost-complete; and
\item[(iii)] all edges present in $H[X_1,X_3]$ are  coloured exclusively red,
all edges present in $H[X_2,X_3]$ are  coloured exclusively blue, 
and all edges present in $H[X_3]$ are  coloured exclusively green. 
\end{itemize}
\end{definition}

\begin{definition}
\label{d:J}

For $x_{1}, x_{2},y_{1}, y_{2}, z, c$ positive, let $\cK^*(x_{1},x_{2}, y_1, y_2, z, c)$ be the class of edge-multicoloured graphs defined as follows: 

A given three-multicoloured graph $H=(V,E)$ belongs to~$\cK^*$, if its vertex set can be partitioned into $X_{1}\cup X_{2}\cup Y_{1}\cup Y_{2}$ such that
\begin{itemize}
\item[(i)] $|X_{1}|\geq x_{1}, |X_{2}|\geq x_{2} , |Y_{1}|\geq y_{2}, |Y_{2}|\geq y_{2}, |Y_1|+|Y_2|\geq z$;
\item[(ii)] $H$ is $c$-almost-complete; and
\item[(iii)] all edges present in $H[X_1,Y_1]$ and $H[X_2,Y_2]$ are coloured exclusively red,
all edges present in $H[X_1,Y_2]$ and $H[X_2,Y_1]$ are  coloured exclusively blue and
all edges present in $H[X_1,X_2]$ and $H[Y_1,Y_2]$ are  coloured exclusively green.
\end{itemize}
\end{definition}

\begin{figure}[!h]
\centering{
\mbox{
\hspace{-8mm}
{~}\quad
{\includegraphics[width=64mm, page=4]{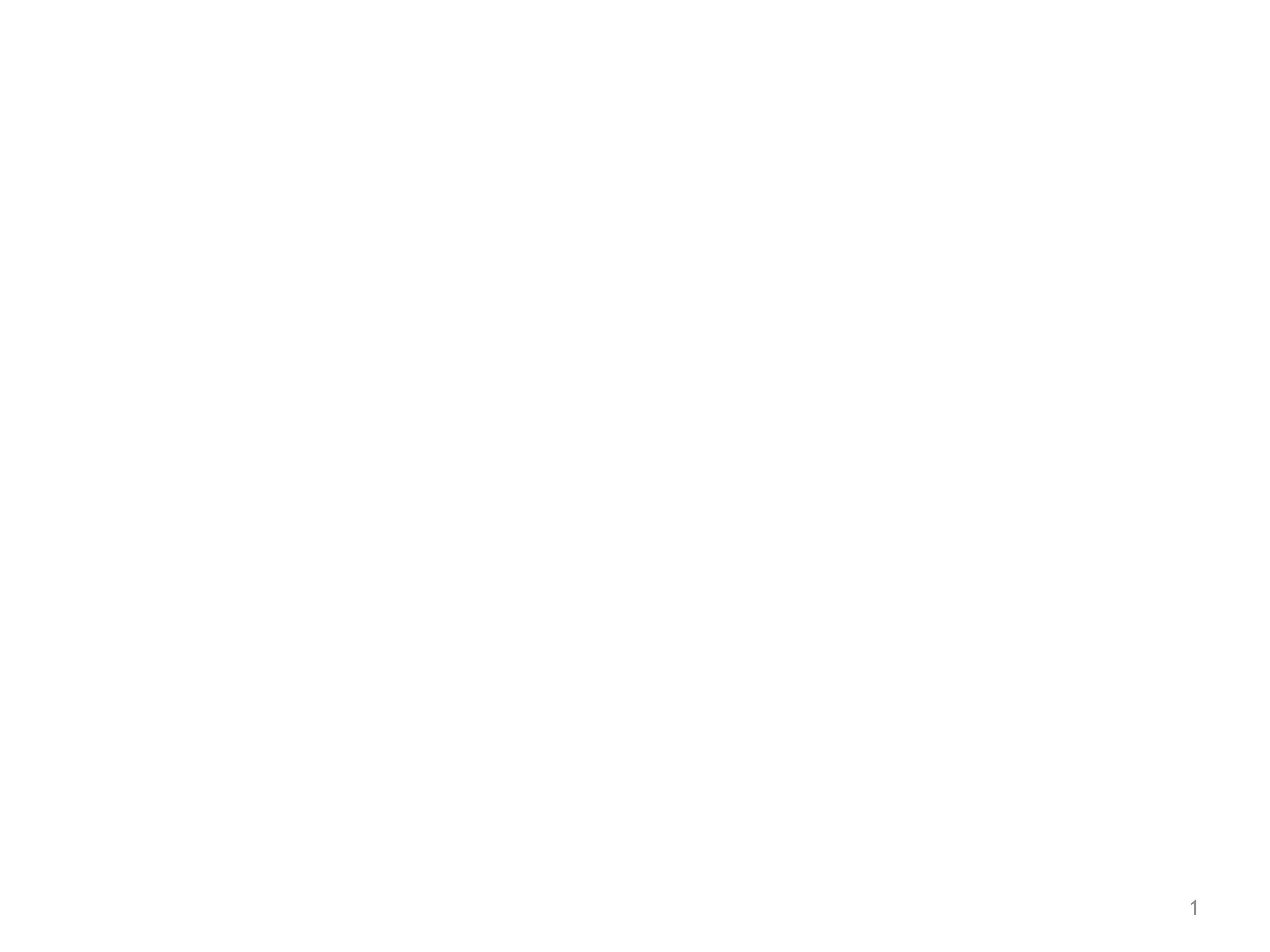}}\quad\quad\quad
{\includegraphics[width=64mm, page=5]{CH2-Figs.pdf}}}}
\caption{$H\in \cK(x_1,x_2,x_3,c)$ and $H\in \cK^{*}(x_1,x_2,y_1,y_2,c)$.}   
\label{th1a}
\end{figure}

Having defined the coloured structures, we now restate Theorem~B:

\FloatBarrier
\phantomsection
\hypertarget{thB}
\phantomsection
\begin{thB}
\label{thB}
\label{th:stab}
For every $\alpha_{1},\alpha_{2},\alpha_{3}>0$ 
such that $\aI\geq \aII$, letting $$c=\max\{2\aI+\aII, \half\aI+\half\aII+\aIII\},$$
there exists $\eta_{B}=\eta_{B}(\aI,\aII,\aIII)$ and $k_{B}=k_{B}(\aI,\aII,\aIII,\eta)$ 
such that, for every $k>k_{B}$ and 
every~$\eta$ such that $0<\eta<\eta_{B}$, every 
three-multicolouring of~$G$, a $(1-\eta^4)$-complete graph on 
$$(c-\eta)k\leq K\leq(c-\half\eta)k$$ vertices,  results in the graph containing at least one of the following:
\vspace{-2mm}
\begin{itemize}
\item [(i)]	a red connected-matching on at least $\aI
 k$ vertices;
\item [(ii)]  a blue connected-matching on at least $\aII k$ vertices;
\item [(iii)]  a green odd connected-matching on at least $\aIII k$ vertices;
\item [(iv)]  
disjoint subsets of vertices $X$ and $Y$ such that $G[X]$ contains a two-coloured spanning subgraph~$H$ from $\cH_{1}\cup\cH_{2}$ and $G[Y]$ contains a two-coloured spanning subgraph $K$ from $\cH_{1}\cup\cH_{2}$, where
\vspace{-2mm}
\begin{align*}
\cH_1=&\cH\left((\aI-2\eta^{1/32})k,(\half\aII-2\eta^{1/32})k,3\eta^4 k,\eta^{1/32},\text{red},\text{blue}\right),\text{ } 
\\ \cH_2=&\cH\left((\aII-2\eta^{1/32})k,(\half\aI-2\eta^{1/32})k,3\eta^4 k,\eta^{1/32},\text{blue},\text{red}\right);
\end{align*}
 \item [(v)]  a subgraph from 
 \vspace{-2mm}
 $$\cK\left((\half\aI-14000\eta^{1/2})k, (\half\aII-14000\eta^{1/2})k, (\aIII-68000\eta^{1/2})k, 4\eta^4 k \right);$$
\item [(vi)]  a subgraph from $\cK_1^{*}\cup \cK_2^*$, where
\vspace{-2mm}
\begin{align*}
\cK_{1}^{*}=\cK^*\big((\half\aI-97\eta^{1/2})k, (\half\aI-97&\eta^{1/2})k, (\half\aI+102\eta^{1/2})k, \\&(\half\aI+102\eta^{1/2})k, (\aIII-10\eta^{1/2})k, 4\eta^4 k\big),\hphantom{l}
\end{align*}
\vspace{-12mm}
\begin{align*}
\cK_{2}^{*}=\cK^*\big((\half\aI-97\eta^{1/2})k, (\half\aII-97\eta^{1/2})k, (\tfrac{3}{4}&\aIII-140\eta^{1/2})k, \\& 100\eta^{1/2}k, (\aIII-10\eta^{1/2})k, 4\eta^4 k\big).
\end{align*}
\end{itemize}
Furthermore, 
\vspace{-2mm}
\begin{itemize}
\item[(iv)] occurs only if $\aIII\leq \tfrac{3}{2}\aI+\half\aII+14\eta^{1/2}$ with $H_1,H_2\in \cH_1$ unless $\aII\geq\aI-\eta^{1/16}$; and
\item[(v)] and (vi) occur only if $\aIII\geq \tfrac{3}{2}\aI+\half\aII-10\eta^{1/2}$.
\end{itemize}
\end{thB}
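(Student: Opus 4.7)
The plan is to assume that none of outcomes (i)--(iii) hold and to derive one of the structural conclusions (iv)--(vi). The argument splits along the two terms of $c$: when $2\aI+\aII$ dominates (equivalently $\aIII\leq \tfrac{3}{2}\aI+\half\aII + O(\eta^{1/2})$) the extremal configuration is essentially two-coloured and I would target (iv); when $\half\aI+\half\aII+\aIII$ dominates, the green colour plays an essential role and I would target (v) or (vi).

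For the first regime I would apply the two-colour red-blue connected-matching stability theorem from \cite{DF1} to $G$. This either yields a red connected-matching on $\aI k$ vertices or a blue one on $\aII k$ vertices, contradicting our assumptions, or produces a bipartition $V=X_1\cup X_2$ in which $G[X_1]$ is nearly red-complete and $G[X_1,X_2]$ is nearly blue-complete, with the sizes of $X_1$ and $X_2$ close to the thresholds appearing in $\cH_1$ (or $\cH_2$ on swap of colours). A cleanup step then absorbs vertices that are green-heavy: the set of vertices whose green neighbourhood in $X_1\cup X_2$ is large must itself be $O(\eta^{1/2}k)$, since otherwise this green substructure would already produce outcome (iii) or push us into the second regime. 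After trimming these vertices from $X_1$ and $X_2$, the density and sparsity conditions of Definition~\ref{d:H} hold, delivering (iv).

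For the second regime I would examine the largest green component $C$. If $C$ contains no odd cycle, then it is bipartite with parts $Y_1,Y_2$ accounting for almost all of $V$; the complement $V\setminus (Y_1\cup Y_2)$ is small, and the red-blue stability theorem applied to the bipartite subgraph $G[Y_1,Y_2]$--whose edges are almost all red or blue--yields subdivisions of the $Y_i$ compatible with $\cK_1^*$ or $\cK_2^*$, the skew case $\cK_2^*$ arising when $|Y_1|$ and $|Y_2|$ are significantly unbalanced. If instead $C$ does contain an odd cycle, then failure of (iii) forces $|C|\approx \aIII k$ and the green subgraph on $C$ is nearly complete; the vertices outside $C$ split according to the dominant colour of their edges to $C$ into sets $X_1$ (red-dominant) and $X_2$ (blue-dominant), giving the three-part structure of $\cK$ in (v) with $X_3 = C$.

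The main obstacle will be accurate bookkeeping of the many error terms. The $\eta^{1/32}$ loss in (iv) originates in the two-colour stability theorem and cascades through the cleanup of green-heavy vertices; the $\eta^{1/2}$ losses in (v) and (vi) are governed by the near-bipartition bound on the green component and by the need to pass from density conditions to strict colour-exclusivity on the edges inside each block. The large multiplicative constants ($14000$ and $68000$ in (v); $97$, $102$ and $140$ in (vi)) arise from iterating these estimates and from repeated applications of degree thresholds used to enforce the conditions in Definitions~\ref{d:K} and~\ref{d:J}. The trickiest case will be $\cK_2^*$, since it concerns a genuinely asymmetric partition with $|Y_1|+|Y_2|\geq z$ but possibly one $|Y_i|$ small; showing that the red-blue pattern transfers cleanly to the bipartite pair $(X_1\cup X_2,\,Y_1\cup Y_2)$ despite this imbalance will require a separate counting argument that I expect to occupy the bulk of the proof.
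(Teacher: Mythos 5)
Your plan has genuine gaps in both regimes, and in each case the missing ingredient is the decomposition driven by the \emph{green} graph, which your sketch tries to bypass.

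In the first regime, outcome (iv) asks for \emph{two disjoint} sets $X$ and $Y$, each carrying a spanning structure from $\cH_1\cup\cH_2$; applying a red--blue stability theorem once to all of $G$ can produce at most one such structure, and in any case you cannot invoke a two-colour result on a graph that still carries its green edges. Your proposed cleanup -- that the set of green-heavy vertices must be $O(\eta^{1/2}k)$ or else (iii) holds -- is false: (iii) requires an \emph{odd} green connected-matching, and in the extremal colouring for (iv) the green graph is a nearly complete bipartite graph between the two halves of $V$, so every vertex is green-heavy while (iii) still fails. The correct order of operations (as in the paper) is to first apply the Figaj--\L uczak decomposition (Lemma~\ref{l:decomp}) to the green graph, obtaining sides $X$, $Y$ containing no green edges and an odd part $W$ whose size is controlled via Lemmas~\ref{l:largeW} and~\ref{l:hole}, and only then apply the two-colour stability lemma (Lemma~\ref{l:SkB}) separately inside $G[X]$ and inside $G[Y]$; that is exactly what yields the two disjoint $\cH$-structures and the side condition $\aIII\leq\tfrac{3}{2}\aI+\half\aII+14\eta^{1/2}$.

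In the second regime the sketch misplaces where (v) and (vi) come from. If the large green component is bipartite with sides $Y_1,Y_2$, then the green edges lie precisely in $G[Y_1,Y_2]$, so that bipartite graph is \emph{not} ``almost all red or blue'' and your application of the red--blue theorem there is unavailable; moreover this branch is excluded when $\aIII$ dominates, because the size bounds on the bipartition force $\aIII\leq\tfrac{3}{2}\aI+\half\aII+14\eta^{1/2}$ (the paper's Case D computation). If the component contains an odd cycle, failure of (iii) does \emph{not} force $|C|\approx\aIII k$, nor near-completeness of green on $C$: (iii) only bounds odd connected-matchings, and an odd component can be much larger and sparse. The real work -- the paper's Case E, which is most of this paper -- is to take a maximum green connected-matching $F$ lying in an odd component, use its maximality to build the partition $M\cup N\cup P\cup Q$ in which $G[N,P]$, $G[M,Q]$, $G[N,Q]$, $G[P,Q]$ and $G[P]$ carry no green, and then run a long case analysis with Lemmas~\ref{l:dgf1}, \ref{l:twoholes} and~\ref{l:eleven}, repeatedly constructing or forbidding connected-matchings to upgrade ``dominant colour'' information to exclusively coloured blocks. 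The $\cK^*$ outcomes of (vi) arise there, from cross-coloured partitions of $L\cup Q$ (so from the \emph{odd} case), not from the bipartite-green branch; and the splitting of the vertices outside the green core by dominant colour, which you state as if it were immediate, is precisely the part that requires the detailed matching constructions your proposal leaves out.
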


\setlength{\parskip}{0.1in plus 0.025in minus 0.025in}
This result forms a partially strengthened analogue of the main technical result of the paper of Figaj and \L uczak~\cite{FL2008}. In that paper, Figaj and \L uczak considered a similar graph but on slightly more than $\max\{2\aI+\aII, \half\aI+\half\aII+\aIII\}k$ vertices and proved the existence of a connected-matching, whereas we consider a graph on slightly fewer vertices and prove the existence of either a monochromatic connected-matching or a particular structure.

In \cite[Section 7]{DF1}, we proved Theorem~B, in the case that $\aI\geq\aIII$, the remainder of this paper is dedicated to proving it in the complementary case. 

\section{{Definitions and tools}}
\label{s:pre1}
In this section, we give a few definitions and summarise results that we shall use later in our proof.

Given a graph~$G$, we say $u,v\in V(G)$ are \textit{connected} (in~$G$) if there exists a path in~$G$ between $u$ and~$v$. The graph itself is said to be \textit{connected} if any pair of vertices are connected. By extension, given a subgraph~$H$ of~$G$, we say~$H$ is \textit{connected} if, given any pair $u,v \in V(H)$, there exists a path in~$H$ between $u$ and $v$ and say that~$H$ is \textit{effectively-connected} if, given any pair $u,v \in V(H)$, there exists a path in~$G$ between $u$ and $v$.

\cbstart
A \textit{connected-component} of a graph~$G$ is a maximal connected subgraph. A subgraph of~$H$, a subgraph of~$G$, is an \textit{effectively-connected-component} or \textit{effective-component} of~$H$ if it is a maximal effectively-connected subgraph of~$H$. Thus the effective-components of $H$ are restrictions of the components of~$G$ to $H$.
\cbend

Given a multicoloured graph~$G$, we say that two vertices $u$ and $v$ belong to the same \textit{monochromatic component} of~$G$ if they belong to the same component of $G_i$ for some~$i$. Given a subgraph~$H$ of a multicoloured graph~$G$, we say that two vertices $u$ and $v$ belong to the same \textit{monochromatic effective-component} of~$H$ if they belong to the same effective-component of $G_i$ for some $i$. We can thus talk about, for instance, the \textit{red components} of a graph~$G$ or the \textit{red effective-components} of a subgraph~$H$ of~$G$.

We now summarise the results that we shall use later in our proofs. We beginning with Dirac's Theorem, which gives us a minimum-degree condition for Hamiltonicity:

\begin{theorem}[Dirac's Theorem~\cite{Dirac52}]
\label{dirac}
If~$G$ is a graph on~$n\geq3$ vertices such that every vertex has degree at least $\half n$, then~$G$ is Hamiltonian, that is, $G$ contains a cycle of length exactly $n$.
\end{theorem}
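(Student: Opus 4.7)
The plan is to follow the classical Dirac argument, which has three main steps. First, I would establish that $G$ is connected: for any $u,v\in V(G)$, since $|N(u)|+|N(v)|\geq n$ while $|V(G)\setminus\{u,v\}|=n-2$, we have $N(u)\cap N(v)\neq\emptyset$, so $u$ and $v$ are joined by a path of length two.

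Next I would take a longest path $P=v_1 v_2\cdots v_k$ in $G$ and extract from it a cycle on $V(P)$. By maximality of $P$, every neighbour of $v_1$ and every neighbour of $v_k$ must lie on $P$. Defining
$$A=\{i\in\{1,\ldots,k-1\}:v_1 v_{i+1}\in E(G)\},\qquad B=\{i\in\{1,\ldots,k-1\}:v_i v_k\in E(G)\},$$
we have $|A|=\deg(v_1)\geq n/2$ and $|B|=\deg(v_k)\geq n/2$, while $A,B\subseteq\{1,\ldots,k-1\}$, a set of size at most $n-1$. Thus $|A|+|B|\geq n>|A\cup B|$, so $A\cap B\neq\emptyset$; for any $i\in A\cap B$, the closed walk $v_1 v_{i+1} v_{i+2}\cdots v_k v_i v_{i-1}\cdots v_1$ is a cycle $C$ on exactly $V(P)$.

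Finally I would show that $C$ is Hamiltonian. If $V(C)\neq V(G)$, then by connectedness there is an edge $xy\in E(G)$ with $x\in V(C)$ and $y\notin V(C)$; breaking $C$ at an edge incident to $x$ yields a Hamilton path of $C$ starting at $x$, and prepending $y$ to it produces a path on $k+1$ vertices in $G$, contradicting the maximality of $P$. Hence $k=n$ and $C$ is a Hamilton cycle. The main obstacle in the argument is the pigeonhole step: one must notice that $A$ and $B$ live inside a common index set of size at most $n-1$, so that the degree hypothesis forces them to intersect; the remainder is essentially bookkeeping with the rotation-extension trick.
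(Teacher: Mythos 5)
Your argument is the standard rotation--extension proof of Dirac's theorem and it is correct: connectedness from the degree-sum observation, the pigeonhole argument on the index sets $A,B\subseteq\{1,\dots,k-1\}$ of a longest path to produce a cycle on all of $V(P)$, and the extension step showing $V(C)=V(G)$. There is nothing in the paper to compare against: the statement is quoted there purely as a classical tool, with a citation to Dirac's original paper and no proof, so your write-up would stand as a self-contained justification. The only point worth making explicit is that the longest path really has at least three vertices (so the closed walk you build is a genuine cycle); this is immediate from $k\geq \deg(v_1)+1\geq \tfrac{n}{2}+1$ together with $n\geq 3$.
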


We also make use of the theorem of Erd\H{o}s and Gallai:

\begin{theorem}[\cite{ErdGall59}]
\label{th:eg}
Any graph on~$K$ vertices with at least~$\frac{1}{2}(m-1)(K-1)+1$ edges, where $3\leq m \leq K$, contains a cycle of length at least~$m$.
\end{theorem}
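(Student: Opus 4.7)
The plan is to prove the theorem by a minimum-counterexample argument, combining a reduction to the 2-connected case with an induction on the degree structure. First, I would observe that the quantity $\tfrac{1}{2}(m-1)(K-1)$ is super-additive across blocks sharing a cut-vertex: if $G$ decomposes into blocks $B_1,\ldots,B_t$ with $|V(B_i)|=n_i$, then $\sum(n_i-1)=K-1$, so if every block $B_i$ had at most $\tfrac{1}{2}(m-1)(n_i-1)$ edges the whole graph would have at most $\tfrac{1}{2}(m-1)(K-1)$ edges. Hence in a minimum counterexample we may assume $G$ is 2-connected.

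Next, examine a vertex $v$ of minimum degree. If $\deg(v) \leq \tfrac{m-1}{2}$, then $G-v$ has $K-1$ vertices and at least
\[\tfrac{1}{2}(m-1)(K-1) + 1 - \tfrac{m-1}{2} = \tfrac{1}{2}(m-1)(K-2) + 1\]
edges. By minimality, $G-v$ must contain a cycle of length at least $m$, and hence so does $G$, a contradiction. Otherwise $\delta(G) \geq \tfrac{m}{2}$, and I would invoke the classical long-cycle theorem for 2-connected graphs, which states that the circumference satisfies $c(G) \geq \min\{2\delta(G),|V(G)|\}$. Since $K\geq m$ and $2\delta(G)\geq m$, this immediately produces a cycle of length at least $m$.

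The main technical obstacle is the interaction between the two reductions: after deleting the low-degree vertex, the resulting graph may no longer be 2-connected, so the block reduction has to be re-applied. Framing the whole argument as a single minimum-counterexample argument on $K$, rather than as a rigid induction, keeps this bookkeeping clean, since we may freely pass to a sub-block at any stage. A secondary subtlety is that the long-cycle theorem invoked in the second case is a strengthening of (and distinct from) the Dirac theorem already cited in the excerpt; if one prefers to avoid quoting it as a black box, one can replace that step by a direct rotation argument on a longest path $P$ in $G$, where 2-connectedness together with $\delta(G)\geq m/2$ guarantees that the shifts of neighbourhoods of the two endpoints of $P$ overlap sufficiently (by pigeonhole) to close $P$ into a cycle of length at least $m$.
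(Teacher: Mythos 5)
The paper offers no proof of this statement: it is quoted directly from Erd\H{o}s and Gallai~\cite{ErdGall59} and used as a black box (its only role is to feed the connected-matching corollary stated immediately after it), so there is nothing internal to compare against. Your argument is essentially the classical proof of the Erd\H{o}s--Gallai cycle theorem, and its skeleton --- reduce to a block, delete a vertex of degree at most $\tfrac{m-1}{2}$, and otherwise finish in the $2$-connected case with Dirac's circumference bound $\min\{2\delta(G),|V(G)|\}$ (which you correctly distinguish from the Hamiltonicity theorem the paper cites) or a rotation argument --- is sound. Two points need tightening before it is airtight. First, the block step does not quite interface with the hypothesis as stated: from $e(G)\ge\tfrac{1}{2}(m-1)(K-1)+1$ you only extract a block $B$ on $n_B$ vertices with $e(B)>\tfrac{1}{2}(m-1)(n_B-1)$, and when $(m-1)(n_B-1)$ is odd this can be half a unit short of the threshold $\tfrac{1}{2}(m-1)(n_B-1)+1$ needed to invoke minimality (two blocks may each exceed their allowance by exactly $\tfrac{1}{2}$). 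The standard remedy is to run the same minimum-counterexample argument for the slightly stronger statement with hypothesis $e(G)>\tfrac{1}{2}(m-1)(K-1)$; this implies the version stated in the paper and is preserved both by the block reduction and by the deletion step, where $e(G-v)>\tfrac{1}{2}(m-1)(K-2)$. Second, two boundary checks should be recorded: a block on $n_B\le m-1$ vertices has at most $\binom{n_B}{2}\le\tfrac{1}{2}(m-1)(n_B-1)$ edges, so the violating block automatically has at least $m$ vertices and the induction hypothesis applies to it; and if $K=m$ the deletion case cannot arise, since $G-v$ would then need more than $\binom{m-1}{2}$ edges on $m-1$ vertices, so one lands in the $\delta\ge m/2$ branch, where $2\delta\ge m$ and $K\ge m$ give the cycle directly. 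With these repairs your proof is complete and correct.
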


Observing that a cycle on $m$ vertices contains a connected-matching on at least~$m-1$ vertices, the following is an immediate consequence of the above.

\begin{corollary}
\label{l:eg}
For any graph~$G$ on~$K$ vertices and any~$m$ such that $3 \leq m \leq K$, if the average degree $d(G)$ is at least $m$, then~$G$ contains a connected-matching on at least~$m$ vertices.
\end{corollary}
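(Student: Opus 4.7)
The plan is a direct reduction to Theorem~\ref{th:eg} combined with the observation preceding the corollary: a cycle on $\ell$ vertices contains a matching on $2\lfloor \ell/2\rfloor$ vertices, obtained by taking every other edge around the cycle.

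First, from the hypothesis I extract the edge count: since $d(G)\geq m$, we have $|E(G)|=\half K\,d(G)\geq \half K m$. In a simple graph the maximum degree is at most $K-1$, so $d(G)\geq m$ forces $m\leq K-1$, hence $m+1\leq K$.

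Second, I apply Theorem~\ref{th:eg} with its parameter set to $m+1$ (which is legitimate since $4\leq m+1\leq K$). The edge threshold required by the theorem is $\half m(K-1)+1$, and the inequality
$$\half K m \;\geq\; \half m(K-1)+1 \;\Longleftrightarrow\; \tfrac{m}{2}\geq 1$$
holds because $m\geq 3$. So $G$ contains a cycle $C$ of length $\ell\geq m+1$.

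Third, the matching obtained from $C$ by taking alternate edges covers $2\lfloor \ell/2\rfloor$ vertices, and for $\ell\geq m+1$ this count is at least $m$ regardless of the parity of $\ell$: if $\ell$ is even we get $\ell\geq m+1$ vertices, and if $\ell$ is odd we get $\ell-1\geq m$ vertices. All vertices of $C$ lie in one connected component of $G$, so this is a connected-matching on at least $m$ vertices, as required. The only conceptual subtlety is the parity correction --- applying Erd\H{o}s--Gallai with parameter $m+1$ rather than $m$ --- which is exactly the reason the observation preceding the corollary only yields $m-1$ rather than $m$ when the cycle length itself is $m$; there is no genuine obstacle.
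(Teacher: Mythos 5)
Your proof is correct and follows exactly the route the paper intends (it states the corollary as an immediate consequence of the Erd\H{o}s--Gallai theorem plus the cycle-to-matching observation): bound the edge count by $\half Km$, apply Theorem~\ref{th:eg} with parameter $m+1$ (valid since $m\leq d(G)\leq K-1$), and take alternate edges of the resulting cycle to get a connected-matching on at least $m$ vertices. The parity adjustment via $m+1$ is precisely the point that makes the statement give $m$ rather than $m-1$, and you have handled it correctly.
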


The following decomposition lemma~of Figaj and \L uczak~\cite{FL2008} also follows from the theorem of Erd\H{o}s and Gallai and is crucial in establishing the structure of a graph not containing large connected-matchings of the appropriate parities:

\begin{lemma}[{\cite[Lemma~9]{FL2008}}]
\label{l:decomp}
For any graph~$G$ on~$K$ vertices and any~$m$ such that $3\leq m \leq K$, if no odd component of~$G$ contains a matching on at least~$m$ vertices, then there exists a partition $V=V'\cup V''$ such that
\begin{itemize}
\item [(i)] $G[V']$ is bipartite;
\item [(ii)] every component of $G''=G[V'']$ is odd;
\item [(iii)] $G[V'']$ has at most $\half m |V(G'')|$ edges; and
\item [(iv)] there are no edges in $G[V',V'']$.
\end{itemize}
\end{lemma}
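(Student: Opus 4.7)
The natural plan is to partition the vertex set along the components of $G$ themselves. Write $\mathcal{C}$ for the set of connected components of $G$, split it as $\mathcal{C} = \mathcal{C}' \cup \mathcal{C}''$ where $\mathcal{C}''$ consists of the odd (i.e.\ non-bipartite) components, and set
$$
V' \;=\; \bigcup_{C \in \mathcal{C}'} V(C), \qquad V'' \;=\; \bigcup_{C \in \mathcal{C}''} V(C).
$$

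With this choice three of the four conclusions come essentially for free. For (i), $G[V']$ is a disjoint union of bipartite components and so is bipartite. For (ii), the components of $G[V'']$ are precisely the non-bipartite components of $G$ and so are all odd. For (iv), $V'$ and $V''$ are unions of distinct components of $G$, so no edge of $G$ joins them.

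The only conclusion requiring work is (iii), and here the plan is to apply Corollary~\ref{l:eg} one odd component at a time. Fix an odd component $H$ of $G$; since $H$ contains an odd cycle it has at least three vertices, and by hypothesis it contains no matching on $m$ or more vertices. If $|V(H)| \ge m$, the contrapositive of Corollary~\ref{l:eg}, applied to $H$ (which is connected, so any matching in $H$ is automatically a connected-matching), gives $d(H) < m$ and hence $|E(H)| < \tfrac{1}{2} m |V(H)|$. If instead $|V(H)| < m$, Corollary~\ref{l:eg} does not directly apply, but the trivial bound
$$
|E(H)| \;\le\; \binom{|V(H)|}{2} \;=\; \tfrac{1}{2}(|V(H)|-1)|V(H)| \;\le\; \tfrac{1}{2} m |V(H)|
$$
settles that case. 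Summing the resulting bounds over all odd components of $G$ yields $|E(G[V''])| \le \tfrac{1}{2} m |V(G'')|$, as required.

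There is essentially no real obstacle: the only mild subtlety is the small-component case, which is needed because Corollary~\ref{l:eg} requires $3 \le m \le |V(H)|$, but that case is immediate from the crudest edge count in a simple graph. Once handled, the lemma follows directly from Corollary~\ref{l:eg} applied component-by-component.
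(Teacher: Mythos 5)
Your proof is correct: the paper does not prove this lemma itself but cites Figaj and \L uczak and remarks that it follows from the theorem of Erd\H{o}s and Gallai, which is precisely the route you take. Splitting off the non-bipartite components and bounding their edges component-by-component via Corollary~\ref{l:eg} (with the trivial $\binom{|V(H)|}{2}$ bound when $|V(H)|<m$) is exactly the intended argument, so there is nothing to add.
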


We recall two more results of Figaj and \L uczak. The first, is the main technical result from~\cite{FL2007}. The second from~\cite{FL2008}, allows us to deal with graphs with a \emph{hole}, that is, a subset $W\subseteq V(G)$ such that no  edge of~$G$ lies inside $W$. Note that both of these results can be immediately extended to multicoloured graphs:

\begin{lemma}[{\cite[Lemma~8]{FL2007}}]
\label{l:largeW}
For every $\alpha_{1}, \alpha_{2}, \alpha_{3}>0$ and
 $0<\eta<0.002\min\{\alpha_1^2, \alpha_2^2,\alpha_3^2\}$, there exists $k_{\ref{l:largeW}}=k_{\ref{l:largeW}}(\aI,\aII,\aIII,\eta)$ such that the following holds:

For every $k>k_{\ref{l:largeW}}$ and every $(1-\eta^4)$-complete graph~$G$ on $$K\geq \half\left(\alpha_{1}+\alpha_{2}+\alpha_{3}+\max\{ \alpha_{1}, \alpha_{2}, \alpha_{3} \} +18 \eta^{1/2}\right)k$$ vertices, for every three-colouring of the edges of~$G$, there exists a colour $i\in\{1,2,3\}$ such that~$G_i$ contains a connected-matching on at least $(\alpha_{i}+\eta)k$ vertices.
\end{lemma}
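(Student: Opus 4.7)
The plan is to argue by contradiction: assume that for every colour $i\in\{1,2,3\}$, every connected-matching in $G_i$ has fewer than $(\alpha_i+\eta)k$ vertices. I would combine a global edge-count with the structural decomposition given by Lemma~\ref{l:decomp} applied in the colour attaining $\max\{\alpha_1,\alpha_2,\alpha_3\}$.

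By Corollary~\ref{l:eg}, the assumption forces every component of $G_i$ to have average degree strictly less than $(\alpha_i+\eta)k$, so $|E(G_i)|<\tfrac{1}{2}(\alpha_i+\eta)kK$. Since $G$ is $(1-\eta^4)$-complete,
\[
(1-\eta^4)\binom{K}{2}\leq \sum_{i=1}^{3}|E(G_i)| <\tfrac{1}{2}(\alpha_1+\alpha_2+\alpha_3+3\eta)kK.
\]
On its own this only forces $K \leq (\alpha_1+\alpha_2+\alpha_3 + O(\eta))k+O(1)$, which is weaker than the hypothesis by the $\tfrac{1}{2}\max\{\alpha_i\}k$ term. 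The extra strength must come from the decomposition lemma.

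Let $j$ be an index with $\alpha_j=\max\{\alpha_1,\alpha_2,\alpha_3\}$ and let $\{i_1,i_2\}=\{1,2,3\}\setminus\{j\}$. Applying Lemma~\ref{l:decomp} to $G_j$ with $m=(\alpha_j+\eta)k$ produces a partition $V(G)=V'\cup V''$ such that $G_j[V']$ is bipartite, every component of $G_j[V'']$ is odd with $|E(G_j[V''])|\leq \tfrac{1}{2}(\alpha_j+\eta)k|V''|$, and no $G_j$-edges cross the partition. Within each bipartite component of $G_j[V']$ the smaller side must have size less than $\tfrac{1}{2}(\alpha_j+\eta)k$, otherwise a maximum matching there would certify a $G_j$-connected-matching of the forbidden size. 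Let $S$ be the union of these smaller sides, so $|S|<\tfrac{1}{2}(\alpha_j+\eta)k$, and set $W=V\setminus(S\cup V'')$; then $W$ carries no $G_j$-edges.

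Re-running the edge-count inside $G[W]$, which is still $(1-\eta^4)$-complete and uses only colours $i_1,i_2$, Corollary~\ref{l:eg} gives
\[
(1-\eta^4)\binom{|W|}{2} \leq |E(G_{i_1}[W])|+|E(G_{i_2}[W])| < \tfrac{1}{2}(\alpha_{i_1}+\alpha_{i_2}+2\eta)k|W|.
\]
Combined with $|W|\geq K-\tfrac{1}{2}(\alpha_j+\eta)k-|V''|$ and the hypothesis $K\geq\tfrac{1}{2}(\alpha_1+\alpha_2+\alpha_3+\alpha_j+18\eta^{1/2})k$, this yields a contradiction once all error terms are collected; the $18\eta^{1/2}$ buffer and the restriction $\eta<0.002\min\alpha_i^2$ are chosen precisely to dominate them, while $k_{\ref{l:largeW}}$ absorbs the $O(1)$ terms from the ``$-1$'' in Erd\H{o}s--Gallai and from rounding. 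The principal obstacle is the careful treatment of $V''$: its odd components may still carry many $G_j$-edges, so one has to show $|V''|=O(\eta^{1/2}k)$ — for instance by combining the edge-density bound $|E(G_j[V''])|\leq \tfrac{1}{2}(\alpha_j+\eta)k|V''|$ with $(1-\eta^4)$-completeness to conclude that $G[V'']$ has average degree close to $|V''|-1$ with almost all edges in $G_{i_1}\cup G_{i_2}$, and then re-invoking Corollary~\ref{l:eg} inside $V''$ to produce a large $G_{i_1}$- or $G_{i_2}$-connected-matching unless $|V''|$ is indeed small.
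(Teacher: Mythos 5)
The paper does not actually prove this lemma --- it is quoted verbatim from \cite{FL2007} --- so the only comparison available is with Figaj and \L uczak's original argument; measured on its own terms, your proposal has several genuine gaps. First, the step bounding $S$ is unsound twice over. In a connected bipartite graph the maximum matching need not come close to saturating the smaller side (a double star has two arbitrarily large sides but matching number two), so from the absence of a colour-$j$ connected-matching on $(\alpha_j+\eta)k$ vertices you may only bound each component's \emph{matching number} (equivalently, by K\"onig, its minimum vertex cover), not the size of its smaller side. And even the correct per-component bound does not give $|S|<\half(\alpha_j+\eta)k$, because $S$ is a union over arbitrarily many components: if $G_j[V']$ is, say, a disjoint union of many single edges or small stars, then $|S|$ has order $|V'|$ while no component has a large matching. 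Second, the claim you flag as ``the principal obstacle'', $|V''|=O(\eta^{1/2}k)$, is not just unproven but false: a single odd colour-$j$ component on slightly fewer than $(\alpha_j+\eta)k$ vertices (e.g.\ nearly an odd clique) contains no forbidden matching yet makes $|V''|$ of order $\alpha_j k$; the edge-count repair you sketch only yields $|V''|\lesssim(\alpha_1+\alpha_2+\alpha_3)k$.

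Third, even if both of those bounds were granted, the final arithmetic does not close. The Erd\H{o}s--Gallai count inside $W$ gives only $|W|\lesssim(\alpha_{i_1}+\alpha_{i_2}+2\eta)k$, so your decomposition yields $K\lesssim \half\alpha_j k+(\alpha_{i_1}+\alpha_{i_2})k+o(k)$, which contradicts the hypothesis $K\geq\big(\alpha_j+\half(\alpha_{i_1}+\alpha_{i_2})+9\eta^{1/2}\big)k$ only when $\alpha_{i_1}+\alpha_{i_2}<\alpha_j$; in the symmetric case $\alpha_1=\alpha_2=\alpha_3=\alpha$ your bounds allow $K$ up to about $\tfrac{5}{2}\alpha k$ while the hypothesis only guarantees $2\alpha k$, so no contradiction arises. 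To reach the stated threshold $\half(\alpha_1+\alpha_2+\alpha_3+\max_i\alpha_i)$ one cannot use the naive two-colour edge count in the $j$-free part: one needs the sharp two-colour connected-matching bound of the form $\half(\alpha+\beta+\max\{\alpha,\beta\})$ (compare Lemma~\ref{l:hole} with $v=0$), applied together with a joint, not separate, treatment of the bipartite part, the cover $S$ and the odd part $V''$ of colour $j$. That is essentially the content of the argument in \cite{FL2007}, and it is substantially more delicate than the counting scheme you propose.
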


\begin{lemma}[{\cite[Lemma~12]{FL2008}}]
\label{l:hole}
For every $\alpha, \beta>0$, $v\geq0$ and $\eta$ such that
$0<\eta<0.01 \min\{ \alpha,\beta\}$, 
there exists $k_{\ref{l:hole}}=k_{\ref{l:hole}}(\alpha,\beta,v,\eta)$ such that, for every $k>k_{\ref{l:hole}}$, the following holds:

Let $G=(V,E)$ be a graph obtained from a $(1-\eta^4)$-complete graph on $$ K\geq\half\left(\alpha+\beta+\max \{ 2v, \alpha, \beta \} + 6\eta^{1/2} \right)k $$
vertices by removing all edges contained within a subset $W \subseteq V$ of size at most~$vk$. Then, every two-multicolouring of the edges of~$G$ results in {either} a red connected-matching on at least $(\alpha+\eta)k$ vertices {or} a blue connected-matching on at least $(\beta+\eta)k$ vertices.
\end{lemma}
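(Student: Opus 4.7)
I would argue by contradiction: assume $G$ contains neither a red connected-matching on $(\alpha+\eta)k$ vertices nor a blue connected-matching on $(\beta+\eta)k$ vertices, and derive a contradiction from the hypothesis on~$K$. The plan is to follow the template of the proof of Lemma~\ref{l:largeW}, adapted to the two-colour setting with a hole.

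As a first step, applying Corollary~\ref{l:eg} within every monochromatic component yields $|E(G_1[R])| < \tfrac12(\alpha+\eta)k\,|R|$ in every red component $R$, so that summing gives $|E(G_1)| < \tfrac12(\alpha+\eta)kK$; likewise $|E(G_2)| < \tfrac12(\beta+\eta)kK$. Combined with the edge lower bound $|E(G)| \geq (1-\eta^4)\binom{K}{2}-\binom{vk}{2}$ coming from $(1-\eta^4)$-completeness minus the hole, this pure edge-count already falls short of producing a contradiction at the stated threshold (one checks that the resulting inequality $c^2-(\alpha+\beta)c-v^2\geq 0$ for $c=K/k$ is not implied by $c=\tfrac12(\alpha+\beta+\max\{2v,\alpha,\beta\})$), so further structure must be extracted. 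For this I would invoke Lemma~\ref{l:decomp} on each colour class: since neither $G_1$ nor $G_2$ has an odd component containing a matching of the required size, each colour admits a partition $V=V'_i\cup V''_i$ with $G_i[V'_i]$ bipartite, $G_i[V''_i]$ a union of small-edge-count odd components, and no edges of colour~$i$ between $V'_i$ and $V''_i$.

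The two decompositions then interlock on the common refinement of $V$ into four blocks; the edges of $G$ across these blocks are forced into a highly constrained pattern in which the absence of red edges between $V'_R$ and $V''_R$ and the absence of blue edges between $V'_B$ and $V''_B$ together with $(1-\eta^4)$-completeness determine which edges of which colour are possible, and the fact that $W$ is an independent set of size at most $vk$ restricts how $W$ can distribute among the four blocks. The main obstacle I expect is to tighten the error to exactly $6\eta^{1/2}$: the term $\max\{2v,\alpha,\beta\}$ in the hypothesis arises precisely because $W$, being independent, is free to lie on either side of each bipartition of $V'_i$, doubling its effective contribution to the size inequality, and the coefficient $6$ (versus $18$ in Lemma~\ref{l:largeW}) reflects the saving from having only two colour classes. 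Tracking all error terms through both invocations of Corollary~\ref{l:eg} and Lemma~\ref{l:decomp} without wasting any slack --- and in particular arguing that the interlocked bipartite configuration is impossible under the hypothesis --- will mirror the analogous step in the proof of Lemma~\ref{l:largeW} but with one fewer colour's worth of bookkeeping.
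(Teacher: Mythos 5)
You should first be aware that the paper contains no proof of this statement to compare against: it is quoted as Lemma~12 of Figaj and \L uczak~\cite{FL2008} and used as a black box. Judged on its own, your proposal has a genuine gap. The opening step is fine — applying Corollary~\ref{l:eg} componentwise to get $|E(G_1)|<\half(\alpha+\eta)kK$ and $|E(G_2)|<\half(\beta+\eta)kK$, and checking that this edge count alone cannot reach a contradiction at the threshold $K\geq\half(\alpha+\beta+\max\{2v,\alpha,\beta\}+6\eta^{1/2})k$ — but everything after that is asserted rather than argued. The entire contradiction is supposed to come from the claim that the two applications of Lemma~\ref{l:decomp} ``interlock on the common refinement of $V$ into four blocks'' and force ``a highly constrained pattern''; no such pattern is derived, and that is exactly where the work lies. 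Worse, Lemma~\ref{l:decomp} is ill-suited to this parity-free statement: its hypothesis is satisfied, but its only quantitative conclusion is an edge bound on the non-bipartite part $V''_i$, while it says nothing at all about the bipartite part $V'_i$ — which is precisely where a large red (or blue) connected-matching could sit — and the parity information it does supply is irrelevant here. To control the bipartite parts you would in any case have to go back to the no-large-connected-matching assumption via the bipartite matching lemmas (Lemma~\ref{l:ten} or Lemma~\ref{l:eleven}), which your sketch never invokes; at that point the decomposition buys you nothing, and your heuristic that $W$ ``lies on either side of each bipartition, doubling its contribution'' is not an argument.

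A route that actually closes the argument (and matches the shape of the bound) is structural rather than decompositional: let $F$ be a largest monochromatic, say red, component of $G$ (its size is controlled by statements of the type of Lemma~\ref{l:dgf1}), and let $M$ be a maximum red matching inside $G_1[F]$, so $|V(M)|<(\alpha+\eta)k$. By maximality, $F\setminus V(M)$ spans no red edge, hence outside $W$ it is almost-complete in blue, and vertices of $W$ lying in $F\setminus V(M)$ send almost all their edges to $(F\setminus V(M))\setminus W$ in blue. One then builds a blue connected-matching in $F\setminus V(M)$ (together with the vertices outside $F$), matching $W$-vertices only to non-$W$-vertices via Lemma~\ref{l:eleven}; this last restriction is the true source of the $\max\{2v,\alpha,\beta\}$ term — the extremal configuration attaches a large independent set $W$ to a blue block of size only about $\half\beta k$, so that $K$ can be as large as roughly $(\half\alpha+\half\beta+v)k$ without either matching appearing — and the hypothesis on $K$ then forces one of the two matchings to have the required size. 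Until your ``interlocked'' middle step is replaced by an argument of this kind, with the error terms tracked, the proposal is a plan rather than a proof.
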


The following pair of lemmas allow us to find large connected-matchings in almost-complete bipartite graphs:

\begin{lemma}[{\cite[Lemma~10]{FL2008}}]
\label{l:ten}
Let $G=G[V_1,V_2]$ be a bipartite graph with bipartition $(V_{1},V_{2})$, where $|V_{1}|\geq|V_{2}|$, which has at least $(1-\epsilon)|V_{1}||V_{2}|$ edges for some $\epsilon$ such that $0<\epsilon<0.01$. Then,~$G$ contains a connected-matching on at least $2(1-3\epsilon)|V_{2}|$ vertices.
\end{lemma}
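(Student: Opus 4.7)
I plan a two-stage proof: first, identify a single connected component of $G$ that contains almost all vertices on each side; second, apply the deficit form of Hall's theorem inside that component to locate the required matching of $(1-3\epsilon)|V_2|$ edges.

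For the first stage, let $C_1, C_2, \ldots$ be the components of $G$, with $a_i = |V_1 \cap C_i|$ and $b_i = |V_2 \cap C_i|$. Edges lie only within components, so $(1-\epsilon)|V_1||V_2| \leq e(G) \leq \sum_i a_i b_i \leq (\max_i b_i)|V_1|$; hence some component (call it $C_1$) has $b_1 \geq (1-\epsilon)|V_2|$. A brief case-analysis---ruling out the scenario where a different component dominates on the $V_1$ side, by checking that in that case $\sum_i a_i b_i$ would be below $(1-\epsilon)|V_1||V_2|$---forces this $C_1$ also to satisfy $a_1 \geq (1-\epsilon)|V_1|$. Set $A = V_1 \cap C_1$, $B = V_2 \cap C_1$, $\alpha = 1 - a_1/|V_1|$, $\beta = 1 - b_1/|V_2|$, so $\alpha, \beta \leq \epsilon$. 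Since no edges leave $C_1$, the $a_1(|V_2|-b_1) + (|V_1|-a_1)b_1$ pairs across its boundary are forced non-edges; subtracting these from the global budget of $\epsilon|V_1||V_2|$ leaves at most $(\epsilon - \alpha - \beta + 2\alpha\beta)|V_1||V_2|$ non-edges inside $G[A,B]$.

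For the second stage, suppose $G[A,B]$ has no matching of size $(1-3\epsilon)|V_2|$. By the deficit form of Hall's theorem there exists $S \subseteq B$ with $|S| - |N_A(S)| > b_1 - (1-3\epsilon)|V_2| = (3\epsilon-\beta)|V_2|$. Setting $s = |S|$ and $t = |N_A(S)|$, the non-edges from $S$ to $A \setminus N_A(S)$ in $G[A,B]$ number exactly $s(a_1-t)$, which (by $s > t + (3\epsilon-\beta)|V_2|$) exceeds the concave-down parabola $f(t) := (t + (3\epsilon-\beta)|V_2|)(a_1-t)$. Its minimum over the admissible range $t \in [0, (1-3\epsilon)|V_2|]$ is attained at an endpoint, and a direct algebraic check at each endpoint---using $a_1 \geq (1-\epsilon)|V_1|$ and $|V_1| \geq |V_2|$---shows $f$ strictly exceeds the available budget $(\epsilon - \alpha - \beta + 2\alpha\beta)|V_1||V_2|$, contradicting the non-edge bound. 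The resulting matching of size $(1-3\epsilon)|V_2|$ inside $C_1$ is the desired connected-matching.

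The principal obstacle is calibrating the constants so that the bound $(1-3\epsilon)$ is tight. Applying Hall directly to all of $G$ would yield a matching of roughly the right size but would not localize it in a single component; working inside $G[A,B]$ simultaneously (i)~costs $\alpha+\beta$ of the non-edge budget and (ii)~tightens the deficit threshold from $3\epsilon|V_2|$ to $(3\epsilon-\beta)|V_2|$. These two losses must balance exactly for both endpoints of $f$ to yield strict contradictions; after simplification, the upper-endpoint check reduces to the inequality $\lambda(1-\epsilon-\alpha\beta) > (1-\beta)(1-3\epsilon)$ with $\lambda = |V_1|/|V_2| \geq 1$, which is precisely where the hypothesis $|V_1| \geq |V_2|$ enters.
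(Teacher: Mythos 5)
This lemma is not proved in the paper at all: it is imported verbatim from Figaj and \L{}uczak \cite{FL2008} (their Lemma~10), so there is no internal proof to measure you against. Taken on its own terms, your argument is correct and would serve as a self-contained proof. I checked the two computations it rests on: the non-edge budget inside $G[A,B]$ is indeed at most $(\epsilon-\alpha-\beta+2\alpha\beta)|V_1||V_2|$, and the two endpoint checks for the concave function $f$ reduce, after simplification, to $2\epsilon+\alpha(1-3\epsilon-\beta)>0$ at $t=0$ and to $\lambda(1-\epsilon-\alpha\beta)>(1-\beta)(1-3\epsilon)$ at $t=(1-3\epsilon)|V_2|$, both of which hold for $\alpha,\beta\leq\epsilon<0.01$ and $\lambda=|V_1|/|V_2|\geq1$. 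Two remarks. First, the ``brief case-analysis'' giving $a_1\geq(1-\epsilon)|V_1|$ is the one step you should write out: the pigeonhole argument only gives $\beta\leq\epsilon$ directly; counting the forced non-edges across the boundary of $C_1$ gives $\alpha(1-\beta)+\beta(1-\alpha)\leq\epsilon$, whence $\alpha(1-\beta)\leq\epsilon$ forces $\alpha<\tfrac12$, and then $\alpha\leq\alpha(1-\beta)+\beta(1-\alpha)\leq\epsilon$ as you claim (even the weaker bound $\alpha\leq\epsilon/(1-\epsilon)$ would suffice for the later inequalities). You should also record that $a_1>(1-3\epsilon)|V_2|$, so the endpoint $t=(1-3\epsilon)|V_2|$ lies below $a_1$ and $a_1-t>0$ throughout the admissible range. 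Second, your closing claim that the two losses ``must balance exactly'' overstates the tightness: at $t=0$ the margin is $[2\epsilon+\alpha(1-3\epsilon-\beta)]|V_1||V_2|$ and at the upper endpoint it is at least $[2\epsilon+\beta(1-\alpha-3\epsilon)]|V_2|^2$, i.e.\ the inequalities hold with slack of order $\epsilon$; this is only a matter of presentation, not a gap, and in fact it is what makes the argument robust to the small loss in the bound on $\alpha$.
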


Notice that, if~$G$ is a $(1-\epsilon)$-complete bipartite graph with bipartition $(V_1,V_2)$, then we may immediately apply the above to find a large connected-matching in~$G$.

\begin{lemma}
\label{l:eleven}
Let $G=G[V_1,V_2]$ be a bipartite graph with bipartition $(V_1,V_2)$. If $\ell$ is a positive integer such that $|V_1|\geq|V_2|\geq \ell$ and~$G$ is $a$-almost-complete for some $a$ such that $0<a/\ell<0.5$, then~$G$ contains a connected-matching on at least $2|V_2|-2a$ vertices.
\end{lemma}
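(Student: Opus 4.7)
The statement has two components: producing a matching of size at least $|V_2|-a$, and ensuring that it lies inside a single connected-component of $G$. The plan is to handle connectedness first, since once $G$ itself is shown to be connected, any matching is automatically a connected-matching.

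\textbf{Step 1 (connectivity of $G$).} I would show $G$ has diameter at most three. Given any two vertices $u,u'\in V_2$, each has at least $|V_1|-a$ neighbours in $V_1$, so by inclusion–exclusion they share at least $|V_1|-2a$ common neighbours in $V_1$. Since $|V_1|\geq|V_2|\geq \ell$ and $a/\ell<\tfrac12$, this is strictly positive, so $u$ and $u'$ have a common neighbour; the symmetric argument handles pairs in $V_1$. Finally, every vertex of $V_1$ has at least $|V_2|-a\geq \ell-a>0$ neighbours in $V_2$, so any $V_1$-vertex is adjacent to some $V_2$-vertex. Hence $G$ is connected.

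\textbf{Step 2 (matching of size $|V_2|-a$).} I would apply the defect form of Hall's theorem to the bipartite graph $G[V_1,V_2]$ from the side $V_2$. For any $S\subseteq V_2$ with $|S|>a$: each vertex of $V_1$ has at most $a$ non-neighbours in $V_2$, hence cannot fail to meet $S$, so $N(S)=V_1$ and $|N(S)|\geq |V_2|\geq |S|$. For $|S|\leq a$ the deficit $|S|-|N(S)|$ is trivially at most $a$. Defect Hall therefore yields a matching saturating all but at most $a$ vertices of $V_2$, i.e.\ a matching of at least $|V_2|-a$ edges. (An equivalent argument via K\"onig's theorem: if $A\cup B$ with $A\subseteq V_1$, $B\subseteq V_2$ is a minimum vertex cover, then either $B=V_2$, giving a cover of size $\geq|V_2|$, or any $v\in V_2\setminus B$ has all its $\geq|V_1|-a$ neighbours inside $A$, forcing $|A|\geq|V_1|-a\geq|V_2|-a$.)

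\textbf{Step 3 (conclusion) and obstacles.} The matching from Step~2 covers at least $2|V_2|-2a$ vertices, and by Step~1 all these vertices lie in the unique component of $G$, so it is a connected-matching of the required size. There is no substantive obstacle here: the hypothesis $a/\ell<\tfrac12$ together with $|V_2|\geq \ell$ is used solely to guarantee $2a<|V_2|$, which simultaneously drives the diameter bound in Step~1 and ensures that the matching lower bound in Step~2 is nontrivial. The mild point to watch is that the hypothesis is phrased in terms of $\ell$ rather than $|V_2|$, but since $|V_2|\geq \ell$ this only makes the bounds stronger.
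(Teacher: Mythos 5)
Your proof is correct. Structurally it mirrors the paper's: establish that $G$ is connected (so any matching is a connected-matching), then produce a matching missing at most $a$ vertices of $V_2$. The difference is in the matching step: the paper uses a one-line greedy augmentation — if the current matching has fewer than $|V_2|-a$ edges, an uncovered vertex $v_2\in V_2$ still has at least $|V_1|-a$ neighbours in $V_1$, more than the at most $|V_2|-a-1\leq|V_1|-a-1$ matched $V_1$-vertices, so the matching extends — whereas you invoke the defect form of Hall's theorem (or K\"onig), verifying that every $S\subseteq V_2$ has deficiency at most $a$. Both are sound and give exactly the bound $2|V_2|-2a$; the paper's greedy induction is slightly more elementary and self-contained, while your Hall/K\"onig route packages the counting into a standard theorem and would generalise more readily if one wanted the sharpest defect. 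Your connectivity argument via common neighbours is the same calculation the paper hides inside its earlier observation that a $(1-c)$-complete bipartite graph with $c<\tfrac12$ is connected (here $c=a/\ell$).
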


\begin{proof}
Observe that~$G$ is $(1-a/\ell)$-complete. Therefore, since $a/\ell<0.5$,~$G$ is connected. Thus, it suffices to find a matching of the required size. Suppose that we have found a matching with vertex set~$M$ such that $|M|=2k$ for some $k<|V_2|-a$ and consider a vertex $v_2\in V_2\backslash M$. Since $G$ is $a$-almost-complete, $v_2$ has at least $|V_1|-a$ neighbours in~$|V_1|$ and thus at least one neighbour in $v_1\in V_1\backslash M$. Then, the edge $v_1v_2$ can be added to the matching and thus, by induction, we may obtain a matching on $2|V_2|-2a$ vertices.
\end{proof}

We also make use of the following lemma~from~\cite{KoSiSk2}, which is an extension of the two-colour Ramsey result for even cycles and which allows us to find, in any almost-complete two-multicoloured graph on~$K$ vertices, either a large matching or a particular structure.

\begin{lemma}[\cite{KoSiSk2}]
\label{l:SkB}

For every~$\eta$ such that $0<\eta<10^{-20}$, there exists $k_{\ref{l:SkB}}=k_{\ref{l:SkB}}(\eta)$ such that, for every $k>k_{\ref{l:SkB}}$ and every $\alpha,\beta>0$ such that $\alpha \geq \beta \geq 100\eta^{1/2}\alpha$, if $K>(\alpha + \half\beta-\eta^{1/2}\beta)k$ and $G=(V,E)$ is a two-multcoloured $\beta \eta^2 k$-almost-complete graph on $K$ vertices, then at least one of the following occurs:
\begin{itemize}
\item[(i)]~$G$ contains a red connected-matching on at least $(1+\eta^{1/2})\alpha k$ vertices;
\item[(ii)]~$G$ contains a blue connected-matching on at least $(1+\eta^{1/2})\beta k$ vertices;
\item[(iii)] the vertices of~$G$ can be partitioned into three sets $W$, $V'$, $V''$ such that
\begin{itemize}
\item[(a)] $|V'| < (1+\eta^{1/2})\alpha k$, 
$|V''|\leq \half(1+\eta^{1/2})\beta k$,
$|W|\leq \eta^{1/16} k$,
\item[(b)] $G_1[V']$ is $(1-\eta^{1/16})$-complete and $G_2[V']$ is $\eta^{1/16}$-sparse,
\item[(c)] $G_2[V',V'']$ is $(1-\eta^{1/16})$-complete and $G_1[V',V'']$ is $\eta^{1/16}$-sparse;
\end{itemize}
\item[(iv)] we have $\beta > (1-\eta^{1/8})\alpha$ and the vertices of~$G$ can be partitioned into sets $W$, $V'$ and $V''$ such that
\begin{itemize}
\item[(a)] $|V'| < (1+\eta^{1/2})\beta k$,
$|V''|\leq \half(1+\eta^{1/8})\alpha k$,
$|W|\leq \eta^{1/16} k$,
\item[(b)] $G_2[V']$ is $(1-\eta^{1/16})$-complete and $G_1[V']$ is $\eta^{1/16}$-sparse, 
\item[(c)] $G_1[V',V'']$ is $(1-\eta^{1/16})$-complete and $G_2[V',V'']$ is $\eta^{1/16}$-sparse.
\end{itemize}
\end{itemize}
Furthermore, if $\alpha+ \half\beta \geq 2(1+\eta^{1/2})\beta$, then we can replace (i) with
\begin{itemize}
\item[(i')]~$G$ contains a red odd connected-matching on $(1+\eta^{1/2})\alpha k$ vertices.
\end{itemize}
\end{lemma}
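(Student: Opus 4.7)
The plan is a stability analysis: assume both (i) and (ii) fail, and show $G$ must look like one of the two extremal configurations captured by (iii) and (iv). The model is the classical extremal colouring for $R(C_{2\alpha k},C_{2\beta k})$, in which one colour class induces a near-clique of size roughly $\alpha k$ while the other colour forms a bipartite ``linker'' of width roughly $\half \beta k$.

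First, I would extract a maximum red connected-matching $M_r$ of size $\alpha' k<(1+\eta^{1/2})\alpha k$ and a maximum blue connected-matching $M_b$ of size $\beta' k<(1+\eta^{1/2})\beta k$. The residual set $U_r=V\setminus V(M_r)$ then has size at least $(\half\beta-2\eta^{1/2}\beta)k$. The maximality of $M_r$ forces all red edges inside $U_r$ to belong to the same red component as $M_r$, and blocks the obvious augmenting configurations through this component; combined with the $\beta\eta^2 k$-almost-completeness, this shows that the blue graph on $U_r$ is nearly complete. Applying Lemma~\ref{l:hole} to $G[U_r\cup S]$ for a suitably chosen $S\subseteq V(M_r)$ then forces either a blue connected-matching of size $(1+\eta^{1/2})\beta k$ (impossible by assumption) or a near-extremal colour-structure, which pins most vertices into a single set $V'$ of size close to $\alpha k$ on which red is $(1-\eta^{1/16})$-complete and blue is $\eta^{1/16}$-sparse, with nearly every edge from $V'$ to its complement coloured blue.

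Second, I would assemble the partition $V=W\cup V'\cup V''$ by taking $V''$ to be the set of vertices outside $V'$ whose edges into $V'$ are predominantly blue and dumping the remaining exceptional vertices into $W$. A double-counting against the failure of (ii), via Lemma~\ref{l:ten} applied to the bipartite blue graph between $V'$ and $V''$, forces $|V''|\leq\half(1+\eta^{1/2})\beta k$ and $|W|\leq\eta^{1/16}k$, delivering (iii). Conclusion (iv) is obtained by running the symmetric argument starting from $M_b$; the hypothesis $\beta>(1-\eta^{1/8})\alpha$ is precisely what is needed for the colour-swapped configuration to meet the stated size bounds. Finally, when $\alpha+\half\beta\geq 2(1+\eta^{1/2})\beta$, the red-dense subgraph on $V'$ is large and dense enough in red to contain a triangle through two vertices incident to $M_r$, so the red component of $M_r$ is non-bipartite and (i) upgrades to~(i').

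The main obstacle is the quantitative bookkeeping. The hypothesis delivers only $\beta\eta^2 k$-almost-completeness, whereas the conclusion demands $\eta^{1/16}$-scale control on both part sizes and sparse/complete ratios. The cascade $\eta^2\to\eta^{1/2}\to\eta^{1/16}$ records successive losses in transferring stability from Erd\H os--Gallai-type edge-count excesses to explicit near-clique partitions, and at each step one must verify that the number of ``bad'' vertices---those whose local colour pattern deviates from the nominal red-inside/blue-across dichotomy---fits into the small exceptional set $W$ of size $\eta^{1/16}k$. This accounting, while standard in spirit and mirroring the tracking used in~\cite{FL2008} and~\cite{KoSiSk2}, is delicate because the bound on $|W|$ is tight enough that any additional constant overhead would spoil the argument.
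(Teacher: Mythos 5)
The paper does not prove this lemma at all: it is quoted verbatim from~\cite{KoSiSk2} (the two-colour stability result for connected matchings), so there is no internal proof to compare against, and your task was in effect to reprove that external theorem. Judged on its own terms, your proposal is a plan rather than a proof, and it has at least two genuine gaps. First, the step ``maximality of $M_r$ \dots shows that the blue graph on $U_r$ is nearly complete'' is false as stated. Maximality of a red connected-matching only forbids red edges of $U_r=V\setminus V(M_r)$ that lie in the \emph{same} red component as $M_r$; red edges of $U_r$ in other red components are unconstrained, and configurations such as two large disjoint red cliques show that $U_r$ can be far from blue-complete. Handling exactly this possibility (several red components, the bipartite-versus-non-bipartite structure of each, and the interaction with the blue graph) is where the real work of \cite{KoSiSk2} lies, and your sketch skips it.

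Second, the tool you invoke to produce the structural alternative does not deliver it: Lemma~\ref{l:hole} (like Lemma~\ref{l:largeW} and Lemma~\ref{l:ten}) outputs only a large monochromatic connected-matching, never a ``near-extremal colour-structure''. Since conclusions (iii) and (iv) — the $(1-\eta^{1/16})$-complete / $\eta^{1/16}$-sparse partition with the stated size bounds on $V'$, $V''$, $W$ — are precisely the stability content being claimed, your argument assumes the hard part rather than proving it; none of the lemmas available in this paper can be cited for it. The same looseness affects the final clause: in case (i) there is no red-dense set $V'$ at hand (that set only exists in case (iii)), so the proposed ``triangle through two vertices incident to $M_r$'' does not get off the ground, and the hypothesis $\alpha+\half\beta\geq 2(1+\eta^{1/2})\beta$ must instead be used to rule out the red component carrying the matching being bipartite. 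As it stands, the proposal would not compile into a proof of the lemma; it would need the full case analysis of \cite{KoSiSk2} (or an equivalent regularity/stability argument) to fill these holes.
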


We also make use of the following corollary of Lemma~\ref{l:SkB}:

\begin{corollary} 
\label{c:SkBe}
For every $0<\epsilon <10^{-12}$, there exists $k_{\ref{c:SkBe}}=k_{\ref{c:SkBe}}(\epsilon)$ such that, for every $k\geq k_{\ref{c:SkBe}}$, if $K>(1-\epsilon)k$ and $G=(V,E)$ is a two-multicoloured $\tfrac{27}{8}\epsilon^4 k$-almost-complete graph, then~$G$ contains at least one of the following:
\begin{itemize}
\item[(i)] a red connected-matching on $(\tfrac{2}{3}-7\epsilon^{1/8})k$ vertices;
\item[(ii)] a blue connected-matching on $(\tfrac{2}{3}-7\epsilon^{1/8})k$ vertices.
\end{itemize}
\end{corollary}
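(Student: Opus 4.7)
The plan is to apply Lemma~\ref{l:SkB} with the parameters $\alpha=\beta=\tfrac{2}{3}$ and $\eta=\tfrac{9}{4}\epsilon^2$. This choice is dictated by two coincidences tailored to the corollary: first, $\alpha+\tfrac{1}{2}\beta-\eta^{1/2}\beta=1-\epsilon$, so the hypothesis $K>(1-\epsilon)k$ is exactly the lemma's input bound on $K$; second, $\beta\eta^2=\tfrac{2}{3}\cdot\tfrac{81}{16}\epsilon^4=\tfrac{27}{8}\epsilon^4$, so the almost-completeness hypothesis matches precisely. The remaining conditions $\eta<10^{-20}$ and $\beta\geq 100\eta^{1/2}\alpha$ follow easily from $\epsilon<10^{-12}$, and I will take $k_{\ref{c:SkBe}}(\epsilon):=k_{\ref{l:SkB}}(\tfrac{9}{4}\epsilon^2)$.

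If Lemma~\ref{l:SkB} returns outcome (i) or (ii), we immediately obtain a red or blue connected-matching on at least $(1+\eta^{1/2})\cdot\tfrac{2}{3}k=\tfrac{2}{3}(1+\tfrac{3}{2}\epsilon)k$ vertices, which comfortably exceeds $(\tfrac{2}{3}-7\epsilon^{1/8})k$, and we are done.

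The bulk of the argument will go into outcomes (iii) and (iv), which are essentially symmetric. In each, $V$ splits as $V'\cup V''\cup W$ with $|W|\leq\eta^{1/16}k$, $|V'|<(1+\eta^{1/2})\cdot\tfrac{2}{3}k=(\tfrac{2}{3}+\epsilon)k$, and one of $G_1[V',V'']$ or $G_2[V',V'']$ is $(1-\eta^{1/16})$-complete. Combining these upper bounds with $|V'|+|V''|+|W|=K>(1-\epsilon)k$ yields $|V''|>(\tfrac{1}{3}-2\epsilon-\eta^{1/16})k$ and, for $\epsilon<10^{-12}$, the inequality $|V'|\geq|V''|$. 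Since $\eta^{1/16}=(\tfrac{9}{4})^{1/16}\epsilon^{1/8}$ is of order $\epsilon^{1/8}$, I plan to apply Lemma~\ref{l:eleven} to the bipartite graph $G_i[V',V'']$ (with $a=\eta^{1/16}|V'|$), producing a monochromatic connected-matching of size at least $2|V''|-2\eta^{1/16}|V'|$; substituting the bounds gives at least $(\tfrac{2}{3}-7\epsilon^{1/8})k$ vertices, as required. I use Lemma~\ref{l:eleven} rather than Lemma~\ref{l:ten}, since $\eta^{1/16}\approx 1.05\,\epsilon^{1/8}$ need not be smaller than the constant $0.01$ required by the latter.

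The main obstacle is the bookkeeping in the previous paragraph: we must verify that the accumulated losses of order $\epsilon^{1/8}$ (from the hole $W$, from the derived lower bound for $|V''|$, and from the sparsity loss in Lemma~\ref{l:eleven}) all fit inside the $7\epsilon^{1/8}$ margin. The choice $\eta=\tfrac{9}{4}\epsilon^2$ is forced by the need to align the two input conditions of Lemma~\ref{l:SkB} with those of the corollary; it is a fortunate side-effect that $(\tfrac{9}{4})^{1/16}$ is close to $1$, which keeps $\eta^{1/16}$ only marginally larger than $\epsilon^{1/8}$ so that the calculation indeed closes.
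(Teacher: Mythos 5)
Your proposal is correct and is evidently the intended derivation (the paper itself only cites \cite{DF1} for this proof, but the constants of the corollary are exactly reverse-engineered from Lemma~\ref{l:SkB} with $\alpha=\beta=\tfrac{2}{3}$ and $\eta=\tfrac{9}{4}\epsilon^{2}$, which is precisely your choice, since then $\alpha+\tfrac{1}{2}\beta-\eta^{1/2}\beta=1-\epsilon$ and $\beta\eta^{2}=\tfrac{27}{8}\epsilon^{4}$). Your handling of outcomes (iii) and (iv) via Lemma~\ref{l:eleven} is sound: the checks $|V'|\geq|V''|$, $a/\ell=\eta^{1/16}|V'|/|V''|<0.5$ and the loss budget $4\epsilon+2\eta^{1/16}+2\eta^{1/16}|V'|/k\approx 3.6\,\epsilon^{1/8}<7\epsilon^{1/8}$ all close with room to spare.
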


\begin{proof}
See \cite{DF1}.
\end{proof}

It is a well-known fact that either a graph is connected or its complement is. The following 
three results are
simple extensions of this fact for two-coloured almost-complete graphs, all of which can be immediately extended to two-multicoloured almost-complete graphs. 

\begin{lemma}\label{l:dgf0}
For every $\eta$ such that $0<\eta<1/3$ and every $K\geq 1/\eta$, if $G=(V,E)$ is a two-coloured $(1-\eta)$-complete graph on~$K$ vertices and~$F$ is its largest monochromatic component, then $|F|\geq (1-3\eta)K$.
\end{lemma}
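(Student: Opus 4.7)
The plan is a direct two-colour connectivity argument based on the size of a largest red connected-component, call it $R$. If $|R|\geq (1-3\eta)K$, then $|F|\geq|R|\geq(1-3\eta)K$ and we are done, so assume $|R|<(1-3\eta)K$, which in particular gives $|V\setminus R|>3\eta K$. The opening observation is that every edge between $R$ and $V\setminus R$ is forced to be blue, because any red edge across the cut would enlarge $R$.

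I would then split into two sub-cases depending on whether $|R|$ is very small or moderately large. If $|R|\leq 2\eta(K-1)$, then every red component is small, so every vertex has at most $2\eta(K-1)-1$ red neighbours. Combined with $\delta(G)\geq (1-\eta)(K-1)$, this leaves each vertex with at least $(1-3\eta)(K-1)+1$ blue neighbours, and consequently each vertex lies in a blue component of size at least $(1-3\eta)(K-1)+2\geq (1-3\eta)K$, finishing this sub-case.

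In the remaining sub-case $|R|>2\eta(K-1)$, the argument is a short inclusion-exclusion: for any two vertices $u,v\in V\setminus R$, each has at most $\eta(K-1)$ non-neighbours and hence at least $|R|-\eta(K-1)$ neighbours in $R$---all blue by the opening observation---and the two blue neighbourhoods in $R$ therefore intersect in at least $|R|-2\eta(K-1)>0$ vertices. Thus $u$ and $v$ share a common blue neighbour in $R$ and lie in a single blue component $B$. Since $|V\setminus R|>3\eta K>\eta(K-1)$, every $f\in R$ in turn has a blue neighbour in $V\setminus R\subseteq B$, so $R\subseteq B$ as well. This gives $|B|=K$, which contradicts $|R|<(1-3\eta)K$ being the size of a largest monochromatic component and yields $|F|\geq K\geq (1-3\eta)K$.

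There is no substantive obstacle; this is a standard lemma in the spirit of the classical fact that a graph or its complement is connected. The only care required is to pick the sub-case threshold ($2\eta(K-1)$) so that the inclusion-exclusion in the second sub-case yields a strictly positive overlap, and the hypothesis $K\geq 1/\eta$ enters only to keep the relevant integer quantities ($\eta(K-1)$, $2\eta(K-1)$, etc.) meaningful so that the degree bounds translate cleanly into the desired component-size bound.
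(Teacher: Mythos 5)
Your proof is correct: the split on the size of a largest red component $R$, the observation that all $R$--$(V\setminus R)$ edges are blue, the degree count in the small-$R$ case, and the inclusion--exclusion giving a common blue neighbour in the large-$R$ case together give $|F|\geq(1-3\eta)K$ in every case (in the second sub-case you actually show the blue graph is connected, so one should simply conclude $|F|=K$ there rather than phrase it as a contradiction -- a purely cosmetic point). The paper defers the proof of this lemma to the first paper in the series and introduces it precisely as a simple extension of the fact that a graph or its complement is connected, which is exactly the kind of argument you give, so your route is essentially the intended one.
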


\begin{proof}
See \cite{DF1}.
\end{proof}

The following lemmas form analogues of the above, the first concerns the structure of two-coloured almost-complete graphs with one hole and the second concerns the structure of two-coloured almost-complete graphs with two holes, that is, bipartite graphs. 

\begin{lemma}
\label{l:dgf1} 

For every $\eta$ such that $0<\eta<1/20$ and every $K\geq 1/\eta$, the following holds. For~$W$, any subset of~$V$ such that $|W|,|V\backslash W|\geq 4\eta^{1/2}K$, let $G_{W}=(V,E)$ be a two-coloured graph obtained from~$G$, a $(1-\eta)$-complete graph on~$K$ vertices with vertex set~$V$ by removing all edges contained entirely within~$W$. Let~$F$ be the largest monochromatic component of $G_W$ and define the following two sets:
\begin{align*} 
 W_{r}&= \{\text{$w \in W$ : $w$ has red edges to all but at most  $3\eta^{1/2} K$ vertices in $V \backslash W$}\}; 
\\ 
 W_{b}&=\{ w \in W : w \text{ has blue edges to all but at most } 3\eta^{1/2} K \text{ vertices in } V \backslash W\}.
\end{align*}
Then, at least one of the following holds:
\begin{itemize}
\item [(i)] $|F|\geq (1-2\eta^{1/2})K$; and
\item [(ii)] $|W_{r}|,|W_{b}|>0$.
\end{itemize}
\end{lemma}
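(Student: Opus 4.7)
The plan is to assume conclusion~(i) fails, i.e.\ $|F|<(1-2\eta^{1/2})K$, and to deduce both $|W_r|>0$ and $|W_b|>0$. The key observation is that the subgraph induced on $U:=V\setminus W$ has no hole, so Lemma~\ref{l:dgf0} can be applied inside $U$ to produce a large monochromatic component; the failure of~(i) then forces $W$ to be reached from this component by both colours simultaneously.

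First I would check that $G[U]$ is $(1-\eta^{*})$-complete with $\eta^{*}:=\eta(K-1)/(|U|-1)$, and verify that the hypotheses $K\ge 1/\eta$, $|U|\ge 4\eta^{1/2}K$ and $\eta<1/20$ give $\eta^{*}\le \eta^{1/2}/3<1/3$ and $|U|\ge 1/\eta^{*}$. Lemma~\ref{l:dgf0} then yields a monochromatic component $C$ of $G[U]$ with $|C|\ge (1-\eta^{1/2})|U|$, which, by the symmetry of the remaining argument, I may assume is red. Let $R$ be the red component of $G_W$ containing $C$. Since $R\supseteq C$ and $|R|\le |F|<(1-2\eta^{1/2})K$, subtracting gives $|R\cap W|<|W|-\eta^{1/2}K$, so $W\setminus R$ contains more than $\eta^{1/2}K$ vertices. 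Pick any $w_1\in W\setminus R$: because $w_1$ has no red edge into $C$, its red neighbours in $U$ all lie in $U\setminus C$ (at most $\eta^{1/2}K$ vertices), and with at most $\eta K$ non-neighbours in $V$ this leaves at most $\eta^{1/2}K+\eta K\le 3\eta^{1/2}K$ non-blue-neighbours of $w_1$ in $U$ (using $\eta<\eta^{1/2}$). Hence $w_1\in W_b$, so $|W_b|>0$.

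For $W_r$, I would take $D$ to be the blue component of $G_W$ containing $w_1$. The same count shows $w_1$ has at least $|U|-2\eta^{1/2}K$ blue neighbours in $U$, so $|D\cap U|\ge |U|-2\eta^{1/2}K$. Combined with $|D|\le |F|<(1-2\eta^{1/2})K$, this forces $|D\cap W|<|W|$, so I can pick $w_2\in W\setminus D$. Any blue edge from $w_2$ into $D$ would force $w_2\in D$, a contradiction; hence all of $w_2$'s blue neighbours in $U$ lie in $U\setminus D$, a set of size at most $2\eta^{1/2}K$. Together with at most $\eta K$ further non-neighbours, this gives at most $3\eta^{1/2}K$ non-red-neighbours of $w_2$ in $U$, so $w_2\in W_r$ and $|W_r|>0$. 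The main obstacle I anticipate is purely bookkeeping: aligning the $\eta$ versus $\eta^{1/2}$ error terms, verifying that Lemma~\ref{l:dgf0} really applies to the slightly degraded graph $G[U]$ in the boundary case $K=1/\eta$, and ensuring that the strict inequality $|F|<(1-2\eta^{1/2})K$ is preserved through each subtraction so that $W\setminus R$ and $W\setminus D$ are genuinely nonempty.
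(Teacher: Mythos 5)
Your argument is correct. The paper itself gives no proof of Lemma~\ref{l:dgf1} beyond citing the companion paper \cite{DF1}, so there is no in-text argument to compare against; but your route --- apply Lemma~\ref{l:dgf0} inside $U=V\setminus W$ (where no edges have been removed), then use the failure of (i) to extract one vertex of $W$ outside the red component containing the large component $C$, and a second vertex outside the blue component of the first --- is sound and fits the toolkit exactly as the paper uses it elsewhere (compare the direct component-splitting argument given for Lemma~\ref{l:twoholes}). The two checks you flag do go through: with $\eta^{*}=\eta(K-1)/(|U|-1)$ one has $\eta^{*}\le\eta^{1/2}/3$ because $|U|-1\ge 4\eta^{1/2}K-1$ and $\eta K\ge 1$, and $|U|\ge 1/\eta^{*}$ holds since either $\eta(K-1)\ge 1$, or else $K<1/\eta+1$, in which case $|U|\le(1-4\eta^{1/2})K$ gives $\eta|U|<1$ and hence $|U|\eta(K-1)\ge|U|(1-\eta)>|U|-1$. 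The remaining counts are accurate: $|U\setminus C|\le\eta^{1/2}K$ and at most $\eta K$ non-neighbours give $w_1\in W_b$ with room to spare ($2\eta^{1/2}K\le 3\eta^{1/2}K$), the strict bound $|F|<(1-2\eta^{1/2})K$ keeps $W\setminus R$ and $W\setminus D$ nonempty, and the observation that every edge from $w_2$ into $D\cap U$ must be red (else $w_2\in D$) yields $w_2\in W_r$; the colour-swapped case when $C$ is blue is symmetric, as you say.
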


\begin{proof}
See \cite{DF1}.
\end{proof}

\begin{lemma}
\label{l:twoholes}
For every $\eta$ such that $0<\eta<0.1$ and $K \geq 2/\eta$, the following holds: Suppose $G=(V,E)$ is a two-multicoloured graph obtained from an $(1-\eta)$-complete graph on $K$ vertices with $V=A\cup B$ and $|A|,|B|\geq 6\eta K$ by removing all edges contained completely within $A$ and all edges contained completely within $B$. Let~$F$ be the largest monochromatic component of~$G$ and define the following sets:\begin{align*}
  A_{r}&=\{ a\in A :  
             a \text{ has red edges to all but at most } 4\eta K \text{ vertices in } B \}; \\
  A_{b}&=\{ a\in A : 
            a \text{ has blue edges to all but at most } 4\eta K \text{ vertices in } B \}; \\ 
  B_{r}&=\{ b\in B : 
           b \text{ has red edges to all but at most } 4\eta K \text{ vertices in } A \};  \\ 
  B_{b}&=\{ b\in B : 
          b \text{ has blue edges to all but at most } 4\eta K \text{ vertices in } A \}. 
\end{align*}

Then, at least one of the following occurs:
\begin{itemize}
\item[(i)] $|F|\geq (1-7\eta)K$;
\item[(ii)] $A,B$ can be partitioned into $A_{1}\cup A_{2}, B_{1}\cup B_{2}$ such that $|A_{1}|, |A_{2}|, |B_{1}|, |B_{2}|\geq 3\eta K$ and all edges present between $A_{i}$ and $B_{j}$ are red for $i=j$, blue for $i\neq j$;
\item[(iii)] $|A_{r}|,|A_{b}|>0$; 
\item[(iv)]  $|B_{r}|,|B_{b}|>0$.
\end{itemize}
\end{lemma}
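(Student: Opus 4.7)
The plan is to assume that both (iii) and (iv) fail and derive (i) or (ii). Up to the symmetry swapping red and blue, the failures leave two essentially distinct cases: Case I with $A_r = B_r = \emptyset$, and Case II with $A_r = B_b = \emptyset$. In both, the $(1-\eta)$-completeness hypothesis (which limits non-neighbors to $\eta K$) promotes the defining $4\eta K$ thresholds of $A_r, A_b, B_r, B_b$ into a strict lower bound of $> 3\eta K$ on the appropriate colour-degree of every vertex.

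For Case I, I take $F$ to be the largest blue component and set $A_F = F \cap A$, $B_F = F \cap B$, $A'' = A \setminus A_F$, $B'' = B \setminus B_F$. Since blue edges cannot leave $F$, the blue min-degree conditions give $|A_F|, |B_F| > 3\eta K$, and either $A'' = B'' = \emptyset$ (so $F = V$, giving (i)) or $|A''|, |B''| > 3\eta K$. In the latter case, edges in $(A_F, B'')$ and $(A'', B_F)$ are forced red-or-absent (a blue edge would grow $F$). With the labelling $A_1 = A_F, A_2 = A'', B_1 = B'', B_2 = B_F$, these two classes already satisfy the $i = j$ red requirement of (ii). If $(A_F, B_F)$ and $(A'', B'')$ also contain no red edges, (ii) holds. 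Otherwise, a stray red edge $a_0 b_0$ with, say, $a_0 \in A_F, b_0 \in B_F$ generates a red component $R$ that we show equals $V$: $a_0$ has $\geq |B''| - \eta K$ forced-red neighbours in $B''$, and every $a \in A_F$ likewise, so the overlap $\geq |B''| - 2\eta K > 0$ puts $A_F \subseteq R$. Then $b_0$'s $\geq |A''| - \eta K$ forced-red neighbours in $A''$ bring $A''$ into $R$ after a similar overlap, and further red-chase steps sweep up $B_F$ and $B''$. Hence $R = V$ and (i) holds. A stray red edge in $(A'', B'')$ is handled symmetrically.

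Case II is asymmetric: blue min-degree holds only on $A$'s side, red min-degree on $B$'s side. The blue-component setup still gives $|B_F| > 3\eta K$ and, provided $A'' \neq \emptyset$, $|B''| > 3\eta K$. The bounds $|A_F|, |A''| > 3\eta K$ are not automatic, but they arise in the (ii)-compatible subcase: if all edges of $(A_F, B_F)$ are blue, then every $b \in B_F$ has its $> 3\eta K$ red edges going to $A''$, forcing $|A''| > 3\eta K$; if all edges of $(A'', B'')$ are blue, then every $b \in B''$ has its red edges in $A_F$, forcing $|A_F| > 3\eta K$. Hence in the all-blue subcase, (ii) holds with the labelling from Case I. If instead a stray red edge appears in $(A_F, B_F)$ or $(A'', B'')$, the red-chase of Case I applies using the forced-red structure of $(A_F, B'')$ and $(A'', B_F)$ together with the red-degree bound on $B$ (which allows rerouting through $A''$ or $B_F$ when $|A_F|$ is too small for the direct overlap argument); in all cases $R = V$ and (i) holds. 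The subcase $A'' = \emptyset$ is easy: every $b \in B''$ has $\geq |A| - \eta K$ red edges to $A \subseteq F$, immediately giving a red component sweeping $V$.

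The main obstacle lies in the small-$|A_F|$ variants of Case II where the obvious overlap argument on $A_F$ degrades; the remedy is to route the red-chase through $B_F$ or $B''$, using the forced-red class $(A'', B_F)$ whose opposite part is close to $|A|$. All error terms stay on the scale of $\eta K$ throughout, leaving enough slack to close the final bound $|F| \geq (1 - 7\eta)K$ in (i) via elementary inclusion-exclusion; no non-elementary estimate is required.
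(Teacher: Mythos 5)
Your proposal is correct in substance, but it runs in the opposite direction to the paper's proof and is organised quite differently. The paper assumes (i) fails, so that (after a without-loss-of-generality step) $A$ splits as $A_1\cup A_2$ with both parts of size at least $3\eta K$ lying in different red components; the absence of red paths through $B$ then yields a partition $B_1\cup B_2$ with no red edges in $G[A_1,B_1]\cup G[A_2,B_2]$, and $(1-\eta)$-completeness forces either the cross-colouring of (ii) (when both $B_i$ are large) or, when one $B_i$ is small, membership of all of $A_2$ in $A_b$ and hence (i) or (iii) (with (iv) by exchanging $A$ and $B$). You instead negate (iii) and (iv), which upgrades the $4\eta K$ thresholds to colour-degree bounds of more than $3\eta K$, anchor the four-part partition on the largest blue component $F$ rather than on red-disconnection, and then must explicitly build a spanning red component (your ``red chase'' through the forced-red pairs $(A_F,B'')$ and $(A'',B_F)$) whenever a stray red edge prevents the cross-colouring of (ii); the paper never needs such a construction because the standing assumption $|F|<(1-7\eta)K$ forbids the merged components outright. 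Your route costs more case analysis (Case I versus the asymmetric Case II, the small-$|A_F|$/$|A''|$ variants, and the $A''=\emptyset$ subcase, where your one-line claim that a red component sweeps $V$ should be split: if $B''$ is small then the blue component $F\supseteq A\cup B_F$ already gives (i), and only otherwise does the red sweep, closed up via the red-degree bound on $B$, apply), but it buys a self-contained derivation of (i) with room to spare (you in fact get a monochromatic component equal to $V$ in the chase cases) and it avoids the paper's somewhat terse ``without loss of generality'' extraction of a red-disconnected bipartition of $A$; the bounds $3\eta K$ and $(1-7\eta)K$ are met comfortably in both arguments.
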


\begin{proof}
Suppose $|F|< (1-7\eta)K$. Then, without loss of generality, $A$ can be partitioned into $A_{1}\cup A_{2}$, with $|A_1|,|A_2|\geq 3\eta K$, such that $A_1$ and $A_2$ are in different red components. Then, there exists no triple $(a_{1},b,a_{2})$ with $a_{1}\in A_{1}, b\in B, a_{2} \in A_{2}$ and both $a_{1}b$ and $ba_{2}$ coloured red. Thus, we may partition $B$ into $B_{1}\cup B_{2}$ such that there are no red edges present in $G[A_{1},B_{1}]$ or $G[A_{2},B_{2}]$. 

Since~$G$ is $(1-\eta)$-complete, given any subsets $A'\subseteq A, B'\subseteq B$ every vertex in $A'$ has degree at least $|B'|-\eta K$ in $G[A',B']$ and every vertex in $B'$ has degree at least $|A'|-\eta K$ in $G[A',B']$. Thus, if $|B_{1}|,|B_{2}|\geq 3\eta K$, $G[A_1,B_1]$ and $G[A_2,B_2]$ each have a single blue component. Therefore, there can be no blue edges present in $G[A_{1},B_{2}]$ or $G[A_{2},B_{1}]$, giving rise to case (ii).

Thus, without loss of generality, we may assume that  $|B_{1}|<3\eta K$. Then, every vertex in $A_{2}$ is a vertex of $A_{b}$, in which case either every vertex $a\in A_{1}$ has a blue edge to $B$, leading to case (i), or there exists some $a\in A_{1}$ such that $a\in A_{r}$, giving rise to case (iii), thus completing the proof. 

Note that exchanging the roles of A and B above leads to case (iv) in place of case (iii).
\end{proof}

\section{Proof of the stability result  -- Part II}
\label{s:stabp2}

In \cite[Section 7]{DF1}, we proved Theorem~B in the case that $\aI\geq\aIII$, here we we consider the case when $\aIII \geq \aI \geq \aII$. 
We wish to prove that any three-multicoloured $(1-\eta^4)$-complete graph on  slightly fewer than $\max\{2\aI+\aII, \half\aI +\half\aII +\aIII\}k$ vertices will have a red connected-matching on at least~$\alpha_{1}k$ vertices, a blue connected-matching on at least~$\alpha_{2}k$ vertices, a green odd connected-matching on at least~$\alpha_{3}k$ vertices or will have a particular coloured structure.

Thus, given $\aI, \aII, \aIII$ such that $\aIII \geq \aI \geq \aII$, we set $$c=\max\{2\aI+\aII, \half\aI +\half\aII +\aIII\},$$
choose 
$$\eta<\eta_{B2}=\min \left\{
\frac{1}{10^{5}}, \frac{\aII}{10^{24}}, \bigg(\frac{\aII}{100}\bigg)^8, \bigg(\frac{\aII}{1200\aI}\bigg)^2
\right\}$$
and consider $G=(V,E)$, a $(1-\eta^4)$-complete graph on~$K\geq 72/\eta$ vertices, where
$$(c - \eta)k \leq K \leq (c - \tfrac{\eta}{2})k$$ for some integer $k>k_{B2}$
, where $k_{B2}=k_{B2}(\aI,\aII,\aIII,\eta)$ will be defined implicitly during the course of the proof, in that, on a finite number of occasions, we will need to bound~$k$ below in order to apply results from Section~\ref{s:pre1}.

Note that, since $\aIII \geq \aI \geq \aII$, the largest forbidden connected-matching is green and odd. By scaling, we may assume that $2\geq\aIII\geq1\geq \aI\geq\aII$. Thus, $G$ is $3\eta^4k$-almost-complete, as is $G[X]$, for any $X\subset V$. 
We begin by noting that we can use Lemma~\ref{l:largeW} to obtain either a red connected-matching on $\aI k$ vertices, a blue connected-matching on $\aII k$ vertices or a green connected-matching of almost the required size. Note, however, that this green connected-matching need not be odd. Indeed, the graph has $$|V|=vk\geq(c-\eta)k=(\half\aI+\half\aII+(v-\half\aI-\half\aII-9\eta^{1/2})+9\eta^{1/2})k$$ vertices and, since $\aI\geq\aII$ and $\eta\leq(\frac{\aII}{20})^2$, we have 
\begin{align*}
(v-\half\aI-\half\aII-9\eta^{1/2})
&\geq(c-\eta-\half\aI-\half\aII-9\eta^{1/2}) \\
&\geq\max\{2\aI+\aII,\half\aI+\half\aII+\aIII\}-\half\aI-\half\aII-10\eta^{1/2}\\
&\geq\max\{\tfrac{3}{2}\aI+\half\aII,\aIII\}-10\eta^{1/2}\geq \aI\geq\aII.
\end{align*}
Thus, since $\eta\leq 0.002\min\{\alpha_1^2,\alpha_2^2,\alpha_3^2\}$, by Lemma~\ref{l:largeW}, provided $k\geq k_{\ref{l:largeW}}(\aI,\aII,v-\half\aI+\half\aII-9\eta^{1/2}, \eta)$,~$G$ contains either a red connected-matching on at least $\aI k$ vertices, a blue connected-matching on at least $\aII k$ vertices or a green connected-matching on at least 
\begin{equation}
\label{281}
|V|-(\half\aI+\half\aII+9\eta^{1/2})k\geq(\max\{\tfrac{3}{2}\aI+\half\aII, \aIII\}-10\eta^{1/2})k
\end{equation} 
vertices.
Lemma~\ref{l:decomp} gives a decomposition of the green-graph $G_3$ into its bipartite and non-bipartite parts and in doing so gives a decomposition of the vertices of~$G$ into $X\cup Y\cup W$ such that there are no green edges between $X\cup Y$ and $W$ or within $X$ or $Y$. Choosing such a decomposition which maximises $|X\cup Y|$, results in $G_3[X\cup Y]$ being the union of the bipartite green components of~$G$ and $G_3[W]$ being the union of the non-bipartite green components of~$G$. In what follows, we consider the vertices of~$G$ to have been thus partitioned. We will also assume that $|X|\geq |Y|$ and will write $\widehat{V}$ for $X\cup W$ and~$w$ for $|W|/k$. By~(\ref{281}), we may assume that the largest green connected-matching in~$G$ spans at least $(\max\{\tfrac{3}{2}\aI+\half\aII, \aIII\}-10\eta^{1/2})k$ vertices and distinguish three cases: 

\begin{itemize}
\item[(C)] the largest green connected-matching~$F$ is not odd and $w\geq 7\eta^{1/2}$;
\item[(D)] the largest green connected-matching~$F$ is not odd and $w\leq 7\eta^{1/2}$; 
\item[(E)] the largest green connected-matching~$F$ is odd.
\end{itemize}

Within each case, we will, when necessary, distinguish between the two possible forms taken by $c$, that is, between $c=2\aI+\aII$ and $c=\half\aI+\half\aII+\aIII$: 

\phantomsection
\begin{itemize}
\labitem{II\dag}{IId}The first possibility arises only when $\aIII \leq \tfrac{3}{2}\aI+\half\aII$, in which case we may assume that~$F$ spans at least $(\tfrac{3}{2}\aI+\half\aII-10\eta^{1/2})k$ vertices.
\labitem{II\ddag}{IIdd}The second possibility arises only when $\aIII \geq \tfrac{3}{2}\aI+\half\aII$, in which case we may assume that that~$F$ spans at least $(\aIII-10\eta^{1/2})k$ vertices.
\end{itemize}

\subsection*{Case C: {\rm Largest green connected-matching is not odd and $w\geq 7\eta^{1/2}$.}} 

Suppose that we have $c=2\aI+\aII$. Then~$F$ spans at least $(\tfrac{3}{2}\aI+\half\aII-10\eta^{1/2})k$ vertices and is assumed to not be contained in an odd component of~$G$. Thus, by the decomposition, we have
\begin{subequations}
\begin{align}
\label{c1}
|X|\geq|Y| &\geq (\tfrac{3}{4}\aI+\tfrac{1}{4}\aII-5\eta^{1/2})k,\\
\label{c1c}
|X|\geq\frac{K-|W|}{2}&\geq (\aI+\half\aII-\half\eta-\half w)k,\\
\label{c2}
 |W|=K-|X|-|Y| &\leq  (\half\aI+\half\aII+10\eta^{1/2})k.
\end{align}
\end{subequations}
\begin{subequations}
From~(\ref{c1}) and~(\ref{c1c}) we obtain
\begin{align}
\label{c3}
\,\,\,\,\,|\widehat{V}|&=|X|+|W|\geq(\tfrac{3}{4}\aI+\tfrac{1}{4}\aII +w- 5\eta^{1/2})k, \\
\label{c3a}
\!|\widehat{V}|&=|X|+|W|\geq(\aI+\tfrac{1}{2}\aII +\half w- \half\eta)k.
\end{align}
Now, suppose that 
$|\widehat{V}|\geq \half(\aI+\aII+\max\{2w,{\aI},\aII\}+6\eta^{1/2})k.$
In that case, since $\eta\leq 0.01\aII$, provided $k> k_{\ref{l:hole}}(\aI,\aII,w,\eta)$, we may apply Lemma~\ref{l:hole} to obtain either a red connected-matching on $\aI k$ vertices or a blue connected-matching on $\aII k$ vertices. Therefore, we may assume that 
\begin{equation}
\label{c4}
|\widehat{V}|\leq \half\left(\aI+\aII+\max\{2w,{\aI},\aII\}+6\eta^{1/2}\right)k.
\end{equation}
\end{subequations}
If $w\leq \half \aI$, then together~(\ref{c3a}) and~(\ref{c4}) contradict our assumption that $w\geq 7\eta^{1/2}$. Thus, we may assume that $w\geq \half \aI$. In that case,~(\ref{c3}) and~(\ref{c4}) give $$(\tfrac{3}{4}\aI+\tfrac{1}{4}\aII +w -5\eta^{1/2})k\leq |\widehat{V}| \leq (\half \aI +\half \aII+w+3\eta^{1/2})k,$$
which together with~(\ref{c1}) and~(\ref{c2}) gives
\begin{align*}
(\tfrac{3}{4}\aI+\tfrac{1}{4}\aII -5\eta^{1/2})k \leq  |X| & \leq (\half \aI + \half \aII +3\eta^{1/2})k, \\
(\tfrac{3}{4}\aI+\tfrac{1}{4}\aII -5\eta^{1/2})k \leq  |Y| & \leq (\half \aI + \half \aII +3\eta^{1/2})k, \\
(\aI-7\eta^{1/2})k \leq  |W| & \leq (\half \aI + \half \aII +10\eta^{1/2})k.
\end{align*}

The condition for $|X|$ above gives a contradiction unless $\aI\leq\aII+32\eta^{1/2}$. So, recalling that $\aII\leq\aI$, we may obtain
\begin{equation}
\label{c6}
\left.
\begin{aligned}
\quad\quad\quad\quad\quad\quad\quad\quad\quad(\aI -13\eta^{1/2})k \leq  |X| & \leq (\aI + 3\eta^{1/2})k,  \quad\quad\quad\quad\quad\quad\quad \\
(\aI-13\eta^{1/2})k \leq  |Y| & \leq (\aI +3\eta^{1/2})k, \\
(\aI -7\eta^{1/2})k \leq  |W| & \leq (\aI + 10\eta^{1/2})k. 
\end{aligned}
\right\}
\end{equation}
Suppose instead that $\aIII\geq\tfrac{3}{2}\aI+\half\aII$. Then, by~(\ref{281}),~$F$ spans at least $(\aIII-10\eta^{1/2})k$ vertices. Recall that we assume that~$F$  is not contained in an odd component of~$G$, thus, by the decomposition, we have
\begin{subequations}
\begin{align}
\label{c7} |X|\geq|Y| &\geq (\half\aIII-5\eta^{1/2})k,\\
\label{c7c}
|X|\geq\frac{K-|W|}{2}&\geq (\aI+\half\aII-\half\eta-\half w)k,\\
\label{c8} |W| &\leq  (\half\aI+\half\aII+10\eta^{1/2})k.
\end{align}
\end{subequations}
From~(\ref{c7}) and~(\ref{c7c}), we obtain 
\begin{subequations}
\begin{align}
\label{c9} \!\!|\widehat{V}|=|X|+|W|\geq(\half \aIII +w- 5\eta^{1/2})k, \\
\label{c9a}
\quad\,\,\,\,\,|\widehat{V}|=|X|+|W|\geq(\aI+\tfrac{1}{2}\aII +\half w- \half\eta)k.
\end{align}
Again, we may assume that 
\begin{equation}
\label{c10}
|\widehat{V}|\leq \half\left(\aI+\half\aII+\max\{2w,\aI,\aII\}+6\eta^{1/2}\right)k
\end{equation}
\end{subequations}
 since, otherwise, we may apply Lemma~\ref{l:hole} to obtain either a red connected-matching on $\aI k$ vertices or a blue connected-matching on $\aII k$ vertices. Again, we may assume that $w\geq \half \aI$ since, otherwise, together~(\ref{c9a}) and~(\ref{c10}) contradict our assumption that $w\geq 7\eta^{1/2} k$. Then,~(\ref{c9}) and~(\ref{c10}) give $$(\half \aIII +w -5\eta^{1/2})k\leq |\widehat{V}| \leq (\half \aI +\half \aII+w+3\eta^{1/2})k,$$
which, together with~(\ref{c7}) and~(\ref{c8}) gives
\begin{align*}
(\half \aIII -5\eta^{1/2})k \leq  |X| & \leq (\half \aI + \half \aII +3\eta^{1/2})k, \\
(\half \aIII -5\eta^{1/2})k \leq  |Y| & \leq (\half \aI + \half \aII +3\eta^{1/2})k, \\
(\aIII-\half\aI-\half\aII -7\eta^{1/2})k \leq  |W| & \leq (\half \aI + \half \aII +10\eta^{1/2})k.
\end{align*}
 Then, recalling that $\aIII \geq \tfrac{3}{2}\aI+\half\aII$ and that $\aII\leq\aI$, in order to avoid a contradiction, we have $\aI\leq \aII+32\eta^{1/2}$ and obtain
\begin{equation}
\label{c11}
\left.
\begin{aligned}
\quad\quad\quad\quad\quad\quad\quad\quad\quad(\aI -13\eta^{1/2})k \leq  |X| & \leq (\aI + 3\eta^{1/2})k,  \quad\quad\quad\quad\quad\quad\quad \\
(\aI-13\eta^{1/2})k \leq  |Y| & \leq (\aI +3\eta^{1/2})k, \\
(\aI -7\eta^{1/2})k \leq  |W| & \leq (\aI + 10\eta^{1/2})k. 
\end{aligned}
\right\}
\end{equation}
Considering~(\ref{c6}) and~(\ref{c11}), we see that we have obtained the same set of bounds irrespective of the form taken by $c$. Thus, in what follows, we consider both possibilities together.

Recall that, under the decomposition, there are no green edges contained within~$X$ or~$Y$. Then, since~$G$ is $3\eta^4k$-almost-complete, provided $k>k_{\ref{c:SkBe}}(\eta)$, we may apply Corollary~\ref{c:SkBe} 
to each of $G[X]$ and $G[Y]$, thus finding that each contains a monochromatic connected-matching on at least $(\tfrac{2}{3}\aI-8\eta^{1/8})k$ vertices. Thus, provided $\eta<(\aI/120)^8$, we may assume that each of $X$ and $Y$ contain a monochromatic connected-matching on at least $\tfrac{3}{5}\aI k$ vertices. Referring to these matchings as $M_1\subseteq G[X]$ and $M_2\subseteq G[Y]$, we consider three subcases:

\begin{itemize}
\item[(i)] $M_{1}$ and $M_{2}$ are both red;
\item[(ii)] $M_{1}$ and $M_{2}$ are both blue; 
\item[(iii)] $M_{1}$ and $M_{2}$ are different colours.
\end{itemize}

The proof in the first subcase is identical to that of Case B.i in \cite{DF1}, the proof in the second subcase is identical to that of Case B.ii in \cite{DF1} and the proof in the third subcase is identical to that of Case B.iii in \cite{DF1} with the overall result being that~$G$ contains either a red connected-matching on $\aI k$ vertices or a blue connected-matching on $\aII k$ vertices.

\subsection*{Case D: {\rm Largest green connected-matching is not odd and $w\leq 7\eta^{1/2}$.}}

Suppose that $c=2\aII+\aII$. Then, since $\eta\leq\eta_{B2}$, provided $k\geq k_{\ref{l:hole}}(\aI,\aII,w,\eta)$, we obtain bounds on the sizes of $X$ and $Y$ as follows:
\begin{align*}
(\aI +\half\aII-4\eta^{1/2})k \leq  |X| & \leq (\aI + \half\aII+3\eta^{1/2})k, \\
(\aI+\half\aII-12\eta^{1/2})k \leq  |Y| & \leq (\aI +\half\aII+3\eta^{1/2})k. 
\end{align*}
Suppose instead that $c=\half\aI+\half\aII+\aIII$. Then, since $\eta\leq\eta_{B2}$, provided $k\geq k_{\ref{l:hole}}(\aI,\aII,w,\eta)$, we obtain bounds on the sizes of $X$ and $Y$ as follows:
\begin{subequations}
\begin{align}
\label{d1} (\tfrac{1}{4}\aI+\tfrac{1}{4}\aII+\half\aIII-4\eta^{1/2})k \leq  |X| & \leq (\aI +\half\aII+3\eta^{1/2})k, \quad\quad\quad \\
\label{d1a} (\aIII-\half\aI-12\eta^{1/2})k \leq  |Y| & \leq (\aI+\half\aII+3\eta^{1/2})k.
\end{align}
\end{subequations}
Note that the inequalities in~(\ref{d1}) give a contradiction unless $\aIII\leq\tfrac{3}{2}\aI+\half\aII+14\eta^{1/2}$. Since $\aIII\geq\tfrac{3}{2}\aI+\half\aII$, we obtain
\begin{align*}
(\aI +\half\aII-4\eta^{1/2})k \leq  |X| & \leq (\aI + \half\aII+3\eta^{1/2})k, \\
(\aI+\half\aII-12\eta^{1/2})k \leq  |Y| & \leq (\aI +\half\aII+3\eta^{1/2})k. 
\end{align*}
Then, in either case, since $\eta\leq \eta_{B2}$, provided $k>k_{\ref{l:SkB}}(\eta^{1/2})$, we may apply Lemma~\ref{l:SkB} (with $\alpha=\aI, \beta=\aII$) to each of $X$ and $Y$ to find that each contains a red connected-matching on at least~$\aI k$ vertices or a blue connected-matching on at least~$\alpha_{2}k$ vertices {or} has a structure belonging to one of the following classes as a subgraph:
\begin{align*}
\cH_1&=\cH\left((\aI-2\eta^{1/32})k,(\half\aII-2\eta^{1/32})k,3\eta^4 k,\eta^{1/32},\text{red},\text{blue}\right);\text{ } 
\\ \cH_2&=\cH\left((\aII-2\eta^{1/32})k,(\half\aI-2\eta^{1/32}) k,3\eta^4 k,\eta^{1/32},\text{blue},\text{red}\right),
\end{align*}
with the latter case occurring only if $\aII\geq\aI-\eta^{1/16}$.

\subsection*{Case E: {\rm Largest green connected-matching is odd.}}

Recall, from (\ref{281}), that $F$, the largest green connected-matching in~$G$, spans at least $(\max\{\tfrac{3}{2}\aI+\half\aII,\aIII\}-10\eta^{1/2})k$ vertices. We now consider the case when this connected-matching is contained in an odd component of~$G$. 

Thus far, we have made extensive use of the decomposition of Figaj and \L uczak described in Lemma~\ref{l:decomp}. However, in this case, it is necessary to consider an alternative (and somewhat more complicated) decomposition:

We begin by partitioning the vertices of~$G$ into $L\cup P \cup Q$ as follows. We let~$L$ be the vertex set of~$F$. Then, for each $v\in V\backslash L$, if there exists a green edge between $v$ and~$L$, we assign $v$ to~$P$; otherwise, we assign $v$ to~$Q$.

Suppose there exists a green edge $mn$ in~$F$ and distinct vertices $p_1,p_2\in P$ such that $mp_1$ and $np_2$ are both coloured green. Then, we can replace $mn$ with $mp_1$ and $np_2$, contradicting the maximality of~$F$. Thus, after discarding at most one edge from $G[L,P]$ for each edge of~$F$, we may assume that given an edge $uv$ in the matching, at most one of~$u$ or~$v$ has a green edge to~$P$. We may therefore partition~$L$ into $M\cup N$ such that each edge of the matching belongs to $G[M,N]$ and there are no green edges in $G[N,P]$. Observe also that, by maximality of~$F$, there can be no green edges within $G[P]$ or $G[P,Q]$.

In summary, we have a partition $M\cup N\cup P \cup Q$ such that
\phantomsection
\begin{itemize}
\labitem{E1}{E1-0} $M\cup N$ is the vertex set of~$F$ and every edge of~$F$ belongs to $G[M,N]$;
\labitem{E2}{E2-0} every vertex in~$P$ has a green edge to~$M$;
\labitem{E3}{E3-0} there are no green edges in $G[N,P]$, $G[M,Q]$, $G[N,Q]$, $G[P,Q]$ or $G[P]$.
\end{itemize}

\begin{figure}[!h]
\centering
\includegraphics[width=64mm, page=5]{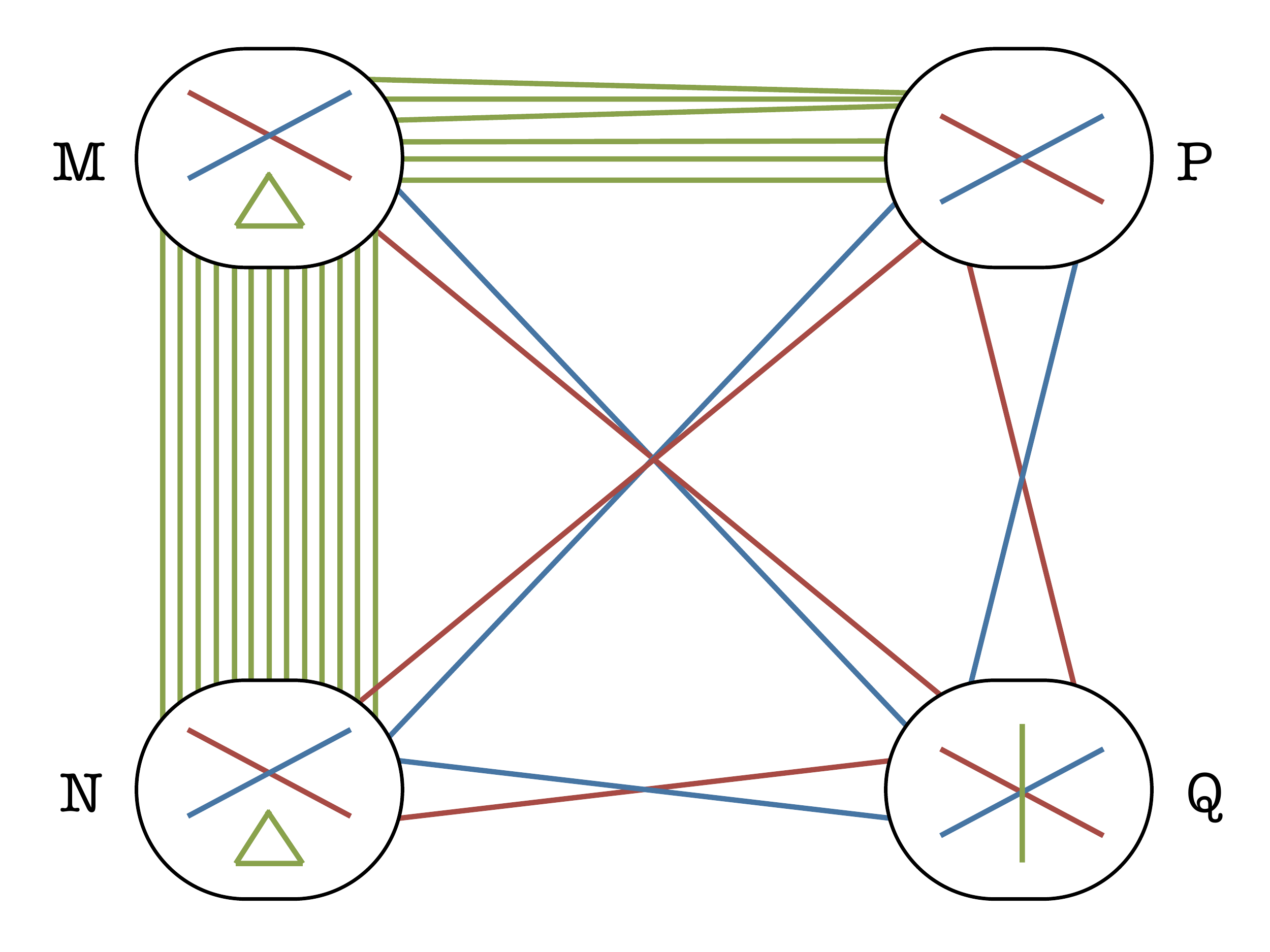}
\vspace{-2.5mm}\caption{Decomposition into $M\cup N\cup P\cup Q$.}
\label{fige}
\end{figure}

Note that, since~$G$ was assumed to be $(1-\eta^4)$-complete and also $3\eta^4 k$-almost-complete, having discarded the green edges described above, provided $k\geq 1/\eta^{4}$, we may now assume that the (new) graph is $(1-\tfrac{3}{2}\eta^4)$-complete and also $4\eta^4 k$-almost-complete. In what follows, on a number of occasions, we will discard vertices from $M\cup N\cup P\cup Q$ but will continue to refer the parts of the partition as $M,N,P$ and~$Q$. The discarded vertices remain in the graph and will be considered later. We need to take care to account of this when considering the sizes of $V(G), M, N, P, Q,$ etc. 

Recalling (\ref{IId}), in the case that $c=2\aI+\aII$, we have $\aIII\leq \tfrac{3}{2} \aI+\half\aII$. Then, since $V(F)=M\cup N$ and $|M|+|N|+|P|+|Q|=K$, we have
\begin{subequations}
\begin{align}
\label{E0a} (\tfrac{3}{4}\aI+\tfrac{1}{4}\aII-5\eta^{1/2})k & \leq  |M|\,,\,|N| \leq \half\aIII k,  \\
\label{E0b} (2\aI+\aII-\aIII-\eta)k & \leq  |P|+|Q| \leq (\half\aI+\half\aII+5\eta^{1/2})k.
\end{align} 
\end{subequations}
The inequalities in~(\ref{E0a}) give a contradiction unless $\aIII\geq\tfrac{3}{2}\aI+\tfrac{1}{2}\aII-10\eta^{1/2}$. Then, since $\aIII \leq \tfrac{3}{2}\aI+\tfrac{1}{2}\aII$, we may re-write~(\ref{E0b}) as
\begin{equation}
\label{E1}
\tag{\ref*{E0b}$^{\prime}$}
(\half\aI+\half\aII-\eta)k  \leq  |P|+|Q| \leq (\half\aI+\half\aII+5\eta^{1/2})k.
\end{equation}
Recalling (\ref{IIdd}), in the case that $c=\half\aI+\half\aII+\aIII$, we have $\aIII\geq \tfrac{3}{2} \aI+\half\aII$. Then, since $V(F)=M\cup N$ and $|M|+|N|+|P|+|Q|=K$, we have
\begin{subequations}
\begin{align}
\label{E2} (\half\aIII-5\eta^{1/2})k & \leq  |M|\,,\,|N| \leq \half\aIII k, \\
\label{E3} (\half\aI+\half\aII-\eta)k & \leq  |P|+|Q| \leq (\half\aI+\half\aII+5\eta^{1/2})k.
\end{align} 
\end{subequations}
We will proceed considering the two possible situations together, assuming that 
\begin{equation}
\label{A3BIG}
\aIII\geq\tfrac{3}{2}\aI+\tfrac{1}{2}\aII-10\eta^{1/2}.
\end{equation}
Comparing~(\ref{E0a}) to~(\ref{E2}) and~(\ref{E1}) to~(\ref{E3}), we will assume that
\begin{subequations}
\begin{align}
\label{E4a-0}\tag{E4a} (\max\{\tfrac{3}{4}\aI+\tfrac{1}{4}\aII,\half\aIII\}-5\eta^{1/2})k & \leq  |M|\,,\,|N| \leq \half\aIII k, \\
\label{E4b-0}\tag{E4b} (\half\aI+\half\aII-\eta)k & \leq  |P|+|Q| \leq (\half\aI+\half\aII+5\eta^{1/2})k.
\end{align} 
\end{subequations}
and distinguish between three possibilities:
\begin{itemize}
\item[(i)] $|P|\leq 95\eta^{1/2}k$;
\item[(ii)] $|Q|\leq 95\eta^{1/2}k$;
\item[(iii)] $|P|, |Q| \geq 95\eta^{1/2}k$.
\end{itemize}

\subsection*{Case E.i: {\rm $|P|\leq 95\eta^{1/2}k$.}}

In this case, we disregard~$P$, and, recalling that $L=M\cup N$, consider $G[L \cup Q]$. From~(\ref{E4a-0}) and~(\ref{E4b-0}), we have
\begin{subequations}
\begin{align}
\label{E6}\tag{E4a$^\prime$} (\max\{\tfrac{3}{2}\aI+\tfrac{1}{2}\aII,\aIII\}-10\eta^{1/2})k & \leq  |L| \leq \aIII k, \\
\label{E7}\tag{E4b$^\prime$} (\half\aI+\half\aII-96\eta^{1/2})k & \leq  |Q| \leq (\half\aI+\half\aII+5\eta^{1/2})k.
\end{align} 
\end{subequations}
By (\ref{E3-0}), we know that all edges in $G[L,Q]$ are coloured red or blue.

\begin{figure}[!h]
\centering
\vspace{-8mm}
\includegraphics[width=64mm, page=6]{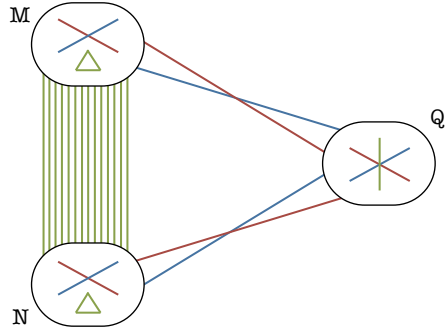}
\vspace{-1mm}\caption{Decomposition in Case E.i.}
  
\end{figure}

Observe that, provided $\eta<(1/200)^2$, we have $$|L|+|Q|\geq\left(\max\{2\aI+\aII,\half\aI+\half\aII+\aIII\}-106\eta^{1/2}\right)k\geq\tfrac{3}{4} K.$$
Thus, since~$G$ is $(1-\tfrac{3}{2}\eta^4)$-complete, $G[L\cup Q]$ is $(1-2\eta^4)$-complete. Also, provided $\eta<10^{-5}$, we have $|L|,|Q|\geq 18\eta^{1/2} \left(|L|+|Q|\right)$. Thus, since $2\eta^4\leq 3\eta^{1/2}$, provided $K\geq 2/\eta^{1/2}$, we may apply Lemma~\ref{l:twoholes}, giving rise to four cases:

\begin{itemize}
\item[(a)] $G[L,Q]$ contains a monochromatic component on at least $(1-21\eta^{1/2})|L\cup Q|\geq|L\cup Q|-63\eta^{1/2}k$ vertices;
\item[(b)] $L,Q$ can be partitioned into $L_{1}\cup L_{2}, Q_{1}\cup Q_{2}$ such that $|L_{1}|, |L_{2}|, |Q_{1}|, |Q_{2}|\geq 9\eta^{1/2}|L\cup Q|\geq9\eta^{1/2}k$ and all edges present between $L_{i}$ and $Q_{j}$ are red for $i=j$, blue for $i\neq j$;
\item[(c)] there exist vertices $v_r, v_b\in L$ such that $v_r$ has red edges to all but $12\eta^{1/2}|L\cup Q|\leq 36\eta^{1/2}k$ vertices in~$Q$ and $v_b$ has blue edges to all but $12\eta^{1/2}|L\cup Q|\leq 36\eta^{1/2}k$ vertices in~$Q$;
\item[(d)] there exist vertices $v_r, v_b\in Q$ such that $v_r$ has red edges to all but $12\eta^{1/2}|L\cup Q|\leq 36\eta^{1/2}k$ vertices in~$L$ and $v_b$ has blue edges to all but $12\eta^{1/2}|L\cup Q|\leq 36\eta^{1/2}k$ vertices in~$L$.
\end{itemize}

\subsection*{Case E.i.a: {\rm $G[L\cup Q]$ has a large monochromatic component.}}

Recall that we assume that $F$, the largest green connected-matching in $G$, spans at least $(\max\{\tfrac{3}{2}\aI+\half\aII,\aIII\}-10\eta^{1/2})k$ vertices and is contained in an odd component of $G$. We have a partition of $V(G)$ into $L\cup P \cup Q$ such that $|P| \leq  95\eta^{1/2}k$,
\begin{subequations}
\begin{align}
\label{Eia1}\tag{E4a$^\prime$}
(\max\{\tfrac{3}{2}\aI+\tfrac{1}{2}\aII,\aIII\}-10\eta^{1/2})k  \leq  |L| & \leq \aIII k, \\
\label{Eia2}\tag{E4b$^\prime$}
(\half\aI+\half\aII-96\eta^{1/2})k \leq  |Q| & \leq (\half\aI+\half\aII+5\eta^{1/2})k. 
\end{align} 
\end{subequations}

Recalling that $L=M\cup N$, by (\ref{E3-0}), all edges present in $G[L,Q]$ are coloured red or blue. Additionally, in this case, we assume that $G[L,Q]$ contains a monochromatic component on at least $|L\cup Q|-63\eta^{1/2}k$ vertices. Suppose this large monochromatic component is red, then
\phantomsection
\begin{itemize}
\labitem{E5}{E5} $G[L,Q]$ has a red component on at least $|L\cup Q|-63\eta^{1/2} k$ vertices.
\end{itemize}
We consider the largest red matching $R$ in $G[L,Q]$ and, thus, partition~$L$ into $L_1\cup L_2$ and~$Q$ into $Q_1\cup Q_2$ where $L_1=L\cap V(R)$, $L_2=L\backslash L_1$, $Q_1=Q\cap V(R)$ and $Q_2=Q\backslash Q_1$. 
By maximality of $R$, all edges present in $G[L_2,Q_2]$ are coloured exclusively blue. Notice that, since $\eta\leq(\aI/100)^2$, we have, by~(\ref{Eia1}) and~(\ref{Eia2}), $|L|\geq|Q|$. Thus, since $|L_1|=|Q_1|$, we have $|L_2|\geq|Q_2|$ and so, in order to avoid having a blue connected-matching on at least $\aII k$ vertices, by Lemma~\ref{l:eleven}, we have $|Q_2|\leq (\half\aII+\eta^{1/2})k$ and, therefore, also $|L_1|=|Q_1|=|Q|-|Q_2|\geq (\half\aI-97\eta^{1/2})k.$

\begin{figure}[!h]
\centering
\includegraphics[width=64mm, page=60]{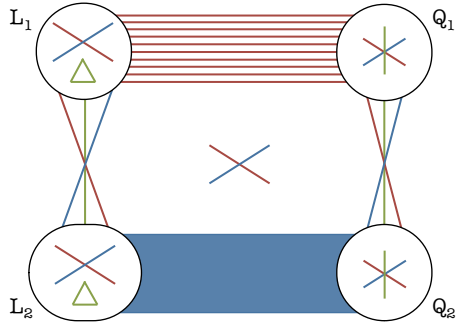}
\vspace{-3mm}\caption{Decomposition into $L_1\cup L_2\cup Q_1 \cup Q_2$ in Case E.i.a.}
  
\end{figure}

Also, by (E5),
in order to avoid having a red connected-matching on at least $\aI k$ vertices, we may assume that $|L_1|=|Q_1|\leq(\half\aI+64\eta^{1/2})k$.
Finally, we have $|Q_2|=|Q|-|Q_1|\geq (\half\aII-160\eta^{1/2})k.$ 

In summary, we have $|L_1|=|Q_1|$,
\begin{subequations}
\begin{align}
\label{Eia3} (\half\aII-97\eta^{1/2})k & \leq |Q_1|\leq (\half\aI+64\eta^{1/2})k,\\
\label{Eia4} (\half\aII-160\eta^{1/2})k  &\leq |Q_2|\leq (\half\aII+\eta^{1/2})k.
\end{align}
Note that, since $\eta\leq (\aI/10000)^2$, by~(\ref{Eia1}),~(\ref{Eia2}), we have $|L|\geq |Q|+3000\eta^{1/2}k$ and thus, since $|Q_1|=|L_1|$, also have
\begin{equation}
\label{Eia5}
|L_2|\geq|Q_2|+3000\eta^{1/2}k.
\end{equation}
Recalling that $|L_2|=|L|-|L_1|=|L|-|Q_1|$, since $\eta\leq (\aI/10000)^2$, considering~(\ref{Eia1}) and~(\ref{Eia3}), we also have 
\begin{equation}
\label{Eia6}
|L_2|\geq|Q_1|+3000\eta^{1/2}k.
\end{equation}
\end{subequations}
Equations~(\ref{Eia5}) and~(\ref{Eia6}) are crucial to the argument that follows since they provide us with the spare vertices we will need in order to establish the coloured structure of~$G$. In what follows, we will show that, after possibly discarding some vertices from each of $L_1, L_2, Q_1$ and $Q_2$, we may assume that all edges present in $G[L,Q_1]$ are coloured exclusively red and that all edges present in $G[L,Q_2]$ are coloured exclusively blue. This is done in three steps, the first dealing with $G[L_2,Q_1]$,  the second dealing with $G[L_1,Q_2]$ and the third dealing with $G[L_1,Q_1]$. Similar arguments will appear many times throughout the remainder of the proof of Theorem B. Note that, in what follows, we mostly omit floors and ceilings for the sake of clarity of presentation, we may do this since we are free to increase $k$ 
where necessary.

\begin{claim}
\label{claimLQ}
We may discard at most $842\eta^{1/2}k$ vertices from each of $L_1$ and $Q_1$, at most $161\eta^{1/2}k$ vertices from $L_2$ and at most $260\eta^{1/2}k$ vertices from $Q_2$ such that all remaining in $G[L,Q_1]$ are coloured exclusively red and all edges present in $G[L,Q_2]$ are coloured exclusively blue.
\end{claim}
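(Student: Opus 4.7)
The plan is three sequential clean-up steps, each handling one of the bipartite pieces $G[L_2,Q_1]$, $G[L_1,Q_2]$, $G[L_1,Q_1]$ by discarding the vertex set of a suitably bounded wrong-coloured matching. The key observation is that if $M$ is a maximum matching in any graph, then $V(M)$ is automatically a vertex cover of all edges of that graph (else $M$ could be extended); it therefore suffices in each step to bound a certain monochromatic matching.

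\textbf{Step 1 (clean $G[L_2,Q_1]$ to be red-only).} Let $M_B$ be a maximum blue matching in $G[L_2,Q_1]$. Since $G[L_2,Q_2]$ is entirely blue and $4\eta^4k$-almost-complete, and since $|L_2|\ge|Q_2|+3000\eta^{1/2}k$ by~(\ref{Eia5}), Lemma~\ref{l:eleven} applied to the bipartite graph $G[L_2\setminus V(M_B),Q_2]$ yields a further blue matching of $|Q_2|-4\eta^4k$ edges on fresh $L_2$ vertices. Combined with $M_B$, this is a blue matching of $|M_B|+|Q_2|-4\eta^4k$ edges lying in the main blue component (which contains all of $L_2\cup Q_2$ since $G[L_2,Q_2]$ is a connected almost-complete bipartite blue graph). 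Using $|Q_2|\ge(\tfrac12\aII-160\eta^{1/2})k$ from~(\ref{Eia4}), a forbidden blue connected-matching on $\aII k$ vertices is avoided only if $|M_B|<161\eta^{1/2}k$. Discarding $V(M_B)$ removes at most $161\eta^{1/2}k$ vertices from each of $L_2$ and $Q_1$.

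\textbf{Step 2 (clean $G[L_1,Q_2]$ to be blue-only).} Let $M_E$ be a maximum red matching in $G[L_1,Q_2]$. After Step~1 all edges of $G[L_2^{(1)},Q_1^{(1)}]$ are red, and since $|L_2^{(1)}|\gg|Q_1^{(1)}|$ by~(\ref{Eia6}) and the bipartite graph remains $4\eta^4k$-almost-complete, Hall's theorem gives a red matching saturating $Q_1^{(1)}$ into $L_2^{(1)}$. Combined with $M_E$ (vertex-disjoint, as $M_E\subseteq L_1\times Q_2$), this yields a red matching in $G[L,Q]$ of $|Q_1^{(1)}|+|M_E|$ edges. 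By the maximality of $R$ (with $|R|=|Q_1|$), we conclude $|M_E|\le|Q_1|-|Q_1^{(1)}|\le|M_B|<161\eta^{1/2}k$. Discarding $V(M_E)$ removes at most $161\eta^{1/2}k$ vertices from each of $L_1$ and $Q_2$.

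\textbf{Step 3 and totals.} Let $M_F$ be a maximum blue matching in the surviving $G[L_1^{(2)},Q_1^{(1)}]$. After Step~2, all edges of $G[L_1^{(2)},Q_2^{(2)}]$ are blue, so every $v\in L_1^{(2)}$ has blue edges to at least $|Q_2^{(2)}|-4\eta^4k$ vertices of $Q_2^{(2)}$ and hence lies in the main blue component. Combining $M_F$ with a near-perfect blue matching in $G[L_2^{(1)},Q_2^{(2)}]$ (Lemma~\ref{l:eleven}) produces a blue connected-matching of $|M_F|+|Q_2^{(2)}|-4\eta^4k$ edges. Since the Step~2 discards give $|Q_2^{(2)}|\ge(\tfrac12\aII-321\eta^{1/2})k$, the forbidden bound forces $|M_F|<322\eta^{1/2}k$; discarding $V(M_F)$ completes the proof. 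Summing, the total discards are at most $161\eta^{1/2}k$ from $L_2$, at most $161\eta^{1/2}k<260\eta^{1/2}k$ from $Q_2$, and at most $|M_E|+|M_F|<483\eta^{1/2}k<842\eta^{1/2}k$ from each of $L_1$ and $Q_1$, well within the stated bounds. The main obstacle (and source of the claim's constants) is the careful bookkeeping of the slack inequalities~(\ref{Eia5}) and~(\ref{Eia6}) so that each auxiliary matching uses independent vertices, together with the verification that the merged blue matchings in Steps~1 and~3 genuinely lie in a single blue component.
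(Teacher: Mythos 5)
Your proof is correct and stays comfortably inside the claim's discard budgets, but it is not quite the paper's argument. Steps 1 and 3 coincide with the paper's: bound the stray blue matching in $G[L_2,Q_1]$ (resp.\ $G[L_1,Q_1]$) by gluing it, via the blue-connectedness of the almost-complete all-blue graph $G[L_2,Q_2]$ and Lemma~\ref{l:eleven}, to a near-spanning blue matching there, using the slack in~(\ref{Eia5})--(\ref{Eia6}) to keep the vertex sets disjoint, thereby threatening a blue connected-matching on $\aII k$ vertices. Your Step 2, however, is genuinely different. The paper rules out a red matching on $520\eta^{1/2}k$ vertices in $G[L_1,Q_2]$ by rerouting the affected $R$-edges: it matches the freed set $\widetilde{Q}\subseteq Q_1$ into $L_2$ and assembles a red connected-matching on $2(|Q_1|-261\eta^{1/2}k)+520\eta^{1/2}k+518\eta^{1/2}k\geq\aI k$ vertices, which needs the red effective-connectivity of $L_1\cup Q_1$ and the lower bound $|Q_1|\geq(\half\aI-258\eta^{1/2})k$. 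You instead exploit the extremal choice of $R$ as the \emph{largest} red matching in $G[L,Q]$: a saturating red matching of $Q_1^{(1)}$ into $L_2^{(1)}$ (justified by almost-completeness, since after Step 1 all edges there are red) together with $M_E$ is again a red matching of $G[L,Q]$, so $|M_E|\leq|Q_1|-|Q_1^{(1)}|\leq 161\eta^{1/2}k$. This is shorter, avoids any connectivity argument and any reference to $\aI$, and gives a smaller discard ($161$ rather than $260$ per side); the paper's heavier version is the template it reuses later (Claims~\ref{claimMNPQ} and~\ref{claimNP}), where no single global maximum matching is available to play the role of $R$. Two small points to tidy. First, in Steps 1 and 3, if the maximum wrong-coloured matching exceeded the slack $3000\eta^{1/2}k$ of~(\ref{Eia5})--(\ref{Eia6}), you could not apply Lemma~\ref{l:eleven} with $Q_2$ as the smaller side as written; pass first to a sub-matching of the threshold size (the paper does this implicitly by positing a matching on exactly $322\eta^{1/2}k$, resp.\ $842\eta^{1/2}k$, vertices) and derive the contradiction from that. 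Second, your final tally for $Q_1$ should read $|M_B|+|M_F|$ rather than $|M_E|+|M_F|$ (the numerical bound is unchanged), and note that the paper's larger constants arise because it also discards $R$-mates to preserve $|L_1|=|Q_1|$ for use after the claim; the claim as stated does not require this, so your proof establishes it as written.
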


\begin{proof}
We begin by considering the blue graph. Observing that $G[L_2,Q_2]$ contains a blue connected-matching of size close to $\aII k$ and that there are `spare' vertices in $L_2$. Thus, we note that there can only be few blue edges in $G[L_2,Q_1]$. Indeed, suppose there exists a blue matching $B_S$ on at least  $322\eta^{1/2}k$ vertices in $G[L_2,Q_1]$. 

\begin{figure}[!h]
\centering
\includegraphics[width=64mm, page=10]{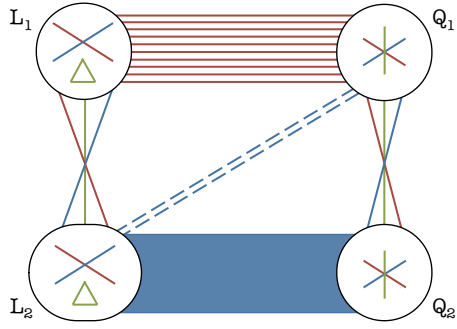}
\vspace{-2mm}\caption{The blue matching $B_S$.}
  
\end{figure}

Then, by (\ref{Eia6}), letting $\widetilde{L}={L_2\backslash V(B_S)}$, we have $|\widetilde{L}|\geq|Q_2|\geq(\half\aII-160\eta^{1/2})k$. Thus, by Lemma~\ref{l:eleven}, 
there exists a blue connected-matching $B_L$ on at least $(\aII -322\eta^{1/2})k$ vertices in $G[\widetilde{L},Q_2]$ which shares no vertices with $B_S$. Notice that, since $G_2[L_2,Q_2]$ is $4\eta^4 k$-almost-complete, $L_2\cup Q_2$ forms a single blue component in~$G$ and, thus, $B_S\cup B_L$ forms a blue connected-matching on $\aII k$ vertices. Therefore, no such matching as $B_S$ can exist. So, after discarding at most $161\eta^{1/2} k$ vertices from each of $Q_1$ and $L_2$, we may assume that all edges present in $G[L_2,Q_1]$ are coloured exclusively red.  

In order to retain the equality $|L_1|=|Q_1|$ and the property that every vertex in $L_1$ belongs to an edge of $R$, we discard from $L_1$ each vertex whose $R$-mate in $Q_1$ has already been discarded. Recalling~(\ref{Eia3})--(\ref{Eia6}), we now have 
\begin{align*}
|L_2|&\geq|Q_1|+2800\eta^{1/2}k, &
  |L_1|=|Q_1|&\geq (\half\aI-258\eta^{1/2})k,\\
|L_2|&\geq|Q_2|+2800\eta^{1/2}k, &  |Q_2|&\geq (\half\aII-160\eta^{1/2})k.
\end{align*}

We now consider the red graph. Since all edges in $G[L_2,Q_1]$ are coloured exclusively red, any two vertices in $Q_1$ have a common red neighbour in $L_2$. Thus, since every vertex in $L_1$ has a red neighbour in $Q_1$, we know that $G[L_1\cup Q_1]$ has a single effective red component. Suppose, then, that there exists a red matching $R_S$ on at least $520\eta^{1/2}k$ vertices in $G[L_1,Q_2]$. Then, recalling that the matching $R$ spans all the vertices of $G[L_1,Q_1]$, we may construct a red connected-matching on at least $\aI k$ vertices as follows.
%

Observe that there exists a set $R^{-}$ of $260\eta^{1/2}k$ edges belonging to $R$ such that $L_1\cap V(R_S)=L_1\cap V(R^{-})$. Define $R^*=R\backslash R^-$ and $\widetilde{Q}=Q_1\cap V(R^-)$ and consider $G[L_2,\widetilde{Q}]$. Since $|L_2|,|\widetilde{Q}|\geq 260\eta^{1/2} k$ and $G[L_2,\widetilde{Q}]$ is $4\eta^4 k$-almost-complete, by Lemma~\ref{l:eleven}, there exists a red connected-matching $R_T$ on at least $518\eta^{1/2}k$ vertices in $G[L_2,\widetilde{Q}]$. Then, $R^{*}\cup R_S \cup R_T$ is a red-connected-matching in 
$G[L_1,Q_1]\cup G[L_1,Q_2]\cup G[L_2,Q_1]$ on at least $2(|Q_1|-261\eta^{1/2}k)+520\eta^{1/2}k+518\eta^{1/2}k\geq \aI k$ vertices. 

\begin{figure}[!h]
\centering
\includegraphics[width=64mm, page=19]{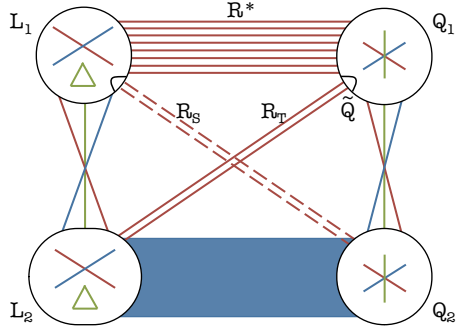}
\vspace{-2mm}\caption{Construction of a red connected-matching on $\aI k$ vertices.}
  
\end{figure}

Therefore, a matching such as $R_S$ cannot exist. Thus, after discarding at most $260\eta^{1/2}k$ vertices from each of $L_1$ and $Q_2$, we may assume that all edges present in $G[L_1,Q_2]$ are coloured exclusively blue. 
%
Note that, in order to retain the equality $|L_1|=|Q_1|$, we also discard from $Q_1$ each vertex whose $R$-mate in $L_1$ has already been discarded. After discarding vertices, we have
$$|L_1|=|Q_1|\geq(\half\aI-518\eta^{1/2}), \text{\hspace{8mm}} |Q_2|\geq(\half\aII-420\eta^{1/2}).$$
  To complete the claim, we return to the blue graph. Since all edges present in $G[L,Q_2]$ are coloured exclusively blue and $G$ is $4\eta^4 k$-almost-complete, $L\cup Q_2$ forms a single blue component. Also, since $|L_2|,|Q_2|\geq (\half\aII-420\eta^{1/2})k$, by Lemma~\ref{l:eleven}, there exists a blue connected-matching $B_2$ on at least $(\aII-842\eta^{1/2})k$ vertices in $G[L_2,Q_2]$. Thus, if there existed a blue matching $B_1$ on at least $842\eta^{1/2}k$ vertices in $G[L_1,Q_1]$, then $B_1\cup B_2$ would form a blue connected-matching on at least $\aII k$ vertices. Thus, after discarding at most $421\eta^{1/2} k$ vertices from each of $L_1$, $Q_1$, we may assume that all edges present in $G[L_1,Q_1]$ are coloured exclusively red. Thus completing the proof of the claim.
    \end{proof}
      \begin{figure}[!h]
\centering
\includegraphics[width=64mm, page=15]{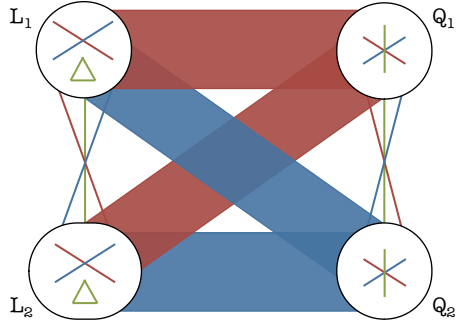}
\vspace{-2mm}\caption{Colouring of $G[L,Q]$ after Claim~\ref{claimLQ}.}
\end{figure}
  
Having proved Claim~\ref{claimLQ}, we know that all edges present in $G[L,Q_1]$ are coloured exclusively red and that all edges present in $G[L,Q_2]$ are coloured exclusively blue. Additionally, we have
\begin{equation}
\left.
\label{name}
\begin{aligned}
\quad\quad\quad\,\,
|L_2|&\geq|Q_1|+2800\eta^{1/2}k, \quad&\quad
|L_1|&=|Q_1|\geq(\half\aI-939\eta^{1/2})k,
\quad\quad\,\,\,\,\\
|L_2|&\geq|Q_2|+2800\eta^{1/2}k, & 
|Q_2|&\geq(\half\aII-420\eta^{1/2})k.
\end{aligned}
\right\}\!
\end{equation}
Now, suppose that there exists a red matching $R^{+}$ on $1880\eta^{1/2}k$ vertices in~$L$. Then, by (\ref{name}), we have $|L\backslash V(R^{+})|\geq |Q_1|\geq(\half\aI-939\eta^{1/2})k$. So, by Lemma~\ref{l:eleven}, there exists a red connected-matching on at least $(\aI-1880\eta^{1/2}) k$ vertices in $G[L\backslash V(R^{+}),Q_1]$, which can be augmented with edges from $R^{+}$ to give a red connected-matching on at least $\aI k$ vertices. Thus, after discarding at most $1880\eta^{1/2} k$ vertices from~$L$, we may assume that there are no red edges present in $G[L]$ and that we then have 
\begin{align*}
|L|&\geq |Q_1|+900\eta^{1/2}k, &|L|&\geq |Q_2|+900\eta^{1/2}k.
\end{align*}
Finally, suppose that there exists a blue connected-matching $B^{+}$ on $842\eta^{1/2}k$ vertices in~$L$. Then, $|L\backslash V(B^{+})|\geq |Q_2|\geq(\half\aII-420\eta^{1/2} )k$ so, by Lemma~\ref{l:eleven}, there exists a blue connected-matching on at least $(\aII-842\eta^{1/2})k$ vertices in $G[L\backslash V(B^{+}),Q_2]$, which can be augmented with edges from $B^{+}$ to give a blue connected-matching on at least~$\aII k$~vertices.
Thus, after discarding at most a further $842\eta^{1/2}$ vertices from~$L$, we may assume that all edges present in $G[L]$ are coloured exclusively green.  

\begin{figure}[!h]
\centering
\includegraphics[width=64mm, page=51]{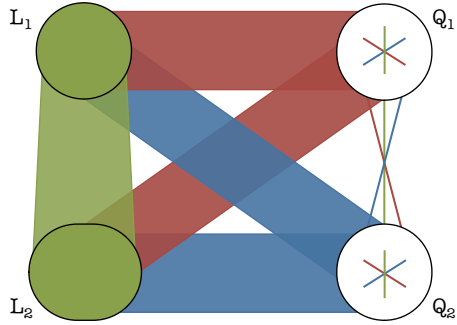}
\vspace{-3mm}\caption{Final colouring in Case E.i.a.}
  
\end{figure}

In summary, having discarded at most $4828\eta^{1/2}k$ vertices, we have 
\begin{align*}
|Q_1|&\geq(\half\aI-939\eta^{1/2})k, &
|Q_2|&\geq(\half\aI-420\eta^{1/2})k, &
|L|&\geq(\half\aIII-3750\eta^{1/2})k,
\end{align*}
and know that all edges present in $G[Q_1,L]$ are coloured exclusively red, all edges present in $G[Q_2,L]$ are coloured exclusively blue and all edges present in $G[L]$ are coloured exclusively green.
Thus, we have found, as a subgraph of~$G$, a graph belonging to $$\cK \left((\half\aI-1000\eta^{1/2})k, (\half\aII-1000\eta^{1/2})k, (\aIII-4000\eta^{1/2})k,4\eta^4k\right).$$

\begin{center}
***
\end{center}
\vspace{-3.5mm}

At the beginning of Case E.i.a, we assumed that the largest monochromatic component in $G[L,Q]$ was red. If, instead, that monochromatic component is blue, then the proof proceeds exactly as above with the roles of red and blue exchanged and with $\aI$ and $\aII$ exchanged. The result is identical.

\subsection*{Case E.i.b: {\rm $L\cup Q$ has a non-trivial partition with `cross' colouring.}}

Recall that we assume that $F$, the largest green connected-matching in $G$, spans at least $(\max\{\tfrac{3}{2}\aI+\half\aII,\aIII\}-10\eta^{1/2})k$ vertices and is contained in an odd component of $G$. We have a partition of $V(G)$ into $L\cup P \cup Q$, such that $|P| \leq  95\eta^{1/2}k$,
\begin{subequations}
\begin{align}
\label{Eib1}\tag{E4a$^\prime$} (\max\{\tfrac{3}{2}\aI+\tfrac{1}{2}\aII,\aIII\}-10\eta^{1/2})k  \leq  |L| & \leq \aIII k, \\
\label{Eib2}\tag{E4b$^\prime$} (\half\aI+\half\aII-96\eta^{1/2})k \leq  |Q| & \leq (\half\aI+\half\aII+5\eta^{1/2})k. \quad\quad\quad
\end{align} 
\end{subequations}
Additionally, in this subcase, we assume that  $L$ and $Q$ can be partitioned into $L_{1}\cup L_{2}$ and $Q_{1}\cup Q_{2}$ such that $|L_{1}|, |L_{2}|, |Q_{1}|, |Q_{2}|\geq 9\eta^{1/2}k$ and all edges present in $G[L_{i},Q_{j}]$ are coloured exclusively red for $i=j$, and exclusively blue for $i\neq j$.
Then, by Lemma~\ref{l:eleven}, there exist red connected-matchings 
\vspace{-1.5mm}
\begin{subequations}
\begin{align}
\label{Eib1a} M_{11}\text{ on at least }2\left(\min\{|L_1|,|Q_1|\}\right)-2\eta^{1/2}k\text{ vertices in }G[L_1,Q_1],\\
\label{Eib1b} M_{22}\text{ on at least }2\left(\min\{|L_2|,|Q_2|\}\right)-2\eta^{1/2}k\text{ vertices in }G[L_2,Q_2],
\end{align}
\vspace{-1.5mm}
and blue connected-matchings 
\vspace{-1.5mm}
\begin{align}
\label{Eib1c} M_{12}\text{ on at least }2\left(\min\{|L_1|,|Q_2|\}\right)-2\eta^{1/2}k\text{ vertices in }G[L_1,Q_2],\\
\label{Eib1d} M_{21}\text{ on at least }2\left(\min\{|L_2|,|Q_1|\}\right)-2\eta^{1/2}k\text{ vertices in }G[L_2,Q_1].
\end{align}
\end{subequations}

\vspace{-5mm}
\begin{figure}[!h]
\centering
\includegraphics[width=64mm, page=16]{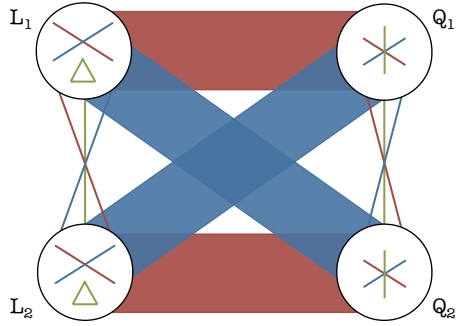}
\vspace{-3mm}\caption{Decomposition into $L_1\cup L_2\cup Q_1 \cup Q_2$ in Case E.i.b.}
\end{figure}

Thus, in order to avoid a red connected-matching on at least $\aI k$ vertices or a blue connected-matching on at least $\aII k$ vertices, we may assume that $|V(M_{11})|,|V(M_{22})|\leq\aI k$ and $|V(M_{12})|,|V(M_{21})|\leq\aII k$. Thus,~(\ref{Eib1a})--(\ref{Eib1d}), above can be used to obtain bounds on the sizes of $L_1,L_2,Q_1$ and $Q_2$ as follows:
\begin{subequations}
\begin{align}
\label{sizea} &\text{Since }|V(M_{11})|, |V(M_{22})|\leq \aI k, \text{we have}  & \min\{|L_1|,|Q_1|\}&\leq (\half\aI+\eta^{1/2})k,\\
\label{sizeb} &\hphantom{\text{Since }|V(M_{11})|, |V(M_{22})|\leq \aI k, \text{we l}}\text{and}  &\min\{|L_2|,|Q_2|\}&\leq (\half\aI+\eta^{1/2})k. \\
\label{sizec} &\text{Since }|V(M_{12})|, |V(M_{21})|\leq \aII k, \text{we have}  & \min\{|L_1|,|Q_2|\}&\leq (\half\aII+\eta^{1/2})k,\\
\label{sized} &\hphantom{\text{Since }|V(M_{12})|, |V(M_{21})|\leq \aII k, \text{we l}}\text{and}  &\min\{|L_2|,|Q_1|\}&\leq (\half\aII+\eta^{1/2})k. 
\end{align}
\end{subequations}
Observe that, since $\eta\leq (\aI/15)^2$, by~(\ref{Eib1}) and~(\ref{Eib2}), we have
\begin{align*}
|L_1|+|L_2|=|L| & \geq (\tfrac{3}{2}\aI+\half\aII-10\eta^{1/2})k\\
& \geq (\half\aI+\half\aII+5\eta^{1/2})k+(\aI-15\eta^{1/2})k\geq |Q|=|Q_1|+|Q_2|.
\end{align*}
Thus, it is not possible to have, for instance, $|Q_1|,|Q_2|\geq|L_1|,|L_2|$. Without loss of generality, we therefore consider four possibilities

\begin{itemize}
\item[(i)] $|L_1|,|L_2|\geq|Q_1|,|Q_2|$;
\item[(ii)] $|L_1|\geq|Q_1|,|Q_2|\geq|L_2|$;
\item[(iii)] $|L_1|\geq|Q_i|\geq|L_2|\geq|Q_j|$ for $\{i,j\}=\{1,2\}$; 
\item[(iv)] $|Q_i|\geq|L_1|\geq|L_2|\geq|Q_j|$ for $\{i,j\}=\{1,2\}$.
\end{itemize}

\subsection*{Case E.i.b.i: $|L_1|,|L_2|\geq|Q_1|,|Q_2|$\rm.}

In this case, by~(\ref{sizec}) and~(\ref{sized}), we may assume that $|Q_1|, |Q_2|\leq (\half\aII+\eta^{1/2})k$ and, consequently, by~(\ref{Eib2}), we have $$|Q_1|, |Q_2|\geq (\half\aI-97\eta^{1/2})k.$$ Thus, by~(\ref{Eib1a})--(\ref{Eib1d}), we have red connected-matchings $M_{11}, M_{22}$, each on at least $(\aI-196\eta^{1/2})k$ vertices and blue connected-matchings $M_{12}, M_{21}$, each on at least $(\aI-196\eta^{1/2})k$ vertices. (Also, in order to avoid having a blue connected-matching on at least~$\aII k$ vertices, we may  assume that $\aI \leq \aII+196\eta^{1/2}$.)
Notice that there can be no red edges present in $G[L_1,L_2]\cup G[Q_1,Q_2]$, since any such edge would mean $M_{11}$ and $M_{22}$ being in the same red-component and so $M_{11}\cup M_{22}$ would form a red connected-matching on at least $\aI k$ vertices. Similarly, there can be no blue edges present in $G[L_1,L_2]\cup G[Q_1,Q_2]$. Therefore, all edges present in $G[L_1,L_2]\cup G[Q_1,Q_2]$ are coloured exclusively~green. 

\vspace{-1mm}
\begin{figure}[!h]
\centering
\includegraphics[width=64mm, page=17]{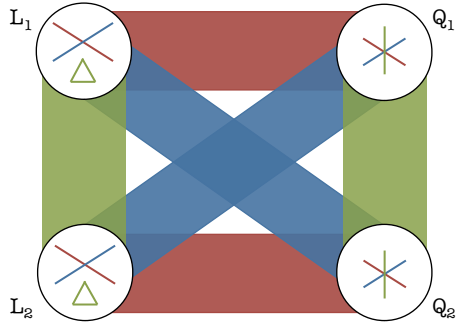}
\vspace{-3mm}\caption{Colouring of $G[L_1\cup L_2\cup Q_1 \cup Q_2]$ in Case E.i.b.i.}
\end{figure}

Thus, we have found, as a subgraph of~$G$, a graph belonging to 
\begin{align*}
\cK^*\big((\half\aI-97\eta^{1/2})k, (\half\aI&-97\eta^{1/2})k, (\half\aI-97\eta^{1/2})k, \\&(\half\aI-97\eta^{1/2})k, (\aIII-10\eta^{1/2})k, 4\eta^4 k\big)\subseteq \cK_1^{*}\cup\cK_2^{*}.
\end{align*}

\subsection*{Case E.i.b.ii: $|L_1|\geq|Q_1|,|Q_2|\geq|L_2|$\rm.}

In this case, by~(\ref{sizea}) and~(\ref{sizec}), we have 
\begin{align}
\label{Eibii0}
|Q_1|&\leq(\half\aI+\eta^{1/2})k, & |Q_2|&\leq (\half\aII+\eta^{1/2})k,
\intertext{and so, by~(\ref{Eib2}),}
\label{Eibii1}|Q_1|&\geq (\half\aI-97\eta^{1/2})k, & |Q_2|&\geq (\half\aII-97\eta^{1/2})k.
\end{align}
Thus, by~(\ref{Eib1a}), we have a red connected-matching $M_{11}$ on at least $(\aI-196\eta^{1/2})k$ vertices in $G[L_1,Q_1]$ and, by~(\ref{Eib1c}), have a blue connected-matching $M_{12}$ on at least $(\aII-196\eta^{1/2})k$ vertices in $G[L_1,Q_2]$. 

Suppose, then, that $|L_2|\geq 100\eta^{1/2}k$. Then, by Lemma~\ref{l:eleven}, there exists a red connected-matching $R_S$ on at least $198\eta^{1/2}k$ vertices in $G[L_2,Q_2]$ and a blue connected-matching~$B_S$ on at least $198\eta^{1/2}k$ vertices in $G[L_2,Q_1]$. Thus, there can be no red edges present in $G[L_1,L_2]\cup G[Q_1,Q_2]$, since otherwise $M_{11}$ and $R_{S}$ would belong to the same red component and together span at least $\aI k$ vertices. Likewise, there can be no blue edges present in $G[L_1,L_2]\cup G[Q_1,Q_2]$, since then $M_{12}$ and $B_{S}$ would then belong to the same blue component and together span at least $\aII k$ vertices. Therefore, we have, as a subgraph of~$G$, a graph belonging to

{~}
\vspace{-6mm}
\begin{align*}
\cK^*\big((\half\aI-97\eta^{1/2})k, (\half\aII-97\eta^{1/2})k, (&\aIII-\half\aII-12\eta^{1/2})k, \\& 100\eta^{1/2}k, (\aIII-10\eta^{1/2})k, 4\eta^4 k\big)\subset \cK_2^{*}.
\end{align*}

Thus, we may assume that $|L_2|\leq 100\eta^{1/2}k$, in which case we have 
\begin{align}
\label{Eibii2}
|L_1|&\geq(\max\{\tfrac{3}{2}\aI+\tfrac{1}{2}\aII,\aIII\}-110\eta^{1/2})k
\end{align}
and know that all edges present in $G[Q_1,L_1]$ are coloured exclusively red and all edges present in $G[Q_2,L_1]$ are coloured exclusively blue. In this case, we disregard $L_2$ and consider $G[L_1]$. 

\begin{figure}[!h]
\centering
\includegraphics[width=64mm, page=53]{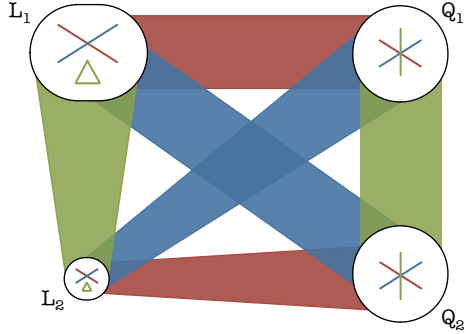}
\vspace{-3mm}\caption{Colouring of $G[L\cup Q]$ in Case E.i.b.ii.}
\end{figure}

Since $\eta\leq (\aI/1000)^2$, by~(\ref {Eibii0}) and~(\ref{Eibii2}),
\begin{align}
\label{Eibii3}
|L_1|&\geq|Q_1|+200\eta^{1/2}k, & |L_1|&\geq|Q_2|+200\eta^{1/2}k.
\end{align}
Suppose there exists a red matching $R_S$ on $196\eta^{1/2}k$ vertices in $G[L_1]$. Then, by~(\ref{Eibii1}) and~(\ref{Eibii3}), $$|L_1\backslash V(R_S)|,|Q_1|\geq(\half\aI-97\eta^{1/2})k.$$ Thus, by Lemma~\ref{l:eleven}, there exists a red connected-matching $R_L$ on at least $(\aI - 196\eta^{1/2})k$ vertices in $G[L_2,Q_1]$ which shares no vertices with $R_S$. Since all edges present in $G[L_1,Q_1]$ are coloured exclusively red and $G$ is $4\eta^4 k$-almost-complete, $R_S$ and $R_L$ belong to the same red component and, thus, together form a red connected-matching on at least $\aI k$ vertices. Therefore, the largest red matching in $G[L_1]$ spans at most~$196\eta^{1/2}k$~vertices.
Similarly, the largest blue connected-matching in $G[L_1]$ spans at most $196\eta^{1/2}k$ vertices. Thus, after discarding at most $392\eta^{1/2}k$ vertices from $L_1$, we may assume that all edges in $G[L]$ are coloured green. 
Thus, we have obtained, as a subgraph of~$G$, a graph in  $$\cK\left((\half\aI-97\eta^{1/2})k, (\half\aII-97\eta^{1/2})k, (\aIII-502\eta^{1/2})k, 4\eta^4 k \right).$$  

\subsection*{Case E.i.b.iii: {\rm $|L_1|\geq|Q_i|\geq|L_2|\geq|Q_j|$ for $\{i,j\}=\{1,2\}$.}}
Suppose that $|L_1|\geq|Q_1|\geq|L_2|\geq|Q_2|$. Then, by~(\ref{sizea})--(\ref{sized}) and~(\ref{Eib2}), we obtain 
\begin{align*}
(\half\aI-97\eta^{1/2})k&\leq|Q_1|\leq(\half\aI+\eta^{1/2})k,\\
(\half\aII-97\eta^{1/2})k&\leq|Q_2|\leq (\half\aII+\eta^{1/2})k.
\end{align*}
Then, given the sizes of $M_{11}$, $M_{12}$, $M_{21}$ and $M_{22}$, there can be no red or blue edges present in $G[L_1,L_2]\cup G[Q_1,Q_2]$ and we may assume that $\aI\leq\aII+196\eta^{1/2}k$.
Thus, we have found, as a subgraph of~$G$, a graph
 in 
\begin{align*}
\cK^*\big((\half\aI-97\eta^{1/2})k, (\half\aII-97&\eta^{1/2})k, (\aIII-\half\aI-12\eta^{1/2})k, \\& (\half\aII-97\eta^{1/2})k, (\aIII-10\eta^{1/2})k, 4\eta^4 k\big)\subset \cK_2^{*}.
\end{align*}
Suppose instead that $|L_1|\geq|Q_2|\geq|L_2|\geq|Q_1|$. Then, by~(\ref{sizea})--(\ref{sized}) and~(\ref{Eib2}), we obtain 
\begin{align*}
(\half\aI-97\eta^{1/2})k&\leq|Q_1|\leq(\half\aII+\eta^{1/2})k,\\
(\half\aI-97\eta^{1/2})k&\leq|Q_2|\leq (\half\aII+\eta^{1/2})k.
\end{align*}
Again, there can be no red or blue edges present in $G[L_1,L_2]\cup G[Q_1,Q_2]$. Thus, we have found, as a subgraph of~$G$, a graph in 
\begin{align*}
\cK^*\big((\half\aI-97\eta^{1/2})k, (\half\aII-97&\eta^{1/2})k, (\aIII-\half\aII-12\eta^{1/2})k, \\& (\half\aI-97\eta^{1/2})k, (\aIII-10\eta^{1/2})k, 4\eta^4 k\big)\subset \cK_2^{*}.
\end{align*}

\subsection*{Case E.i.b.iv: {\rm $|Q_i|\geq|L_1|\geq|L_2|\geq|Q_j|$ for $\{i,j\}=\{1,2\}$.}}

Suppose $|Q_1|\geq|L_1|\geq|L_2|\geq|Q_2|$. Then, $|Q_1|\geq\half|L_1|+\half|L_2|=\half|L|$ and,  since $\eta\leq(\aII/100)^2$, by~(\ref{Eib1}), we have $$|Q_1|\geq(\tfrac{3}{4}\aI+\tfrac{1}{4}\aII-5\eta^{1/2})k\geq(\half\aI+\half\aII-5\eta^{1/2})k\geq(\half\aI+\eta^{1/2})k.$$

Also, by~(\ref{Eib1}), we have $$|L|\geq (\tfrac{3}{2}\aI+\half\aII-10\eta^{1/2})k\geq(\aI+\aII-10\eta^{1/2})k.$$ Thus, either $$|L_1|\geq(\aI-5\eta^{1/2})k, \text{\hspace{3mm} or \hspace{3mm}} |L_2|\geq (\aII-5\eta^{1/2})k.$$  Recall that all edges present in  $G[L_1,Q_1]$ are coloured exclusively red and all edges present in $G[L_2, Q_1]$ are coloured exclusively blue. Thus, by Lemma~\ref{l:eleven}, there exists either a red connected-matching on at least $\aI k$ vertices in $G[L_1,Q_1]$ or a blue connected-matching on at least $\aII k$ vertices in $G[L_1,Q_1]$.

The result is the same in the case that $|Q_2|\geq|L_1|\geq|L_2|\geq|Q_1|$.

\subsection*{Case E.i.c: {\rm $G[L,Q]$ contains red and blue `stars' centred in $L$.}}

We continue to assume that $F$, the largest green connected-matching in $G$, spans at least $(\max\{\tfrac{3}{2}\aI+\half\aII,\aIII\}-10\eta^{1/2})k$ vertices and is contained in an odd component of $G$ and that we have a partition of $V(G)$ into $L\cup P \cup Q$, satisfying~(\ref{E6}) and~(\ref{E7}) such that all edges present in $G[L,Q]$ are coloured red or blue. Additionally, in this case, we have vertices $v_r, v_b\in L$ such that $v_r$ has red edges to all but $36\eta^{1/2} k$ vertices in $Q$ and $v_b$ has blue edges to all but $36\eta^{1/2} k$ vertices in~$Q$. Observe that the existence of $v_r$ means that  
\begin{itemize}
\item[(E5$^{\prime}$)] $G[Q]$ has a red effective-component on at least $|Q|-36\eta^{1/2}k$ vertices.
\end{itemize}
The proof then proceeds as in Case E.i.a but with (\ref{E5}) replaced by (E5$^{\prime}$). The result is the same.

\subsection*{Case E.i.d: {\rm $G[L,Q]$ contains red and blue `stars' centred in $Q$.}}

We continue to assume that $F$, the largest green connected-matching in $G$, spans at least $(\max\{\tfrac{3}{2}\aI+\half\aII,\aIII\}-10\eta^{1/2})k$ vertices and is contained in an odd component of $G$ and that we have a partition of $V(G)$ into $L\cup P \cup Q$, satisfying~(\ref{E6}) and~(\ref{E7}) such that all edges present in $G[L,Q]$ are coloured red or blue. Additionally, in this case,  we have vertices $v_r, v_b\in Q$ such that $v_r$ has red edges to all but $36\eta^{1/2} k$ vertices in $L$ and $v_b$ has blue edges to all but $36\eta^{1/2} k$ vertices in~$L$. Observe that the existence of $v_r$ means that  
\begin{itemize}
\item[(E5$^{\prime\prime}$)] $G[L]$ has a red effective-component on at least $|L|-36\eta^{1/2}k$ vertices.
\end{itemize}
The proof then proceeds as in Case E.i.a but with (\ref{E5}) replaced by (E5$^{\prime\prime}$). The result is the same.

\subsection*{Case E.ii: $|Q|\leq 95\eta^{1/2}k.$}

Recall that we have a decomposition of $V(G)$ into $M\cup N\cup P \cup Q$ such that:
\begin{itemize}
\labitem{E1}{E1-ii}$M\cup N$ is the vertex set of~$F$ and every edge of~$F$ belongs to $G[M,N]$;
\labitem{E2}{E2-ii} every vertex in~$P$ has a green edge to~$M$;
\labitem{E3}{E3-ii} there are no green edges in $G[N,P]$, $G[M,Q]$, $G[N,Q]$, $G[P,Q]$ or $G[P]$.
\end{itemize}
Recall also that
\begin{align}\label{Eii0}\tag{E4a}(\max\{\tfrac{3}{4}\aI+\tfrac{1}{4}\aII,\half\aIII\}-5\eta^{1/2})k & \leq  |M|\,,\,|N| \leq \half\aIII k,\\
\tag{E4b}(\half\aI+\half\aII-\eta)k & \leq  |P|+|Q| \leq (\half\aI+\half\aII+5\eta^{1/2})k.
\end{align}
Here, we consider the case when $Q$ is sufficiently small to be disregarded. In that case, provided $|Q|\leq 95\eta^{1/2} k$, from (E4b), we have 
\begin{equation}
\label{Eii1} 
\tag{E4b$^{\prime\prime}$}(\half\aI+\half\aII-96\eta^{1/2})k\leq|P|\leq (\half\aI+\half\aII+5\eta^{1/2})k.
\end{equation}
By (\ref{E3-ii}), every edge in~$G[P]$ is red or blue. Then, since~$G$ is $4\eta^4 k$-almost-complete, by Lemma~\ref{l:dgf0}, the largest monochromatic component $F_P$ in~$G[P]$ contains at least $|P|-\eta^{1/2}k$ vertices. Suppose that that this component is red. 

We consider the largest red matching $R$ in $G[N,P]$ and partition $N$ into $N_1\cup N_2$ and~$P$ into $P_1\cup P_2$, where $N_1=N\cap V(R)$, $N_2 = N\backslash N_1$, $Q_1=Q\cap V(R)$ and $Q_2=Q\backslash Q_1$.
Then, by maximality of $R$, all edges present in $G[N_2,P_2]$ are coloured exclusively blue. 

\begin{figure}[!h]
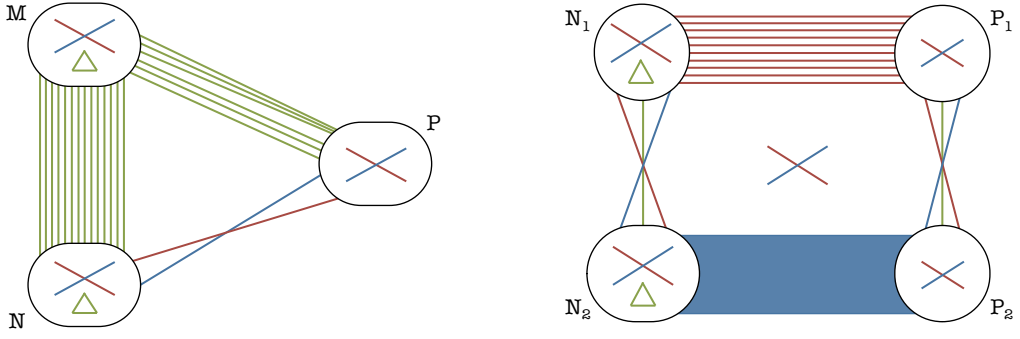

\centering{
\mbox{
\hspace{-8mm}
{~}\quad
{\includegraphics[width=64mm, page=7]{CaseE-Figs.pdf}}\quad\quad\quad
{\includegraphics[width=64mm, page=9]{CaseE-Figs.pdf}}}}
\caption{Decompositions in Case E.ii.}   
\label{th1a}
\end{figure}

Since~$P$ has a large red connected-component, all but $\eta^{1/2}k$ of the edges of $R$ belong to the same red-component and thus form a red connected-matching. Thus, in order to avoid having a red connected-matching on at least $\aI k$ vertices, we have $|P_1|\leq (\half\aI+\eta^{1/2})k$.

Suppose that $|N_2|\geq|P_2|$. Then, we have $|P_2|\leq(\half\aII+\eta^{1/2})k$, since otherwise, by Lemma~\ref{l:eleven}, $G[N_2,P_2]$ would contain a blue connected-matching on at least $\aII k$ vertices. Thus, by (\ref{Eii0}) and~(\ref{Eii1}), we have
\begin{equation}
\label{Eii2}
\left.
\begin{aligned}
\quad\quad\quad\quad\quad\quad\quad\quad(\half\aI-97\eta^{1/2})k&\leq|P_1|\leq (\half\aI+\eta^{1/2})k,\quad\quad\quad\quad\quad\quad\quad\\
(\half\aII-97\eta^{1/2})k&\leq|P_2|\leq (\half\aII+\eta^{1/2})k,\\
(\half\aI-97\eta^{1/2})k&\leq|N_1|\leq (\half\aI+\eta^{1/2})k,\\
(\half\aII-97\eta^{1/2})k&\leq|N_2|. \\
\end{aligned}
\right\}
\end{equation}
If instead $|N_2|\leq|P_2|$, then, by Lemma~\ref{l:eleven}, we have $|N_2|\leq(\half\aII+\eta^{1/2})k$ since, otherwise, by Lemma~\ref{l:eleven}, $G[N_2,P_2]$ contains a blue connected-matching on at least~$\aII k$ vertices. Thus, by (\ref{Eii0}) and~(\ref{Eii1}), we have
\begin{equation}
\label{Eii3}
\left.
\begin{aligned}
\,\,\,\,\,\quad\quad\quad\quad\quad(\tfrac{3}{4}\aI-\tfrac{1}{4}\aII-6\eta^{1/2})k&\leq|P_1|\leq (\half\aI+\eta^{1/2})k,\quad\quad\quad\quad\quad\quad\quad\\
(\half\aII-97\eta^{1/2})k&\leq|P_2|\leq (\tfrac{3}{4}\aI-\tfrac{1}{4}\aII+11\eta^{1/2})k,\\
(\tfrac{3}{4}\aI-\tfrac{1}{4}\aII-6\eta^{1/2})k&\leq|N_1|\leq (\half\aI+\eta^{1/2})k,\\
(\tfrac{1}{4}\aI+\tfrac{1}{4}\aII-6\eta^{1/2})k&\leq|N_2|\leq(\half\aII+\eta^{1/2})k, 
\end{aligned}
\right\}\,\,
\end{equation}
which yields a contradiction unless $\aI\leq\aII+28\eta^{1/2}$.

Six of the eight bounds obtained in~(\ref{Eii3}) are stronger than the corresponding bounds obtained in~(\ref{Eii2}). The seventh is weaker but can be written in a similar form. Thus, we will combine the two cases and continue under the assumption that
\begin{equation}
\label{Eii4}
\left.
\begin{aligned}
\,\,\,\,\,\,\quad\quad\quad\quad\quad\quad\quad(\half\aI-97\eta^{1/2})k&\leq|P_1|\leq (\half\aI+\eta^{1/2})k,\quad\quad\quad\quad\quad\quad\quad\\
(\half\aII-97\eta^{1/2})k&\leq|P_2|\leq (\half\aII+32\eta^{1/2})k,\\
(\half\aI-97\eta^{1/2})k&\leq|N_1|\leq (\half\aI+\eta^{1/2})k,\\
(\half\aII-97\eta^{1/2})k&\leq|N_2|. 
\end{aligned}
\right\}
\end{equation}

Suppose there exists a blue matching $B_1$ on $198\eta^{1/2}k$ vertices in $G[N_2,P_1]$ and a blue matching $B_2$ on $198\eta^{1/2}k$ vertices in $G[N_1,P_2]$. Then, defining $\widetilde{N}=N_2\backslash V(B_1)$ and $\widetilde{P}=P_2\backslash V(B_2)$, we have $|\widetilde{N}|, |\widetilde{P}|\geq (\half\aII-197\eta^{1/2})k$ and, thus, by Lemma~\ref{l:eleven}, there exists a blue connected-matching $B_3$ on at least $(\aII - 396\eta^{1/2})k$ vertices in $G[\widetilde{N},\widetilde{P}]$. Since $G_2[N_2,P_2]$ is $4\eta^4 k$-almost-complete, all vertices in $N_2\cup P_2$ belong to the same blue component. Thus, $B_1\cup B_2 \cup B_3$ forms a blue connected-matching on at least $\aII k$ vertices.

\begin{figure}[!h]
\centering
\includegraphics[width=64mm, page=22]{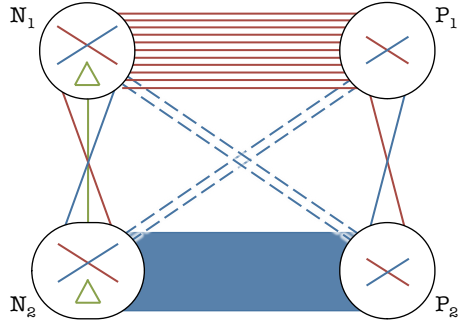}
\vspace{-2mm}\caption{Colouring of the edges of $G[L_1,Q_2], G[L_2,Q_1]$.}
  
\end{figure}

Therefore, we proceed considering the following two subcases:
\begin{itemize}
\item[(a)] the largest blue matching in $G[N_2,P_1]$ spans at most $198\eta^{1/2}k$ vertices;
\item[(b)] the largest blue matching in $G[N_1,P_2]$ spans at most $198\eta^{1/2}k$ vertices.
\end{itemize}

\subsection*{Case E.ii.a: {\rm Most edges in $G_2[N_2,P_1]$ are red.}}

Since the largest blue matching in $G[N_2,P_1]$ spans at most $198\eta^{1/2}k$ vertices, we can discard at most $99\eta^{1/2}k$ vertices from each of $N_2$ and $P_1$ so that all edges present in $G[N_2,P_1]$ are coloured exclusively red. 
%
%
%
In order to retain the equality $|N_1|=|P_1|$ and the property that every vertex in $N_1$ belongs to an edge of $R$, we discard from $N_1$ each vertex whose $R$-mate in $P_1$ has already been discarded. Recalling~(\ref{Eii4}), we then have
\begin{equation*}
\begin{aligned}
\,\,\,\,\,\,\quad\quad\quad\quad\quad\quad\quad(\half\aI-196\eta^{1/2})k&\leq|P_1|\leq (\half\aI+\eta^{1/2})k,\quad\quad\quad\quad\quad\quad\quad\\
(\half\aII-97\eta^{1/2})k&\leq|P_2|\leq (\half\aII+32\eta^{1/2})k,\\
(\half\aI-196\eta^{1/2})k&\leq|N_1|\leq (\half\aI+\eta^{1/2})k,\\
(\half\aII-196\eta^{1/2})k&\leq|N_2|. 
\end{aligned}
\end{equation*}

Now, suppose there exists a red matching $R_U$ on $396\eta^{1/2}k$ vertices in $G[N_1,P_2]$.  Observe that there exists a set $R^{-}$ of $198\eta^{1/2}k$ edges belonging to $R$ such that $N_1\cap V(R_S)=N_1\cap V(R^{-})$. Then, we have $|N_2|,|P_1\cap V(R^-)|\geq 198\eta^{1/2}k$ so, by Lemma~\ref{l:eleven}, there exists a red connected-matching $R_V$ on at least $394\eta^{1/2}k$ vertices in $G[N_2,P_1\cap V(R^-)]$. Then, since all edges present in $G[N_2,P_1]$ are coloured exclusively red and every vertex in $N_1$ has a red neighbour in $P_1$, $(R\backslash R^{-})$, $R_U$ and $R_V$ belong to the same red-component and, thus, together form a red-connected-matching on at least $2(\half\aI-196\eta^{1/2}k-199\eta^{1/2}k)+396\eta^{1/2}k+394\eta^{1/2}k\geq\aI k$ vertices. Thus, we can discard at most $198\eta^{1/2}k$ vertices from each of $N_1$ and $P_2$ so that all edges present in $G[N_1,P_2]$ are coloured exclusively blue. We then have
\begin{align*}
(\half\aI-196\eta^{1/2})k&\leq|P_1|\leq (\half\aI+\eta^{1/2})k,\quad\,\,\,\,~\\
(\half\aII-295\eta^{1/2})k&\leq|P_2|\leq (\half\aII+32\eta^{1/2})k,\\
(\half\aI-394\eta^{1/2})k&\leq|N_1|\leq (\half\aI+\eta^{1/2})k,\\
(\half\aII-196\eta^{1/2})k&\leq|N_2|. 
\end{align*}

\begin{figure}[!h]
\vspace{-2mm}
\centering
\includegraphics[width=64mm, page=23]{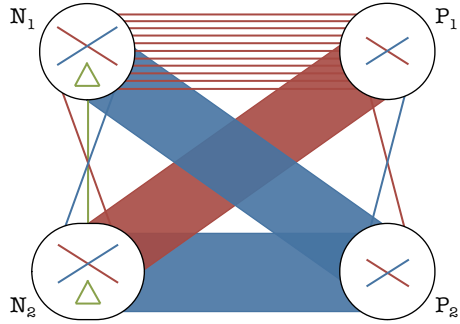}
\vspace{-2mm}\caption{Colouring of the edges of $G[N_1,P_2]$.}
  
\end{figure}

Observe then that, since $|N_2|, |P_2|\geq (\half\aII-295\eta^{1/2})k$, by Lemma~\ref{l:eleven}, there exists a blue connected-matching on at least $(\aII-592\eta^{1/2})k$ vertices in $G[N_2,P_2]$. Thus, since all edges present in $G[N,P_2]$ are coloured exclusively blue, if there existed a blue matching on $592\eta^{1/2}k$ vertices in $G[N_1,P_1]$, we would have a blue connected-matching on at least $\aII k$ vertices. Therefore, after discarding at most $296\eta^{1/2}k$ vertices from each of $P_1$ and~$N_1$, we may assume that all edges in $G[P_1,N_1]$ are coloured exclusively red and that
\begin{equation}
\label{Eiia3}
\left.
\begin{aligned}
\,\,\,\,\,\,\quad\quad\quad\quad\quad\quad\quad(\half\aI-492\eta^{1/2})k&\leq|P_1|\leq (\half\aI+\eta^{1/2})k,\quad\quad\quad\quad\quad\quad\quad\\
(\half\aII-295\eta^{1/2})k&\leq|P_2|\leq (\half\aII+32\eta^{1/2})k,\\
(\half\aI-690\eta^{1/2})k&\leq|N_1|\leq (\half\aI+\eta^{1/2})k,\\
(\half\aII-196\eta^{1/2})k&\leq|N_2|.
\end{aligned}
\right\}
\end{equation}

Then, since $\eta\leq(\aII/20000)^2$, by~(\ref{Eiia3}), we have 
\begin{align}
\label{nlargea}
|N|=|N_1|+|N_2|&\geq|P_1|+5000\eta^{1/2}k,\\
\label{nlargeb}
|N|=|N_1|+|N_2|&\geq|P_2|+5000\eta^{1/2}k.
\end{align}

In particular, $|N|\geq|P_2|$ so, by Lemma~\ref{l:eleven}, we have 
\begin{equation}
\label{p2upb}
|P_2|\leq(\half\aII+\eta^{1/2})k.
\end{equation} 

  \begin{figure}[!h]
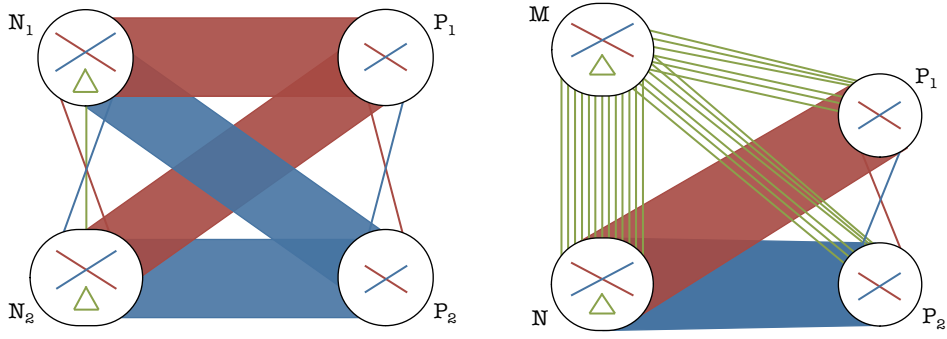

\centering{
\mbox{\hspace{-2mm}{\includegraphics[width=64mm, page=25]{CaseE-Figs.pdf}}\quad{\includegraphics[width=64mm, page=26]{CaseE-Figs.pdf}}}}
\caption{Colouring after three rounds of discarding vertices.}   
\end{figure}

Having determined the colouring of the red-blue graph $G[N,P]$, we now expand our sights to $G[M\cup N]$ and $G[M,P]$, each of which can include green edges:

Suppose there exists a red matching $R_A$ on $986\eta^{1/2}k$ vertices in $G[N]\cup G[M,N]$. Then, by~(\ref{Eiia3}) and~(\ref{nlargea}), we have $|N\backslash V(R_A)|\geq|P_1|\geq(\half\aI - 492\eta^{1/2})k$ so, by Lemma~\ref{l:eleven}, there exists a red connected-matching $R_B$ on at least $(\aI -986\eta^{1/2})k$ in $G[N\backslash V(R_A),P_1]$. Since all edges in $G[N,P_1]$ are coloured exclusively red, $R_A$ and $R_B$ belong to the same red component and, thus, together form a red connected-matching on at least $\aI k$ vertices. Similarly, if there exists a blue matching on at least $594\eta^{1/2}k$ vertices in $G[N]\cup G[M,N]$, then this can be used along with $G[N,P_2]$ to give a blue connected-matching on at least $\aII k$ vertices. Thus, after discarding at most $790\eta^{1/2}k$ vertices from~$M$ and at most $2370\eta^{1/2}k$ vertices from~$N$, we may assume that all edges present in $G[N]$ and $G[M\cup N]$ are green.  

\begin{figure}[!h]
\centering
\includegraphics[width=64mm, page=27]{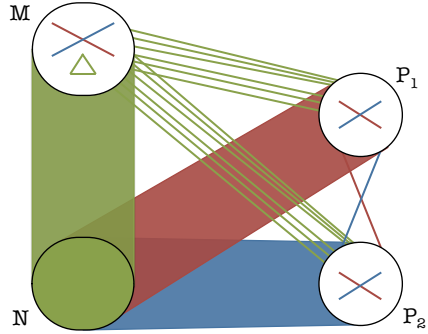}
\vspace{-2mm}\caption{Colouring of the edges of $G[N]\cup G[N,M]$.}
  
\end{figure}

After discarding these vertices, we have
\begin{equation}
\label{Eiia4}
\left.
\begin{aligned}
\,\,\,\,\,\quad\quad\quad\quad\quad\quad\quad\quad\quad(\half\aIII-800\eta^{1/2})k&\leq|M|\leq \half\aIII k,\,\quad\quad\quad\quad\quad\quad\quad\quad\quad  \\
(\half\aIII-3300\eta^{1/2})k&\leq|N|\leq \half\aIII k.
\end{aligned}
\right\}
\end{equation}

Next, suppose there exists a green matching $G_S$ on $8240\eta^{1/2}k$ vertices in $G[M,P]$. By~(\ref{Eiia4}), we have $|M\backslash V(G_S)|\geq (\half\aIII-4920\eta^{1/2})k$. Then, taking $\widetilde{N}$ to be any subset of $(\half\aIII-4920\eta^{1/2})k$ vertices in~$N$, by Lemma~\ref{l:eleven}, there exists a green connected-matching $G_L$ on at least $(\aIII-9842\eta^{1/2})k$ vertices in $G[M\backslash V(G_S), \widetilde{N}]$. Also, since $|N\backslash \widetilde{N}|\geq 1618\eta^{1/2}k$, by Theorem~\ref{dirac}, there exists a green matching $G_T$ on $1610\eta^{1/2}k$ vertices in $G[N\backslash \widetilde{N}]$. Then, together $G_S$, $G_L$ and $G_T$ form a green connected-matching on at least $\aIII k$ vertices which is odd by the definition of the decomposition. Thus, we may discard at most $4120\eta^{1/2}k$ vertices from each of $M$ and~$P$ such that none of the edges present in $G[P,M\cup N]$ are coloured green. 

Then, recalling~(\ref{Eiia3}),~(\ref{p2upb}) and~(\ref{Eiia4}), we have
\begin{equation}
\label{Eiia5a}
\left.
\begin{aligned}
\,\,\,\,\quad\quad\quad\quad\quad\quad\quad(\half\aI-4612\eta^{1/2})k&\leq|P_1|\leq (\half\aI+\eta^{1/2})k,\quad\quad\quad\quad\quad\quad\quad\\
(\half\aII-4415\eta^{1/2})k&\leq|P_2|\leq (\half\aII+\eta^{1/2})k,\\
(\half\aIII-4920\eta^{1/2})k&\leq|M|\leq \half\aIII k,\\
(\half\aIII-3300\eta^{1/2})k&\leq|N|\leq \half\aIII k.
\end{aligned}
\right\}
\end{equation}
Now, suppose there exists a red matching $R_S$ on $9230\eta^{1/2}k$ vertices in $G[M,P_2]$. By~(\ref{nlargea}) and~(\ref{Eiia5a}),  we have $|N|\geq|P_1|\geq(\half\aI -4612\eta^{1/2})k$. So, by Lemma~\ref{l:eleven}, there exists a red connected-matching $R_L$ on at least $2|P_1|-2\eta^{1/2}k\geq (\aI-9226\eta^{1/2})k$ vertices in $G[N,P_1]$. Since $G[P]$ has a red effective-component on at least $|P|-\eta^{1/2}k$, $R_L$ belongs to the same red component as at least $4614\eta^{1/2}k$ of the edges of $R_S$, thus giving a red connected-matching on at least $\aI k$ vertices in $G[M,P_2]\cup G[N,P_1]$. Therefore, after discarding at most $4615\eta k$ vertices from each of $M$ and $P_2$, we may assume that all edges present in $G[M,P_2]$ are coloured exclusively blue. 

We then have
\begin{equation}
\label{Eiia5}
\left.
\begin{aligned}
\,\,\,\,\quad\quad\quad\quad\quad\quad\quad(\half\aI-4612\eta^{1/2})k&\leq|P_1|\leq (\half\aI+\eta^{1/2})k,\quad\quad\quad\quad\quad\quad\quad\\
(\half\aII-9030\eta^{1/2})k&\leq|P_2|\leq (\half\aII+\eta^{1/2})k,\\
(\half\aIII-9335\eta^{1/2})k&\leq|M|\leq \half\aIII k,\\
(\half\aIII-3300\eta^{1/2})k&\leq|N|\leq \half\aIII k.
\end{aligned}
\right\}
\end{equation}
Similarly, suppose there exists a blue matching $B_S$ on  $18062\eta^{1/2}k$ vertices in $G[M,P_1]$. By~(\ref{nlargeb}) and~(\ref{Eiia5}), $|N|\geq|P_2|\geq(\half\aI -9030\eta^{1/2})k$. So, by Lemma~\ref{l:eleven}, there exists a blue connected-matching $B_L$ on at least $2|P_2|-2\eta^{1/2}k\geq (\aI-18062\eta^{1/2})k$ vertices in $G[N,P_2]$. Then, since $G$ is $4\eta^4 k$-almost-complete and all edges present in $G[M,P_2]$ are coloured exclusively blue, $B_L$ and $B_S$ belong to the same blue component of $G$, and thus, together, form a blue connected-matching on at least $\aII k$ vertices. Thus, after discarding at most $9031\eta k$ vertices from each of $M$ and $P_1$, we may assume that all edges present in $G[M,P_1]$ are coloured exclusively red. We then have
\begin{equation}
\label{Eiia6}
\left.
\begin{aligned}
\,\,\,\,\,\,\,\,\quad\quad\quad\quad\quad\quad\quad(\half\aI-13643\eta^{1/2})k&\leq|P_1|\leq (\half\aI+\eta^{1/2})k,\quad\quad\quad\quad\quad\quad\,\\
(\half\aII-9030\eta^{1/2})k&\leq|P_2|\leq (\half\aII+\eta^{1/2})k,\\
(\half\aIII-18366\eta^{1/2})k&\leq|M|\leq \half\aIII k,\\
(\half\aIII-3300\eta^{1/2})k&\leq|N|\leq \half\aIII k.
\end{aligned}
\right\}\!\!
\end{equation}
Finally, since $|N|\geq|P_1|,|P_2|$, if there existed a red matching on at least $27288\eta^{1/2}k$ vertices in $G[M]$ or a blue matching on at least $18062\eta^{1/2}k$ vertices in $G[M]$, then we could obtain a red connected-matching on at least $\aI k$ vertices or a blue connected-matching on at least $\aII k$ vertices. Thus, after discarding at most $45350\eta^{1/2}k$ further vertices from $M$, we may assume that all edges present in $G[M\cup N]$ are green.

\begin{figure}[!h]
\centering
\includegraphics[width=64mm, page=31]{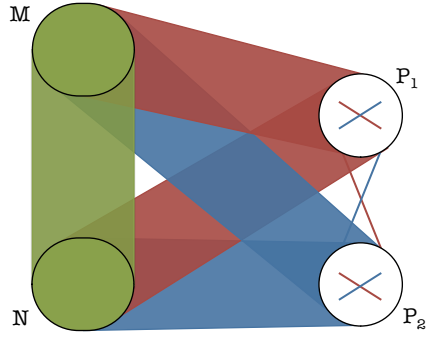}
\vspace{-2mm}\caption{Final colouring in Case E.ii.a.}
  
\end{figure}

In summary, we now have
\begin{align*}
|P_1|&\geq(\half\aI-13643\eta)k,  &
|P_2|&\geq(\half\aII-9030\eta)k,  &
|M\cup N|&\geq(\aIII-67216\eta^{1/2})k,  
\end{align*}

and know that all edges present in $G[M\cup N, P_1]$ are coloured exclusively red, all edges present $G[M\cup N, P_2]$ are coloured exclusively blue and all edges in $G[M\cup N]$ are coloured exclusively green.

We have thus found, as a subgraph of $G$, a graph belonging to
$$\cK\left((\half\aI-14000\eta^{1/2})k, (\half\aII-14000\eta^{1/2})k, (\aIII-68000\eta^{1/2})k, 4\eta^4 k \right).$$ 

\subsection*{Case E.ii.b: {\rm Most edges in $G_2[N_1,P_2]$ are red.}}

Recall that we have a decomposition of $V(G)$ into $M\cup N\cup P \cup Q$ such that:
\begin{itemize}
\labitem{E1}{E1-iib} $M\cup N$ is the vertex set of~$F$ and every edge of~$F$ belongs to $G[M,N]$;
\labitem{E2}{E2-iib} every vertex in~$P$ has a green edge to~$M$;
\labitem{E3}{E3-iib} there are no green edges in $G[N,P]$, $G[M,Q]$, $G[N,Q]$, $G[P,Q]$ or $G[P]$.
\end{itemize}
Recall also that
\begin{align}\label{E4a-iib}\tag{E4a}(\max\{\tfrac{3}{4}\aI+\tfrac{1}{4}\aII,\half\aIII\}-5\eta^{1/2})k & \leq  |M|\,,\,|N| \leq \half\aIII k,\\
\label{E4b-iib}\tag{E4b}(\half\aI+\half\aII-\eta)k & \leq  |P|+|Q| \leq (\half\aI+\half\aII+5\eta^{1/2})k.
\end{align}

Furthermore, recall that the largest red matching $R$ in $G[N,P]$ defines a partition of $N$ into $N_1\cup N_2$ and $P$ into $P_1\cup P_2$ such that the edges of $R$ belong $G[N_1,P_1]$, all edges present in $G[N_2,P_2]$ are coloured exclusively blue and that
\begin{align*}
(\half\aI-97\eta^{1/2})k&\leq|P_1|\leq (\half\aI+\eta^{1/2})k,\\
(\half\aII-97\eta^{1/2})k&\leq|P_2|\leq (\half\aII+32\eta^{1/2})k,\\
(\half\aI-97\eta^{1/2})k&\leq|N_1|\leq (\half\aI+\eta^{1/2})k,\\
(\half\aII-97\eta^{1/2})k&\leq|N_2|. 
\end{align*}

Additionally, in this case, we assume that the largest blue matching in $G[N_1,P_2]$ spans at most $198\eta^{1/2}k$ vertices. Thus, we can discard at most $99\eta^{1/2}k$ vertices from each of~$N_1$ and $P_2$ so that all edges present in $G[N_1,P_2]$ are coloured exclusively red. 

\begin{figure}[!h]
\centering
\includegraphics[width=64mm, page=56]{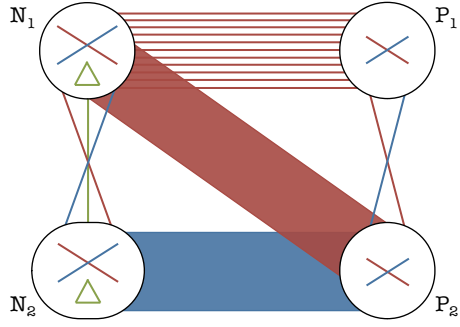}
\vspace{-2mm}\caption{Initial colouring in Case E.ii.b.}
  
\end{figure}

In order to retain the equality $|N_1|=|P_1|$ and the property that every vertex in $P_1$ belongs to an edge of $R$, we discard from $P_1$ each vertex whose $R$-mate in $N_1$ has already been discarded. We then have
\begin{align*}
(\half\aI-196\eta^{1/2})k&\leq|P_1|\leq (\half\aI+\eta^{1/2})k,\\
(\half\aII-196\eta^{1/2})k&\leq|P_2|\leq (\half\aII+32\eta^{1/2})k,\\
(\half\aI-196\eta^{1/2})k&\leq|N_1|\leq (\half\aI+\eta^{1/2})k,\\
(\half\aII-97\eta^{1/2})k&\leq|N_2|. 
\end{align*}

Then, suppose there exists a red matching on $396\eta^{1/2}k$ vertices in $G[N_2,P_1]$. Such a matching could be used together with $G[N_1,P_1]$ and $G[N_1,P_2]$ to give a red connected-matching on at least $\aI k$ vertices. Thus, we can discard at most $198\eta^{1/2}k$ vertices from each of $N_2$ and $P_1$ so that all edges present in $G[N_2,P_1]$ are coloured exclusively blue. 

We then have
\begin{align*}
(\half\aI-394\eta^{1/2})k&\leq|P_1|\leq (\half\aI+\eta^{1/2})k,\\
(\half\aII-196\eta^{1/2})k&\leq|P_2|\leq (\half\aII+32\eta^{1/2})k,\\
(\half\aI-196\eta^{1/2})k&\leq|N_1|\leq (\half\aI+\eta^{1/2})k,\\
(\half\aII-295\eta^{1/2})k&\leq|N_2|. 
\end{align*}

Then, if there existed a blue matching on $592\eta^{1/2}k$ vertices in $G[P_1,N_1]$, this could be used along with $G[P,N_2]$ to give a blue connected-matching on at least $\aII k$ vertices. Thus, after discarding at most $296\eta^{1/2}k$ vertices from each of $N_1$ and $P_1$, we may assume that all edges present in $G[N_1,P_1]$ are coloured exclusively red 
and that 
\begin{equation}
\label{Eiib0}
\left.
\begin{aligned}
\,\,\,\,\,\,\quad\quad\quad\quad\quad\quad\quad (\half\aI-690\eta^{1/2})k&\leq|P_1|\leq (\half\aI+\eta^{1/2})k, \quad\quad\quad\quad\quad\quad\quad \\
(\half\aII-196\eta^{1/2})k&\leq|P_2|\leq (\half\aII+32\eta^{1/2})k,\\
(\half\aI-492\eta^{1/2})k&\leq|N_1|\leq (\half\aI+\eta^{1/2})k,\\
(\half\aII-295\eta^{1/2})k&\leq|N_2|.
\end{aligned}
\right\}
\end{equation}
  Observe, now, that, given any subset $P'$ of $(\half\aI-492\eta^{1/2})k$ vertices from~$P$, by Lemma~\ref{l:eleven}, there exists a red connected-matching on at least $(\aI-986\eta^{1/2})k$ vertices in $G[N_1,P']$. Thus, the largest red matching in $G[P]$ spans at most $986\eta^{1/2}k$ vertices. Similarly, given any subset $P''$ of $(\half\aI-295\eta^{1/2})k$ vertices from~$P$, we can find a blue connected-matching on at least $(\aI-592\eta^{1/2})k$ vertices in $G[N_2,P'']$. Thus, the largest blue matching in $G[P]$ spans at most $592\eta^{1/2}k$ vertices.  Thus, since there are no green edges within $G[P]$, we have $|P|\leq 1600\eta^{1/2}k$, which, since $\eta\leq(\aI/10000)^2$, contradicts~(\ref{Eiib0}), completing Case E.ii.b.

  \begin{figure}[!h]
\centering
\includegraphics[width=64mm, page=33]{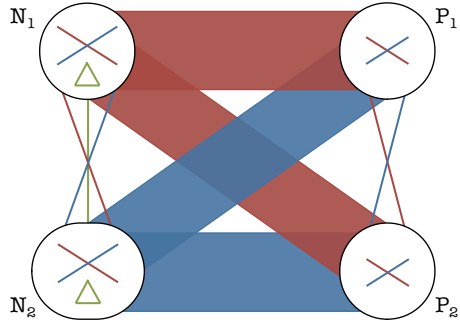}
\vspace{-3mm}\caption{Colouring of the edges of $G[N,P]$.} \label{f236f}
\end{figure}

At the beginning of Case E.ii, we assumed that $F_P$, the largest monochromatic component in $G[P]$, was red. If instead that monochromatic component is blue, then the proof proceeds exactly as above following the same steps with the roles of red and blue exchanged and with $\aI$ and $\aII$ exchanged. The result is identical.

\subsection*{Case E.iii: $|P|,|Q|\geq 95\eta^{1/2}k.$}

We now consider the case when neither $P$ nor $Q$ is trivially small. This case is fairly involved, combining elements of Case E.i and Case E.ii with new arguments. However, because neither $P$ nor $Q$ is trivially small, we can exploit the structure of the two coloured graph $G[P\cup Q]$.

Recall that we have a decomposition of $V(G)$ into $M\cup N\cup P \cup Q$ such that:
\begin{itemize}
\labitem{E1}{E1-iii} $M\cup N$ is the vertex set of~$F$ and every edge of~$F$ belongs to $G[M,N]$;
\labitem{E2}{E2-iii} every vertex in~$P$ has a green edge to~$M$;
\labitem{E3}{E3-iii} there are no green edges in $G[N,P]$, $G[M,Q]$, $G[N,Q]$, $G[P,Q]$ or $G[P]$.
\end{itemize}

\begin{figure}[!h]
\centering
\includegraphics[width=64mm, page=5]{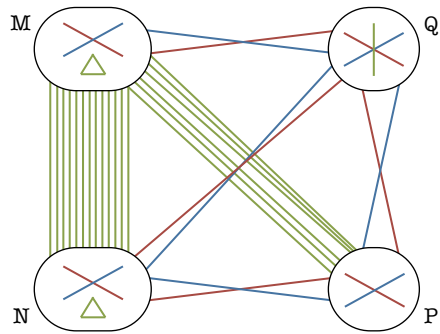}
\vspace{-3mm}\caption{Decomposition in Case E.iii.}
  
\end{figure}

Recall also that
\begin{align}\label{E4a-iii}\tag{E4a}(\max\{\tfrac{3}{4}\aI+\tfrac{1}{4}\aII,\half\aIII\}-5\eta^{1/2})k & \leq  |M|\,,\,|N| \leq \half\aIII k,\\
\label{E4b-iii}\tag{E4b}(\half\aI+\half\aII-\eta)k & \leq  |P|+|Q| \leq (\half\aI+\half\aII+5\eta^{1/2})k.
\end{align}
In this case, we assume that $|P|,|Q|\geq 95\eta^{1/2} k$. 
Recall that $G[P\cup Q]$ is $4\eta^4 k$-almost-complete. Then, since $|P|+|Q|\geq 190\eta^{1/2}k$, we have $4\eta^4 k \leq \eta^2 (|P|+|Q|-1)$ and, thus, $G[P\cup Q]$ is $(1-\eta^2)$-complete. Since $\aII\leq\aI\leq 1$, recalling (E4b), we have $|P|+|Q|\leq(\half\aI+\half\aII+5\eta^{1/2})k\leq(1+5\eta^{1/2})k$ and so $$|P|,|Q|\geq 95\eta^{1/2}k\geq4\eta^{1/2}(1+5\eta^{1/2})\geq4\eta^{1/2}(|P|+|Q|)\geq4\eta(|P|+|Q|).$$ Thus, provided $k\geq 1/(190\eta^{5/2})$, we may apply Lemma~\ref{l:dgf1} to $G[P\cup Q]$, giving rise to two possibilities:
\begin{itemize}
\item[(a)] $P\cup Q$ contains a monochromatic component $F$ on at least $|P\cup Q|-8\eta k$ vertices;
\item[(b)] there exist vertices $w_r, w_b\in Q$ such that $w_r$ has red edges to all but $8\eta k$ vertices in~$P$ and $w_b$ has blue edges to all but $8\eta k$ vertices in~$P$.

\end{itemize}

\subsection*{Case E.iii.a: {\rm $P\cup Q$ has a large monochromatic component.}}

Suppose that $F$, the largest monochromatic component in $P\cup Q$ is red. We consider $G[M,Q]$ and $G[N,P]$, both of which have only red and blue edges, and let $R$ be the largest red matching in $G[M,Q]\cup G[N,P]$. We partition each of $M,N,P,Q$ into two parts such that $M_1=M\cap V(R)$, $M_2=M\backslash M_1$, $N_1=N\cap V(R)$, $N_2=N\backslash N_1$,
$P_1=P\cap V(R)$, $P_2=M\backslash P_1$,
$Q_1=Q\cap V(R)$ and $Q_2=Q\backslash Q_1$. 
Observe that, by maximality of $R$, all edges present in $G[P_2,N_2]$ and $G[Q_2,N_2]$ are blue.

\begin{figure}[!h]
\centering
\includegraphics[width=64mm, page=2]{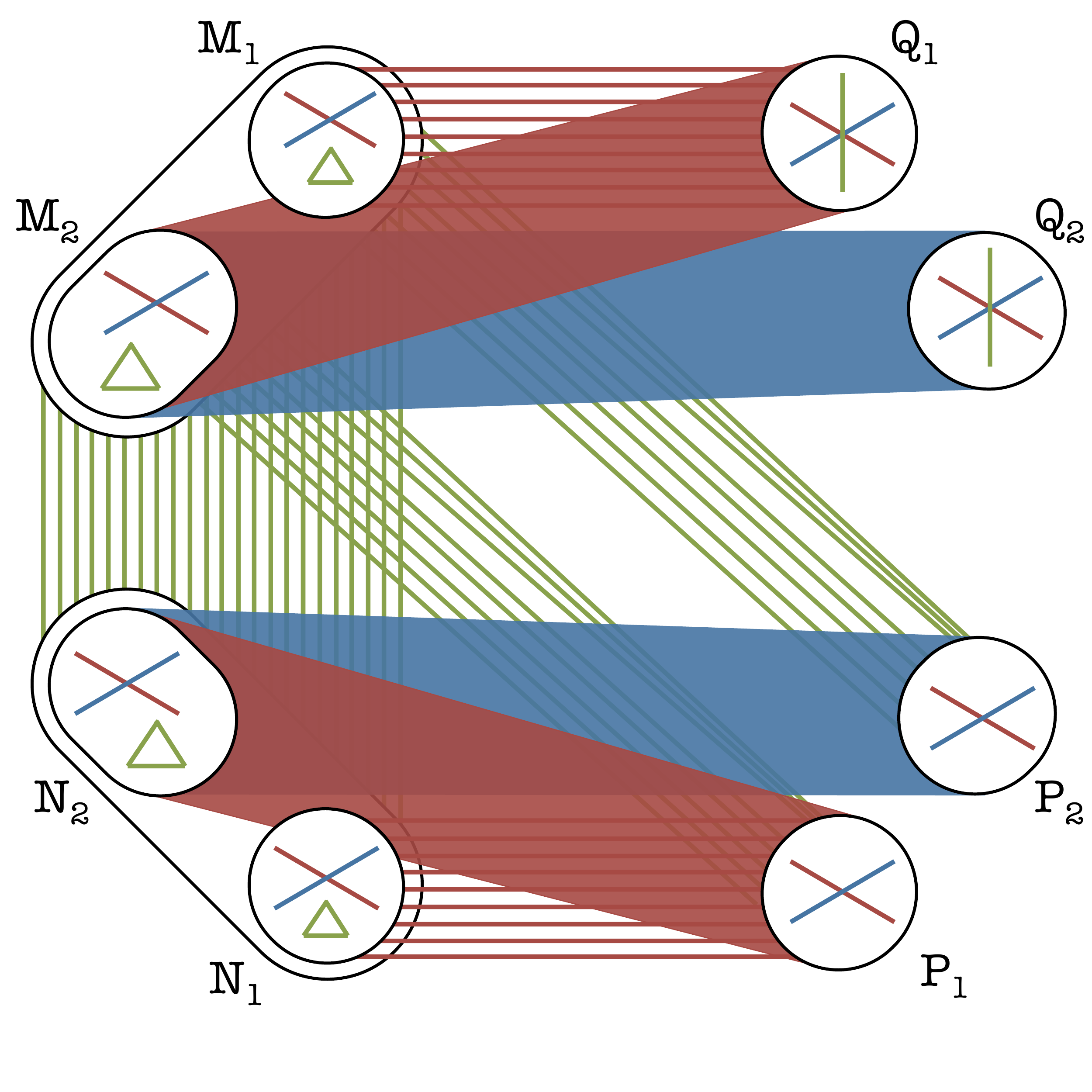}
  
\vspace{-4mm}
\caption{Decomposition into eight parts.}
\end{figure}

Notice that, since $G$ is $4\eta^4 k$-almost-complete, $G[M_2\cup Q_2]$ and $G[N_2\cup P_2]$ each have a single blue component. We then consider two subcases:
\begin{itemize}
\item[(i)] $P_2$ and $Q_2$ belong to the same blue component of $G$;
\item[(ii)] $P_2$ and $Q_2$ belong to different blue components of $G$.
\end{itemize}

\subsection*{Case E.iii.a.i: {\rm $P_2$ and $Q_2$ belong to the same blue component.}}

Since $F$, the largest red component in $P\cup Q$, contains at least $|P\cup Q|-8\eta k$ vertices, all but at most $8\eta k$ of the edges of $R$ belong to $F$. Thus, since $|M_1|=|Q_1|$ and $|N_1|=|P_1|$, we have a red connected-matching on at least $2(|P_1|+|Q_1|-8\eta k)$ vertices and so we may assume that 
\begin{equation}
\label{Eiiai1} |P_1|+|Q_1|\leq (\half\aI+8\eta)k
\end{equation} in order to avoid having a red connected-matching on at least $\aI k$ vertices.

Recalling (\ref{E4a-iii}) and (\ref{E4b-iii}), since $\eta\leq(\aI/100)^2$, we have 
\begin{equation*}
\label{Eiiiai2} |M|+|N|\geq|P|+|Q|+80\eta^{1/2}k
\end{equation*}
and also
$$|M|,|N|\geq\tfrac{3}{4}\aI+\tfrac{1}{4}\aII-5\eta^{1/2}k\geq \half\aI+\half\aII-5\eta^{1/2}k\geq |P|+|Q|-10\eta^{1/2}.$$
Thus, since $|M_1|=|Q_1|$, $|N_1|=|P_1|$ and $|P|,|Q|\geq 95\eta^{1/2}k$, we have
\begin{subequations}
\begin{align}
\label{Eiiai4a}|M_2|&\geq |P|+|Q_2|-10\eta^{1/2}k\geq |Q_2|+85\eta^{1/2}k,\\
\label{Eiiai4b}|N_2|&\geq |P_2|+|Q|-10\eta^{1/2}k\geq |P_2|+85\eta^{1/2}k.
\end{align}
\end{subequations}
In particular, we have $|M_2|\geq|Q_2|$ and $|N_2|\geq|P_2|$. Thus, since $P_2$ and $Q_2$ belong to the same effective-blue component, by Lemma~\ref{l:eleven}, there exists a blue connected-matching on at least $(2|P_2|-2\eta k)+(2|Q_2|-2\eta k)$ vertices in $G[N_2,P_2]\cup G[M_2,Q_2]$. Thus, we may assume that
\begin{equation}
\label{Eiiai5} 
|P_2|+|Q_2|\leq (\half\aII+2\eta)k,
\end{equation}
in order to avoid having a blue connected-matching on at least $\aII k$ vertices.

Then, by (\ref{E4a-iii}),~(\ref{E4b-iii}),~(\ref{Eiiai1}) and~(\ref{Eiiai5}), we have
\begin{align}
\label{Eiiai5a} |P_1|+|Q_1|\geq (\half\aI-3\eta)k, \text{\hspace{8mm}} |P_2|+|Q_2|\geq (\half\aII-9\eta)k.
\end{align}

We now proceed to determine the coloured structure of $G$. We begin by proving the following claim whose proof follows the same three steps as that of Claim~\ref{claimLQ}. Considering in parallel $G[M,Q]$ and $G[N,P]$, the first step in the proof is to show that, after possibly discarding some vertices, all edges contained in $G[M_2,Q_1]\cup G[N_2,P_1]$ are coloured exclusively red, the second is to show that, after possibly discarding further vertices, all edges contained in $G[M_1,Q_2]\cup G[N_1,P_2]$ are coloured exclusively blue and the third is to show that, after possibly discarding still more vertices, all edges contained in $G[M_1,Q_1]\cup G[N_1,P_1]$ are coloured exclusively red.

\begin{claim}
\label{claimMNPQ}
We may discard at most $70\eta k$ vertices from $P_1 \cup Q_1$, at most $48\eta^{1/2}k$ vertices from $P_2\cup Q_2$, at most $94\eta^{1/2}k$ vertices from each of $M_1$ and $N_1$ and at most~$11\eta k$~vertices from $M_2\cup N_2$ such that, in what remains, all edges present in $G[M,Q_1]\cup G[N,P_1]$ are coloured exclusively red and all edges present in $G[M,Q_2]\cup G[N,P_2]$ are coloured exclusively blue.   
\end{claim}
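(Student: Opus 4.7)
The plan is to mirror the three-step strategy of Claim~\ref{claimLQ}, but now work in parallel with the two ``halves'' $G[M,Q]$ and $G[N,P]$, linked by the standing assumption that $P_2$ and $Q_2$ lie in a common blue effective-component of~$G$ (via $G[M_2,Q_2]$, $G[N_2,P_2]$ and the connecting blue edges). Throughout, the key resource we keep exploiting is the slack $|M_2|\geq|Q_2|+85\eta^{1/2}k$ and $|N_2|\geq|P_2|+85\eta^{1/2}k$ from~(\ref{Eiiai4a}) and~(\ref{Eiiai4b}), together with the large red component of $P\cup Q$ (which ensures all but at most $8\eta k$ of the edges of $R$ lie in a single effective red component).

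\textbf{Step 1 (kill blue edges in $G[M_2,Q_1]\cup G[N_2,P_1]$).} Suppose some such blue matching $B_S$ spans at least, say, $23\eta^{1/2}k$ vertices in the combined bipartite graph. Split $B_S$ into its contributions $B_S^{MQ}\subseteq G[M_2,Q_1]$ and $B_S^{NP}\subseteq G[N_2,P_1]$. Inside $G[M_2\setminus V(B_S),Q_2]$ and $G[N_2\setminus V(B_S),P_2]$, the slack from~(\ref{Eiiai4a})--(\ref{Eiiai4b}) still gives us bipartite graphs with the larger side at least as big as the smaller and with $|Q_2|,|P_2|$ essentially unchanged, so Lemma~\ref{l:eleven} produces blue connected-matchings on $2|Q_2|-O(\eta^{1/2})k$ and $2|P_2|-O(\eta^{1/2})k$ vertices respectively. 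Because $P_2$ and $Q_2$ share a blue component, all four pieces live together in one blue component, and summing with~(\ref{Eiiai5a}) yields a blue connected-matching on at least $\aII k$ vertices, a contradiction. So, after discarding at most about $12\eta^{1/2}k$ vertices each from $M_2,N_2$ and from $P_1,Q_1$ (and then trimming the $R$-mates in $M_1,N_1$ to preserve $|M_1|=|Q_1|$, $|N_1|=|P_1|$), all edges in $G[M_2,Q_1]\cup G[N_2,P_1]$ are red.

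\textbf{Step 2 (kill red edges in $G[M_1,Q_2]\cup G[N_1,P_2]$).} Once Step~1 is in place, $G[M,Q_1]$ is effectively red-connected via $M_2$ (every vertex of $Q_1$ has red neighbours into $M_2$, likewise for $N_2$ and $P_1$), so $G[M,Q_1]\cup G[N,P_1]$ together with the matching $R$ forms one big red structure that contains a near-perfect red connected-matching on $\sim 2(|P_1|+|Q_1|)$ vertices. Now suppose there is a red matching $R_S$ of size $\geq 24\eta^{1/2}k$ in $G[M_1,Q_2]\cup G[N_1,P_2]$. Pick the set $R^-\subseteq R$ of at most $12\eta^{1/2}k$ edges whose $M_1\cup N_1$-endpoints clash with $R_S$, remove them, and on the freed partners in $P_1\cup Q_1$ use the all-red graphs $G[N_2,P_1]$, $G[M_2,Q_1]$ together with Lemma~\ref{l:eleven} to re-attach via a red connected-matching $R_T$. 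The concatenation $(R\setminus R^-)\cup R_S\cup R_T$ sits in a single red effective-component (using the large red component in $P\cup Q$), and its size exceeds $2(|P_1|+|Q_1|)+|R_S|-O(\eta^{1/2})k\geq \aI k$, a contradiction. Discarding at most $O(\eta^{1/2}k)$ vertices from each of $M_1,N_1,P_2,Q_2$ (again rebalancing $R$ by removing $R$-mates) forces $G[M_1,Q_2]\cup G[N_1,P_2]$ to be exclusively blue.

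\textbf{Step 3 (kill blue edges in $G[M_1,Q_1]\cup G[N_1,P_1]$).} After Step~2, all edges in $G[M,Q_2]\cup G[N,P_2]$ present are blue, so by almost-completeness of $G$ and the common-component hypothesis the whole set $M\cup N\cup P_2\cup Q_2$ lies in a single blue effective-component, and Lemma~\ref{l:eleven} gives a blue connected-matching of size close to $2(|P_2|+|Q_2|)\sim\aII k$ inside $G[M_2,Q_2]\cup G[N_2,P_2]$. Any additional blue matching of size $>O(\eta^{1/2}k)$ in $G[M_1,Q_1]\cup G[N_1,P_1]$ would tip this over $\aII k$, contradicting the blue connected-matching bound. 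Discarding at most $O(\eta^{1/2}k)$ vertices from $M_1,N_1,P_1,Q_1$ leaves $G[M,Q_1]\cup G[N,P_1]$ exclusively red, which is the desired conclusion.

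A careful bookkeeping of the discards through the three steps gives the numerical totals claimed: at most $70\eta k$ from $P_1\cup Q_1$, at most $48\eta^{1/2}k$ from $P_2\cup Q_2$, at most $94\eta^{1/2}k$ from each of $M_1,N_1$, and at most $11\eta k$ from $M_2\cup N_2$. The main obstacle is not the combinatorial idea, which is essentially the same as in Claim~\ref{claimLQ}, but the simultaneous management of two parallel bipartite structures while maintaining both the identities $|M_1|=|Q_1|$, $|N_1|=|P_1|$ and the slack bounds~(\ref{Eiiai4a})--(\ref{Eiiai4b}); this forces each ``corrective'' deletion on one side to be accompanied by the corresponding deletion of $R$-mates on the other, and requires the constants in each step to be chosen so that the slack is never exhausted before Step~3 is complete.
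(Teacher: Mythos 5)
Your three-step argument is essentially the same as the paper's proof of Claim~\ref{claimMNPQ}: the same exploitation of the slack bounds~(\ref{Eiiai4a})--(\ref{Eiiai4b}), the common blue component of $P_2$ and $Q_2$, the large red effective-component of $P\cup Q$, Lemma~\ref{l:eleven}, and the $R$-mate trimming to preserve $|M_1|=|Q_1|$ and $|N_1|=|P_1|$. The only adjustment in the bookkeeping is that in Step~1 the forbidden blue matching should be taken at the $\eta k$ scale (the paper uses $22\eta k$) rather than your $23\eta^{1/2}k$, since the contradiction already works at that finer scale and otherwise the stated bound of $11\eta k$ discarded vertices from $M_2\cup N_2$ would not be achieved.
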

\begin{proof}
Suppose there exists a blue matching $B_S$ on at least $22\eta k$ vertices in $G[M_2, Q_1]\cup G[N_2, P_1]$. By~(\ref{Eiiai4a}) and~(\ref{Eiiai4b}), we have $|M_2\backslash V(B_S)|\geq |Q_2|$ and $|N_2\backslash V(B_S)|\geq |P_2|$. Thus, since $P_2$ and $Q_2$ belong to the same blue component of $G$, applying Lemma~\ref{l:eleven} to each of $G[M_2\backslash V(B_S),Q_2]$ and $G[N_2\backslash V(B_S),P_2]$ gives a blue connected-matching~$B_L$ in $G[M_2\backslash V(B_S),Q_2]\cup G[N_2\backslash V(B_S),P_2]$ on at least $(2|P_2|-2\eta k)+(2|Q_2|-2\eta k)$ vertices which belongs to the same blue component of $G$ as $B_S$ but shares no vertices with it. Thus, $B_L \cup B_S$ forms a blue connected-matching on at least $2|P_2|+2|Q_2|+18\eta k\geq \aII k$ vertices. Therefore, after discarding at most $11\eta k$ vertices from each of $M_2\cup N_2$ and $P_1\cup Q_1$, we may assume that all edges present in $G[M_2,Q_1]\cup G[N_2, P_1]$ are coloured exclusively red. 
\begin{figure}[!h]
\centering
\vspace{-2mm}
\includegraphics[width=64mm, page=1]{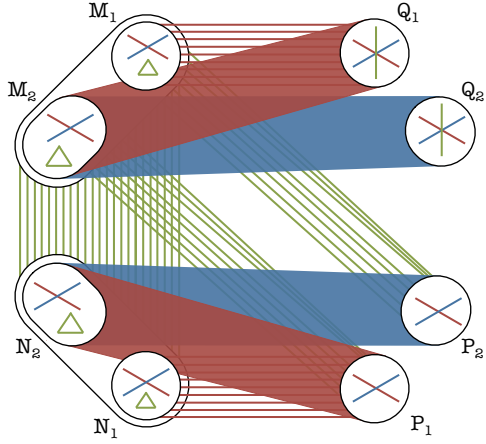}
\vspace{-4mm}\caption{Colouring of $G[M_2,Q_1]\cup G[N_2,P_1]$.}
  
\end{figure}
\vspace{-2mm}

Recalling~(\ref{Eiiai5a}), we then have
\begin{align*}
|P_1|+|Q_1|\geq(\half\aI-14\eta)k, \text{\hspace{8mm}}
|P_2|+|Q_2|\geq(\half\aII-9\eta)k.
\end{align*}

In order to retain the equalities $|M_1|=|Q_1|$ and $|N_1|=|P_1|$ and the property that every vertex in $M_1\cup N_1$ belongs to an edge of $R$, we discard from $M_1\cup N_1$ each vertex whose $R$-mate in $P_1\cup Q_1$ has already been discarded. Thus, we discard at most a further~$11\eta k$ vertices.

Then, if either $G[N_1,P_2]$ or $G[M_1,Q_2]$ contains a red matching on at least $48\eta k$ vertices in $G[N_1,P_2]$, then we may construct a red connected-matching on at least $\aI k$ vertices as follows:

Suppose that there exists a red matching $R_P$ on $48\eta k$ vertices in $G[N_1,P_2]$. Then, observe that there exists a set $R^{-}$ of $24\eta k$ edges belonging to $R$ such that $N_1\cap V(R_P)=N_1\cap V(R^{-})$. Define $\widetilde{P}=P_1\cap V(R^{-})$ and consider $G[N_2,\widetilde{P}]$. Since $|N_2|,|\widetilde{P}|\geq 24\eta k$ and $G[N_2,\widetilde{P}]$ is $4\eta^4 k$-almost-complete, we may apply Lemma~\ref{l:eleven} to find a red connected-matching $R_S$ on at least $46\eta k$ vertices in $G[N_2,\widetilde{P}]$. Then, recalling that $P\cup Q$ has a red effective-component including all but $8\eta k$ of its vertices, we have a red connected-matching $R^{\star}\subseteq (R\backslash R^{-})\cup R_S\cup R_P$ on at least 
\begin{align*}
2\left(|P_1\backslash V(R^{-})|+|Q_1|+|P_1\cap \right. &\left.V(R_S)|+|P_2\cap V(R_P)|-8\eta k \right) \\
&\geq 2\left(\half\aI-39\eta +23\eta +24\eta  - 8\eta \right)k\geq \aI k \text{ vertices}
\end{align*}
in $G[N_1\backslash V(R_P),P_1\backslash V(R_S)]\cup G[M_1,Q_1]\cup G[N_2,P_1\cap V(R_S)]\cup G[N_1\cap V(R_P),P_2].$

Similarly, suppose that there exists a red matching $R_Q$ on $48\eta k$ vertices in $G[M_1,Q_2]$. Then, observe that there exists a set $R^{=}$ of $24\eta k$ edges belonging to $R$ such that $M_1\cap V(R_Q)=M_1\cap V(R^{=})$. Define $\widetilde{Q}=Q_1\cap V(R^{=})$ and consider $G[M_2,\widetilde{Q}]$. Since $|M_2|,|\widetilde{Q}|\geq 24\eta k$ and $G[M_2,\widetilde{Q}]$ is $4\eta^4 k$-almost-complete, we may apply Lemma~\ref{l:eleven} to find a red connected-matching $R_T$ on at least $46\eta k$ vertices in $G[M_2,\widetilde{Q}]$. Then, recalling that $P\cup Q$ has a red effective-component including all but $8\eta k$ of its vertices, we have a red connected-matching $R^{*}\subseteq (R\backslash R^{=})\cup R_T\cup R_Q$ on at least
\begin{align*}
2\left(|P_1|+|Q_1\backslash V(R^{=})|+|Q_1\cap V(R_S)|+|Q_2\cap V(R_Q)|-8\eta k \right)\geq \aI k \text{ vertices}
\end{align*}
in $G[N_1,P_1]\cup G[M_1\backslash V(R_Q),Q_1\backslash V(R_T)] \cup G[M_2,Q_1\cap V(R_T)] \cup G[M_1\cap V(R_Q),Q_2].$

Thus, after discarding at most $24\eta k$ vertices from each of $M_1, N_1, P_2$ and $Q_2$, we may assume that all edges present in $G[M_1,Q_2]\cup G[N_1,P_2]$ are coloured exclusively blue. We then have
\begin{align*}
|P_1|+|Q_1|\geq(\half\aI-14\eta)k, \text{\hspace{8mm}} |P_2|+|Q_2|\geq(\half\aII-57\eta)k.
\end{align*}

\begin{figure}[!h]
\centering
\includegraphics[width=64mm, page=101]{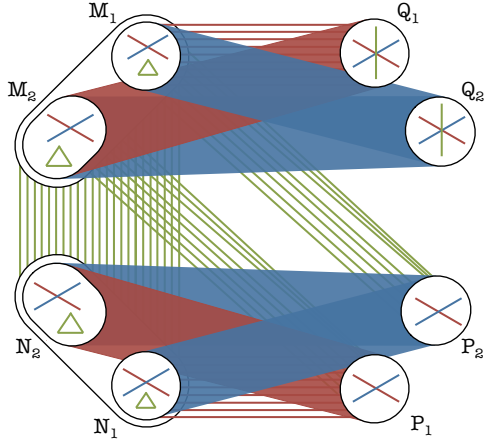}
\vspace{-5mm}\caption{Colouring of $G[M_1,Q_2]\cup G[N_1,P_2]$.}
\end{figure}

Recalling~(\ref{Eiiai4a}) and~(\ref{Eiiai4a}), given that, so far, we have discarded at most $11\eta k$ vertices from $M_2\cup N_2$,  we have
\begin{align*}
|M_2|\geq|Q_2|+14\eta^{1/2}k, \text{\hspace{8mm}} |N_2|\geq|P_2|+14\eta^{1/2}k.
\end{align*}
Thus, by Lemma~\ref{l:eleven}, there exist blue connected-matchings $B_1$ spanning at least $2|Q_2|-2\eta k$ vertices in $G[M_2,Q_2]$, and $B_2$ spanning at least $|P_2|-2\eta k$ vertices in $G[N_2,Q_2]$. Recall that we assume that~$P_2$ and $Q_2$ belong to the same blue effective-component. Then, since all edges present in $G[M,Q_2]$ and~$G[N,P_2]$ are coloured blue, all vertices in $M\cup N\cup P_2\cup Q_2$ belong to the same blue component of~$G$ and $B_1 \cup B_2$ forms a connected-matching on at least $2(|P_2|+|Q_2|)-4\eta k \geq (\aII -118\eta)k$ vertices in that component. 

Thus, the largest blue matching in $G[M_1,Q_1]\cup G[N_1,P_1]$ spans at most than $118\eta k$ vertices. Therefore, after discarding at most $59\eta k$ vertices from each of $M_1\cup N_1$ and $P_1\cup Q_1$, we may assume that all edges present in $G[M_1,Q_1]\cup G[N_1,P_1]$ are coloured exclusively red, completing the proof of Claim~\ref{claimMNPQ}.\end{proof}

In summary, we now know that all edges in $G[M,Q_1]\cup G[N,P_1]$ are coloured exclusively red and that all edges in $G[M,Q_2]\cup G[N,P_2]$ are coloured exclusively blue.

\begin{figure}[!h]
\centering
\includegraphics[width=64mm, page=5]{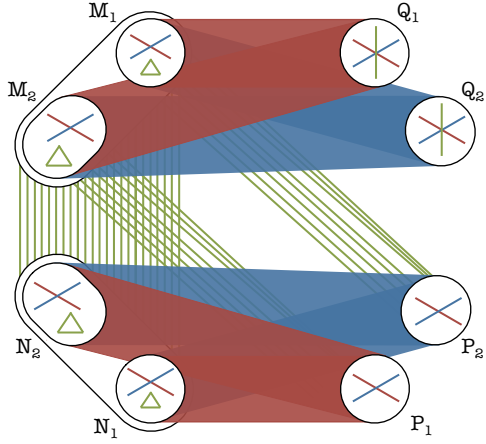}
\vspace{-4mm}\caption{Colouring of $G[M,Q]\cup G[N,P]$.}
  
\end{figure}

Additionally, we have
\begin{equation}
\label{F1}
\left.
\begin{aligned}
\quad\,\,\,\, |M_1|+|M_2|&\geq|Q_1|+|Q_2|+13\eta^{1/2}k,\quad\, & |P_1|+|Q_1|&\geq(\half\aI-73\eta)k,\quad\,\,\,\, \\
|N_1|+|N_2|&\geq|P_1|+|P_2|+13\eta^{1/2}k,   &|P_2|+|Q_2|&\geq(\half\aII-57\eta)k.
\end{aligned}
\right\}\!
\end{equation}
We now move on to consider $G[M,P]\cup G[N,Q]$, taking the same approach as we did for $G[M,Q]\cup G[N,P]$ but recalling the possibility of green edges in $G[M,P]$. We prove the following claim:

\begin{claim}
\label{claimcrossF}
We may discard at most $145\eta k$ vertices from $P_1 \cup Q_1$, at most $84\eta^{1/2}k$ vertices from $P_2\cup Q_2$ and at most $229\eta k$ vertices from $M\cup N$ such that, in what remains, there are no red edges present in $G[M,P_2]\cup G[N, Q_2]$ and no blue edges present in $G[M,P_1]\cup G[N, Q_1]$.
\end{claim}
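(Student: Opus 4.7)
The plan is to mirror the three-step structure of the proof of Claim~\ref{claimMNPQ}, now applied to the cross-region $G[M,P]\cup G[N,Q]$. We wish to show, after the claimed discards, that all edges of $G[M,P_1]\cup G[N,Q_1]$ are red (or green) and all edges of $G[M,P_2]\cup G[N,Q_2]$ are blue (or green). We handle blue and red in parallel: any offending matching in the ``wrong'' half of the cross-region will be combined with a matching found via Lemma~\ref{l:eleven} in the already-determined region of Claim~\ref{claimMNPQ} to produce a monochromatic connected-matching above the forbidden threshold.

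For the blue step, suppose there is a blue matching $B_X$ on sufficiently many vertices in $G[M,P_1]\cup G[N,Q_1]$. Write $B_X=B_X^{(M)}\cup B_X^{(N)}$ with $B_X^{(M)}\subseteq G[M,P_1]$ and $B_X^{(N)}\subseteq G[N,Q_1]$. By~(\ref{F1}), $|M|\geq |Q_1|+|Q_2|+13\eta^{1/2}k$ and $|N|\geq |P_1|+|P_2|+13\eta^{1/2}k$, and these inequalities survive the removal of $V(B_X)$ from $M$ and $N$ provided $|V(B_X)|$ is well within the $13\eta^{1/2}k$ slack. Applying Lemma~\ref{l:eleven} to the bipartite graphs $G[M\setminus V(B_X),Q_2]$ and $G[N\setminus V(B_X),P_2]$ then yields a blue matching $B_L$ on at least $2(|P_2|+|Q_2|)-4\eta k\geq (\aII-118\eta)k$ vertices. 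Since all edges in $G[M,Q_2]\cup G[N,P_2]$ are blue and $P_2,Q_2$ lie in the same blue component of $G$ (by the defining assumption of Case~E.iii.a.i), the whole set $M\cup N\cup P_2\cup Q_2$ lies in a single blue component. Hence $B_X\cup B_L$ forms a blue connected-matching on at least $\aII k$ vertices, a contradiction. Discarding one endpoint per edge of a maximum such $B_X$ removes all blue edges from $G[M,P_1]\cup G[N,Q_1]$ at the costs claimed for $M\cup N$ and $P_1\cup Q_1$.

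The red step proceeds symmetrically. A red matching $R_X$ in $G[M,P_2]\cup G[N,Q_2]$ combines with red matchings obtained by Lemma~\ref{l:eleven} in $G[M\setminus V(R_X),Q_1]$ and $G[N\setminus V(R_X),P_1]$ to yield a red matching of total size $|V(R_X)|+2(|P_1|+|Q_1|)-4\eta k$. The crucial connectivity point is: since $G[M,Q_1]$ is all red, $M\cup Q_1$ lies in a single red component; since $G[N,P_1]$ is all red, $N\cup P_1$ lies in a single red component; and the large red effective-component in $P\cup Q$ (containing all but at most $8\eta k$ vertices of $P\cup Q$) bridges $P_1$ and $Q_1$ and also contains most of $V(R_X)\cap(P_2\cup Q_2)$. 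Allowing a loss of up to $8\eta k$ to this exceptional set, a red connected-matching exceeding $\aI k$ results once $|V(R_X)|$ is large enough, producing the contradiction and the claimed discards from $M\cup N$ and $P_2\cup Q_2$.

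The main obstacle is the bookkeeping around the slack inequalities in~(\ref{F1}): each discard from $M\cup N$ encroaches on the $13\eta^{1/2}k$ surplus, so the total removals over both steps must fit comfortably below this threshold, which they do since the discards are on the $\eta k$ (or at worst $\eta^{1/2}k$) scale. Green edges that may be present in $G[M,P]$ (there are none in $G[N,Q]$ by~(\ref{E3-iii})) are irrelevant to the argument, since they contribute neither to the offending monochromatic matchings nor to the red/blue connectivity, and so may simply be ignored throughout.
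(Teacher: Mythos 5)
Your proposal is correct and follows essentially the same route as the paper's proof: assemble a contradiction by combining any offending matching with matchings obtained via Lemma~\ref{l:eleven} in the regions already fixed by Claim~\ref{claimMNPQ}, using the slack in~(\ref{F1}), the red effective-component of $P\cup Q$ for red connectivity and the all-blue $G[M,Q_2]\cup G[N,P_2]$ for blue connectivity. The only difference is that you treat blue before red (the paper does red first, then blue with adjusted constants), which is an inessential reordering that still fits within the claimed discard budgets.
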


\begin{proof}
Suppose that there exists a red matching $R^{\times}$ on at least $168\eta k$ vertices in $G[M,P_2]\cup G[N,Q_2]$. Then, by~(\ref{F1}), we have $|M\backslash V(R^{\times})|\geq|Q_1|$ and $|N\backslash V(R^{\times})|\geq|P_1|$. Thus, since all but at most $8\eta k$ vertices of $P\cup Q$ belong to the same red component of $G$, $R^{\times}$ can be used along with edges from $G[N\backslash V(R^{\times}),P_1]$ and $G[M\backslash V(R^{\times}),Q_1]$ to form a red connected-matching on 
$$(2|P_1|-2\eta k)+(2|Q_1|-2\eta k)+(168\eta k-16\eta k) \geq \aI k$$ vertices.  Thus, after discarding at most $84\eta k$ vertices from each of $M\cup N$ and $P_2 \cup Q_2$, we may assume that there are no red edges present in $G[M,P_2]\cup G[N,Q_2]$. In particular, since there are no green edges present in $G[N,Q]$, we know that all edges present in $G[N,Q_2]$ are coloured exclusively blue. We then have
\begin{equation}
\label{F2}
\left.
\begin{aligned}
\quad\,\,\,\,\, |M_1|+|M_2|&\geq|Q_1|+|Q_2|+12\eta^{1/2}k, \hspace{2mm} & |P_1|+|Q_1|&\geq(\half\aI-73\eta)k,\quad\quad \\
|N_1|+|N_2|&\geq|P_1|+|P_2|+12\eta^{1/2}k, \hspace{2mm}   &|P_2|+|Q_2|&\geq(\half\aII-142\eta)k.
\end{aligned}
\right\}\!
\end{equation}
Next, suppose that there exists a blue matching $B^{\times}$ on at least $290\eta k$ vertices in $G[M,P_1]\cup G[N,Q_1]$. Then, by~(\ref{F2}), we have $|M\backslash(V(B^{\times})|\geq |Q_2|$ and $|N\backslash(V(B^{\times})|\geq |P_2|$. Thus, since $M_2\cup N_2\cup P_2\cup Q_2$ belong to the same component of $G$, $B^{\times}$ can be used along with edges from $G[N\backslash V(B^{\times}),P_2]$ and $G[M\backslash V(B^{\times}),Q_2]$ to give a blue connected-matching on at least 
$$(2|P_2|-2\eta k)+(2|Q_2|-2\eta k)+290\eta k\geq\aII k$$ vertices. Thus, after discarding at most $145\eta k$ vertices from each of $P_1\cup Q_1$ and~$N\cup M$, we may assume that there are no blue edges present in $G[M,P_1]\cup G[N,Q_1]$. In particular, since there are no green edges present in $G[N,Q]$, we know that all edges present in $G[N,Q_1]$ are coloured exclusively red. 
 
 \begin{figure}[!h]
\centering
\includegraphics[width=64mm, page=8]{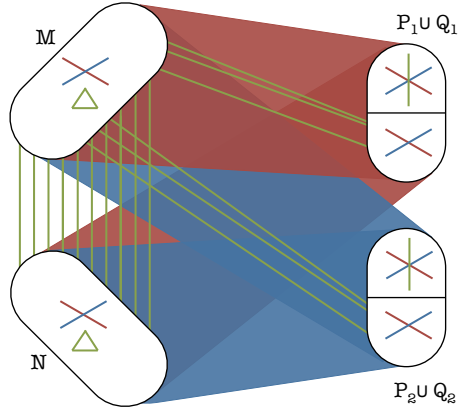}
\vspace{-4mm}\caption{Colouring of $G[M,P]\cup G[N,Q]$ after Claim~\ref{claimcrossF}.}
  
\end{figure}
 
In summary, we have discarded at most $145\eta k$ vertices from $P_1\cup Q_1$, at most $84\eta k$ vertices from $P_2\cup Q_2$ and at most $229\eta k$ vertices from $M\cup N$. Having done so, we now know that there are no red edges present in $G[M,P_2]\cup G[N,Q_2]$ and that there are no blue edges present in $G[M,P_1]\cup G[N,Q_1]$. In particular, since we already knew that are no green edges present in $G[N,Q]$, we know that all edges present in $G[N,Q_1]$ are coloured exclusively red and that all edges present in $G[N,Q_2]$ are coloured exclusively blue, thus completing the proof of Claim~\ref{claimcrossF}.\end{proof}
 
 We now have
\begin{equation}
\label{F3}
\left.
\begin{aligned}
\quad\quad\quad |M|&\geq|Q_1|+|Q_2|+11\eta^{1/2}k,\hspace{2mm} & |P_1|+|Q_1|&\geq(\half\aI-218\eta)k,\quad\quad\quad \\
|N|&\geq|P_1|+|P_2|+12\eta^{1/2}k,\hspace{2mm}    &|P_2|+|Q_2|&\geq(\half\aII-142\eta)k.
\end{aligned}
\right\}
\end{equation}
Finally, we turn our attention to $G[M\cup N]$, proving the following claim.

\begin{claim}
\label{claimMN}
We may discard at most $732\eta k$ vertices from $M\cup N$, such that, in what remains, all edges present in $G[M\cup N]$ are coloured exclusively green.
\end{claim}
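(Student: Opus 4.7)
The plan is to argue by contradiction: if $G[M\cup N]$ contains a substantial monochromatic (red or blue) matching, I can extend it through the already-established red or blue structure to obtain a monochromatic connected-matching that violates our Ramsey hypotheses. First I would record the key connectivity consequence of Claims~\ref{claimMNPQ} and~\ref{claimcrossF} combined with Case~E.iii.a.i: since all edges in $G[M,Q_1]\cup G[N,P_1]$ are red and $P_1,Q_1$ lie in the same red component of $G[P\cup Q]$ (Case~E.iii.a was the ``large red monochromatic component'' subcase), the union $M\cup N\cup P_1\cup Q_1$ lies in one red effective-component of $G$. Symmetrically, $M\cup N\cup P_2\cup Q_2$ lies in one blue effective-component.

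Next, for the red step, I would suppose a red matching $R^{+}$ on $s\geq 437\eta k$ vertices exists in $G[M\cup N]$, using $s_M$ vertices of $M$ and $s_N=s-s_M$ of $N$. The excess bounds in~(\ref{F3}), namely $|M|\geq|Q_1|+|Q_2|+11\eta^{1/2}k$ and $|N|\geq|P_1|+|P_2|+12\eta^{1/2}k$, ensure $|M\setminus V(R^{+})|\geq|Q_1|$ and $|N\setminus V(R^{+})|\geq|P_1|$ because $s=O(\eta k)\ll\eta^{1/2}k$. Applying Lemma~\ref{l:eleven} to the $4\eta^4 k$-almost-complete bipartite graphs $G[M\setminus V(R^{+}),Q_1]$ and $G[N\setminus V(R^{+}),P_1]$ yields red connected-matchings on at least $2|Q_1|-8\eta^4 k$ and $2|P_1|-8\eta^4 k$ vertices respectively. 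Since all these edges lie in the single red effective-component identified above, the union with $R^{+}$ is a red connected-matching on at least
\begin{equation*}
s+2(|P_1|+|Q_1|)-16\eta^4 k \;\geq\; s+(\alpha_1-436\eta-16\eta^4)k \;\geq\;\alpha_1 k,
\end{equation*}
using $|P_1|+|Q_1|\geq(\half\alpha_1-218\eta)k$ from~(\ref{F3}). So the maximum red matching in $G[M\cup N]$ spans at most $437\eta k$ vertices, and discarding these eliminates all red edges from $G[M\cup N]$.

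The blue step proceeds analogously using $|P_2|+|Q_2|\geq(\half\alpha_2-142\eta)k$ and the blue effective-component containing $M\cup N\cup P_2\cup Q_2$: any blue matching on $s'$ vertices in $G[M\cup N]$ extends via Lemma~\ref{l:eleven} in $G[M\setminus V(B^{+}),Q_2]$ and $G[N\setminus V(B^{+}),P_2]$ to a blue connected-matching of size at least $s'+(\alpha_2-284\eta-16\eta^4)k$, forcing $s'\leq 285\eta k$. Combining the two rounds of discarding gives at most $437\eta k+285\eta k\leq 732\eta k$ vertices removed from $M\cup N$; thereafter $G[M\cup N]$ carries no red and no blue edges, so every remaining edge (in what is still a $4\eta^4 k$-almost-complete graph) is green.

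The only real obstacle is bookkeeping: I must verify that the small shrinkage of $|M|$ and $|N|$ incurred by the red-discard step does not disturb the bounds used in the blue-discard step. This is automatic since the slack in~(\ref{F3}) is of order $\eta^{1/2}k$, which dwarfs the $O(\eta k)$ losses from discarding, so $|M|-437\eta k\geq|Q_1|+|Q_2|+10\eta^{1/2}k$ still holds, and similarly for $N$. No other subtlety arises, since $P,Q$ are untouched throughout Claim~\ref{claimMN}.
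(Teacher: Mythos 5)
Your proposal is correct and follows essentially the same route as the paper: assume a red (resp.\ blue) matching in $G[M\cup N]$ above an $O(\eta k)$ threshold, use the excess bounds in~(\ref{F3}) together with Lemma~\ref{l:eleven} applied to $G[M\setminus\cdot\,,Q_1]$ and $G[N\setminus\cdot\,,P_1]$ (resp.\ $Q_2,P_2$) and the single red (resp.\ blue) component containing $M\cup N\cup P_1\cup Q_1$ (resp.\ $M\cup N\cup P_2\cup Q_2$) to build a connected-matching on $\aI k$ (resp.\ $\aII k$) vertices, then discard the vertices of the maximum such matchings. The only differences are cosmetic: slightly different thresholds ($437\eta k+285\eta k$ versus the paper's $442\eta k+290\eta k$, both within $732\eta k$) and a justification of the connectivity via the large red component of $G[P\cup Q]$ and the Case~E.iii.a.i hypothesis rather than the paper's direct use of the all-red $G[M\cup N,Q_1]$ and all-blue $G[M\cup N,Q_2]$.
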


\begin{proof} 
Suppose that there exists a red matching $R^{\dagger}$ on $442\eta k$ vertices in $G[M\cup N]$. Then, by~(\ref{F3}), we have $|M\backslash V(R^{\dagger})|\geq|Q_1|$, $|N\backslash V(R^{\dagger})|\geq|P_1|$. Also, since all edges present in $G[M\cup N, Q_1]$ are coloured exclusively red, $M\cup N$ belongs to a single red component of $G$. Thus, by Lemma~\ref{l:eleven}, there exists a red connected-matching $R^{\ddagger}$ in $G[N\backslash V(R^{\dagger}), P_1]\cup G[M\backslash V(R^{\dagger}),Q_1]$ on at least $(2|P_1|-2\eta k)+(2|Q_1|-2\eta k)$ vertices. Then, since $R^{\dagger}$ and $R^{\ddagger}$ belong to the same red component but share no vertices, together they form a red connected-matching on at least $(2|P_1|-2\eta k)+(2|Q_1|-2\eta k)+442\eta k\geq \aI k$ vertices.

Similarly, suppose that there exists a blue matching $B^{\dagger}$ on $290\eta k$ vertices in $G[M\cup N]$. Then, by~(\ref{F3}), we have $|M\backslash V(B^{\dagger})|\geq|Q_2|$, $|N\backslash V(B^{\dagger})|\geq|P_2|$. Since all edges present in $G[M\cup N, Q_2]$ are coloured exclusively blue, $M\cup N$ belongs to a single blue component of $G$. Then, by Lemma~\ref{l:eleven}, there exists a blue connected-matching $B^{\ddagger}$ in $G[N\backslash V(B^{\dagger}), P_2]\cup G[M\backslash V(B^{\dagger}),Q_2]$ on at least $(2|P_2|-2\eta k)+(2|Q_2|-2\eta k)$ vertices. Since $B^{\dagger}$ and $B^{\ddagger}$ belong to the same blue component but share no vertices, together they form a blue connected-matching on at least $(2|P_2|-2\eta k)+(2|Q_2|-2\eta k)+290\eta k\geq \aI k$ vertices.

Thus, after discarding at most $732\eta k$ vertices from $M\cup N$, we can assume that all edges present in $G[M\cup N]$ are coloured exclusively green, completing the proof of Claim~\ref{claimMN}. \end{proof}

\begin{figure}[!h]
\centering
\vspace{-2mm}
\includegraphics[width=64mm, page=37]{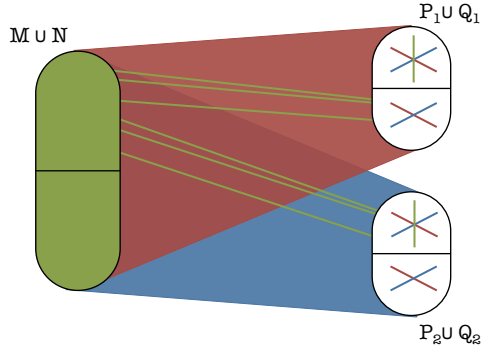}
\vspace{0mm}\caption{Colouring of $G[M\cup N]$.}
  
\end{figure}

Thus far, we have discarded at most $1200\eta k$ vertices from $M\cup N$. Recalling (E4), we now have $|M\cup N|\geq (\aIII - 9\eta^{1/2})k$. Suppose there exists a green matching $G^{\dagger}$ on $20\eta^{1/2}k$ vertices in  $G[M\cup N,P\cup Q]$. Then, we have  $|(M\cup N)\backslash V(G^{\dagger})|\geq (\aIII -19\eta^{1/2})k$. By Theorem~\ref{dirac}, since $G$ is $4\eta^4 k$-almost-complete, $G[(M\cup N)\backslash V(G^{\dagger})]$ contains a green connected-matching on all but at most one of its vertices. Thus, provided $k\geq1/\eta^{1/2}$, there exists a connected-green matching $G^{\ddagger}$ on at least $(\aIII -20\eta^{1/2})k$ vertices in $G[(M\cup N)\backslash V(G^{\dagger})]$. Since $G$ is $4\eta^4 k$-almost-complete and all edges present in $G[M\cup N]$ are coloured exclusivly green, all vertices of $M\cup N$ belong to the same green component of~$G$. Thus, together, $G^{\dagger}$ and $G^{\ddagger}$ form a green connected-matching on at least $\aIII k$ vertices. Thus, we may, after discarding at most $10\eta^{1/2}k$ vertices from each of $M\cup N$ and $P\cup Q$, assume that there are no green edges in $G[M\cup N, P\cup Q]$. 

In summary, we now have
\begin{align*}
 |P_1|+|Q_1|&\geq(\half\aI-218\eta)k,  &
|P_2|+|Q_2|&\geq(\half\aII-142\eta)k,&
|M\cup N|&\geq(\aIII-20\eta^{1/2})k,   
\end{align*}
and know that all edges present in $G[M\cup N, P_1\cup Q_1]$ are coloured exclusively red, all edges present $G[M\cup N, P_2\cup Q_2]$ are coloured exclusively blue and all edges in $G[M\cup N]$ are coloured exclusively green.

Thus, we have found, as a subgraph of~$G$, a graph belonging to
$$\cK\left((\half\aI-10\eta^{1/2})k, (\half\aI-10\eta^{1/2})k, (\aIII-20\eta^{1/2})k, 4\eta^4 k\right),$$ completing Case E.iii.a.i. 

\subsection*{Case E.iii.a.ii: {\rm $P_2$ and $Q_2$ belong to the different components.}}

Recall that we have a decomposition of $V(G)$ into $M\cup N\cup P \cup Q$ satisfying (\ref{E1-iii})--(\ref{E3-iii}) such that
\begin{align}\label{E4a-x}\tag{E4a}(\max\{\tfrac{3}{4}\aI+\tfrac{1}{4}\aII,\half\aIII\}-5\eta^{1/2})k & \leq  |M|\,,\,|N| \leq \half\aIII k\\
\label{E4b-x}\tag{E4b}(\half\aI+\half\aII-\eta)k & \leq  |P|+|Q| \leq (\half\aI+\half\aII+5\eta^{1/2})k.
\end{align}
Recall also that $F$, the largest monochromatic component in $P\cup Q$, is red and spans at least $|P\cup Q|-8\eta k$ vertices and that each of $M,N,P$ and $Q$ have been subdivided into two parts such that $M_1=M\cap V(R)$, $M_2=M\backslash M_1$, $N_1=N\cap V(R)$, $N_2=N\backslash N_1$,
$P_1=P\cap V(R)$, $P_2=M\backslash P_1$,
$Q_1=Q\cap V(R)$, $Q_2=Q\backslash Q_1$, where $R$ is the largest red matching in $G[M,Q]\cup G[N,P]$. By maximality of $R$, all edges present in $G[M_2,Q_2]$ or $G[N_2,P_2]$ are blue. 

Additionally, in this case, we assume that $P_2$ and $Q_2$ belong to different blue components of $G$. Thus, in particular, all edges present in $G[N_2,Q_2]$ and $G[P_2,Q_2]$ are coloured red.

\begin{figure}[!h]
\vspace{3mm}
\centering{
\includegraphics[width=64mm, page=10]{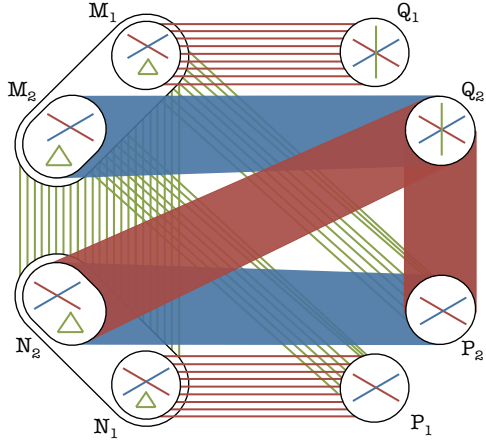}}
\vspace{-4mm}\caption{Initial colouring in Case E.iii.a.ii.}
 \end{figure}

By Lemma~\ref{l:eleven}, there exist red connected-matchings

\begin{tabularx}{\textwidth}{l X l}
& $R_{PQ}$ on at least $2\min\{|P_2|,|Q_2|\}-2\eta k$ vertices in $G[P_2,Q_2]$\\
& $R_{NQ}$ on at least $2\min\{|N_2|,|Q_2|\}-2\eta k$ vertices in $G[N_2,Q_2]$. \\ & \\
\end{tabularx}

Then, since $F$ includes all but at most $8\eta k$ of the vertices of $P\cup Q$, these connected-matchings can be augmented with edges from $R$ to give the red connected-matchings illustrated in Figure~\ref{RpqRnq}:

\begin{tabularx}{\textwidth}{ l X  X  X }
&\multicolumn{2}{l}{$R_1$ on at least $2|P_1|+2|Q_1|+2\min\{|P_2|,|Q_2|\}-18\eta k$ vertices}&  \\
 & & \multicolumn{2}{r}{ in $G[M_1,Q_1]\cup G[N_1,P_1]\cup G[P_2,Q_2]$,}  \\
&\multicolumn{2}{l}{$R_2$ on at least $2|P_1|+2|Q_1|+2\min\{|N_2|,|Q_2|\}-18\eta k$ vertices}&  \\
  && \multicolumn{2}{r}{ in $G[M_1,Q_1]\cup G[N_1,P_1]\cup G[N_2,Q_2]$,}  \\ 
  &\\
\end{tabularx}

Given the existence of these matchings, we can obtain bounds on the sizes of the various sets identified:

Since $|P|, |Q|\geq 95\eta^{1/2}k$, by (\ref{E4b-x}), $|P|, |Q|\leq(\half\aI+\half\aII-90\eta^{1/2})k$.
But, then, $$|M|,|N|\geq (\tfrac{3}{4}\aI+\tfrac{1}{4}\aII-5\eta^{1/2})k\geq \max\{|P|,|Q|\}+85\eta^{1/2}k.$$
So, since $|M_1|=|Q_1|$ and $|N_1|=|P_1|$, we have
\begin{align}
\label{G1}
|M_2|&\geq|Q_2|+85\eta^{1/2}k, &
|N_2|&\geq|P_2|+85\eta^{1/2}k.
\end{align}

  \begin{figure}[!h]
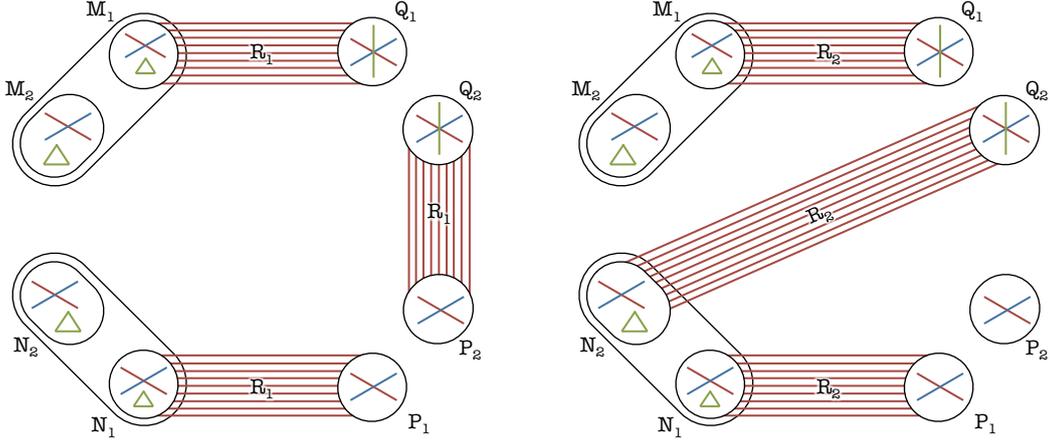

\centering{
\mbox{\hspace{-2mm}{\includegraphics[width=64mm, page=18]{CaseE-Figs2.pdf}}\quad\quad\quad{\includegraphics[width=64mm, page=19]{CaseE-Figs2.pdf}}}}\vspace{-5mm}
\caption{The red connected-matchings $R_1$ and $R_2$.} \label{RpqRnq}
\end{figure}

In particular, $|M_2|\geq |Q_2|$ and $|N_2|\geq|P_2|$. Thus, by Lemma~\ref{l:eleven}, we may obtain a blue connected-matching on $2|P_2|-2\eta k$ vertices in $G[N_2,P_2]$ and one on $2|Q_2|-2\eta k$ vertices in $G[M_2,Q_2]$. Thus, in order to avoid a blue connected-matching on at least $\aII k$ vertices, we may assume that  
\begin{equation}
\label{G2}
|P_2|,|Q_2|\leq (\half\aII+\eta)k.
\end{equation}
Suppose, for now, that $|Q_2|\geq |N_2|$. Then, recalling, that $|N_2|\geq |P_2|$, we have $|Q_2|\geq|P_2|$ and, therefore, $R_1$ spans at least $2|P_1|+2|Q_1|+2|P_2|-18\eta k$ vertices in $G[M_1,Q_1]\cup G[N_1,P_1] \cup G[P_2,Q_2]$. Thus, in order to avoid a red connected-matching on at least $\aI k$ vertices, we may assume that 
$$|P_1|+|Q_1|+|P_2|\leq (\half\aI+10\eta)k.$$
Also, since $|Q_2|\leq (\half\aII+\eta)k$, by (\ref{E4b-x}), we have $$|P_1|+|Q_1|+|P_2|\geq(\half\aI-2\eta)k.$$
Thus, $R_1$ spans at least $(\aI-22\eta)k$ vertices. Now, since $|Q_2|\geq |N_2|$, by~(\ref{G1}), we have $|Q_2|\geq|P_2|+15\eta^{1/2}k$. Thus, there exists $\widetilde{Q}\subseteq{Q_2\backslash V(R_1)}$ such that $|\widetilde{Q}|\geq 15\eta^{1/2}k$. Therefore, by Lemma~\ref{l:eleven}, we may find a red connected-matching $M_3$ on at least $28\eta^{1/2} k$ vertices in $G[N_2,\widetilde{Q}]$ which belongs to the same red component as $R_1$. Thus, together $R_1$ and $R_3$ form a red-connected-matching on $\aI k$ vertices, completing the proof in this case.

Therefore, we may instead assume that $|N_2|\geq |Q_2|$. In that case, $R_2$ spans at least $2|P_1|+2|Q_1|+|Q_2|-20\eta k$ vertices in $G[N_1,P_1]\cup G[M_1,Q_1]\cup G[N_2,Q_2]$. Thus, 
$|P_1|+|Q_1|+|Q_2|\leq(\half\aI+10\eta)k.$ Then, by (\ref{E4b-x}) and~(\ref{G2}), we obtain
\begin{equation}
\label{G3}
\left.
\begin{aligned}
\,\,\quad\quad\quad\quad\quad\quad\quad(\half\aI-2\eta)k\leq |P_1|&+|Q_1|+|Q_2|\leq (\half\aI+10\eta)k,\quad\quad\quad\quad\,\,\,\\
(\half\aI-12\eta)k &\leq |P_2| \leq (\half\aII+\eta)k.
\end{aligned}
\right\}\!
\end{equation}
Observe that, by (\ref{G2}) and (\ref{G3}), we may assume that 
\begin{equation}
\label{G4}
|Q_2|\leq |P_2|+\eta^{1/2}k.
\end{equation}
We are now in a position to examine the coloured structure of $G[N,P]$. We show that, after possibly discarding some vertices, we may assume that all edges contained in $G[N,P_1]$ are coloured exclusively red and all edges contained in $G[N,P_2]$ are coloured exclusively blue. Following the same steps as in the proofs of Claim~\ref{claimLQ} and Claim~\ref{claimMNPQ} we prove:

\begin{claim}
\label{claimNP}
We may discard at most $67\eta k$ vertices from $N_1$, $14\eta k$ vertices from $N_2$, $54\eta k$ vertices from $P_1$ and at most $27\eta k$ vertices from $P_2$ such that, in what remains, all edges present in $G[N,P_1]$ are coloured exclusively red and all edges present in $G[N,P_2]$ are coloured exclusively blue.
\end{claim}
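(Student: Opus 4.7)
The plan is to follow the three-step pattern from the proofs of Claim~\ref{claimLQ} and Claim~\ref{claimMNPQ}: first rule out many blue edges in $G[N_2,P_1]$; then rule out many red edges in $G[N_1,P_2]$; and finally rule out many blue edges in $G[N_1,P_1]$. Since~(\ref{E3-iii}) forbids green edges in $G[N,P]$, eliminating the wrong colour from each bipartite piece leaves the required colouring. I will not maintain $|N_1|=|P_1|$ through the steps, which is what lets the discards split asymmetrically: $14+40=54$ on $P_1$ versus $27+40=67$ on $N_1$, with $14\eta k$ from $N_2$ and $27\eta k$ from $P_2$ coming from Steps~1 and~2 respectively.

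For Step~1, suppose $G[N_2,P_1]$ contains a blue matching $B_S$ on $28\eta k$ vertices. By~(\ref{G1}) we have $|N_2\setminus V(B_S)|\geq|P_2|$, and since every edge of $G[N_2,P_2]$ is blue by maximality of $R$, Lemma~\ref{l:eleven} gives a blue connected-matching $B_L$ on at least $2|P_2|-2\eta k$ vertices in $G[N_2\setminus V(B_S),P_2]$. Almost-completeness places $B_S$ and $B_L$ in a single blue component, and~(\ref{G3}) then yields $|V(B_S\cup B_L)|\geq 2|P_2|+26\eta k\geq \aII k$, a contradiction. Discarding $14\eta k$ vertices from each of $N_2$ and $P_1$ therefore removes every blue edge from $G[N_2,P_1]$.

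Step~2 is the main obstacle. Combining a putative red matching $R_S\subseteq G[N_1,P_2]$ with the matching $R$ alone turns out to be too small when $\aI\approx\aII$; instead, I will combine $R_S$ with the larger red connected-matching $R_2$ on $\geq 2(|P_1|+|Q_1|+|Q_2|)-18\eta k$ vertices in $G[M_1,Q_1]\cup G[N_1,P_1]\cup G[N_2,Q_2]$ that already exists in this subcase because $|N_2|\geq|Q_2|$. Suppose $R_S$ spans $54\eta k$ vertices. Choose $R^{-}\subseteq R$ to be $27\eta k$ edges whose $N_1$-endpoints equal $V(R_S)\cap N_1$, set $\widetilde{P}=P_1\cap V(R^{-})$, and apply Lemma~\ref{l:eleven} to $G[N_2\setminus V(R_2),\widetilde{P}]$ (whose edges are red after Step~1, and where~(\ref{G1}) together with (G4) leaves enough spare $N_2$ vertices) to obtain a red connected-matching $R_T$ on $\geq 52\eta k$ vertices. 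The pieces $(R_2\setminus R^{-})$, $R_T$ and $R_S$ are pairwise vertex-disjoint and each meets the red effective-component of $P\cup Q$, so (losing at most $16\eta k$ vertices to the $8\eta k$ exceptional set) they combine into a red connected-matching of size at least
\[
2(|P_1|+|Q_1|+|Q_2|)-18\eta k-54\eta k+52\eta k+54\eta k-16\eta k,
\]
which by~(\ref{G3}) exceeds $\aI k$, a contradiction. Discarding $27\eta k$ vertices from each of $N_1$ and $P_2$ therefore leaves $G[N_1,P_2]$ entirely blue.

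For Step~3, after Step~2 every edge of $G[N,P_2]$ is blue, so $N\cup P_2$ lies in a single blue component. A blue matching $B_1$ on $80\eta k$ vertices in $G[N_1,P_1]$ would combine, via Lemma~\ref{l:eleven} applied to $G[N\setminus V(B_1),P_2]$ (the bound $|N_2|\geq|P_2|+85\eta^{1/2}k$ of~(\ref{G1}) easily survives the $\leq 41\eta k$ earlier discards from $N$), with a disjoint blue connected-matching on $\geq 2|P_2|-2\eta k$ vertices to exceed $\aII k$ by~(\ref{G3}). Discarding $40\eta k$ vertices from each of $N_1$ and $P_1$ eliminates the remaining blue edges. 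The totals are $14\eta k$ from $N_2$, $27\eta k$ from $P_2$, $27+40=67\eta k$ from $N_1$ and $14+40=54\eta k$ from $P_1$, matching the claim.
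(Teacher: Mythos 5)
Your three-step outline (blue out of $G[N_2,P_1]$, then red out of $G[N_1,P_2]$ using the large matching $R_2$, then blue out of $G[N_1,P_1]$), the use of Lemma~\ref{l:eleven} at each stage and the discard budget $14/27/40$ are exactly the paper's proof, and Steps~1 and~3 are fine as you have written them. The problem is in Step~2, where your bookkeeping is internally inconsistent. You take $R^{-}\subseteq R$ determined by the $N_1$-endpoints of $R_S$ and then apply Lemma~\ref{l:eleven} to $G[N_2\setminus V(R_2),\widetilde{P}]$ ``whose edges are red after Step~1''. But Step~1 only guarantees redness of the \emph{remaining} edges of $G[N_2,P_1]$, while the $P_1$-mates of $N_1\cap V(R_S)$ under $R$ may include up to $14\eta k$ vertices already discarded in Step~1 (you deliberately do not discard $R$-mates), so for those vertices of $\widetilde{P}$ the red connected-matching $R_T$ is not available. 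If instead you do all the accounting consistently in the remaining graph, then $|P_1|$ has dropped by up to $14\eta k$, so $|P_1|+|Q_1|+|Q_2|\geq(\half\aI-16\eta)k$ rather than the bound of~(\ref{G3}), and your total $2(|P_1|+|Q_1|+|Q_2|)+18\eta k$ only reaches $(\aI-14\eta)k$, short of $\aI k$; with the pre-discard sizes the count reads $(\aI+14\eta)k$ but then the redness of $G[\widetilde{N},\widetilde{P}]$ is unjustified. Either way the contradiction does not close as written.

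Both defects are repaired by doing what the paper does at this point: after Step~1, re-derive $R_2$ inside the remaining graph (so it spans at least $(\aI-50\eta)k$ vertices and its endpoints are undiscarded), choose $R^{-}$ from this $R_2$, and do not pay your extra $16\eta k$ ``exceptional set'' toll --- connectivity of $R_S$ and $R_T$ to $R_2$ is free in this subcase because all edges of $G[N_2,Q_2]\cup G[P_2,Q_2]$ are red ($P_2$ and $Q_2$ lie in different blue components), not because of the effective-component $F$. With those two changes the count becomes $(\aI-50\eta-55\eta)k+54\eta k+52\eta k\geq\aI k$ and the rest of your argument goes through unchanged.
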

\begin{proof}
Given any $\widetilde{N}_2\subseteq N$ such that $|\widetilde{N}_2|\geq|P_2|\geq(\half\aII-12\eta)k$, by Lemma~\ref{l:eleven}, we can obtain a blue connected-matching on $(\aII-26\eta)k$ vertices in $G[\widetilde{N}_2,P_2]$. Then, since $|N_2|\geq|P_2|+15\eta^{1/2}k$, the existence a blue matching $B_S$ on at least $28\eta k$ vertices $G[N_2,P_1]$ would allow us to obtain a blue connected-matching on at least $\aII k$ vertices. 

  \begin{figure}[!h]
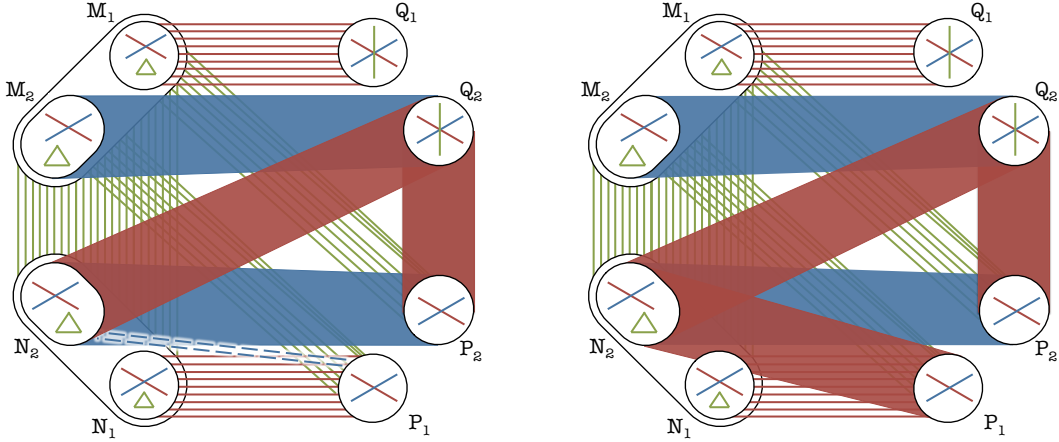

\centering{
\mbox{\hspace{-2mm}{\includegraphics[width=64mm, page=11]{CaseE-Figs2.pdf}} \quad\quad\quad{\includegraphics[width=64mm, page=12]{CaseE-Figs2.pdf}}}}\vspace{-5mm}
\caption{Colouring of the edges of $G[N_2,P_1]$.}   
\end{figure}

Thus, after discarding at most $14\eta k$ vertices from each of $P_1$ and $N_2$, we may assume that all edges present in $G[N_2,P_1]$ are coloured exclusively red.

After discarding these vertices, we have 
\begin{align*}
|P_1|+|Q_1|+|Q_2|\geq(\half\aI-16\eta)k
\end{align*}

and, thus, may assume that $R_2$ spans at least  $(\aI-50\eta)k$ vertices in $G[M_1,Q_1]\cup G[N_1,P_1]\cup G[N_2,Q_2]$. Also, recalling~(\ref{G1}) and~(\ref{G4}), we have $$|N_2|\geq|P_2|+14\eta^{1/2}k\geq|Q_2|+13\eta^{1/2}k.$$
Thus, $$|N_2\backslash V(R_2)|\geq |N_2|-|Q_2|\geq 13\eta^{1/2}k.$$

Suppose there exists a matching $R_S$ on $54\eta k$ vertices in $G[N_1,P_2]$, then we can obtain a red connected-matching on at least $\aI k$ vertices as follows:

Observe that there exists a set $R^{-}$ of $27\eta k$ edges belonging to $R_2$ such that $N_1\cap V(R_S)=N_1\cap V(R^{-})$. Define $R^{*}=R_2\backslash R^{-}$ and $\widetilde{P}=P_1\cap V(R^{-})$, let $\widetilde{N}$ be any set of $27\eta k$ vertices in $N_2 \backslash V(R_2)$ and consider $G[\widetilde{N},\widetilde{P}]$. Since $|\widetilde{N}|,|\widetilde{P}|\geq 27\eta k$, we may apply Lemma~\ref{l:eleven} to find a red connected-matching $R_T$ on at least $52\eta k$ vertices in $G[\widetilde{N},\widetilde{P}]$. Since all edges present in $G[N_2,Q_2]\cup G[P_2,Q_2]$ are coloured exclusively red, $R_S$ and $R_T$ belong to the same red component as $R_2$. Then, $R^{*}\cup R_S \cup R_T$ is a red connected-matching in $$G[M_1,Q_1]\cup G[N_1,P_1]\cup G[N_2,Q_2]\cup G[N_2,P_1] \cup G[N_1,P_2]$$ on at least $(\aI -50\eta  - 55\eta)k +54\eta k+52\eta k\geq \aI k$ vertices.

\begin{figure}[!h]
\centering
\includegraphics[width=64mm, page=14]{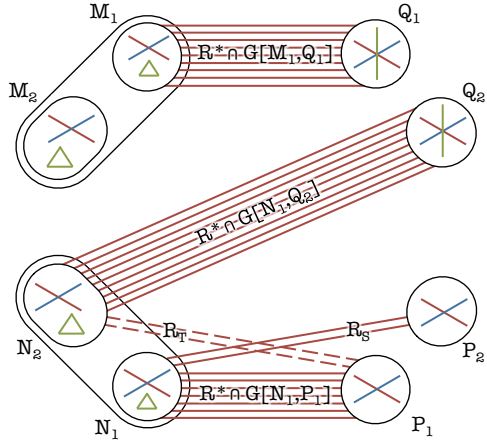}
\vspace{-4mm}\caption{Enlarging the matching $R_2$ with edges from $G[N_1,P_2]$.}
  
\end{figure}

Thus, after discarding at most $27\eta k$ vertices from each of  $N_1$ and $P_2$, we may assume that all edges present in $G[N_1,P_2]$ are coloured exclusively blue and, recalling~(\ref{G3}), that $|P_2|\geq (\half\aII-38\eta)k$. 
\begin{figure}[!h]
\centering
\includegraphics[width=64mm, page=15]{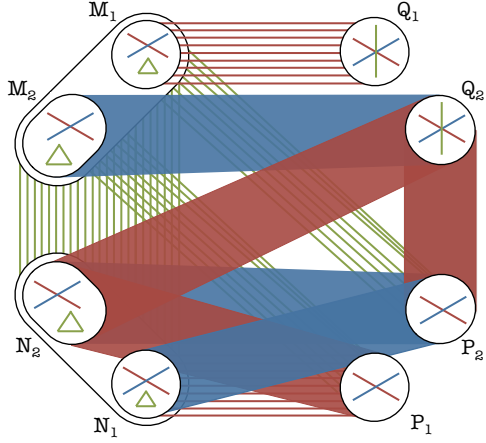}
\vspace{-4mm}\caption{Resultant colouring of the edges of $G[N_1,P_2]$.}
  
\end{figure}

Finally, suppose that there exists a blue matching $B_S$ on at least $80\eta k$ vertices in $G[N_1,P_1]$. Then, we could obtain a blue connected-matching on at least $\aII k$ vertices. Thus, after discarding at most $40\eta k$ vertices from each of $N_1$ and $P_1$, we may assume that all edges present in $G[N_1,P_1]$ are coloured exclusively red, thus completing the proof of the claim.
\end{proof}

In summary, recalling (\ref{G3}), we now have
\begin{align*}
|P_1|+|Q_1|+|Q_2|&\geq(\half\aI-56\eta)k,\\
|P_2|&\geq(\half\aII-40\eta)k,
\end{align*}
and know that all edges present in $G[N,P_1]$ are coloured exclusively red and that all edges present in $G[N,P_2]$ are coloured exclusively blue. Observe, also, that there can be no blue edges present in $G[N_1,Q_2]$ since then $M_2\cup Q_2$ and $N_2\cup P_2$ would belong to the same blue component of $G$.

\begin{figure}[!h]
\centering
\includegraphics[width=64mm, page=17]{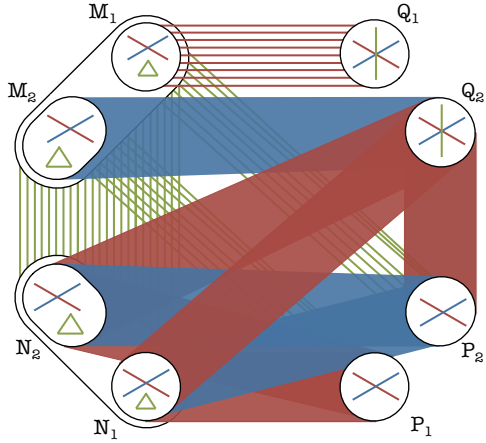}
\vspace{-4mm}
\caption{Colouring after Claim~\ref{claimNP}.}
  
\end{figure}

Next, we consider, in turn $G[M_2,N\cup P_2]$, $G[N]$ and $G[M_1,N]$ showing that after discarding a few vertices, we may assume that all edges remaining in each are coloured exclusively green: 

Suppose there exists a red matching $R_M$ on at least $136\eta k$ vertices in $G[M_2,N\cup P_2]$. Then, since $|N|\geq |P_1|+|Q_2|+12\eta^{1/2}k$, we have $|N\backslash (R_M)|\geq|P_1|+|Q_2|$. Thus, by Lemma~\ref{l:eleven}, there exists a red connected-matching $R_N$ on at least $(2|P_1|-2\eta k)+(2|Q_2|-2\eta k)$ vertices in $G[N,P_1\cup Q_2]$ sharing no vertices with $R_M$. Since all edges present in $G[N\cup P_2,Q_2]\cup G[N,P_1]$ are coloured red, $R_M$ and $R_N$ belong to the same red component of~$G$.

Since $P\cup Q$ has a red effective-component on $F$ on at least $|P\cup Q|-8\eta k$ vertices, all but at most $8\eta k$ of the edges of $R$ contained in $G[M_1,Q_1]$ belong to the same red component as $R_M\cup R_N$. Thus, defining $R_Q$ to be the subset of $R$ belonging to $G[M,Q_1\cap F]$, we have a red connected-matching $R_M\cup R_N\cup R_Q$ on at least $136\eta k+(2|P_1|-2\eta k)+(2|Q_2|-2\eta k)+(2|Q_1|-16\eta k)\geq \aI k$ vertices in $G[M_2,N\cup P_2]\cup G[N,P_1\cup Q_2]\cup G[M_1,Q_1]$.

 \begin{figure}[!h]
\centering
\includegraphics[width=64mm, page=20]{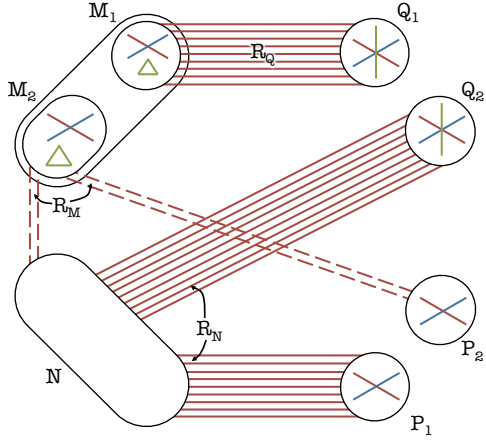}
\vspace{-4mm}\caption{Construction of red connected-matching $R_M\cup R_N\cup R_Q$.}
  
\end{figure}

 Thus, discarding at most $68\eta k$ vertices from each of $M_2$ and $N\cup P_2$, we may assume that there are no red edges $G[M_2,N\cup P_2]$. Thus, recalling that we assume that $P_2$ and~$Q_2$ are in different blue components, all edges present in $G[M_2,N\cup P_2]$ are coloured exclusively green and have
\begin{equation}
\label{G8}
\left.
\begin{aligned}
\quad|N|&\geq|P_1|+|Q_2|+11\eta^{1/2}k,\quad\quad & |P_1|+|Q_1|+|Q_2|&\geq(\half\aI-57\eta)k,\quad\quad\\
|N|&\geq|P_2|+11\eta^{1/2}k, & |P_2|&\geq(\half\aII-108\eta)k.
\end{aligned}
\right\}
\end{equation}
 Next, suppose there exists a red matching $R_A$ on $136\eta k$ vertices in $G[N]$. Then, by~(\ref{G8}), we have $|N\backslash V(R_A)|\geq|P_1|+|Q_2|$. So, by Lemma~\ref{l:eleven}, there exists a red connected-matching $R_B$ on at least $(2|P_1|-2\eta k)+(2|Q_2|-2\eta k)$ vertices in $G[N\backslash V(R_A),P_1\cup Q_2]$. Since all edges in $G[N,P_1]$ are coloured red, all edges of $R_A$ and $R_B$ belong to the same red component. Also, since the red component $F$ spans all but at most $8\eta k$ vertices of $P\cup Q$, there exists a red-matching $R_C\subseteq R$ in $G[M_1,Q_1]$ on at least $2|Q_1|-16\eta k$ vertices belonging to the same red component as $R_A$ and $R_B$. Then, together $R_A$, $R_B$ and $R_C$ form a red connected-matching on at least 
 $\aI k$ vertices in \mbox{$G[N]\cup G[N,P_1\cup Q_2]\cup G[M_1\cup Q_1]$}.
 
Similarly, if there exists a blue matching $B_A$ on $218\eta k$ vertices in $G[N]$, then we can construct a blue connected-matching on at least $\aII k$ vertices as follows. By~(\ref{G8}), we have $|N\backslash V(R_B)|\geq|P_2|$. So, by Lemma~\ref{l:eleven}, there exists a blue connected-matching~$B_B$ on at least $2|P_2|-2\eta k$ vertices in $G[N\backslash V(B_A),P_2]$. Since all edges in $G[N,P_2]$ are coloured blue, all edges in $B_A$ and $B_B$ belong to the same blue component. Thus, together, $B_A$ and $B_B$ form a blue connected-matching on at least $2|P_2|+216\eta k\geq \aII k$ vertices in $G[N]\cup G[N,P_1\cup Q_2]\cup G[M_1\cup Q_1]$. Thus, discarding at most $354\eta k$ vertices from~$N$, we have 
\begin{align}
\label{G9}
|N|&\geq|P_1|+|Q_2|+10\eta^{1/2}k, &|N|&\geq|P_2|+10\eta^{1/2}k
\end{align} and may assume that all edges in $G[N]$ are coloured exclusively green.
 
\begin{figure}[!h]
\centering
\includegraphics[width=64mm, page=24]{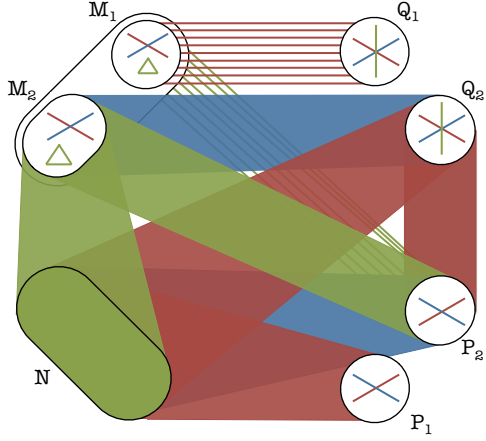}
\vspace{-4mm}\caption{Colouring of $G[M_2,P_2]\cup G[N]$.}
  
\end{figure}

Finally, we consider $G[M_1,N]$. Suppose there exists a red matching $R_D$ on $138\eta k$ vertices in $G[M_1,N]$. Then, 
by~(\ref{G9}), we have $|N\backslash V(R_D)|\geq|P_1|+|Q_2|$. Therefore, by Lemma~\ref{l:eleven}, there exists a red connected-matching $R_E$ on at least $(2|P_1|-2\eta k)+(2|Q_1|-2\eta k)$ vertices in $G[N,P_1\cup Q_2]$ which shares no vertices with $R_D$. Then, since all edges present in $G[N,P_1]$ are coloured red, $R_D$ and $R_E$ belong to the same red component. Since $F$, the largest red component in $G[P\cup Q]$ includes all but at most $8\eta k$ of the vertices of $P\cup Q$, there exists a matching $R_F\subseteq R$ in $G[M_1,Q_1]$ on at least $2(|Q_1|-|M_1\cap V(R_D)|-8\eta k)$ vertices which shares no vertices with $R_D$ but belongs to the same red component as it. Thus, together, $R_D$, $R_E$ and $R_F$ form a red connected-matching on at least $2(|P_1|+|Q_1|+|Q_2|+|N\cap V(R_D)|)-20\eta k \geq \aI k$ vertices.

Similarly, if there exists a blue matching $B_D$ on $220\eta k$ vertices in $G[M_1,N]$. Then, by~(\ref{G9}), we have $|N\backslash V(B_D)|\geq|P_2|$. Therefore, by Lemma~\ref{l:eleven}, there exists a blue connected-matching $B_E$ on at least $(2|P_2|-2\eta k)$ vertices in $G[N,P_2]$ which shares no vertices with $B_D$. Since all edges present in $G[N,P_2]$ are coloured blue, $B_D$ and $B_E$ belong to the same blue component. Thus together $B_D$ and $B_E$ form a blue connected-matching on at least $2|P_2|-2\eta k+220\eta k\geq \aII k$ vertices. Therefore, after discarding at most $174\eta k$ vertices from each of $M_1$ and~$N$, we may assume that all edges present in $G[M_1,N]$ are coloured exclusively green.

\begin{figure}[!h]
\centering
\includegraphics[width=64mm, page=25]{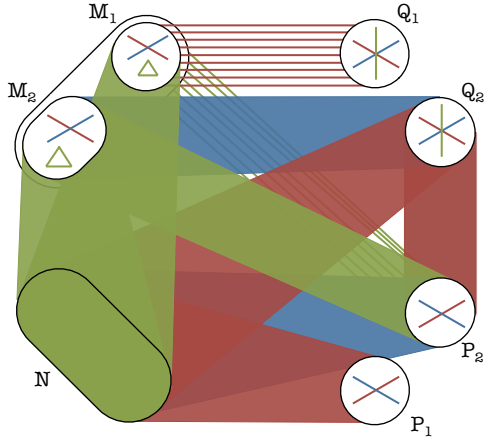}
\vspace{-4mm}\caption{Final Colouring in Case E.iii.a.ii.}
\label{f250}
\end{figure}

Given the colouring found so far, we now show that we may obtain a green connected-matching on at least $\aIII k$ vertices: Having discarded at most $1000\eta k$ vertices from $M\cup N$, recalling (\ref{E4a-x}), since $\eta<10^{-7}$, we have $|M|,|N|\geq (\half\aIII-6\eta^{1/2})k$. Recalling $(\ref{G1})$, we have $|M_2|\geq|Q_2|+14\eta^{1/2}k\geq 14\eta^{1/2}$ and, by~(\ref{G8}), have $|P_2|\geq(\half\aII-108\eta)k\geq14\eta^{1/2}k$. Letting $M^{\prime}$ be a subset of $M_2$ and $N^{\prime}$ a subset of $N$ such that $14\eta^{1/2}k\leq|M^{\prime}|=|N^{\prime}|\leq 15\eta^{1/2}k$, by Lemma~\ref{l:eleven}, there exist green matchings $G_{MP}$ on at least $2|M^{\prime}|-2\eta k$ vertices in $G[M^{\prime},P_2]$ and $G_{MN}$ on at least $2\min\{|M\backslash M^{\prime}|,|N\backslash N^{\prime}|\}-2\eta k$ vertices in $G[M\backslash M^{\prime}, N \backslash N^{\prime}]$. Finally, by  Theorem~\ref{dirac}, provided $k\geq 1/\eta^2$, there exists a connected-matching $G_N$ on at least $|N^{\prime}|-1$ vertices in $G[N^{\prime}]$. Then, since all edges present in $G[M,N]$ are coloured green, $G_{NP}$, $G_{MN}$ and $G_N$ belong to the same green component and, since they share no vertices, form a green connected-matching on at least 
\begin{align*}
(2|M^{\prime}|-2\eta k)+(2\min\{|M\backslash M^{\prime}|,|N\backslash N^{\prime}|\}&-2\eta k) + |N^{\prime}|-1\\
&\geq \aIII k+\eta^{1/2} k - 4\eta k -1\geq \aIII k
\end{align*}
vertices. By the definition of the decomposition $M\cup N\cup P\cup Q$, this connected-matching is odd, thus completing Case E.iii.a.ii.

At the begining of Case E.iii.a, we made the assumption that $F$, the largest monochromatic component in $G[P\cup Q]$ was red. If, instead, $F$ is blue, then the proof is essentially identical to the above with the roles of red and blue reversed. The result is the same, that is,~$G$ will either contain a red connected-matching on at least $\aI k$ vertices, a blue connected-matching on at least $\aII k$ vertices, a green odd connected-matching on at least $\aIII k$ vertices or a subgraph in $$\cK\left((\half\aI-10\eta^{1/2})k, (\half\aI-10\eta^{1/2})k, (\aIII-20\eta^{1/2})k, 4\eta^4 k \right),$$ thus completing Case E.iii.a.
 
\subsection*{Case E.iii.b: {\rm $G[P,Q]$ contains red and blue stars centred in $Q$.}}

Recall that we have a decomposition of $V(G)$ into $M\cup N\cup P \cup Q$ satisfying (\ref{E1-iii})--(\ref{E3-iii}) with $|P|,|Q|\geq95\eta^{1/2}k$ and
\begin{align}\label{E4a-y}\tag{E4a}(\max\{\tfrac{3}{4}\aI+\tfrac{1}{4}\aII,\half\aIII\}-5\eta^{1/2})k & \leq  |M|\,,\,|N| \leq \half\aIII k\\
\label{E4b-y}\tag{E4b}(\half\aI+\half\aII-\eta)k & \leq  |P|+|Q| \leq (\half\aI+\half\aII+5\eta^{1/2})k.
\end{align}

Additionally, in this case, we assume that the sets 
\begin{itemize}
\item[] $W_r=\{ q\in Q$ : $q$ has red edges to all but at most $8\eta k$ vertices in $P\}$,
\item[] $W_b=\{ q\in Q$ : $q$ has blue edges to all but at most $8\eta k$ vertices in $P\}$,
\end{itemize}
are both non-empty.

\begin{figure}[!h]
\centering
\includegraphics[width=64mm, page=57]{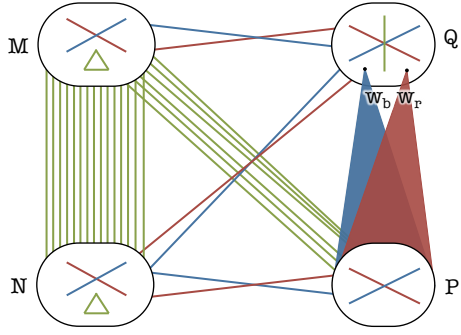}
\vspace{-3mm}\caption{Red and blue stars centred in $Q$.}
  
\end{figure}

We define two further sets which will be useful in what follows:
\begin{itemize}
\item[] $P_r=\{p\in P$ : $p$ has a red edge to some vertex $q \in W_r\}$,
\item[] $P_b=\{p\in P$ : $p$ has a blue edge to some vertex $q \in W_b\}$. 
\end{itemize}

Observe that, since $|P|,|Q|\geq 95\eta^{1/2}k$, by (E4b), we have
$$95\eta^{1/2}k\leq |Q|\leq (\half\aI +\half\aII-90\eta^{1/2})k.$$
Thus, considering (E4a) and (E4b), we have
$$|N|\geq|Q|\geq95\eta^{1/2}\geq6(2\eta^{1/2})|N\cup Q|\geq6(2\eta)|N\cup Q|.$$

Recall that, at the start of Case E, after discarding some edges, $G$ was assumed to be $(1-\tfrac{3}{2}\eta^4)$-complete. Recall also that, from (\ref{A3BIG}), we have $\aIII\geq\tfrac{3}{2}\aI+\half\aII-10\eta^{1/2}$. Thus, considering (E4a), since $|Q|\geq95\eta^{1/2}k$, we have
$$|N\cup Q|\geq\half\aIII k\geq\tfrac{1}{4}(\aIII+\tfrac{3}{2}\aI+\half\aII-10\eta^{1/2})k\geq\tfrac{1}{4}(\aIII+\half\aI+\half\aII+10\eta^{1/2})k\geq\tfrac{1}{4}K.$$
Then, since $\eta<0.1$, we have $6\eta^4\leq 2\eta$ so $G[N\cup Q]$ is $(1-2\eta)$-complete and, provided $|N\cup Q|\geq 1/\eta$, we may apply Lemma~\ref{l:twoholes} to $G[N,Q]$ and distinguish four cases: 

\begin{itemize}
\item[(i)] $G[N\cup Q]$ has a monochromatic component $E$ on at least $|N\cup Q|-28\eta k$ vertices;
\item[(ii)] $N,Q$ can be partitioned into $N_1\cup N_2$, $Q_1\cup Q_2$ such that $|N_1|, |N_2|, |Q_1|, |Q_2|\geq 3\eta k$ and all edges present between $N_i$ and $Q_j$ are red for $i=j$ and blue for $i\neq j$;
\item[(iii)] there exist vertices $n_r, n_b\in N$ such that $n_r$ has red edges to all but $16\eta k$ vertices in~$Q$ and $n_b$ has blue edges to all but $16\eta k$ vertices in~$Q$;
\item[(iv)] there exist vertices $q_r, q_b\in Q$ such that $q_r$ has red edges to all but $16\eta k$ vertices in~$N$ and $q_b$ has blue edges to all but $16\eta k$ vertices in~$N$.
\end{itemize}

\subsection*{Case E.iii.b.i: {\rm $G[N\cup Q]$ has a large monochromatic component.}}

Suppose that $E$, the largest monochromatic component in $G[N\cup Q]$, is red. In that case, if $G[Q\cap E, P_r]$ contains a red edge, then $G[P\cup Q]$ has a red effective-component on at least $|P\cup Q|-36\eta k$ vertices. Alternatively, every edge in $G[Q\cap E,P_r]$ is blue, in which case, $G[P\cup Q]$ has a blue connected-component on at least $|P\cup Q|-36\eta k$ vertices.

\begin{figure}[!h]
\centering
\includegraphics[width=64mm, page=59]{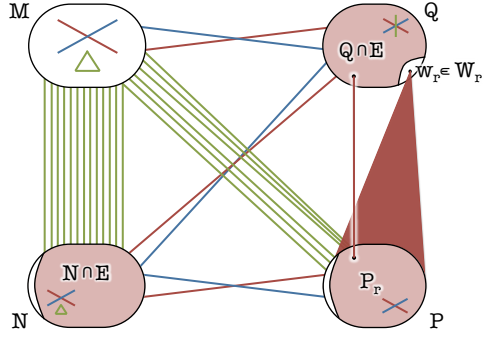}
\vspace{-3mm}\caption{Large red effective-component.}
  
\end{figure}
Suppose instead that $E$ is blue. In that case, if $G[Q\cap E,P_b]$ contains a blue edge, then $G[P\cup Q]$ has a blue effective-component on at least $|P\cup Q|-36\eta k$ vertices. Alternatively, every edge present in $G[Q\cap E,P_b]$ is red, in which case, $G[P\cup Q]$ has a red connected-component on at least $|P\cup Q|-36\eta k$ vertices. 

In either case the proof proceeds via exactly the same steps as Case E.iii.a with the result being that~$G$ will either contain a red connected-matching on at least $\aI k$ vertices, a blue connected-matching on at least $\aII k$ vertices, a green odd connected-matching on at least $\aIII k$ vertices or a subgraph in $$\cK\left((\half\aI-100\eta^{1/2})k, (\half\aI-100\eta^{1/2})k, (\aIII-200\eta^{1/2})k, 4\eta^4 k \right),$$ 
thus completing Case E.iii.b.i.
 
\subsection*{Case E.iii.b.ii: ${N\cup Q}$ {\rm has a non-trivial partition with `cross' colouring.}}

In this case, we assume that $N$ and $Q$ can be partitioned into $N_1\cup N_2$, $Q_1\cup Q_2$ such that $|N_1|, |N_2|, |Q_1|, |Q_2|\geq 3\eta k$ and all edges present between $N_i$ and $Q_j$ are red for $i=j$ and blue for $i\neq j$.

\begin{figure}[!h]
\centering
\includegraphics[width=64mm, page=27]{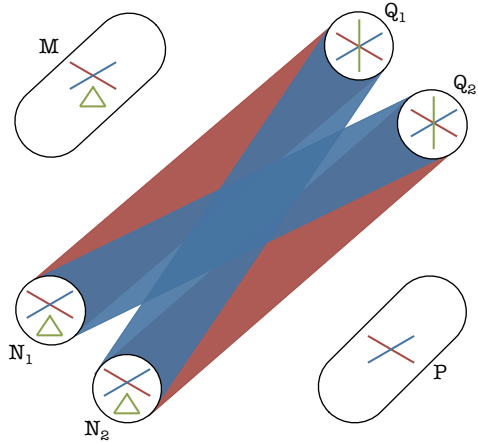}
\vspace{-4mm}\caption{`Cross' colouring of $G[N,Q]$.}
  
\end{figure}

Recall that $W_r$ and $W_b$ are both non-empty, that is, there exist vertices $w_r, w_b\in Q$ such that $w_r$ has red edges to all but $8\eta k$ vertices in~$P$ and $w_b$ has blue edges to all but $8\eta k$ vertices in~$P$.

Observe that, we may assume that $P\cup Q$ does not have a monochromatic effective-component on at least $|P\cup Q|-16\eta k$ vertices. Indeed, otherwise, the proof proceeds via the same steps as Case E.iii.a, with the result being that~$G$ will either contain a red connected-matching on at least $\aI k$ vertices, a blue connected-matching on at least $\aII k$ vertices, a green odd connected-matching on at least $\aIII k$ vertices or a subgraph in $$\cK\left((\half\aI-100\eta^{1/2})k, (\half\aI-100\eta^{1/2})k, (\aIII-200\eta^{1/2})k, 4\eta^4 k \right).$$ 

Without loss of generality, we assume $w_r\in Q_1$. Observe then that the existence of a red edge in $G[Q_2, P_r]$ would result in $P \cup Q$ having a red effective-component on at least $|P\cup Q|-8\eta k$ vertices. Thus, we assume that every edge present in $G[Q_2, P_r]$ is blue. Similarly, we may assume every edge present in $G[Q_1, P_b]$ is red. 
 Since $W_r$ and~$W_b$ are non-empty, $|P_r|,|P_b|\geq|P|-8\eta k$. Then, as $|P|\geq 95\eta^{1/2} k$, we have $|P_r \cap P_b|\geq|P|-16\eta k>0$ and know that all edges present in $G[Q_2, P_r\cap P_b]$ are coloured exclusively red and all edges present in $G[Q_1, P_r\cap P_b]$ are coloured exclusively blue. 

\begin{figure}[!h]
\centering
\includegraphics[width=64mm, page=26]{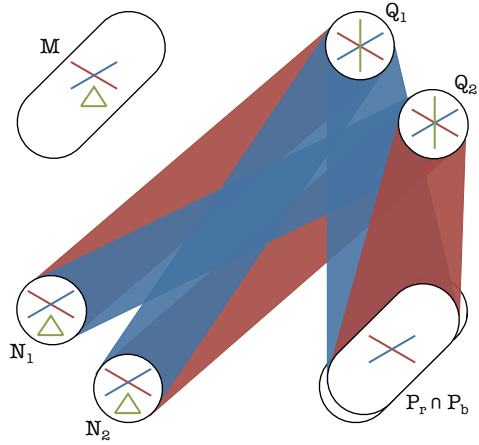}
\vspace{-4mm}\caption{Red and blue edges in $G[N,Q]\cup G[P,Q]$.}
  
\end{figure}

Recall that, by (\ref{E3-iii}), there are no green edges in $G[P,N]$ and notice that, if there existed a vertex $p\in P$ with red edges to both $N_1$ and $N_2$, then $P\cup Q$ would have an effective red-component on at least $|P\cup Q|-16\eta k$ vertices. Thus, we assume that there is no such vertex. Similarly, we assume there does not exist a vertex in $P$ with blue edges to both $N_1$ and $N_2$. Thus,~$P$ can be partitioned into $P_1\cup P_2$ such that all edges present between $P_i$ and $N_j$ are red for $i=j$ and blue for $i\neq j$. 

  \begin{figure}[!h]
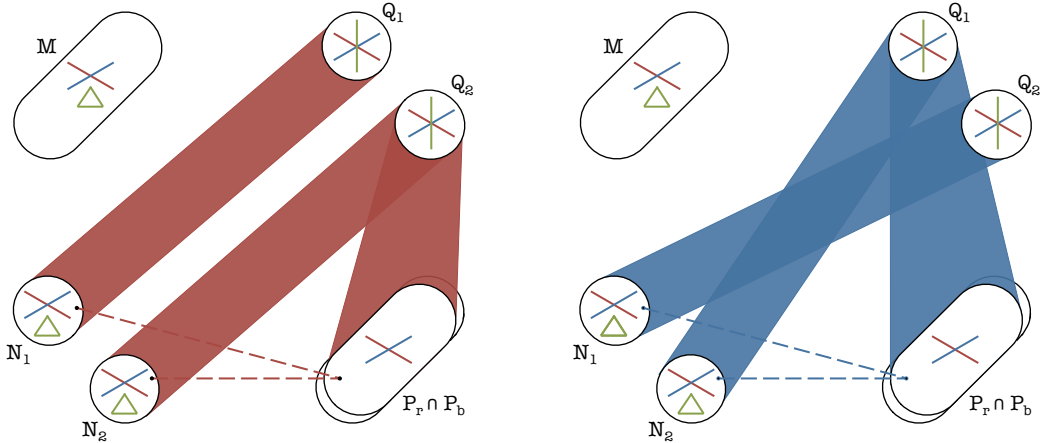

\centering{
\mbox{\hspace{-2mm}{\includegraphics[width=64mm, page=28]{CaseE-Figs2.pdf}}\quad\quad\quad{\includegraphics[width=64mm, page=29]{CaseE-Figs2.pdf}}}}\vspace{-5mm}
\caption{Partitioning of $P$.}   
\end{figure}

  \begin{figure}[!h]
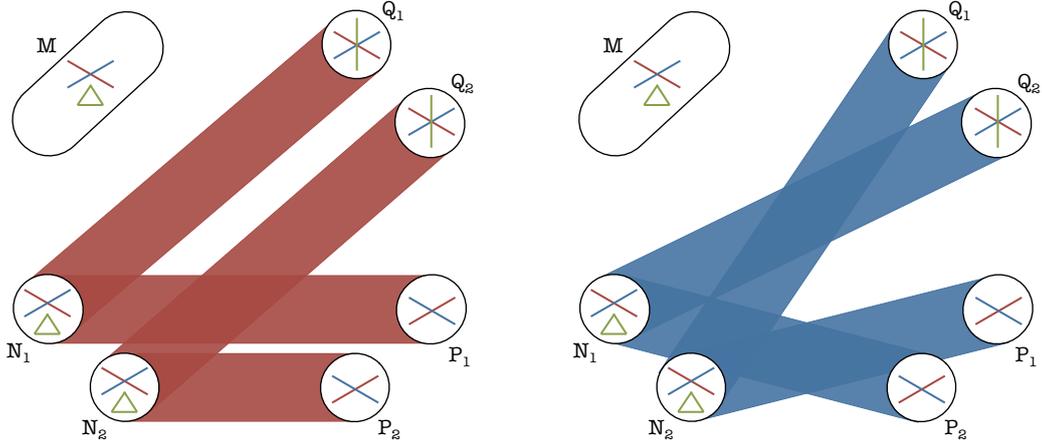

\centering{
\mbox{\hspace{-2mm}{\includegraphics[width=64mm, page=31]{CaseE-Figs2.pdf}}\quad\quad\quad{\includegraphics[width=64mm, page=32]{CaseE-Figs2.pdf}}}\vspace{-3mm}
\caption{Resultant colouring of $G[N,P]$.}   }
\end{figure}

Recall, however, that all edges present in $G[Q_1,P_r]$ are red and that all edges present in $G[Q_2,P_b]$ are blue. Thus, in order to avoid $G[P,Q]$ having a red effective-component on at least $|P\cup Q|-16\eta k$ vertices, we must have $P_r\subseteq P_1$ but then, since $|P_r\cap P_b|>0$, there exists a blue edge in $G[Q_2,P_1]$, giving rise to a blue effective-component on at least $|P\cup Q|-16\eta k$ vertices, completing Case E.iii.a.ii.

\subsection*{Case E.iii.b.iii: {\rm $G[N,Q]$ contains red and blue stars centred in $N$.}}

Recall that we have a decomposition of $V(G)$ into $M\cup N\cup P \cup Q$ satisfying (\ref{E1-iii})--(\ref{E4b-y}) and that there exists vertices $w_r\in W_r$ and $w_b\in W_b$, where 
\begin{itemize}
\item[] $W_r=\{ q\in Q$ : $q$ has red edges to all but at most $8\eta k$ vertices in $P\}$,
\item[] $W_b=\{ q\in Q$ : $q$ has blue edges to all but at most $8\eta k$ vertices in $P\}$.
\end{itemize}

Additionally, in this case, we assume that there exist vertices $n_r\in N_r$ and $n_b\in N_b$, where
\begin{itemize}
\item[] $N_r=\{ n\in N$ : $n$ has red edges to all but at most $16\eta k$ vertices in $Q\}$,
\item[] $N_b=\{ n\in N$ : $n$ has blue edges to all but at most $16\eta k$ vertices in $Q\}$.
\end{itemize}

Recall that 
\begin{itemize}
\item[] $P_r=\{p\in P$ : $p$ has a red edge to some vertex $q \in W_r\}$,
\item[] $P_b=\{p\in P$ : $p$ has a blue edge to some vertex $q \in W_b\}$, 
\end{itemize}

and define $Q_N=\{q\in Q$ : $q$ has a red edge to some vertex $n\in N_r\}$.

\begin{figure}[!h]
\centering
\includegraphics[width=64mm, page=44]{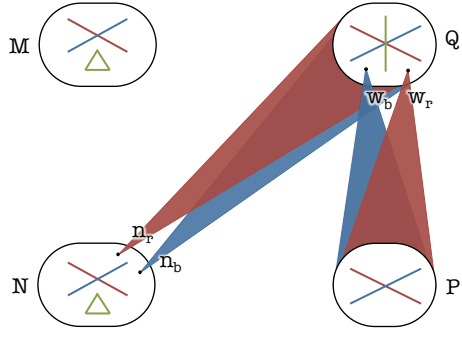}
\vspace{-3mm}\caption{`Stars' centred at $w_r, w_b, n_r$ and $n_b$.}
  
\end{figure}

Notice that, since $|W_r|,|W_b|>0$, we have $|P_r|,|P_b|\geq|P|-8\eta k$ and, since $|N_r|>0$, we have $|Q_N|\geq|Q|-16\eta k$. So, if there exists a red edge in $G[P_r,Q_N]$, then $G[P\cup Q]$ has a red effective-component on at least $|P\cup Q|-24\eta k$ vertices. Alternatively, every edge present in $G[P_r,Q_N]$ is blue. Then $G[P\cup Q]$ is has a blue component on at least $|P\cup Q|-24\eta k$ vertices. In either case, the proof then follows the same steps has in Case E.iii.a with the result being that~$G$ will either contain a red connected-matching on at least $\aI k$ vertices, a blue connected-matching on at least $\aII k$ vertices, a green odd connected-matching on at least $\aIII k$ vertices or a subgraph in $$\cK\left((\half\aI-100\eta^{1/2})k, (\half\aI-100\eta^{1/2})k, (\aIII-200\eta^{1/2})k, 4\eta^4 k \right),$$ thus completing case E.iii.b.iii.

\subsection*{Case E.iii.b.iv: {\rm $G[N,Q]$ contains red and blue stars centred in $Q$.}}

Recall that we have a decomposition of $V(G)$ into four parts $M\cup N\cup P \cup Q$ satisfying

\begin{itemize}
\labitem{E1}{E1-z} $M\cup N$ is the vertex set of~$F$ and every edge of~$F$ belongs to $G[M,N]$;
\labitem{E2}{E2-z} every vertex in~$P$ has a green edge to~$M$;
\labitem{E3}{E3-z} there are no green edges in $G[N,P]$, $G[M,Q]$, $G[N,Q]$, $G[P,Q]$ or $G[P]$;
\end{itemize}
such that the sizes of the four parts satisfy
\begin{align}\label{E4a-z}\tag{E4a}(\max\{\tfrac{3}{4}\aI+\tfrac{1}{4}\aII,\half\aIII\}-5\eta^{1/2})k & \leq  |M|\,,\,|N| \leq \half\aIII k,\\
\label{E4b-z}\tag{E4b}(\half\aI+\half\aII-\eta)k & \leq  |P|+|Q| \leq (\half\aI+\half\aII+5\eta^{1/2})k.
\end{align}
Recall, also, that there exists vertices $w_r\in W_r$ and $w_b\in W_b$, where 
\begin{itemize}
\item[] $W_r=\{ q\in Q$ : $q$ has red edges to all but at most $8\eta k$ vertices in $P\}$,
\item[] $W_b=\{ q\in Q$ : $q$ has blue edges to all but at most $8\eta k$ vertices in $P\}$.
\end{itemize}

Additionally, in this case, we assume that there exist vertices $q_r\in Q_r$ and $q_b\in Q_b$, where
\begin{itemize}
\item[] $Q_r=\{ n\in Q$ : $n$ has red edges to all but at most $16\eta k$ vertices in $N\}$,
\item[] $Q_b=\{ n\in Q$ : $n$ has blue edges to all but at most $16\eta k$ vertices in $N\}$.
\end{itemize}

We consider $G[Q\cup M]$. Since all edges present in $G[Q,M]$ are coloured red or blue,  {Lemma~\ref{l:twoholes} can be applied to $G[M\cup Q]$ as it was previously applied to $G[N\cup Q]$ with the same four possible outcomes:
\begin{itemize}
\item[(i)] $G[M\cup Q]$ has a monochromatic component on at least $|M\cup Q|-28\eta k$ vertices;
\item[(ii)] $M,Q$ can be partitioned into $M_1\cup M_2$, $Q_1\cup Q_2$ such that $|M_1|, |M_2|, |Q_1|, |Q_2|\geq 3\eta k$ and all edges present between $M_i$ and $Q_j$ are red for $i=j$ and blue for $i\neq j$;
\item[(iii)] there exist vertices $m_r, m_b\in N$ such that $m_r$ has red edges to all but $16\eta k$ vertices in~$Q$ and $m_b$ has blue edges to all but $16\eta k$ vertices in~$Q$;
\item[(iv)] there exist vertices $q_r, q_b\in Q$ such that $q_r$ has red edges to all but $16\eta k$ vertices in~$M$ and $q_b$ has blue edges to all but $16\eta k$ vertices in~$M$.
\end{itemize}

 For possibilities (i)--(iii), the proof proceeds via exactly the same steps as in the corresponding cases above with the same possible outcomes.
Thus, we consider possibility (iv). That is, we  assume that (in addition to $w_r,w_b, q_r, q_b$) there exist vertices $v_r, v_b\in Q$ such that $v_r$ has red edges to all but $16\eta k$ vertices in~$M$ and $v_b$ has blue edges to all but $16\eta k$ vertices in~$M$.

\begin{figure}[!h]
\centering
\includegraphics[width=64mm, page=46]{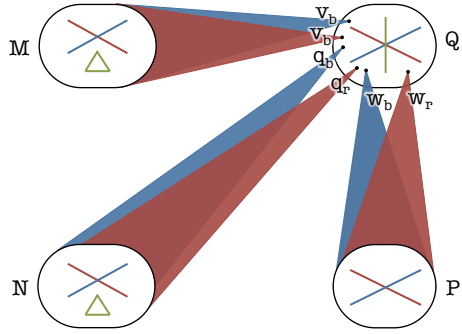}
\vspace{-3mm}\caption{`Stars' centred in $Q$.}
  
\end{figure}

Consider the largest red matching $R$ in $G[N,P\cup Q]$ and partition each of~$N$,~$P$ and~$Q$ into two parts such that $N_1=N\cap V(R)$, $N_2=N\backslash N_1$, $P_1=P\cap V(R)$, $P_2=P\backslash P_1$, $Q_1=Q\cap V(R)$ and $Q_2=Q\backslash Q_1$. By maximality of $R$, all edges present in $G[N_2,P_2\cup Q_2]$ are coloured exclusively blue. 

\begin{figure}[!h]
\centering
\includegraphics[width=64mm, page=37]{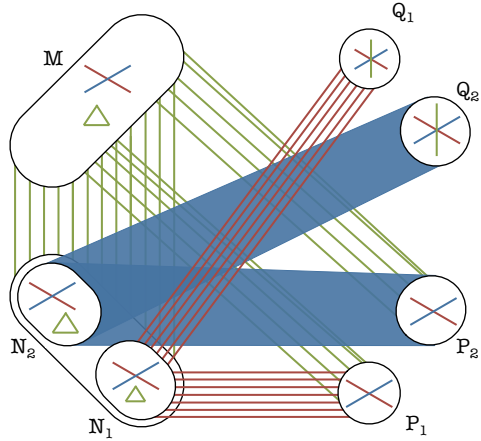}
\vspace{-5mm}\caption{Partition of $N$, $P$ and $Q$.}
  
\end{figure}

Notice that, in order to avoid having a blue connected-matching on at least $\aII k$ vertices,  by Lemma~\ref{l:eleven}, we may assume that $\min\{|P_2 \cup Q_2|,|N_2|\}\leq (\half\aII+\eta^{1/2})k.$
Thus, by (\ref{E4a-z}) and (\ref{E4b-z}), we may thus assume that 
\begin{subequations}
\begin{equation}
\label{M0a}
|N_1|=|P_1\cup Q_1|\geq (\half\aI-6\eta^{1/2})k.
\end{equation}
Since $q_r$ has red edges to all but at most $16\eta k$ vertices in~$N$, all but at most $16\eta k$ of the edges of $R$ belong to the same red component and we have a red connected-matching on $2(|N_1|-16\eta k)$ vertices in $G[N_1,P_1\cup Q_1]$. Thus, we may assume that 
 \begin{equation*}
 \label{M0ab}
 |N_1|=|P_1\cup Q_1|\leq (\half\aI+\eta^{1/2})k
 \end{equation*} and, therefore, by (E4a) and (E4b), that 
 \begin{equation}
 \label{M0b}
|N_2|, |P_2\cup Q_2| \geq (\half\aII-6\eta^{1/2}).
 \end{equation}
 \end{subequations}
Recall that  $v_r$ has red edges to all but $16\eta k$ vertices in~$M$ and $v_b$ has blue edges to all but $16\eta k$ vertices in~$M$. Thus, we may, after discarding at most $32\eta k$ vertices from~$M$, assume that~$G[M]$ is effectively red-connected and effectively blue-connected.

Similarly, we may, after discarding at most $16\eta k$ vertices from~$G[P]$, assume that~$P$ is effectively red-connected and effectively blue-connected and, after discarding at most $32\eta k$ vertices from~$N$, assume that~$N$ is effectively red-connected and effectively blue-connected. In order to maintain the equality $|N_1|=|P_1|$, we also discard from $N_1\cup P_1$ any vertex whose $R$-mate has already been discarded.

Then, in summary, having discarded some vertices, we have a decomposition of $V(G)$ into $M\cup N\cup P\cup Q$ and a refinement into $M\cup N_1\cup N_2 \cup P_1\cup P_2 \cup Q_1\cup Q_2$ such that
\begin{itemize}
\labitem{E3}{E3-a} there are no green edges in $G[N,P]$, $G[M,Q]$, $G[N,Q]$, $G[P,Q]$ or $G[P]$;
\labitem{E6a}{E6a-a} $G[M]$, $G[N]$ and $G[P]$ each have a single red and a single blue effective-component;
\labitem{E6b}{E6b-a} $G[N_1,P_1\cup Q_1]$ contains a red matching utilising every vertex in $G[N_1\cup P_1\cup Q_1]$;
\labitem{E6c}{E6c-a} all edges present in $G[N_2,P_2\cup Q_2]$ are coloured exclusively blue.

\end{itemize}

Having discarded some vertices, recalling (E4a),~(\ref{M0a}) and~(\ref{M0b}), we have
\begin{equation}
\label{M1}
\tag{E7}
\left.
\begin{aligned}
\!\!\!\!\! |M|&\geq\big(\!\max\{\tfrac{3}{4}\aI+\tfrac{1}{4}\aII,\half\aIII\}-6\eta^{1/2}\big)k, 
& \,|N_1|=|P_1\cup Q_1|&\geq (\half\aI-8\eta^{1/2})k,\\
\!\!\!\!\! |N|&\geq\big(\!\max\{\tfrac{3}{4}\aI+\tfrac{1}{4}\aII,\half\aIII\}-7\eta^{1/2}\big)k, 
& |N_2|, \, |P_2\cup Q_2| &\geq (\half\aII-8\eta^{1/2})k.
\end{aligned}
\,\,\, \right\}\!
\end{equation}
and also
\vspace{-2mm}
\begin{align}
|P|&\geq 90\eta^{1/2} k, & \max\{|Q_1|,|Q_2|\}&\geq 45\eta^{1/2} k. 
\tag{E8}\label{M2}
\end{align}

\vspace{-2mm}
For the final time, we distinguish between three cases:
\vspace{-2mm}
\begin{itemize}
\item[(a)] $G[M,N\cup P\cup Q_1]$ contains a red edge;
\item[(b)] $G[M, N\cup P\cup Q_2]$ contains a blue edge;
\item[(c)] $G_1[M,N\cup P\cup Q_1]$ and $G_2[M, N\cup P\cup Q_2]$ each contain no edges.
\end{itemize}

\vspace{-2mm}
\subsection*{Case E.iii.b.iv.a: {\rm $G[M,N\cup P\cup Q_1]$ contains a red edge.}}

\vspace{-2mm}
Given the existence of a red edge in $G[M,N\cup P\cup Q_1,M]$, then the existence of a red matching on at least $14\eta^{1/2} k$ vertices in $G[P_2\cup Q_2,M]$ would give a red connected-matching on at least $\aI k$ vertices. Thus, we may, after discarding at most $7\eta^{1/2} k$ vertices from each of $M$ and $P_2\cup Q_2$, assume that all present  edges in $G[M,Q_2]$ are coloured exclusively blue and that there are no red edges in $G[M,P_2]$. 

\vspace{-1mm}
\begin{figure}[!h]
\centering
\includegraphics[width=64mm, page=38]{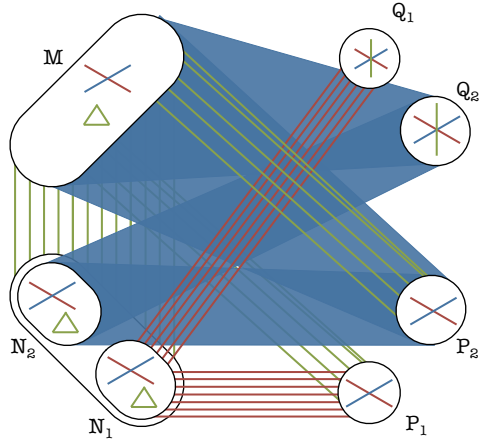}
\vspace{-5mm}\caption{Colouring immediately before Claim~\ref{claimLAST}.}
\label{beforeclaim}
\end{figure}

We then have 
\begin{equation}
\label{K1}
\left.
\begin{aligned}
\,\quad\quad\quad\quad\quad\quad\quad\quad\quad\quad|N_1|=|P_1\cup Q_1|&\geq(\half\aI-8\eta^{1/2})k, \quad\quad\quad\quad\quad\quad\quad\quad\\
|N_2|, \,|P_2\cup Q_2|&\geq(\half\aII-15\eta^{1/2})k.
\end{aligned}
\right\}\!
\end{equation}
The following pair of claims establish the coloured structure of $G[M\cup N,P\cup Q]$:

\begin{claim}
\label{claimLAST}
\hspace{-3mm} {\rm \bf a.} If $|Q_1|\geq 45\eta^{1/2}k$,  we may discard at most $43\eta^{1/2} k$ vertices from $N_1$, at most $43\eta^{1/2} k$ vertices from $N_2$, at most $16\eta^{1/2} k$ vertices from $M$, at most $59\eta^{1/2} k$ vertices from $P_1\cup Q_1$ and at most $27\eta^{1/2} k$ vertices from $P_2\cup Q_2$ such that, in what remains, there are no blue edges present in $G[M\cup N, P_1\cup Q_1]$ and no red edges present in $G[M\cup N,P_2\cup Q_2]$. 

{\rm \bf Claim~\ref{claimLAST}.b.} If $|Q_2|\geq 45\eta^{1/2}k$,  we may discard at most $45\eta^{1/2} k$ vertices from $N_1$, at most $18\eta^{1/2} k$ vertices from $N_2$, at most $18\eta^{1/2} k$ vertices from $M$, at most $18\eta^{1/2} k$ vertices from $P_1\cup Q_1$ and at most $27\eta^{1/2} k$ vertices from $P_2\cup Q_2$ such that, in what remains, there are no blue edges present in $G[M\cup N, P_1\cup Q_1]$ and no red edges present in $G[M\cup N,P_2\cup Q_2]$. 
\end{claim}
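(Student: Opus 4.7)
The plan is to prove both parts of Claim~\ref{claimLAST} via the same discard-and-augment strategy that has been deployed throughout Case~E. In each of subcases a and b we must show separately that the largest red matching in $G[M\cup N,P_2\cup Q_2]$ and the largest blue matching in $G[M\cup N,P_1\cup Q_1]$ are both bounded by the stated discard amounts; otherwise we derive either a red connected-matching on at least $\alpha_1 k$ vertices or a blue connected-matching on at least $\alpha_2 k$ vertices, contradicting our standing assumption.

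For the red side, I exploit the near-spanning red matching $R$ in $G[N_1,P_1\cup Q_1]$ from (\ref{E6b-a}), which already spans roughly $\alpha_1 k$ vertices by (\ref{M0a}). Given a red matching $R^{\ast}$ in $G[M\cup N, P_2\cup Q_2]$ of size just above the prescribed discard threshold, I follow the template of the ``swap'' argument used in Claim~\ref{claimMNPQ}: remove from $R$ the edges whose endpoints in $N_1$ clash with $V(R^{\ast})$, then use Lemma~\ref{l:eleven} together with the red effective-connectivity of $G[N]$ from (\ref{E6a-a}) to supply substitute red edges. The substitutes together with $R^{\ast}$ and the surviving part of $R$ are merged into one red component using the red effective-components of $G[M]$, $G[N]$, $G[P]$ from (\ref{E6a-a}) and the red edge in $G[M,N\cup P\cup Q_1]$ postulated by the hypothesis of Case~E.iii.b.iv.a. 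The total exceeds $\alpha_1 k$, forcing the bound. Paired discards from $N_1$ and its $R$-mates in $P_1\cup Q_1$ will be imposed throughout so as to preserve $|N_1|=|P_1\cup Q_1|$.

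For the blue side, I use that $G[N_2,P_2\cup Q_2]$ is entirely blue by (\ref{E6c-a}) together with the blue completeness of $G[M,Q_2]$ obtained at the start of Case~E.iii.b.iv.a. By (\ref{M1}) both sides of $G[N_2,P_2\cup Q_2]$ have size at least $(\half\alpha_2-O(\eta^{1/2}))k$, so Lemma~\ref{l:eleven} already yields a blue connected-matching close to $\alpha_2 k$ vertices inside $M\cup N_2\cup P_2\cup Q_2$. Any blue matching of sufficient size in $G[M\cup N,P_1\cup Q_1]$ lies in the same blue effective-component via (\ref{E6a-a}), and the combined matching exceeds $\alpha_2 k$. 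The asymmetric discard bounds in subcases a and b arise because the relevant augmentation must route through either $Q_1$ or $Q_2$: in subcase b we have the slack $|Q_2|\geq 45\eta^{1/2}k$ from (\ref{M2}), so the blue side is easy and the red bound tightens; in subcase a the opposite is true, and the discard budget from $P_1\cup Q_1$ must be correspondingly larger.

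The main obstacle will be the bookkeeping: each of the six discard operations must leave enough slack in (\ref{M1}) and (\ref{M2}) for Lemma~\ref{l:eleven} to apply at the next augmentation, the $R$-mate pairing must be maintained across the rounds of discards, and the effective monochromatic components of (\ref{E6a-a}) must not be destroyed by what we throw away. Once the constants $(43,43,16,59,27)$ and $(45,18,18,18,27)$ are chosen to absorb all of these constraints, both conclusions drop out and Claim~\ref{claimLAST} is proved.
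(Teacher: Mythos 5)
Your overall framing (bound the stray matchings or else build a monochromatic connected-matching exceeding $\aI k$ or $\aII k$, with the asymmetry between the two subcases coming from routing through $Q_1$ or $Q_2$) matches the spirit of the paper, but both of your central arguments have gaps that the paper's staged proof is specifically built to avoid. On the blue side, you propose to combine any sizeable blue matching $B_S$ in $G[M\cup N,P_1\cup Q_1]$ with the blue connected-matching that Lemma~\ref{l:eleven} gives in $G[N_2,P_2\cup Q_2]$. But $|N_2|$ and $|P_2\cup Q_2|$ are both only about $(\half\aII-O(\eta^{1/2}))k$, with no slack between them, so if $B_S$ sits on $N_2$ its vertices are exactly the ones the big matching needs: deleting them costs as many vertices as $B_S$ contributes, and no contradiction results. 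The paper circumvents this in part (a) by first treating only $G[M\cup N_1,P_1\cup Q_1]$ (disjoint from $N_2$), postponing $G[N_2,P_1\cup Q_1]$ until after the red step has made $G[N_1,P_2\cup Q_2]$ entirely blue (so $N_1$ can replace the lost $N_2$ vertices), and in part (b) by routing an extra blue connected-matching through $G[M,Q_2\setminus\widetilde{Q}]$ --- which is precisely where the hypothesis $|Q_2|\geq 45\eta^{1/2}k$ is consumed. Neither device appears in your sketch.

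On the red side, your ``swap'' has no source of substitute edges at the point where you invoke it: $|N_1|=|P_1\cup Q_1|$ with no spare vertices, and at the start of Case E.iii.b.iv.a neither $G[N_2,P_1\cup Q_1]$, nor $G[M,P_1]$, nor $G[M,Q_1]$ is known to carry red edges (deciding their colours is the content of the claim), so Lemma~\ref{l:eleven} cannot manufacture red replacements for the $R$-edges you delete. In the paper the red elimination runs only after the blue elimination has made $G[N_1,P_1\cup Q_1]$ and $G[M,Q_1]$ exclusively red, and even then the count closes only because of an additional red connected-matching on roughly $52\eta^{1/2}k$ vertices in $G[M,Q_1\setminus\widetilde{Q}]$ (part (a), using $|Q_1|\geq 45\eta^{1/2}k$), or because $|N|$ exceeds $|P_1\cup Q_1|$ by a large margin (part (b)); with only $R$ (about $(\aI-16\eta^{1/2})k$ vertices), $R^{\ast}$ and clash-removal, the total stays below $\aI k$ whenever $R^{\ast}$ lies on $N_1$. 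So the ordering and interleaving of the red and blue eliminations, and the explicit augmentation through $M$, are not bookkeeping details but the missing ideas; as written the proposal does not prove the claim.
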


\begin{proof} (a)
We begin by considering $G[M\cup N_1,P_1\cup Q_1]$. Suppose there exists a blue matching $B_S$ on $32\eta^{1/2}k$ vertices in $G[M\cup N_1,P_1\cup Q_1]$. Observe
that, since $|N_2|,|P_2 \cup Q_2| \geq (\half\aII-15\eta^{1/2})k$, by Lemma~\ref{l:eleven}, there exists a blue 
connected-matching $B_L$ on at least $(\aII-32\eta^{1/2})k$ vertices in $G[N_2,P_2 \cup Q_2]$. Then, since all edges present in $G[M\cup N_2,
Q_2]$ are coloured blue and $G[N]$ is blue effectively-connected, $B_S$ and $B_L$ belong to the same blue component and thus form a blue connected-matching on at least~$\aII k$ vertices. Thus, after discarding at most $16\eta^{1/2}k$ vertices from each of $M\cup N_1$ and $P_1\cup Q_1$, we may assume that there are no blue edges present in $G[M\cup N_1,P_1\cup Q_1]$. Notice then, in particular, 
that all edges present in $G[M, Q_1]\cup G[N_1,P_1\cup Q_1]$ are coloured exclusively red. 

  \begin{figure}[!h]
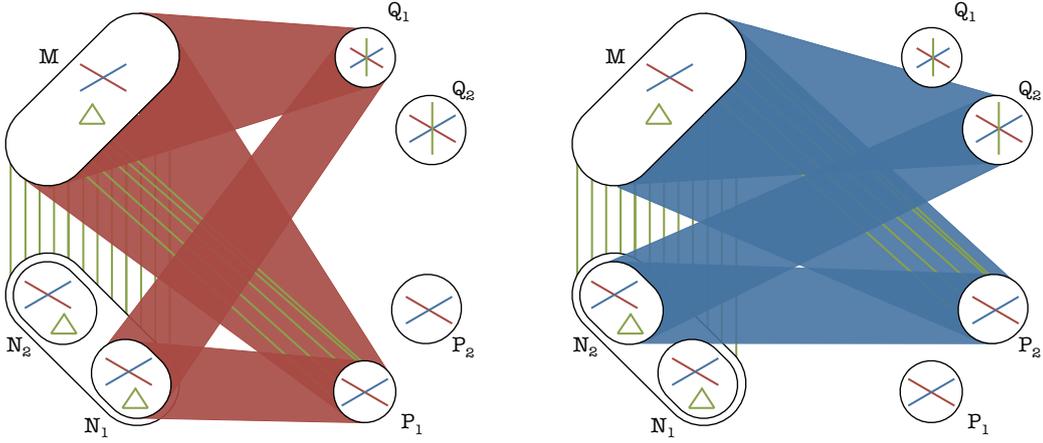

\centering{
\mbox{\hspace{-2mm}{\includegraphics[width=64mm, page=102]{CaseE-Figs2.pdf}}\quad\quad\quad{\includegraphics[width=64mm, page=103]{CaseE-Figs2.pdf}}}}\vspace{-4mm}
\caption{Colouring of the edges of $G[M\cup N_1,P_1\cup Q_1]$ in Claim~\ref{claimLAST}.a.}   
\end{figure}

Having discarding these vertices, we have 
\begin{align*}
|Q_1|&\geq 29\eta^{1/2}k, & |N_1|&\geq|P_1\cup Q_1|\geq(\half\aI-24\eta^{1/2})k.
\end{align*}

Now, suppose there exists a red matching $R_S$ on at least $54\eta^{1/2}k$ vertices in $G[N_1,P_2\cup Q_2]$. Since $|P_1\cup Q_1|\geq(\half\aI -24\eta^{1/2})k$ and $|Q_2|\geq 29\eta^{1/2} k$, there exists $\widetilde{Q}\subseteq Q_1$ such that $|P_1\cup \widetilde{Q}|\geq(\half\aI - 52\eta^{1/2})k$ and $|Q_1\backslash \widetilde{Q}|\geq 27\eta^{1/2} k$. Then, since $|N_1\backslash V(R_S)|,|P_1\cup \widetilde{Q}|\geq(\half\aI - 52\eta^{1/2})k$ and $|M|, |Q_1\backslash \widetilde{Q}|\geq 27\eta^{1/2} k$, by Lemma~\ref{l:eleven}, there exist red connected-matchings $R_L$ on at least $(\aI -106\eta^{1/2})k$ vertices in $G[N_1,P_1\cup Q_1]$ and $R_T$ on at least $52\eta^{1/2}k$ vertices in $G[Q_1,M]$ sharing no vertices with each other or $R_S$. Then, since all edges in $G[N_1,Q_1]$ are coloured red, $R_L, R_S$ and $R_T$ belong to the same red component and, thus, together, form a red connected-matching on at least $\aI k$ vertices. Therefore, after discarding at most $27\eta^{1/2}k$ vertices from each of $N_1$ and $P_2\cup Q_2$, we may assume that all edges in $G[N_1,P_2\cup Q_2]$ are coloured exclusively blue.

  \begin{figure}[!h]
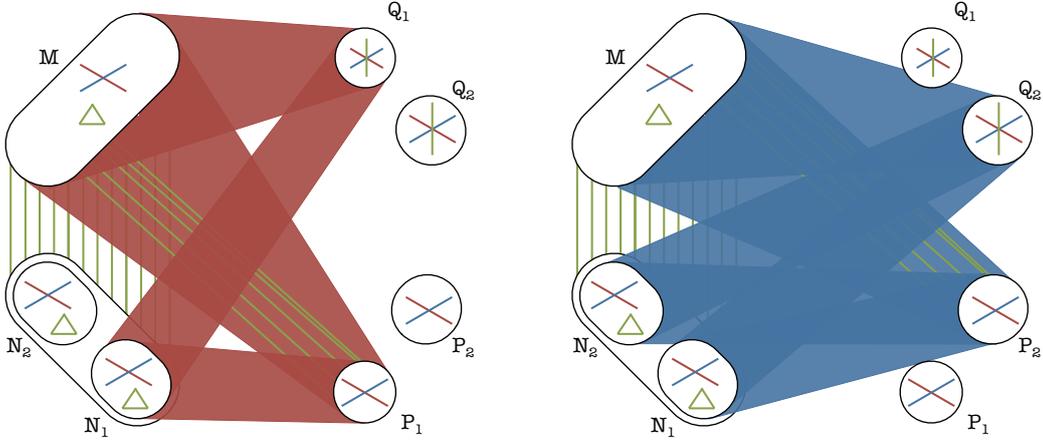

\centering{
\mbox{\hspace{-2mm}{\includegraphics[width=64mm, page=104]{CaseE-Figs2.pdf}}\quad\quad\quad{\includegraphics[width=64mm, page=105]{CaseE-Figs2.pdf}}}}\vspace{-5mm}
\caption{Colouring of the edges of $G[N_1,P_2\cup Q_2]$ in Claim~\ref{claimLAST}.a.}   
\end{figure}

We then have
\begin{align*}
|N_1|&\geq(\half\aI-51\eta^{1/2})k, &
|P_1\cup Q_1|&\geq(\half\aI-24\eta^{1/2})k,\\
|N_2|&\geq(\half\aII-15\eta^{1/2})k, &
|P_2\cup Q_2|&\geq(\half\aII-42\eta^{1/2})k.
\end{align*}
Suppose now that there exists a blue matching $B_U$ on $86\eta^{1/2}k$ vertices in $G[N_2,P_1\cup Q_1]$. Then, since $|N\backslash V(B_U)|,|P_2\cup Q_2|\geq (\half\aII-42\eta^{1/2})k$, by Lemma~\ref{l:eleven}, there exists a blue connected-matching $B_V$ on at least $(\aII -86\eta^{1/2})k$ vertices in $G[N\backslash V(B_U),P_2\cup Q_2]$. Therefore, $B_U\cup B_V$ forms a blue connected-matching on at least $\aII k$ vertices in $G[N,P\cup Q]$. Thus, after discarding at most $43\eta^{1/2}k$ vertices from each of $N_2$ and $P_1\cup Q_1$, we may assume that all edges present in $G[N_1,P_1\cup Q_1]$ are coloured exclusively red, thus completing the proof of Claim~\ref{claimLAST}.a.

\FloatBarrier
(b) Suppose that $G$ is coloured as in Figure~\ref{beforeclaim} and that $|Q_2|\geq 45\eta^{1/2}k$. We begin by considering $G[M\cup N,P_1\cup Q_1]$. Suppose there exists a blue matching $B_S$ on $36\eta^{1/2}k$ vertices in $G[M\cup N,P_1\cup Q_1]$, then we can obtain a blue connected-matching on at least $\aI k$ vertices as follows: Recalling~(\ref{K1}), we have $|P_2\cup Q_2|\geq(\half\aII  -15\eta^{1/2})k$. Then, since $|Q_2|\geq45\eta^{1/2}k$, there exists $\widetilde{Q}\subseteq Q_2$ such that $|P_2 \cup\widetilde{Q}|\geq (\half \aII - 34\eta^{1/2})k$ and $|Q_2\backslash \widetilde{Q}|\geq 18\eta^{1/2} k$. Then, we have $|N_2\backslash V(B_S)|,|P_2\cup \widetilde{Q}|\geq(\half\aII k - 34\eta^{1/2})k$ and $|M\backslash V(B_S)|,|Q_2\backslash \widetilde{Q}|\geq 18\eta^{1/2} k$. Thus, by Lemma~\ref{l:eleven}, there exist blue connected-matchings $B_L$ on at least $(\aII k-70\eta^{1/2})k$ vertices in $G[N\backslash V(B_S),P_2\cup\widetilde{Q}]$ and $B_T$ on at least $34\eta^{1/2}k$ vertices in $G[M\backslash V(B_S),Q_2\backslash \widetilde{Q}]$. Since $M$ and $N$ are each blue effectively-connected and all edges present in $G[M\cup N_2,P_2]$ are coloured blue, $B_L, B_S$ and $B_T$ belong to the same blue component in $G$ and, thus, together, form a blue connected-matching on at least $\aII k$ vertices. Therefore, after discarding at most $18\eta^{1/2} k$ vertices from each of $M\cup N$ and $P_1\cup Q_1$, we may assume that all edges present in $G[M\cup N,P_1\cup Q_1]$ are coloured exclusively red.

\vspace{-2mm}
\begin{figure}[!h]
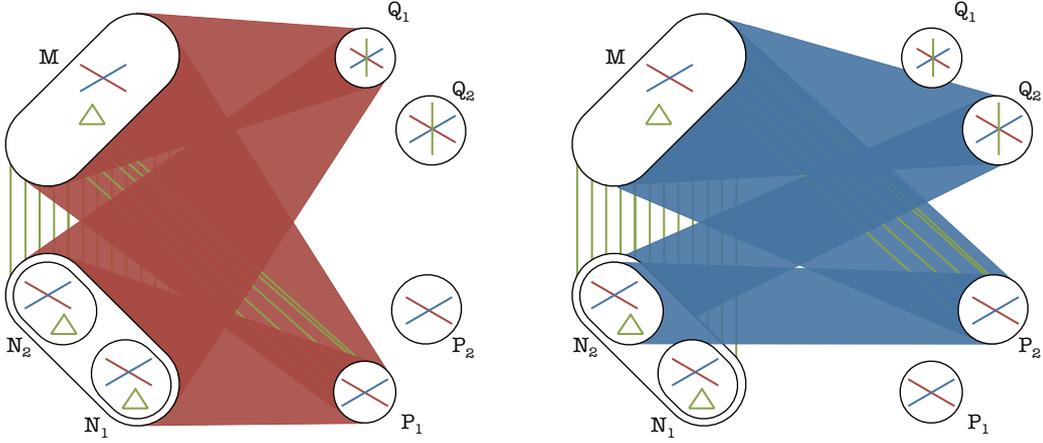

\centering
\mbox{\hspace{-2mm}{\includegraphics[width=64mm, page=106]{CaseE-Figs2.pdf}}\quad\quad\quad{\includegraphics[width=64mm, page=107]{CaseE-Figs2.pdf}}}\vspace{-5mm}\caption{Colouring of $G[M\cup N,P_1\cup Q_1]$ in Claim~\ref{claimLAST}.b.}
  
\end{figure}

Recalling (\ref{K1}), we then have
\vspace{-1mm}
\begin{align*}
|N_1|&\geq(\half\aI-26\eta^{1/2})k, &
|P_1\cup Q_1|&\geq(\half\aI-26\eta^{1/2})k,\\
|N_2|&\geq(\half\aII-33\eta^{1/2})k, &
|P_2\cup Q_2|&\geq(\half\aII-15\eta^{1/2})k.
\end{align*}
Finally, suppose there exists a red matching $R_S$ on at least $54\eta^{1/2}k$ vertices in $G[N_1,P_2\cup Q_2]$. Then, $|N\backslash V(R_S)|,|P_1\cup Q_1|\geq (\half\aI-26\eta^{1/2})k$ so, by Lemma~\ref{l:eleven}, there exists a red connected-matching $R_L$ on at least $(\aI - 54\eta^{1/2})k$ vertices in $G[N\backslash V(R_S),P_1\cup Q_1]$. The red matchings $R_S$ and $R_L$ share no vertices and, since $G[N]$ is red effectively-connected, belong to the same red component of $G$, thus, together, they form a red connected-matching on at least $\aI k$ vertices. Therefore, after discarding at most $27\eta^{1/2}k$ vertices from each of $N_1$ and $P_2\cup Q_2$, we may assume that all edges in $G[N,P_2\cup Q_2]$ are coloured blue, thus completing the proof of Claim~\ref{claimLAST}.b.
\end{proof}

In summary, combining the two cases above, we may now assume that there are no blue edges present in $G[M\cup N,P_1\cup Q_1]$ and no red edges present in $G[M\cup N,P_2\cup Q_2]$. In particular, we may assume that all 
 all edges present in $G[N,P_1\cup Q_1]$ are coloured exclusively red  and that all edges in $G[N,P_2\cup Q_2]$ are coloured exclusively blue.

 \begin{figure}[!h]
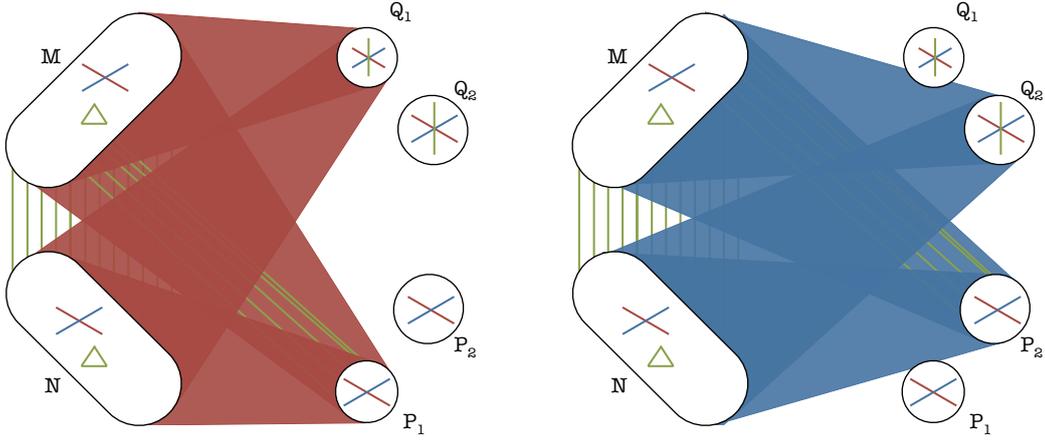

\centering
\mbox{\hspace{-2mm}{\includegraphics[width=64mm, page=108]{CaseE-Figs2.pdf}}\quad\quad\quad{\includegraphics[width=64mm, page=109]{CaseE-Figs2.pdf}}}\vspace{-5mm}\caption{Colouring after Claim~\ref{claimLAST}.}
  
\end{figure}

 We then have 
\begin{equation}
\label{K2}
\left.
\begin{aligned}
\quad\quad\quad\,
|N_1|&\geq(\half\aI-54\eta^{1/2})k, \quad\quad\quad&
|P_1\cup Q_1|&\geq(\half\aI-67\eta^{1/2})k,\quad\quad\quad\\
|N_2|&\geq(\half\aII-58\eta^{1/2})k, &
|P_2\cup Q_2|&\geq(\half\aII-42\eta^{1/2})k.
\end{aligned}
\right\}\!
\end{equation}
Recalling (\ref{M1}), having discarded at most $86\eta^{1/2}k$ vertices from $N$ and  at most $25\eta^{1/2}k$ vertices from $M$, we have 
\begin{align*}
\,|M|&\geq\left(\max\{\tfrac{3}{4}\aI+\tfrac{1}{4}\aII,\half\aIII\}-31\eta^{1/2}\right)k, & |N|&\geq\left(\max\{\tfrac{3}{4}\aI+\tfrac{1}{4}\aII,\half\aIII\}-93\eta^{1/2}\right)k.
\end{align*}

Thus, provided that $\eta\leq(\aII/600)^2$, we have 
\begin{align*}
|N|& \geq|P_1\cup Q_1|+400\eta^{1/2} k, & |N|&\geq|P_2\cup Q_2|+400\eta^{1/2} k.
\end{align*}
 Therefore, if there existed either a red matching on $136\eta^{1/2}k$ vertices or a blue matching on $86\eta^{1/2}k$ vertices in $G[N]\cup G[N,M]$, then these could be used together with edges from $G[N,P\cup Q]$ to obtain a red connected-matching on at least $\aI k$ vertices or a blue connected-matching on at least $\aII k$ vertices. Thus, after discarding at most $111\eta^{1/2}k$ vertices from~$M$ and $333\eta^{1/2}k$ vertices from~$N$, we may assume that all edges present in $G[N]\cup G[M,N]$ are coloured exclusively green.
 
 \begin{figure}[!h]
\centering
\includegraphics[width=64mm, page=112]{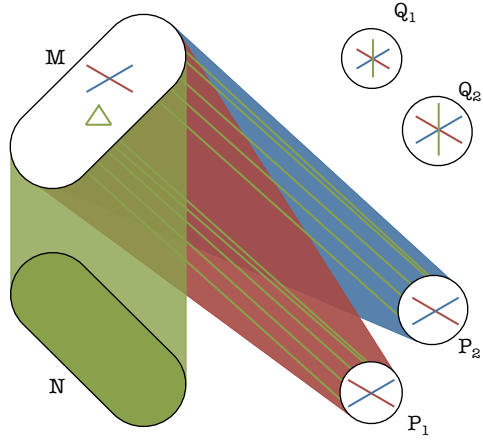}
\vspace{-5mm}\caption{The green graph after considering $G[N]\cup G[M,N]$.}
  
\end{figure}
 
 We then have
 \begin{align*}
|M|&\geq\left(\max\{\tfrac{3}{4}\aI+\tfrac{1}{4}\aII,\half\aIII\}-142\eta^{1/2}\right)k, \\ |N|&\geq\left(\max\{\tfrac{3}{4}\aI+\tfrac{1}{4}\aII,\half\aIII\}-426\eta^{1/2}\right)k.
\end{align*} 
Now, suppose there exists a green matching $G_{MP}$ on $1146\eta^{1/2}k$ vertices in $G[M,P]$. Letting $\widetilde{N}$ be any subset of $290\eta^{1/2}k$, by Lemma~\ref{dirac}, provided $k\geq1/\eta^2$, there exists a green connected-matching $G_N$ on at least $289\eta^{1/2}k$ vertices in $G[\widetilde{N}]$. We then have $|M\backslash V(G_{MP})|,|N\backslash V(G_N)|\geq (\half\aIII-716\eta^{1/2})k$ and, thus, by Lemma~\ref{l:eleven}, we have a green connected-matching $G_{MN}$ on at least $(\aIII-1434\eta^{1/2})k$ vertices in $G[M,N]$, which shares no vertices with $G_{MP}$ or $G_{N}$. Then, since all edges present in $G[M,N]$ are coloured green, together, $G_{MP}$, $G_{MN}$ and $G_{N}$ form a green connected-matching on at least $\aIII k$ vertices which, since all edges in $G[N]$ are green, is odd. Thus, after discarding at most $574 \eta^{1/2}k$ vertices from each of~$M$ and~$P$, we may assume that there are no green edges in $G[M, P\cup Q]$. In particular, since earlier we found that there could be no blue edges in $G[M,P_1]$ and no red edges in $G[M,P_2]$, we now know that all edges present in $G[M,P_1]$ are coloured exclusively red and all edges present in $G[M,P_2]$ are coloured exclusively blue.

 \begin{figure}[!h]
\centering
\includegraphics[width=64mm, page=113]{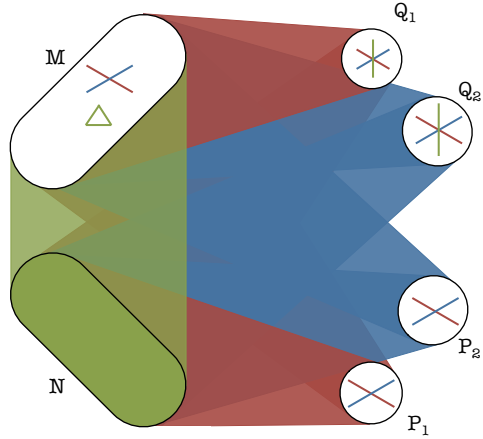}
\vspace{-5mm}\caption{Colouring of $G[M,P]$.}
  
\end{figure}

In summary, having discarded these vertices, we may assume that all edges present in $G[M\cup N,P_1\cup Q_1]$ are coloured exclusively red, that all edges present in $G[M\cup N,P_2\cup Q_2]$ are coloured exclusively blue, that all edges in $G[N]\cup G[M,N]$ are coloured exclusively green and that we have
\begin{align*}
|M|&\geq\left(\max\{\tfrac{3}{4}\aI+\tfrac{1}{4}\aII,\half\aIII\}-716\eta^{1/2}\right)k,
&|P_1\cup Q_1|\geq (\half\aI-642\eta^{1/2})k,\\
|N|&\geq\left(\max\{\tfrac{3}{4}\aI+\tfrac{1}{4}\aII,\half\aIII\}-426\eta^{1/2}\right)k,
&|P_2\cup Q_2|\geq (\half\aI-616\eta^{1/2})k.
\end{align*}
Notice that, provided that $\eta\leq(\aII/5000)^2$, we have $|M|\geq|P_1\cup Q_1|+2200\eta^{1/2}k$. Thus, there cannot exist in $G[M]$ a red matching on $1286\eta^{1/2}k$ vertices or a blue matching on $1234\eta^{1/2}k$ vertices. Therefore, after discarding at most $2520\eta^{1/2}k$ vertices from $M$, we may assume that all edges present in $G[M]$ are coloured exclusively green and that $$|M\cup N|\geq(\aIII-3662\eta^{1/2})k.$$
In summary, we know that all edges present in $G[M\cup N, P_1\cup Q_1]$ are coloured exclusively red, all edges present $G[M\cup N, P_2\cup Q_2]$ are coloured exclusively blue and all edges in $G[M\cup N]$ are coloured exclusively green.
Thus, we have found, as a subgraph of~$G$, a graph in $$\cK\left((\half\aI-700\eta^{1/2})k, (\half\aI-700\eta^{1/2})k, (\aIII-4000\eta^{1/2})k, 4\eta^4 k \right),$$ 
thus completing Case E.iii.b.iv.a.

\subsection*{Case E.iii.b.iv.b: {\rm $G[M,N\cup P\cup Q_2]$ contains a blue edge.}}

In this case, following similar steps as in Case E.iii.b.iv.a will result in either a red connected-matching on at least $\aI k$ vertices, a blue connected-matching on at least $\aII k$ vertices, a green odd connected-matching on at least $\aIII k$ vertices or a subgraph in $$\cK\left((\half\aI-700\eta^{1/2})k, (\half\aI-700\eta^{1/2})k, (\aIII-4000\eta^{1/2})k, 2\eta^4 \right),$$ 
thus completing Case E.iii.b.iv.a.

\subsection*{Case E.iii.b.iv.c: {\rm $G[M,N\cup P\cup Q_1]$ contains no red edges.}}

\vspace{-5mm}
\hspace{45mm} {\large and $G[M,N\cup P\cup Q_2]$ contains no blue edges.}

Recall that we that we have a decomposition $V(G)$ into $M\cup N\cup P\cup Q$ and a refinement into $M\cup N_1\cup N_2 \cup P_1\cup P_2 \cup Q_1\cup Q_2$, such that
\begin{itemize}
\labitem{E3}{E3-c} there are no green edges in $G[N,P]$, $G[M,Q]$, $G[N,Q]$, $G[P,Q]$ or $G[P]$;
\labitem{E6a}{E6a-c} $G[M]$, $G[N]$ and $G[P]$ each have a single red and a single blue effective-component;
\labitem{E6b}{E6b-c}  $G[N_1,P_1\cup Q_1]$ contains a red matching utilising every vertex in $G[N_1\cup P_1\cup Q_1]$;
\labitem{E6c}{E6c-c}  all edges present in $G[N_2,P_2\cup Q_2]$ are coloured exclusively blue.
\end{itemize}
This decomposition also satisfies 
\begin{equation}
\label{E7-c}
\tag{E7}
\left.
\begin{aligned}
\!\!\!\!\! |M|&\geq\big(\!\max\{\tfrac{3}{4}\aI+\tfrac{1}{4}\aII,\half\aIII\}-6\eta^{1/2}\big)k, 
& \,|N_1|=|P_1\cup Q_1|&\geq (\half\aI-8\eta^{1/2})k,\\
\!\!\!\!\! |N|&\geq\big(\!\max\{\tfrac{3}{4}\aI+\tfrac{1}{4}\aII,\half\aIII\}-7\eta^{1/2}\big)k, 
& |N_2|, \, |P_2\cup Q_2| &\geq (\half\aII-8\eta^{1/2})k.
\end{aligned}
\,\,\, \right\}\!
\end{equation}

\vspace{-10mm}
\begin{align}
|P|&\geq 90\eta^{1/2} k, & \max\{|Q_1|,|Q_2|\}&\geq 45\eta^{1/2} k. 
\tag{E8}\label{E8-c}
\end{align}
Additionally, in this case, we may assume that 
\begin{itemize}
\labitem{E9a}{E9a-c}  all edges present in $G[Q_2,M]$ are coloured exclusively red;
\labitem{E9b}{E9b-c} all edges present in $G[Q_1,M]$ are coloured exclusively blue; and
\labitem{E9c}{E9c-c} all edge present in $G[M,P_1\cup P_2]\cup G[M,N]$ are green.
\end{itemize}

\vspace{2mm}
   \begin{figure}[!h]
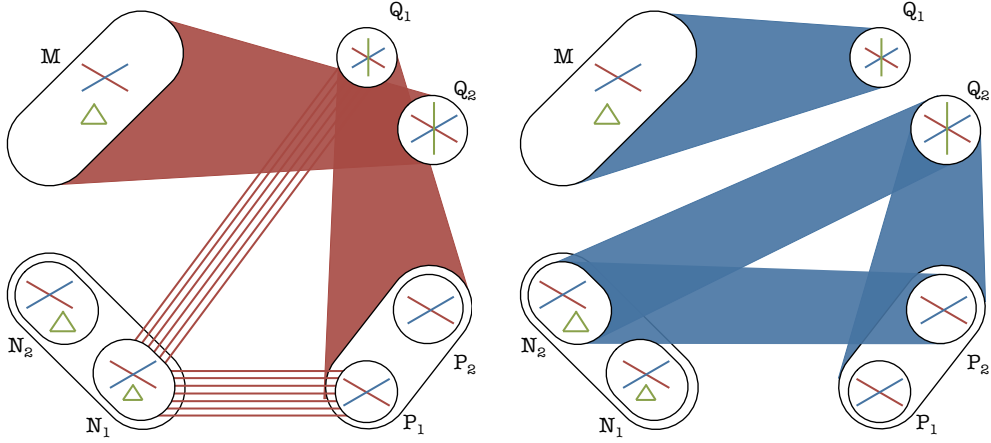

\centering{
\mbox{\hspace{-2mm}{\includegraphics[width=64mm, page=115]{CaseE-Figs2.pdf}}\quad{\includegraphics[width=64mm, page=116]{CaseE-Figs2.pdf}}}}\vspace{-5mm}\caption{Initial red and blue graphs in Case E.iii.b.iv.c. }
\label{f269}\end{figure}

\begin{figure}[!h]
\centering
\includegraphics[width=64mm, page=117]{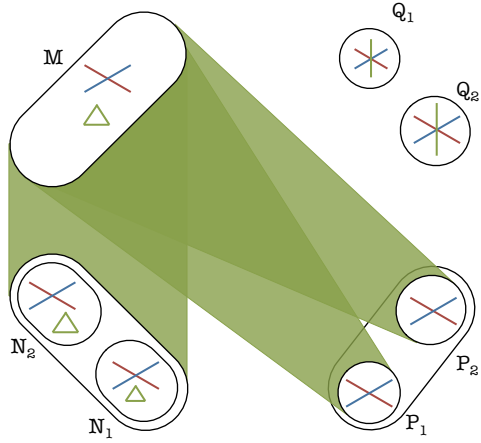}
\vspace{-5mm}\caption{Initial green graph in Case E.iii.b.iv.c. }\label{f270f}
\end{figure}

\FloatBarrier
We begin by proving the following claim which concerns the structure of the red and blue graphs:
\begin{claim}
\label{claimLASTplus2}
\hspace{-3mm} {\rm \bf a.} If $|Q_1|\leq 38\eta^{1/2}k$, we may discard at most $71\eta^{1/2} k$ vertices from $N$, at most $12\eta^{1/2} k$ vertices from $P_1$, at most $59\eta^{1/2} k$ vertices from $P_2$ and at most $38\eta^{1/2}k$ vertices from~$Q_1$ so that, in what remains, all edges present in $G[N,P_1\cup Q_1]$ are coloured exclusively red and all edges present in $G[N,P_2\cup Q_2]$ are coloured exclusively blue.

{\rm \bf Claim~\ref{claimLAST}.b.}  If $|Q_1|\geq 38\eta^{1/2}k$, we may discard at most $57\eta^{1/2} k$ vertices from $N$, at most $19\eta^{1/2} k$ vertices from $P_1$, at most $38\eta^{1/2} k$ vertices from $P_2$ and at most $30\eta^{1/2}k$ vertices from~$Q_1$  so that, in what remains, all edges present in $G[N,P_1\cup Q_1]$ are coloured exclusively red and all edges present in $G[N,P_2\cup Q_2]$ are coloured exclusively blue. 
\end{claim}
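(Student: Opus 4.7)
The plan is to follow the same two-step schema as Claim~\ref{claimLAST}: first eliminate the blue edges from $G[N,P_1\cup Q_1]$, then the red edges from $G[N,P_2\cup Q_2]$, in each step assuming a monochromatic matching exceeding the permitted threshold and constructing a forbidden connected-matching on $\aI k$ or $\aII k$ vertices. The new feature compared to Claim~\ref{claimLAST} is that $G[Q_2,M]$ is entirely red, $G[Q_1,M]$ is entirely blue and $G[M,N\cup P]$ is entirely green by (\ref{E9a-c})--(\ref{E9c-c}), so these subgraphs replace the monochromatic bridges used earlier; in particular $M$ contributes no direct red or blue connectivity to $N\cup P$, and the two effective components in $N$ provided by (\ref{E6a-c}) carry the burden of joining matchings.

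For the blue step, given a blue matching $B_S\subseteq G[N,P_1\cup Q_1]$, Lemma~\ref{l:eleven} applied to $G[N_2\setminus V(B_S),(P_2\cup Q_2)\setminus V(B_S)]$ supplies a blue connected-matching $B_L$ of size close to $\aII k$, using that this bipartite graph is entirely blue by (\ref{E6c-c}) and that both parts have size at least $(\half\aII-8\eta^{1/2})k$ by (\ref{E7-c}); $B_L$ and $B_S$ then lie in a common blue component by the blue effective-connectedness of $G[N]$. In case (b), any shortfall is topped up by a short blue connected-matching inside $G[Q_1,M]$, which is blue by (\ref{E9b-c}) and has $|Q_1|\geq 38\eta^{1/2}k$, joined through the blue component in $N$. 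In case (a) I discard $Q_1$ together with its $R$-mates in $N_1$ at the outset, at cost $38\eta^{1/2}k$ on each side, so that $B_L\cup B_S$ alone already exceeds $\aII k$ without requiring a bridge.

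For the red step, given a red matching $R_S\subseteq G[N,P_2\cup Q_2]$, Lemma~\ref{l:eleven} applied to $G[N\setminus V(R_S),P_1\cup Q_1]$ produces a red connected-matching $R_L$ of size $2|P_1\cup Q_1|-O(\eta^4)k\geq\aI k-O(\eta^{1/2})k$; here I use that $G[N_1,P_1\cup Q_1]$ carries the matching $R$ by (\ref{E6b-c}) and that $|N|\geq(\tfrac{3}{4}\aI+\tfrac{1}{4}\aII-7\eta^{1/2})k$ comfortably exceeds $|P_1\cup Q_1|\leq(\half\aI+\eta^{1/2})k$. The matchings $R_L$ and $R_S$ are vertex-disjoint and share a red component via the red effective-connectedness of $G[N]$ in (\ref{E6a-c}). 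In case (a), since $|Q_2|\geq 45\eta^{1/2}k$ by (\ref{E8-c}), any remaining shortfall is covered by a short red connected-matching inside $G[Q_2,M]$, red by (\ref{E9a-c}), connected to $R_L\cup R_S$ through $N$ via the edges of $R$ and $R_S$. In case (b) the direct union $R_L\cup R_S$ already reaches $\aI k$ once $|R_S|$ exceeds the discarded budget.

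The main technical obstacle is size bookkeeping: after successively removing $V(B_S)$, $V(R_S)$ and the edges of $R$ that conflict with them, I must check that enough vertices remain in $N_2$, $P_2\cup Q_2$, $Q_1$, $Q_2$ and $M$ to invoke Lemma~\ref{l:eleven} at each step, and that the running totals of discarded vertices from each of $N,P_1,P_2,Q_1$ respect the bounds stated in the claim. Conceptually this parallels the proof of Claim~\ref{claimLAST}; the threshold $38\eta^{1/2}k$ separating cases (a) and (b) is chosen precisely so that whichever of $Q_1$ or $Q_2$ is retained has the bulk required for its role as a bridge.
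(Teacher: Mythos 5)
There is a genuine gap, and it occurs at the two places where your outline leans on the bridges through $M$. In this case all edges of $G[M,N\cup P]$ are green by (\ref{E9c-c}), so the all-red graph $G[M,Q_2]$ and the all-blue graph $G[M,Q_1]$ have \emph{no} red or blue connection to $N\cup P$ unless there happens to be a red edge in $G[N\cup P,Q_2]$, respectively a blue edge in $G[N\cup P,Q_1]$ --- and the colour of exactly those edges is undetermined at the start. Your ``top up'' steps (``joined through the blue component in $N$'', ``connected to $R_L\cup R_S$ through $N$ via the edges of $R$ and $R_S$'') therefore assert connectivity that is not available: the red/blue effective-connectedness in (\ref{E6a-c}) only places $N$ (resp.\ $M$, $P$) inside a single colour component, it does not merge the component of $M\cup Q_i$ with that of $N$. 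The paper's proof uses the bridges in the opposite, contrapositive way, and this is the missing idea: in case (a) a red connected-matching of size $88\eta^{1/2}k$ in $G[M,Q_2]$ (Lemma~\ref{l:eleven} plus (\ref{E9a-c})) together with the matching $R$ of (\ref{E6b-c}) shows that \emph{no} red edge can exist in $G[N\cup P,Q_2]$, so by (\ref{E3-c}) that whole bipartite graph is blue; symmetrically in case (b) all of $G[N\cup P,Q_1]$ is red. All later eliminations are built on this newly established structure.

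The second gap is the cancellation problem in your blue step, which the first step is precisely designed to cure. If the blue matching $B_S\subseteq G[N,P_1\cup Q_1]$ has its $N$-endpoints in $N_2$, then removing $V(B_S)$ from $N_2$ costs $B_L$ exactly as many vertices as $B_S$ contributes, so $B_S\cup B_L$ never exceeds roughly $(\aII-16\eta^{1/2})k$ no matter how large $B_S$ is; in particular your case (a) assertion that after discarding $Q_1$ ``$B_L\cup B_S$ alone already exceeds $\aII k$'' is false for $B_S\subseteq G[N_2,P_1]$. The paper compensates with extra blue matching edges from $N_1$ into $Q_2$ (possible only after $G[N_1,Q_2]$ is known to be all blue, case (a)), or, in case (b), by splitting $N_1$ into $N_{11}\cup N_{12}$ and first making $G[N_{12},P_2]$ blue so that $N_{12}$ can replace the depleted part of $N_2$. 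Relatedly, your red step applies Lemma~\ref{l:eleven} to $G[N\setminus V(R_S),P_1\cup Q_1]$ to extract a \emph{red} connected-matching, which requires all edges of $G[N,Q_1]$ to be red --- again exactly the output of the missing first step in case (b) --- and in case (a) no red top-up through $G[Q_2,M]$ is possible (there are no red edges from $N\cup P$ to $Q_2$ once the first step is done); there one must instead shrink to $G[N,P_1]$, which is only large enough because $|Q_1|\leq 38\eta^{1/2}k$. So the simple ``blue first, then red'' schema of Claim~\ref{claimLAST} cannot be transplanted; the interleaved order and the preliminary monochromatisation of $G[N\cup P,Q_2]$ or $G[N\cup P,Q_1]$ are essential to Claim~\ref{claimLASTplus2}.
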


\begin{proof}
(a) Observe that, since $|Q_1|\leq 38\eta^{1/2}k<45\eta^{1/2}k$, by (\ref{E8-c}), we have $|Q_2|\geq 45\eta^{1/2}k$. Considering the red graph, we show that all edges present in $G[N\cup P,Q_2]$ are blue as follows: Given (\ref{E9a-c}), since $|M|,|Q_2|\geq 45\eta^{1/2} k$, by Lemma~\ref{l:eleven}, there exists a red connected-matching $R_S$ on at least $88\eta^{1/2}k$ vertices in $G[M,Q_2]$. Then, since $|N|=|P_1\cup Q_1|\geq(\half\aI-8\eta^{1/2})k$, recalling (\ref{E6a-c}), if $G[N\cup P,Q_2]$ contains a red edge, then $R\cup R_S$ forms a red connected-matching on at least $\aI k$ vertices. Thus, recalling~(\ref{E3-c}), all edges present in $G[N\cup P,Q_2]$ are coloured exclusively blue (see Figure~\ref{f271f}).

\clearpage
{~}
\vspace{-7mm}
\begin{figure}[!h]
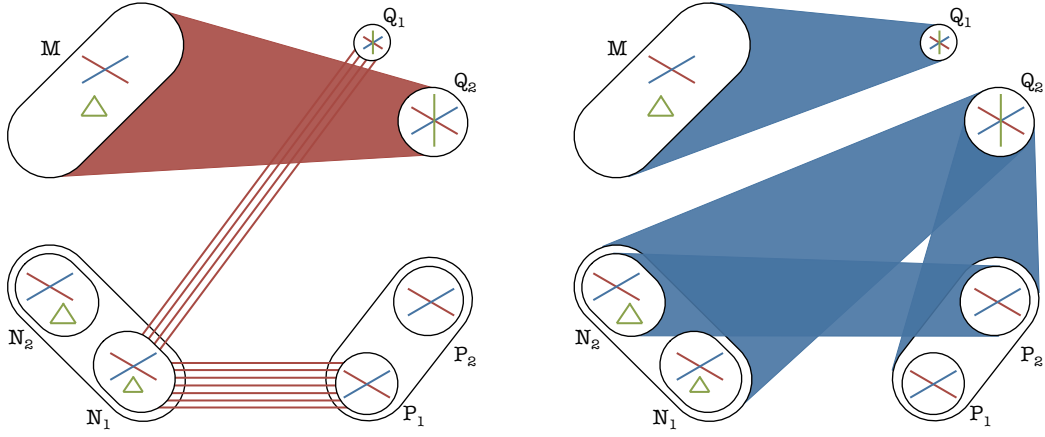

\centering{
\mbox{\hspace{-2mm}{\includegraphics[width=64mm, page=84]{CaseE-Figs2.pdf}}\quad\quad\quad{\includegraphics[width=64mm, page=85]{CaseE-Figs2.pdf}}}}\vspace{-5mm}
\caption{Red and blue subgraphs of $G$, provided $|Q_1|\leq38\eta^{1/2}k$.} \label{f271f}
\end{figure}

\vspace{-3mm}
  Now, suppose there exists a blue matching $B_T$ on $24\eta^{1/2}k$ vertices in $G[N,P_1]$ (see Figure~\ref{f272f}). Then, since $|Q_2|\geq 45\eta^{1/2}k$, there exist subsets $\widetilde{N}_2\subseteq N_2\backslash V(B_T)$ and $\widetilde{Q}_2\subseteq Q_2$ such that $|\widetilde{N}_2|=|P_2\cup \widetilde{Q}_2|\geq (\half\aII-20\eta^{1/2})k$ and 
$|Q_2\backslash \widetilde{Q}_2|\geq 11\eta^{1/2} k$. By Lemma~\ref{l:eleven}, there exist blue connected-matchings $B_U$ on at least $(\aII-42\eta^{1/2})k$ vertices in $G[N_2\backslash V(B_T),P_2\cup \widetilde{Q}_2]$ and $B_V$ on at least $20\eta^{1/2}k$ vertices in $G[N_1\backslash V(B_T),Q_2\backslash \widetilde{Q}_2]$. Since all edges present in $G[N,Q_2]$ are coloured blue, $B_T$, $B_U$ and $B_V$ belong to the same blue component and together, form a blue connected-matching on at least $\aII k$ vertices. Therefore, there can not exist such a matching as $B_T$ and, after discarding at most $12\eta^{1/2}k$ vertices from each of $N$ and $P_1$, we may assume that all edges present in $G[N,P_1]$ are coloured exclusively red.

  \begin{figure}[!h]
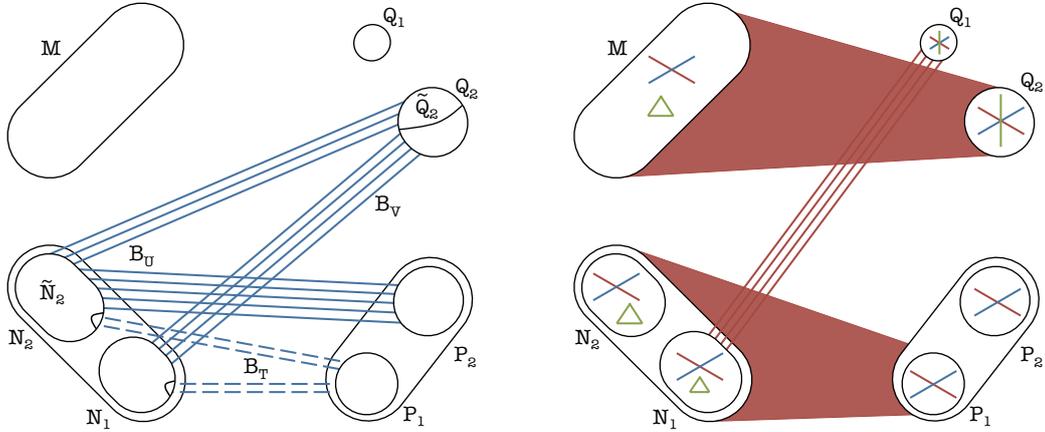

\centering{
\mbox{\hspace{-2mm}{\includegraphics[width=64mm, page=86]{CaseE-Figs2.pdf}}\quad\quad\quad{\includegraphics[width=64mm, page=87]{CaseE-Figs2.pdf}}}}\vspace{-5mm}
\caption{$B_T$ and the resultant colouring of the edges of $G[N,P_2]$.} \label{f272f}
\end{figure}

Having discarded these vertices, we have
\begin{equation}
\left.
\label{Mb1}
\begin{aligned}
\!\!\!\!\! |M|&\geq\big(\!\max\{\tfrac{3}{4}\aI+\tfrac{1}{4}\aII,\half\aIII\}-6\eta^{1/2}\big)k, 
& \,|N_1|,|P_1\cup Q_1|&\geq (\half\aI-20\eta^{1/2})k,\,\,\\
\!\!\!\!\! |N|&\geq\big(\!\max\{\tfrac{3}{4}\aI+\tfrac{1}{4}\aII,\half\aIII\}-19\eta^{1/2}\big)k, 
&  |P_2\cup Q_2| &\geq (\half\aII-8\eta^{1/2})k,\\
&& |N_2|&\geq (\half\aII-20\eta^{1/2})k. \quad
\end{aligned}
\right\}\!\!
\end{equation}
Since $|Q_1|\leq38\eta^{1/2}k$ and $\eta<(\aI/1000)^2$, by~(\ref{E4b-z}) and~(\ref{Mb1}), we have 
\begin{align*}
|N|&\geq|P_1|+200\eta^{1/2}k, & |P_1|&\geq (\half\aI-58\eta^{1/2})k.
\end{align*}

Thus, if there existed a red matching $R_S$ on $118\eta^{1/2}k$ vertices in $G[N,P_2]$, we could obtain a red connected-matching on at least $\aI k$. Indeed, since $|N\backslash V(R_S)|,|P_1|\geq (\half\aI-58\eta^{1/2})k$, by Lemma~\ref{l:eleven}, there exists a red connected-matching $R_L$ on $118\eta^{1/2}k$ vertices in $G[N\backslash V(R_S),P_1]$. Then, since $G[N]$ has a single red effective-component, $R_L$ and $R_S$ belong to the same red component of $G$ and, therefore, together, form a red connected-matching on at least $\aI k$ vertices. Thus, after discarding at most $59\eta^{1/2}k$ vertices from each of $N$ and $P_2$, we may assume that all edges present in $G[P_2,N]$ are coloured exclusively blue. 
  Finally, we discard all vertices from $Q_1$. Having done so, we have $P_1=P_1\cup Q_1$ and know that all edges present in $G[N,P_1\cup Q_1]$ are coloured exclusively red, that all edges present in $G[N,P_2\cup Q_2]$ are coloured exclusively blue.
 
   \begin{figure}[!h]
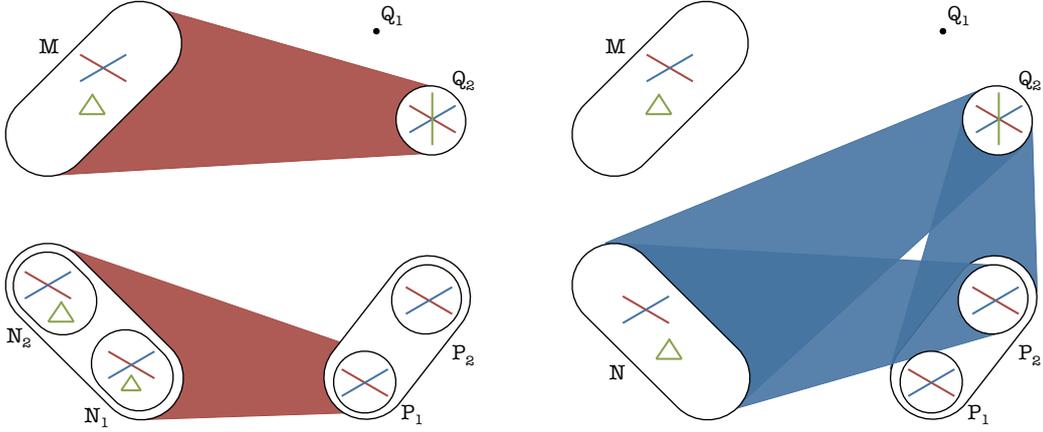

\centering{
\mbox{\hspace{-2mm}{\includegraphics[width=64mm, page=88]{CaseE-Figs2.pdf}}\quad\quad\quad{\includegraphics[width=64mm, page=89]{CaseE-Figs2.pdf}}}}\vspace{-5mm}
\caption{The final red and blue graphs in Claim~\ref{claimLASTplus2}.a.}\end{figure}

We then have
\begin{equation}
\label{M8}
\left.
\begin{aligned}
\,\,\,
 |M|&\geq\big(\!\max\{\tfrac{3}{4}\aI+\tfrac{1}{4}\aII,\half\aIII\}-6\eta^{1/2}\big)k, 
& \,|P_1\cup Q_1|&\geq (\half\aI-58\eta^{1/2})k, \,\,\,\,\,\,\\
 |N|&\geq\big(\!\max\{\tfrac{3}{4}\aI+\tfrac{1}{4}\aII,\half\aIII\}-78\eta^{1/2}\big)k, 
&  |P_2\cup Q_2| &\geq (\half\aII-67\eta^{1/2})k.
\end{aligned}
\, \right\} \!\!
\end{equation}
 thus completing the proof of Claim~\ref{claimLASTplus2}.a.

\FloatBarrier

(b) Suppose $G$ is coloured as shown in Figures~\ref{f269}--\ref{f270f} and that we have $|Q_1|\geq 38\eta^{1/2}k$. Considering the blue graph, we are able to show that all edges present in $G[N\cup P,Q_1]$ are red as follows: Given (\ref{E9b-c}), since $|M|,|Q_1|\geq 38\eta^{1/2}k$, by Lemma~\ref{l:eleven}, there exists a blue connected-matching $B_S$ on at least $74\eta^{1/2}k$ vertices in $G[M,Q_1]$. Then, since $|N_2|,|P_2\cup Q_2|\geq (\half\aII-8\eta^{1/2})k$, if $G[N\cup P,Q_1]$ contains a blue edge, we can obtain a blue connected-matching on at least $\aII k$ vertices utilising edges from $B_S$ and from $G[N_2,P_2\cup Q_2]$. Thus, recalling (\ref{E3-c}), all edges present in $G[N\cup P,Q_1]$ are coloured exclusively red.
  \begin{figure}[!h]
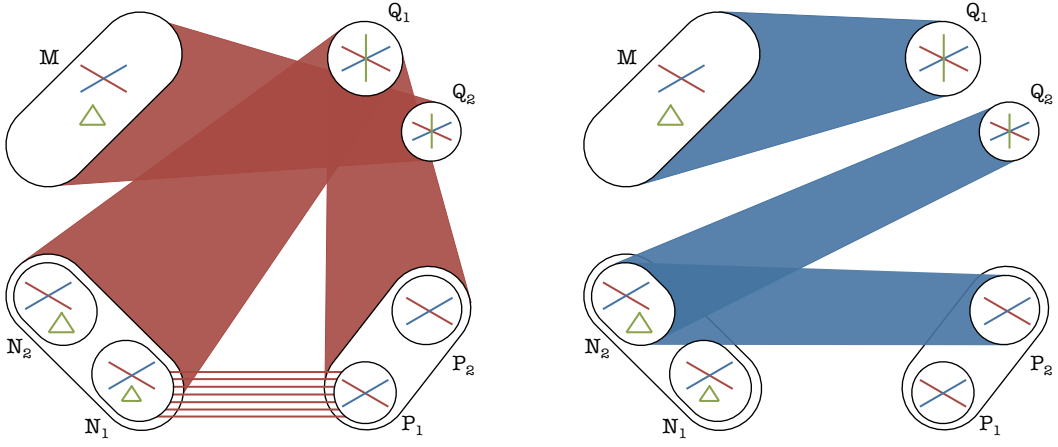

\centering{
\mbox{\hspace{-2mm}{\,\,\,\includegraphics[width=64mm, page=90]{CaseE-Figs2.pdf}}\quad\quad\quad{\includegraphics[width=64mm, page=91]{CaseE-Figs2.pdf}}}}\vspace{-5mm}
\caption{Red and blue subgraphs of $G$ when $|Q_1|\geq38\eta^{1/2}k$.}   
\end{figure}

Now, let $N_{11}=N_1\cap V(R)$ and $N_{12}=N_1\backslash V(R)$. Then, since $|Q_1|\geq 38\eta^{1/2} k$ and $|N_{11}|=|P_1|$, we have $|N_{12}|\geq38\eta^{1/2}k$. If, there exists a red matching $R_S$ on $18\eta^{1/2}k$ vertices in $G[N_{12},P_2]$, then, since $|N_2|,|Q_1|\geq (\half\aI-8\eta^{1/2})k-|P_1|$ and all edges in $G[N,Q_1]$ are coloured red, by Lemma~\ref{l:eleven}, there exists a red connected-matching $R_T$ on at least $(\aI-18\eta^{1/2})k-2|P_1|$ vertices in $G[N_2,Q_1]$. Since $|N_{11}|=|P_1|$, $R_U=R\cap G[N_{11},P_1]$ is a red matching on $2|P_1|$ vertices. Thus, since $G[N]$ has a single red effective-component, $R_S\cup R_T\cup R_U$ forms a red connected-matching on at least $\aI k$ vertices. Thus, after discarding at most $9\eta^{1/2}k$ vertices from each of $N_{12}$ and $P_2$, we may assume that all edges in $G[N_{12},P_2]$ are coloured exclusively blue. We then have $|N_{12}|\geq 28\eta^{1/2}k$ and $|P_2\cup Q_2|\geq (\aII k-17\eta^{1/2})k$.

\begin{figure}[!h]
\centering
\includegraphics[width=64mm, page=93]{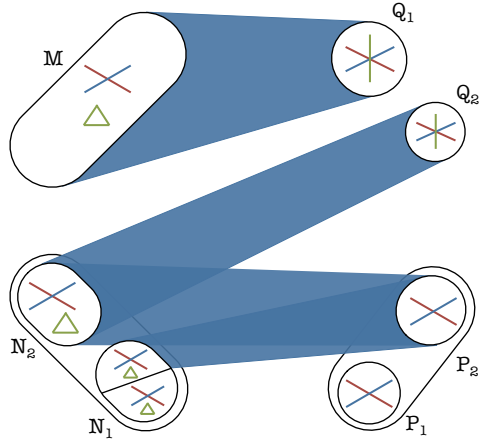}
\vspace{-5mm}\caption{Colouring of $G[N_{12},P_2]$. }
  \end{figure}

Then, suppose there exists a blue matching $B_S$ on $38\eta^{1/2}k$ vertices in $G[N,P_1]$. Then 
\begin{align*}
 |(N_2\cup N_{12})\backslash V(B_S)|\geq(\half\aII &-8\eta^{1/2})k+28\eta^{1/2}k-19\eta^{1/2}k\\
& \geq (\half\aII+\eta^{1/2})k
\geq|P_2\cup Q_2|\geq (\half\aII-17\eta^{1/2})k.
\end{align*} Thus, by Lemma~\ref{l:eleven}, there exists a blue connected-matching on at least $(\aII-38\eta^{1/2})k$ vertices belonging to the same component as $B_S$. Thus, together, these matchings form a blue connected-matching on at least $\aII k$ vertices. Therefore, after discarding at most $19\eta^{1/2}k$ vertices from each of $N$ and $P_1$, we may assume that all edges present in $G[N,P_1]$ are coloured exclusively red and that $|P_1\cup Q_1|\geq(\half\aI-27\eta^{1/2})k$.

Penultimately, suppose there exists a red matching $R_V$ on $58\eta^{1/2}k$ vertices in $G[N_{11},P_2]$. Then, since $|N\backslash V(R_V)|, |P_1\cup Q_1|\geq (\half\aI-27\eta^{1/2})k$, by Lemma~\ref{l:eleven}, there exists a red connected-matching $R_W$ on at least $(\aI-56\eta^{1/2})k$ vertices in $G[N\backslash V(R_V),P_1\cup P_2]$, which together with $R_V$ gives a red connected-matching on at least $\aI k$ vertices. Thus, after discarding at most $29\eta^{1/2}k$ vertices from each of $N_{11}$ and $P_2$, we may assume that all edges in $G[N,P_2]$ are coloured exclusively blue.

  \begin{figure}[!h]
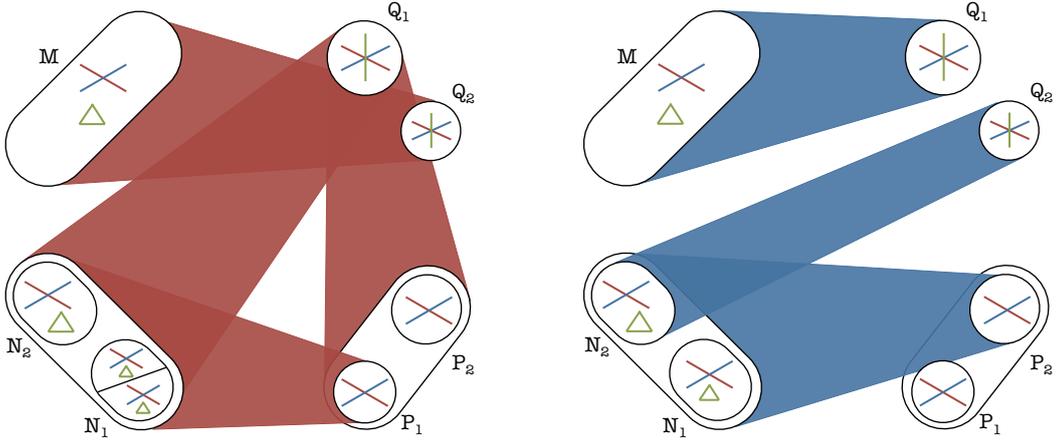

\centering{
\mbox{\hspace{-2mm}{\,\,\,\includegraphics[width=64mm, page=92]{CaseE-Figs2.pdf}}\quad\quad\quad{\includegraphics[width=64mm, page=94]{CaseE-Figs2.pdf}}}}\vspace{-5mm}
\caption{Colouring of $G[N,P]$.}   
\end{figure}

\FloatBarrier
Finally, observe that, if $|Q_2|\geq 30\eta^{1/2}k$, there can be no red edges in $G[N\cup P,Q_2]$. Indeed, in that case, since $|M|,|Q_2|\geq 30\eta^{1/2}k$ and $|N|,|P_1\cup Q_1|\geq (\half\aI -27\eta^{1/2})k$, there exist red connected-matchings $R_W$ in $G[M,Q_2]$ and $R_X$ in $G[P_1\cup Q_1,N]$ which belong to the same component and together span at least $\aI k$ vertices. Alternatively, if $|Q_2|\leq 30\eta^{1/2}k$, we can discard every vertex in $Q_2$, rendering the graph $G[N,Q_2]$ trivial. Thus, in either case, all edges in $G[N,Q_2]$ are coloured exclusively blue.

  \begin{figure}[!h]
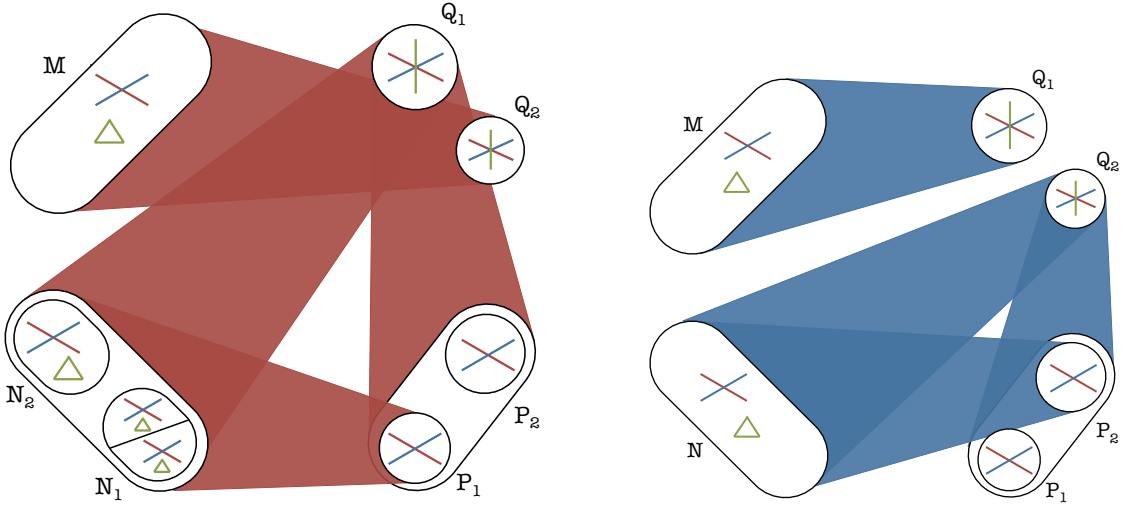

\centering{
\mbox{\hspace{-2mm}{\,\,\,\includegraphics[trim=0mm 0mm 0mm 0mm, width=73mm, page=118, clip=true]{CaseE-Figs2.pdf}}
\quad\quad\quad{\includegraphics[width=64mm, page=96]{CaseE-Figs2.pdf}}}}\vspace{-5mm}
\caption{The final red and blue graphs in Claim~\ref{claimLASTplus2}.b. if $|Q_2|\geq 30\eta^{1/2}k$.}   
\end{figure}

In summary, having discarded some vertices, we may assume that all edges present in $G[N,P_1\cup Q_1]$ are coloured exclusively red, that all edges present in $G[N,P_2\cup Q_2]$ are coloured exclusively blue and that 
\begin{equation}
\label{M6}
\left.
\begin{aligned}
\,\,\,
 |M|&\geq\big(\!\max\{\tfrac{3}{4}\aI+\tfrac{1}{4}\aII,\half\aIII\}-6\eta^{1/2}\big)k, 
& \,|P_1\cup Q_1|&\geq (\half\aI-27\eta^{1/2})k, \,\,\,\,\,\,\\
 |N|&\geq\big(\!\max\{\tfrac{3}{4}\aI+\tfrac{1}{4}\aII,\half\aIII\}-64\eta^{1/2}\big)k, 
&  |P_2\cup Q_2| &\geq (\half\aII-76\eta^{1/2})k,
\end{aligned}
\, \right\} \!\!
\end{equation}
thus, completing the proof of Claim~\ref{claimLASTplus2}.b.
\end{proof}

Having proved the claim, we know that all edges in $G[N,P_1\cup Q_1]$ are coloured exclusively red and all edges in $G[N,P_2\cup Q_2]$ are coloured exclusively blue. Combining~(\ref{M6}) and~(\ref{M8}), we have
\begin{equation}
\label{M10}
\left.
\begin{aligned}
\,\,\, |M|&\geq\big(\!\max\{\tfrac{3}{4}\aI+\tfrac{1}{4}\aII,\half\aIII\}-6\eta^{1/2}\big)k, 
& \,|P_1\cup Q_1|&\geq (\half\aI-58\eta^{1/2})k, \,\,\,\,\,\,\\
 |N|&\geq\big(\!\max\{\tfrac{3}{4}\aI+\tfrac{1}{4}\aII,\half\aIII\}-78\eta^{1/2}\big)k, 
&  |P_2\cup Q_2| &\geq (\half\aII-76\eta^{1/2})k.
\end{aligned}
\, \right\} \!\!
\end{equation}

We now consider the green graph (see Figure~\ref{f270f}). Recall that all edges present in 
$G[M,N\cup P]$ are coloured green. Now, suppose there exists a green matching $G_S$ on $17\eta^{1/2}k$ vertices in $G[N]$. Since $|M|\geq|P|\geq 90\eta^{1/2}k$, by Lemma~\ref{l:eleven}, there exists a green connected-matching $G_T$ on $178\eta^{1/2}k$ vertices in $G[M,P]$. Then, by~(\ref{M10}), we have 
\begin{align*}
|M\backslash V(G_T)|,|N\backslash V(G_S)|\geq (\half\aIII - 96\eta^{1/2})k.
\end{align*} 
Thus, by Lemma~\ref{l:eleven}, there exists a green connected-matching $G_L$ on at least $(\aIII-194\eta^{1/2})k$ in $G[M\backslash V(G_T),N\backslash V(G_S)]$. Then, since all edges present in $G[M,N]$ are coloured exclusively green, $G_S, G_T$ and $G_L$ belong to the same green component of $G$ and, thus, together form an green odd connected-matching on at least $\aIII k$ vertices. Therefore, after discarding at most $17\eta^{1/2}k$ vertices from $N$, we may assume that there are no green edges in $G[N]$ and that 
\begin{equation}
\label{M11}
|N|\geq\big(\!\max\{\tfrac{3}{4}\aI+\tfrac{1}{4}\aII,\half\aIII\}-95\eta^{1/2}\big)k.
\end{equation}
We continue to consider $G[N]$. Recall that we know that all edges in $G[N,P_1\cup Q_1]$ are coloured exclusively red, that all edges in $G[N,P_2\cup Q_2]$ are coloured exclusively red and that. From~(\ref{M10}), 
\begin{align*}
|P_1\cup Q_1|&\geq (\half\aI-58\eta^{1/2})k, &  |P_2\cup Q_2| &\geq (\half\aII-76\eta^{1/2})k.
\end{align*}
Recalling (\ref{E4b-iii}) and (\ref{M11}), provided that $\eta\leq(\aI/2000)^2$, we have $$|N|\geq|P_1\cup Q_1|+300\eta^{1/2}k.$$
Then, suppose that there exists a red-matching $R_A$ on at least $118\eta^{1/2}k$ vertices in $G[N]$. Then, since $|N\backslash V(R_A)|\geq|P_1\cup Q_1|\geq(\half\aI-58\eta^{1/2})k$, by Lemma~\ref{l:eleven}, there exists a red connected-matching $R_B$ on at least $(\aI -118\eta^{1/2})k$ vertices in $G[N\backslash V(R_A),P_1\cup Q_1]$. Then, by (\ref{E6a-c}), $R_A$ and $R_B$ belong to the same red component of $G$ and thus, together, form a red connected-matching on at least $\aI k$ vertices.
Likewise, if there exists a blue-matching $B_A$ on at least $154\eta^{1/2}k$ vertices in $G[N]$, then we can obtain a blue connected-matching on at least $\aII k$ vertices, as follows: Since $|N\backslash V(R_B)|\geq|P_2\cup Q_2|\geq(\half\aI-76\eta^{1/2})k$, by Lemma~\ref{l:eleven}, there exists a blue connected-matching $B_B$ on at least $(\aI -154\eta^{1/2})k$ vertices in $G[N\backslash V(B_A),P_2\cup Q_2]$. Then, since all edges present in $G[N,Q_2]$ are coloured exclusively blue, $B_A$ and $B_B$, together, form a blue connected-matching on at least $\aII k$ vertices.

Thus, after discarding at most a further $272\eta^{1/2}k$ vertices from $N$, we can assume that there are no edges of any colour in $G[N]$ and that 
$$|N|\geq\big(\!\max\{\tfrac{3}{4}\aI+\tfrac{1}{4}\aII,\half\aIII\}-367\eta^{1/2}\big)k\geq 10\eta^{1/2}k.$$ This contradicts the fact that $G$ is $4\eta^4 k$-almost-complete, thus completing Case E.iii.b.iv.c. and the proof of Theorem~B.
\qed

\cleardoublepage
\phantomsection
\addcontentsline{toc}{section}{References}
\clearpage
\bibliographystyle{halpha2}
\bibliography{test}

\end{document}